\documentclass[11pt]{amsart}
\usepackage{amssymb}
\usepackage{amsmath}
\allowdisplaybreaks
\usepackage{amsfonts}
\usepackage{mathrsfs}
\usepackage{psfrag}
\usepackage{verbatim}
\usepackage{color}
\usepackage{enumitem}
\usepackage{bm}
\usepackage[usenames,dvipsnames]{xcolor}
\usepackage{dsfont}
\usepackage{overpic}
\usepackage{subcaption}
\usepackage{multirow}
\usepackage{float}
\usepackage{tikz}
\usetikzlibrary{trees}
\usepackage[section]{placeins}

\usepackage[colorlinks=true]{hyperref}

\hypersetup{urlcolor=blue, citecolor=MidnightBlue}

\numberwithin{equation}{section} 

\DeclareGraphicsExtensions{}
\theoremstyle{plain}
\newtheorem{theorem}{Theorem}[section]

\newtheorem{lemma}[theorem]{Lemma}
\newtheorem{proposition}[theorem]{Proposition}

\theoremstyle{definition}
\newtheorem{assumption}[theorem]{Assumption}

\newtheorem{example}[theorem]{Example}

\theoremstyle{remark}
\newtheorem{remark}[theorem]{Remark}

\newcommand{\BR}{\mathbb{R}}
\newcommand{\BP}{\mathbb{P}}
\newcommand{\BE}{\mathbb{E}}

\newcommand{\BN}{\mathbb{N}}

\newcommand{\ind}{\mathds{1}}

\begin{document}
\title[Quantitative Fluctuation Analysis for Continuous-Time SGD via Malliavin Calculus]{Quantitative Fluctuation Analysis for Continuous-Time Stochastic Gradient Descent via Malliavin Calculus}

\author[S. Bourguin]{}
\address{LAMA, Universit\'e Gustave Eiffel and Department of Mathematics and Statistics\\ Boston University}
\email{solesne.bourguin@univ-eiffel.fr, bourguin@math.bu.edu}
\thanks{S.B was partially supported by Simons Foundation Award
  635136.}

\author[S. S. Dhama]{}
\address{Department of Mathematics and Statistics\\ Boston University}
\email{ssdhama@bu.edu, shivamsd.maths@gmail.com}
\thanks{S.S.D. was partially supported by NSF DMS-2311500.}

\author[K. Spiliopoulos]{}
\address{Department of Mathematics and Statistics\\ Boston University}
 \email{kspiliop@math.bu.edu}
\thanks{K.S was partially supported by NSF DMS-2311500 and NSF DMS-2107856.}

 \subjclass[2020]{60H07, 60F05.}
 \keywords{Stochastic gradient descent; Malliavin Calculus; second
   order Poincar\'e inequality; Quantitative limit theorem; Machine learning, Statistical learning.}

\maketitle

\centerline{\scshape Solesne Bourguin, Shivam Singh Dhama, Konstantinos Spiliopoulos}

\begin{abstract}
In this paper, we establish a Quantitative Central Limit Theorem ({\sc qclt}) for the Stochastic Gradient Descent in Continuous Time ({\sc sgdct}) algorithm, whose parameter updates are governed by a stochastic differential equation. We derive an explicit rate at which the {\sc sgdct} iterates converge, in the Wasserstein metric, to a critical point of the objective function. This rate is driven primarily by the magnitude of the learning rate: for a fixed convexity constant of the objective function, smaller learning rates lead to slower convergence.
Our approach relies on tools from Malliavin calculus. In particular, we apply a second-order Poincar\'e inequality and obtain explicit bounds by estimating the first- and second-order Malliavin derivatives separately. Controlling the second-order derivative requires several delicate calculations and a careful sequence of decompositions in order to achieve sharp estimates. We complement the theoretical results with several numerical experiments that illustrate the predicted convergence behavior.
\end{abstract}

\section{Introduction}
Training models on large-scale, continuously evolving datasets poses a
fundamental challenge in both machine learning and deep learning. One
effective way to address this challenge is Stochastic Gradient Descent
in Continuous Time ({\sc sgdct}), an optimization algorithm designed
to efficiently handle streaming data -- see, for example, \cite{KS17, siri_spilio_2020, sharrock2023online,pavliotis2025filtered}. Unlike traditional batch optimization methods, which require access to the full dataset in order to compute updates, {\sc sgdct} follows a noisy descent direction informed by incoming data, updating parameters incrementally using gradients computed from mini-batches. This can substantially reduce computational cost and allows the model to adapt in real time. In addition, {\sc sgdct} is used to estimate unknown parameters or functions in stochastic differential equations ({\sc sde}), making it a natural tool for dynamical systems in which data arrive continuously.

To formulate the question of primary interest, we consider a diffusion process $X_t \in \mathscr{X} \subseteq \BR$ governed by the {\sc sde}:
\begin{equation}\label{E:Process-X}
dX_t = f^{*}(X_t) \thinspace dt + \sigma \thinspace dW_t,
\end{equation}
where \(f^*(x)\) is an unknown function, \(\sigma\) is a known constant, and the process \(W_t\) denotes a standard Brownian motion. Given observations of the process $X_t$, we estimate a model \(f(x,\theta)\) for the unknown function \(f^*(x)\) using {\sc sgdct}. To derive the {\sc sde} governing the parameter updates $\theta,$ we assume that the process $X_t$ admits a unique invariant measure $\mu$. Under this assumption, we define the objective function and the distance function as follows:
\begin{equation}\label{E:Objective-distance-function}
\bar{g}(\theta) \triangleq \int_{\mathscr{X}}g(x,\theta)\mu(dx), \qquad g(x,\theta) \triangleq  \frac{1}{2}|f(x,\theta)-f^*(x)|^2_{\sigma^2} = \frac{1}{2}\sigma^{-2}|f(x,\theta)-f^*(x)|^2.
\end{equation}
The function $\bar{g}(\theta)$ decreases when the parameter $\theta$ is updated in the descent direction \( -\bar{g}_\theta(\theta) \), leading to the algorithm:
\begin{equation*}
{d \theta_t} = \alpha_t f_\theta(X_t, \theta_t){\sigma}^{-2}[dX_t - f(X_t, \theta_t)] \thinspace dt,
\end{equation*}
where $\alpha_t$ denotes the learning rate. A more detailed formulation of this algorithm is provided in Section \ref{S:Problem-Set-up}.

The asymptotic behavior of the {\sc sgdct} updates \(\theta_t\) as \(t\) approaches infinity has been a central topic in the literature. In particular, the authors of \cite{siri_spilio_2020} study the convergence of {\sc sgdct} updates by establishing an \(L^p\) convergence rate and analyzing the fluctuations of \(\theta_t\) under certain assumptions. For the fluctuation analysis, they identify the limiting behavior \textit{in distribution} of the rescaled process
\[
\mathsf{F}_t \triangleq \sqrt{t} \thinspace (\theta_t - \theta^*)
\]
as time increases to infinity, where \(\theta^*\) is a critical point of the objective function \(\bar{g}\) defined in \eqref{E:Objective-distance-function}. Notably, the fluctuation result in \cite{siri_spilio_2020} is qualitative and can be interpreted as a qualitative central limit theorem ({\sc clt}). A natural and important extension of this qualitative {\sc clt} is therefore to investigate the quantitative fluctuation behavior of the process \(\theta_t\).

Accordingly, the primary goal of the present work is to establish a Quantitative Central Limit Theorem ({\sc qclt}) for the process $\theta_t$, in the sense of deriving an explicit rate at which the rescaled fluctuation process $\mathsf{F}_t$ converges to its limiting distribution in the Wasserstein metric (Theorem \ref{T:Main-theorem}). To obtain this rate, we employ tools from Malliavin calculus. In particular, we use a second-order Poincar\'e inequality that yields quantitative convergence rates in terms of bounds on the first- and second-order Malliavin derivatives of the pre-limit process, with Gaussian limiting behavior \cite[Theorem 2.1]{vido_2020}. A key difficulty is therefore to derive explicit bounds for these Malliavin derivatives. To address this challenge, we construct appropriate Poisson equations to control the fluctuation terms that arise, and we apply H\"older's inequality repeatedly throughout the analysis.

This work contributes in several respects. First, we provide an explicit convergence rate for the fluctuations of the {\sc sgdct} algorithm, thereby making the result quantitative rather than merely qualitative. The rate depends on the interplay between the learning rate magnitude $C_\alpha$ and the strong convexity constant $C_{\bar{g}}$ of the objective function $\bar{g}$. For a fixed convexity constant, a larger learning rate yields a faster convergence rate. Second, our analysis allows the {\sc sgdct} updates to depend on the dynamics of the underlying stochastic process $X_t$, thereby introducing temporal correlation into the data stream. This stands in contrast to standard discrete-time stochastic gradient descent ({\sc sgd}), where the data are typically assumed to be independent and identically distributed. The presence of correlated data substantially complicates the analysis. Furthermore, we allow the model $f(x,\theta)$ (see Assumption \ref{A:Growth-f-g}) to grow polynomially in \(x\) and quadratically in \(\theta\).

The most technical part of the present work concerns the estimation of the first- and, especially, the second-order Malliavin derivatives. Controlling the second-order derivatives requires intricate decompositions and careful bookkeeping to obtain sufficiently sharp bounds. In particular, the derivation in Lemma \ref{L:Second-der-Gamma-f-3} is delicate due to the structure of the first-order Malliavin derivatives appearing in the definition of the relevant process (see $\Gamma_3^f$ in Lemma \ref{L:Second-der-Gamma-f-3}). Another subtle point is the control of the fluctuation term \(\int_0^t \alpha_s \left[ (\bar{g}(\theta_s) - g(X_s, \theta_s)) \right]ds\), which appears repeatedly in the estimation of both derivatives and pre-limit expectations and variances. Intuitively, this term should be small because the process $X_t$ is assumed to be ergodic; however, in the present setting we derive an explicit rate at which it vanishes by constructing several Poisson equations.

We now discuss related work on {\sc sgd}, in both discrete and continuous time, and compare it with the present study. Our work is related to \cite{siri_spilio_2020}, where the authors establish an $L^p$ convergence rate and characterize qualitative fluctuations for {\sc sgdct}. Their analysis relies on Duhamel's principle, It\^o’s formula, the construction of several Poisson equations \cite{pardoux2003poisson}, and martingale moment inequalities \cite[Proposition 3.26]{KS91}. In contrast, we focus on the {\sc qclt} and employ a different set of tools—most notably, a second-order Poincar\'e inequality from Malliavin calculus—to obtain explicit convergence rates. Earlier contributions to the study of {\sc sgdct} include, for example, \cite{KS17}, which investigates convergence of {\sc sgd} updates $\theta_t$ to a critical point of the objective function and discusses applications in deep learning, American options, and partial differential equations. For discrete-time {\sc sgd} with independent and identically distributed (i.i.d.) data, foundational results can be found in \cite{benveniste2012adaptive}, while studies such as \cite{sakrison1964continuous} and \cite{yin1993continuous} examine convergence and {\sc clt} results for continuous-time {\sc sgd} in settings without data dynamics for $X$. The works \cite{KS17, siri_spilio_2020}, however, do incorporate stochastic dynamics for $X$, as we do here. For convergence analysis of {\sc sgdct} updates using filtered data, we refer to \cite{pavliotis2025filtered} and the references therein. For parameter estimation in McKean-Vlasov {\sc sde} and interacting particle systems, see, for example, \cite{sharrock2023online}.

In recent decades, Malliavin calculus has emerged as a powerful framework for establishing {\sc qclt} results across a wide range of stochastic models. Several recent works use this approach to derive explicit rates of convergence in terms of Malliavin-derivative bounds. For example, \cite{bour_spilio_2025} studies a fully coupled multiscale system and derives quantitative fluctuation bounds by controlling the first- and second-order Malliavin derivatives of both the fast and slow components. The authors of \cite{rockner2021averaging} also investigate quantitative fluctuations in multiscale diffusion processes, but follow a different route via the analysis of associated Cauchy problems. Our methodology is closest in spirit to that of \cite{bour_spilio_2025}, while our setting introduces additional challenges due to the structure of the {\sc sgdct} updates, the dependence on the dynamics of $X$, and the growth of the model $f(x,\theta)$. For further developments in quantitative error analysis for stochastic iterative algorithms, see \cite{wang2025quantitative}. In addition, for {\sc qclt} results in the context of stochastic partial differential equations ({\sc spde}), we refer to \cite{nualart2022quantitative,nualart2022central,balan2024hyperbolic} and the references therein.

The paper is organized as follows. In Section \ref{S:Problem-Set-up}, we present our problem set-up ({\sc sgdct}) and the core equations used throughout the manuscript. This section also states the main assumptions and presents our principal result, Theorem \ref{T:Main-theorem}. In Section \ref{S:theorem-proof}, we prove the main result by combining several key ingredients, including Propositions \ref{P:First-der-product} \ref{P:Second-der-product}, \ref{P:Pre-limit-Expectation-P-1}, and \ref{P:Prelimit-Sec-2-P1}. In Section \ref{S:simulation}, we numerically illustrate our theoretical findings. Sections \ref{S:First-order-derivative} and \ref{S:Second-order-derivative} are devoted to the analysis of first- and second-order Malliavin derivatives, respectively; the main conclusions are summarized in Propositions \ref{P:First-der-product} and \ref{P:Second-der-product}. Section \ref{S:Slower-rates} focuses on cases exhibiting slower convergence rates for the quantity of interest. Next, in Section \ref{S:Pre-limit-Exp-Var}, we present the calculations associated with rescaling, pre-limit expectations, and variances. Finally, in Section \ref{S:Appendix}, we provide preliminary material on Poisson equations and Malliavin calculus for the convenience of the reader, along with proofs of several auxiliary results from the previous sections.

\subsection*{Notation and conventions}
We list some of the notation and conventions used throughout this paper. The symbol $\triangleq$ is read as ``is defined to equal''. We denote the sets of positive integers and real numbers by $\BN$ and $\BR,$ respectively. $\BE$ denotes the expectation operator with respect to the probability measure $\BP.$ For a random variable $Y$ with a normal distribution with parameters $\mu$ and $\xi^2,$ we write $Y \sim \mathscr{N}(\mu, \xi^2).$ For two random variables $U$ and $V$, we define
$$d_W(U,V) \triangleq \sup_{\mathsf{f}:|\mathsf{f}|_{\text{Lip}}\le 1}\left|\BE \left[\mathsf{f}(U)\right] - \BE \left[\mathsf{f}(V)\right]\right|,$$
which represents the Wasserstein distance between the laws of $U$ and $V.$ The partial derivatives of the function \(\mathsf{j}:\mathbb{R}^n \to \mathbb{R}\) with respect to the \(i^{\text{th}}\) variable are denoted using subscript notation: \(\mathsf{j}_{x_i} \triangleq \frac{\partial \mathsf{j}}{\partial x_i}\). The same convention applies to higher-order derivatives. For \(0 \leq r \leq t\), the first- and second-order Malliavin derivatives of the process \(U_t\) are denoted by \(D_r U_t\) and \(D_r^2 U_t\), respectively. Throughout the paper, the letter $K$ denotes a positive constant that may depend on various parameters \textit{except} for the time parameter $t$; its value may change from line to line. We will repeatedly use the following versions of the triangle inequality and H\"older's inequality: for any $n \in \BN,$ $p>0$, positive real numbers $a_1, \cdots, a_n,$ and random variables $U_1, \cdots, U_n$,
\begin{equation*}
\begin{aligned}
(a_1 + \cdots + a_n)^p  \le K (a_1^p + \cdots + a_n^p), \qquad
\BE \left[ \prod_{i=1}^n U_i \right] \le \prod_{i=1}^n \left[ \BE (U_i^n) \right]^{\frac{1}{n}}.
\end{aligned}
\end{equation*}

\section{Problem Statement, Assumptions and Main result}\label{S:Problem-Set-up}
Our problem formulation follows the work of \cite{siri_spilio_2020}. Given observations of the data process $X_t$ satisfying the {\sc sde} \eqref{E:Process-X} and a model $f(x,\theta)$ for the unknown function $f^*(x)$, the continuous-time stochastic gradient descent updates for the parameter $\theta$ are governed by the {\sc sde}
\begin{equation}\label{E:SGDCT-theta-updates}
d\theta_t = \alpha_t \left[f_\theta(X_t,\theta_t)\sigma ^{-2}dX_t - f_\theta(X_t,\theta_t)\sigma^{-2}f(X_t, \theta_t)\thinspace dt \right],
\end{equation}
where \(\alpha_t\) denotes the learning rate, and throughout this
paper we assume $\alpha_t = \frac{C_\alpha}{C_0 + t}.$ To better
understand the parameter estimation problem for \(\theta\), we note
from \eqref{E:Process-X} that $dX_t$ provides a noisy estimate of
$f^*(X_t)dt,$ which motivates the derivation of
\eqref{E:SGDCT-theta-updates}. Following this, we can interpret {\sc sgdct} as the sum of a descent direction and a noise term:
\begin{equation}\label{E:SGDCT-theta-updates-V-2}
\begin{aligned}
d\theta_t &= \alpha_t \left[ f_\theta(X_t, \theta_t)\sigma^{-2} dX_t - f_\theta(X_t, \theta_t)\sigma^{-2} f(X_t, \theta_t) \thinspace dt \right] \\
&= \alpha_t f_\theta(X_t, \theta_t)\sigma^{-2} (f^*(X_t) - f(X_t, \theta_t)) \thinspace dt + \alpha_t f_\theta(X_t, \theta_t)\sigma^{-1} \thinspace dW_t \\
&= -\alpha_t g_{\theta}(X_t, \theta_t) \thinspace dt + \alpha_t f_\theta(X_t, \theta_t)\sigma^{-1} dW_t.
\end{aligned}
\end{equation}
Next, to make the progress of $\theta$ toward its minimum more
transparent -- since the descent term depends on the dynamics of
$X_t$ -- we rewrite the above equation as the sum of a descent term, a fluctuation term, and a noise term:
\begin{equation}\label{E:Process-theta}
d\theta_t = \underbrace{-\alpha_t \bar{g}_\theta(\theta_t) \thinspace dt}_{\text{Descent term}} + \underbrace{\alpha_t \Big(
  \bar{g}_{\theta}(\theta_t) - g_\theta(X_t,\theta_t)  \Big) \thinspace dt}_{\text{Fluctuation term}} + \underbrace{\alpha_t f_\theta(X_t,\theta_t) \sigma^{-1} \thinspace dW_t}_{\text{Noise term}}.
\end{equation}
We expect the process \(\theta\) to converge toward its minimum for the following heuristic reason. With our choice of learning rate \(\alpha_t\), which decreases over time, and in view of the definition of the distance function \(g(x, \theta)\), the descent term $\alpha_t \bar{g}_\theta(\theta_t)$ dominates both the fluctuation term $\alpha_t (
  \bar{g}_{\theta}(\theta_t) - g_\theta(X_t,\theta_t))$ and the noise term $\alpha_t f_\theta(X_t,\theta_t) \sigma^{-1}$ for large $t$. A detailed and rigorous analysis is given in \cite{KS17}, where the authors show that \(\theta\) converges to a critical point of \(\bar{g}(\theta)\) by proving that \(|\nabla \bar{g}(\theta_t)| \to 0\) almost surely as \(t\) approaches infinity.

Before stating our main result (Theorem \ref{T:Main-theorem}), we introduce the key assumptions on which Theorem \ref{T:Main-theorem} relies.

\begin{assumption}[Ergodicity]\label{A:f*-growth}
There exists a positive number $C^*$ such that
$$f_x^* \le -C^*< 0.$$
\end{assumption}
As a consequence of this assumption, we have $\lim_{|x|\to \infty}f^*(x)\cdot x = - \infty.$ This condition guarantees the existence of a unique invariant measure for the process $X_t.$ Next, Assumption \ref{A:Well-Posedness} (part 2) ensures that \eqref{E:Process-X} is well posed.

\begin{assumption}[Well posedness]\cite{siri_spilio_2020}\label{A:Well-Posedness}
\begin{enumerate}
\item For all $\theta\in \BR$, $x \in \mathcal{S} \subseteq \BR $ and for some $\gamma \in (0,1),$ we assume that $g_\theta(x, \cdot) \in \mathscr{C}(\BR), \thinspace g_{xx \theta} \in \mathscr{C}(\mathcal{S},\BR),$ and $g_\theta(\cdot, \theta) \in \mathscr{C}^{\gamma}(\mathcal{S})$ uniformly in $\theta.$
\item For $\gamma \in (0,1),$ the function $f^*(x)$ has two derivatives in $x$ with all partial derivatives being H\"older continuous with exponent $\gamma,$ i.e., $f^*(x) \in \mathscr{C}^{2+\gamma}(\mathcal{S}).$
\item For all $t \ge 0,$ Equation \eqref{E:SGDCT-theta-updates} has a unique strong solution.
\end{enumerate}
\end{assumption}

In our analysis, we allow the function \( f(x, \theta) \) and its derivatives with respect to \( \theta \) to grow polynomially in both variables \( x \) and \( \theta \) (Assumption \ref{A:Growth-f-g}). Consequently, we require uniform moment bounds for the processes \( \theta_t \) and \( X_t \). The bounds for $\theta$ and $X$ are ensured by the following assumption and by the conditions in \cite{pardoux2001poisson}, respectively.

\begin{assumption}[Uniform-in-time moment bounds for process $\theta$]\cite{siri_spilio_2020}\label{A:Moment-bounds}

\begin{enumerate}
\item For some finite number $R$ such that $|\theta| \ge R$ and the function $\kappa(x)> \gamma >0$, we have:
$- \theta \cdot g_\theta(x,\theta) \le -\kappa(x)|\theta|^2, \thinspace \text{for all} \thinspace \thinspace x \in \BR.$
\item Let $\tau(x,\theta) \triangleq |f_\theta(x,\theta)|.$ Then, there exists a function $\lambda(x)$ growing not faster than polynomially in $|x|,$ such that for all $\theta_1, \theta_2 \in \BR$ and $x \in \BR$, we have
$|\tau(x, \theta_1)- \tau(x, \theta_2)| \le |\lambda(x)| \rho(|\theta_1 - \theta_2|),$
where for $s \ge 0,$ $\rho(s)$ is an increasing function with $\rho(0)=0$ and $\int_0^\infty \rho^{-2}(s) \thinspace ds = \infty.$
\end{enumerate}
 \end{assumption}

\begin{assumption}[Growth of the derivatives of the functions $f(x,\theta)$ and $\bar{g}(\theta)$]\label{A:Growth-f-g}
The functions $f(x,\theta)$ and $\bar{g}(\theta)$ are assumed to satisfy the following conditions for some positive constants $K$ and $q$:
\begin{enumerate}
\item $\left|\frac{ \partial^i f (x, \theta)}{\partial \theta^i}\right| \le K(1+|x|^q +|\theta|^j),$ for $i=0,1,2,3$ and $j \triangleq \text{max}\{2-i,0\}.$
\item $\bar{g}(\theta)$ is strongly convex, i.e., there exists a positive constant $C_{\bar{g}}$ such that
$\bar{g}_{\theta \theta}(\theta) \ge C_{\bar{g}},$ as $\bar{g}$ is differentiable.
\item $\left|\frac{ \partial^i \bar{g}(\theta)}{\partial \theta^i} \right| \le K (1+ |\theta|^{4-i}),$ for $i=0,1,2,3.$
\end{enumerate}
\end{assumption}

\begin{assumption}[Learning rate] \label{A:Learning-rate}
The learning rate $\alpha_t$ satisfies the condition:
\begin{equation*}
\int_0^\infty \alpha_s \thinspace ds = \infty, \qquad \int_0^\infty \alpha_s^2 \thinspace ds < \infty.
\end{equation*}
\end{assumption}
In this paper, we focus on a specific form of the learning rate\footnote{We conveniently express $\alpha_t$ as $\frac{C_\alpha}{t}$ for $t \ge r > 0.$} defined as $\alpha_t \triangleq \frac{C_\alpha}{C_0 + t}, \thinspace C_\alpha > 0$. This choice is made for two main reasons: first, it simplifies the presentation, and second, it reflects the typical structure of learning rates used in practice, see, for example, \cite{sharrock2023online,siri_spilio_2020}.

For completeness, we recall the qualitative {\sc clt} for the process \( \theta_t \), which is one of the main results in \cite{siri_spilio_2020}. In contrast, the primary focus of the present work is to provide a quantitative counterpart.

\begin{proposition}[Qualitative {\sc clt} \cite{siri_spilio_2020}]\label{P:SS20-Main-Result}
Let $\theta_t$ be the solution of Equation \eqref{E:Process-theta} and Assumptions \ref{A:f*-growth} through \ref{A:Learning-rate} are satisfied. Then, as $t \to \infty$, we have
\begin{equation*}
\sqrt{t}(\theta_t - \theta^*) \Longrightarrow \mathscr{N}(0, \bar{\Sigma}),
\end{equation*}
where, for the solution $\Psi$ of Poisson Equation \eqref{E:Poisson-equation-prelimit} and the functions $\bar{h}(\theta) = \int h(x,\theta)\mu(dx)$, $h(x,\theta)  \triangleq \sigma^2 \left[ f_\theta(x, \theta) \sigma^{-2} - \Psi_x(x, \theta) \right]^2$,  the limiting variance\footnote{We would like to point out a minor typo in \cite[Theorem 2]{siri_spilio_2020} regarding the expression for the limiting variance \(\bar{\Sigma}\). Specifically, the expression is missing the number \(\frac{1}{2}\) in the exponent; the correct expression should read \(\left(C_\alpha \bar{g}_{\theta\theta}(\theta^*) - \frac{1}{2}\right)\) instead of \(\left(C_\alpha \bar{g}_{\theta\theta}(\theta^*) - 1\right)\). This typo appears \emph{only} in the integral version of the variance expression. However, the summation counterpart, as given in \cite[Equation 30]{siri_spilio_2020}, is correct. Therefore, the calculations remain unaffected.} is defined to be
\begin{equation}\label{E:Limiting-Variance}
\bar{\Sigma} \triangleq C_\alpha^2 \int_0^\infty e^{-2s \left(C_\alpha \bar{g}_{\theta\theta}(\theta^*)- \frac{1}{2} \right)}\bar{h}(\theta^*) \thinspace ds.
\end{equation}
\end{proposition}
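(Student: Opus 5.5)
Since this is the qualitative CLT of \cite{siri_spilio_2020}, I would reproduce their argument. It linearizes around $\theta^*$, removes the fluctuation term with a Poisson equation, and applies a martingale CLT.

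\emph{Linearization.} Write $\tilde\theta_t=\theta_t-\theta^*$ and use $\bar g_\theta(\theta^*)=0$ together with Taylor's formula:
\[
\bar g_\theta(\theta_t)=\bar g_{\theta\theta}(\theta^*)\tilde\theta_t+R_t,\qquad |R_t|\le K(1+|\theta_t|^2)\tilde\theta_t^2 ,
\]
where the remainder bound uses Assumption \ref{A:Growth-f-g}(3). Let $\Phi(t,s)=\exp\bigl(-\bar g_{\theta\theta}(\theta^*)\int_s^t\alpha_u\,du\bigr)$. Since $\alpha_u=C_\alpha/(C_0+u)$, this equals $\bigl(\tfrac{C_0+s}{C_0+t}\bigr)^{C_\alpha\bar g_{\theta\theta}(\theta^*)}$. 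Duhamel's principle applied to \eqref{E:Process-theta} then gives
\[
\tilde\theta_t=\Phi(t,0)\tilde\theta_0-\int_0^t\Phi(t,s)\alpha_sR_s\,ds+\int_0^t\Phi(t,s)\alpha_s\bigl(\bar g_\theta(\theta_s)-g_\theta(X_s,\theta_s)\bigr)ds+\int_0^t\Phi(t,s)\alpha_s f_\theta(X_s,\theta_s)\sigma^{-1}dW_s .
\]

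\emph{Poisson equation.} For the fluctuation term, I would take $\Psi$ solving $\mathcal L_x\Psi(x,\theta)=g_\theta(x,\theta)-\bar g_\theta(\theta)$, where $\mathcal L_x$ is the generator of $X$; this is equation \eqref{E:Poisson-equation-prelimit}. The Pardoux--Veretennikov theory, under Assumptions \ref{A:f*-growth}--\ref{A:Well-Posedness}, gives polynomial growth bounds on $\Psi$ and its derivatives. Next apply It\^o's formula to $\Phi(t,s)\alpha_s\Psi(X_s,\theta_s)$. This rewrites the fluctuation integral as three kinds of terms:
\begin{itemize}
\item boundary terms of order $\alpha_t$;
\item $ds$-integrals with an extra factor $\alpha_s$, or involving $\partial_s(\Phi\alpha_s)=O(\alpha_s^2)$;
\item the stochastic integral $-\int_0^t\Phi(t,s)\alpha_s\Psi_x(X_s,\theta_s)\sigma\,dW_s$.
\end{itemize}
Combined with the noise term, the martingale part becomes $M_t=\int_0^t\Phi(t,s)\alpha_s\sigma\bigl[f_\theta\sigma^{-2}-\Psi_x\bigr](X_s,\theta_s)dW_s$. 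This explains the appearance of $h$.

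\emph{Negligibility.} Every term other than $M_t$ must be shown to be $o(t^{-1/2})$ in $L^1$. The ingredients are the uniform moment bounds for $\theta_t$ and $X_t$ (Assumption \ref{A:Moment-bounds} and \cite{pardoux2001poisson}) and the $L^p$ rate $\BE|\tilde\theta_t|^2\le K/t$, which is the other main result of \cite{siri_spilio_2020}. For example,
\[
\sqrt t\int_0^t\Phi(t,s)\alpha_s\BE|R_s|\,ds\lesssim\sqrt t\int_0^t\Bigl(\tfrac{s}{t}\Bigr)^{C_\alpha\bar g_{\theta\theta}(\theta^*)}s^{-2}\,ds\to0,
\]
provided $C_\alpha\bar g_{\theta\theta}(\theta^*)>1/2$; that condition is implicit in \eqref{E:Limiting-Variance} being finite. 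The term $\Phi(t,0)\tilde\theta_0$ is $O(t^{-C_\alpha\bar g_{\theta\theta}(\theta^*)})$, so it is also negligible after multiplying by $\sqrt t$.

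\emph{Martingale CLT (main obstacle).} It remains to show $\sqrt t\,M_t\Rightarrow\mathscr N(0,\bar\Sigma)$. Since $M_t$ depends on $t$ through $\Phi$, I would fix $t$ and view $s\mapsto\sqrt t\int_0^s\Phi(t,u)\alpha_u(\cdots)dW_u$ as a martingale. Its bracket at time $t$ is
\[
t\int_0^t\Phi(t,s)^2\alpha_s^2\,h(X_s,\theta_s)\,ds .
\]
The task is to show this converges in probability to $\bar\Sigma$, in two steps:
\begin{enumerate}
\item Replace $h(X_s,\theta_s)$ by $h(X_s,\theta^*)$, which is justified by $\theta_s\to\theta^*$ and the growth bounds.
\item Replace $h(X_s,\theta^*)$ by $\bar h(\theta^*)$ using a second Poisson equation $\mathcal L_x\chi=h-\bar h$ together with the same It\^o/Duhamel bookkeeping. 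This ergodic averaging step is where the delicate estimates lie.
\end{enumerate}
For the deterministic limit, substitute $s=te^{-u}$ and take $C_0\to$ negligible:
\[
t\int_0^t\Bigl(\tfrac{s}{t}\Bigr)^{2C_\alpha\bar g_{\theta\theta}(\theta^*)}\frac{C_\alpha^2}{s^2}\,ds\;\to\;C_\alpha^2\int_0^\infty e^{-2u(C_\alpha\bar g_{\theta\theta}(\theta^*)-1/2)}\,du .
\]
Multiplying by $\bar h(\theta^*)$ gives exactly \eqref{E:Limiting-Variance}. The martingale CLT (e.g.\ via a time change to Brownian motion) then concludes the proof.
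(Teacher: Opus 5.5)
Your proposal is correct and is essentially the argument of \cite{siri_spilio_2020}. The paper cites that result rather than reproving it, but it reuses the same ingredients in Section~\ref{S:Pre-limit-Exp-Var}: the deterministic integrating factor $\Xi^*_{t,u}$, the Poisson equation \eqref{E:Poisson-equation-prelimit}, the remainder $\mathsf{R}(t;\Psi)$, the martingale $\int_1^t\alpha_u\Xi^*_{t,u}[f_\theta-\Psi_x]\,dW_u$, and a second Poisson equation for $h-\bar h$. Freezing $\theta_s$ at $\theta^*$ before the ergodic averaging, as you do, is a harmless reordering that lets you use a $\theta$-independent corrector.

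There is one sign slip. Equation \eqref{E:Poisson-equation-prelimit} reads $\mathscr{L}_x\Psi=\bar g_\theta-g_\theta$, not $g_\theta-\bar g_\theta$. With your sign, It\^o's formula gives $+\int_0^t\Phi(t,s)\alpha_s\sigma\Psi_x\,dW_s$, so the martingale integrand becomes $f_\theta\sigma^{-1}+\sigma\Psi_x$. You need the paper's sign to arrive at $h=\sigma^2[f_\theta\sigma^{-2}-\Psi_x]^2$ as in the statement.

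Also, your bound $\BE|R_s|\lesssim s^{-1}$ needs the fourth-moment rate $\BE|\tilde\theta_s|^4\le Ks^{-2}$ from \cite{siri_spilio_2020}, not only the second-moment rate you list.
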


Before stating our main result, Theorem \ref{T:Main-theorem}, we specify a technical condition involving the learning-rate magnitude $C_\alpha.$ In this condition, $K_{g_{\theta \theta}}^*$ is the upper bound of the quantity $\sup_{t \ge t^*}K_{g_{\theta \theta}}(t) \triangleq \sup_{t \ge t^*} \left[ \BE|g_{\theta \theta}(X_t, \theta_t)|^p\right]^{\frac{1}{p}}$, $p \in \BN,$ where $t^*$ is a sufficiently large number and its choice is discussed in Lemma \ref{L:moment-bound} and Remark \ref{R:Rem-t*-choice}. For all $t \ge t^*$, the function $K_{g_{\theta \theta}}(t)$ is uniformly bounded by a constant independent of $C_\alpha$, and hence $K_{g_{\theta \theta}}^* \triangleq \sup_{t \ge t^*}K_{g_{\theta \theta}}(t)$ is independent of $C_\alpha.$

\begin{assumption}\label{A:Tech-Cond}
For a fixed convexity constant $C_{\bar{g}}$, the learning rate magnitude $C_\alpha$ satisfies the conditions:
\begin{equation*}
C_{\bar{g}} C_\alpha > \frac{\sigma^2}{2}, \quad \text{and} \quad K_{g_{\theta \theta}}^* < \frac{\sigma^2}{2C_\alpha} + 2 C_{\bar{g}}.
\end{equation*}
\end{assumption}
\color{black}
We now state the main result of the paper.

\begin{theorem}[Quantitative {\sc clt}]\label{T:Main-theorem}
Let the process $\theta_t$ be the solution of Equation \eqref{E:Process-theta}, the rescaled process $\mathsf{F}_t \triangleq \sqrt{t}(\theta_t-\theta^*)$ and $N \sim
\mathscr{N}(0, \bar{\Sigma})$. Assume that Assumptions \ref{A:f*-growth} through \ref{A:Tech-Cond} hold. Then, for a sufficiently large $t$, i.e., $t \ge t^*$, there exists a time-independent positive constant $K$ such that
$$d_W(\mathsf{F}_t, N) \le \begin{cases} \frac{K \log t}{{t}^{\frac{1}{4}}},  & {C_{\bar{g}} C_\alpha \ge \frac{3}{4}\sigma^2}\\  \frac{K}{t^{C_{\bar{g}} C_\alpha \sigma^{-2}- \frac{1}{2}}}, & \frac{\sigma^2}{2} < C_{\bar{g}}C_\alpha < \frac{3}{4} \sigma^2,\end{cases}$$
where the choice of $t^*$ is specified in Lemma \ref{L:moment-bound} and Remark \ref{R:Rem-t*-choice}.
\end{theorem}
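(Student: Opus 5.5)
We split the distance with the triangle inequality,
\[
d_W(\mathsf{F}_t,N)\le d_W\big(\mathsf{F}_t,\mathscr{N}(\BE\mathsf{F}_t,\operatorname{Var}\mathsf{F}_t)\big)+d_W\big(\mathscr{N}(\BE\mathsf{F}_t,\operatorname{Var}\mathsf{F}_t),\mathscr{N}(0,\bar\Sigma)\big),
\]
and bound the two pieces separately. For the first piece we use the second-order Poincar\'e inequality \cite[Theorem 2.1]{vido_2020}. It bounds the Wasserstein distance between a centred, Malliavin-twice-differentiable functional and a Gaussian with the same variance by
\[
K\big(\operatorname{Var}\mathsf{F}_t\big)^{-1}\left(\int\!\!\int\!\!\int \|D_r\mathsf{F}_t\|_4\,\|D_s\mathsf{F}_t\|_4\,\|D^2_{r,u}\mathsf{F}_t\|_4\,\|D^2_{s,u}\mathsf{F}_t\|_4\right)^{1/2}.
\]
We apply it to $\mathsf{F}_t-\BE\mathsf{F}_t=\sqrt t(\theta_t-\BE\theta_t)$. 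Since $\operatorname{Var}\mathsf{F}_t\to\bar\Sigma>0$, the prefactor is bounded for $t\ge t^*$. The second piece is bounded by $K\big(|\BE\mathsf{F}_t|+|\operatorname{Var}\mathsf{F}_t-\bar\Sigma|\big)$, using the elementary Wasserstein bound between one-dimensional Gaussians.

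For the Malliavin derivatives, we differentiate the equation for $\theta_t$. Writing $\Phi_{r,t}$ for the exponential of $-\int_r^t\alpha_s g_{\theta\theta}(X_s,\theta_s)\,ds$, and similar terms, we get $D_r\theta_t=\alpha_r f_\theta(X_r,\theta_r)\sigma^{-1}\Phi_{r,t}$ plus contributions from $D_rX_s$ and the stochastic integral. The decay of $D_rX_s$ is $e^{-C^*(s-r)}$ by Assumption \ref{A:f*-growth}. To estimate $\Phi_{r,t}$ we split $g_{\theta\theta}=\bar g_{\theta\theta}+(g_{\theta\theta}-\bar g_{\theta\theta})$. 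Strong convexity gives roughly $(r/t)^{C_{\bar g}C_\alpha}$, adjusted by the $\sigma^{-2}$ normalization of $g$ and by the Itô correction that Assumption \ref{A:Tech-Cond} controls through $K^*_{g_{\theta\theta}}$. The fluctuation part $\int\alpha_s(g_{\theta\theta}-\bar g_{\theta\theta})\,ds$ is handled by solving a Poisson equation in $x$ and applying Itô's formula, which turns it into boundary terms of size $O(\alpha)$ and martingales with summable quadratic variation. Together with the uniform moment bounds (Lemma \ref{L:moment-bound}), this should give $\|D_r\theta_t\|_p\le K\,r^{-1}(r/t)^{C_{\bar g}C_\alpha\sigma^{-2}}$, as in Proposition \ref{P:First-der-product}. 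Differentiating again gives a linear SDE for $D^2_{r,u}\theta_t$ with the same fundamental solution. Its forcing terms are products of first derivatives with $g_{\theta\theta\theta}$, $f_{\theta\theta}$ and similar functions, and we bound each by Hölder's inequality to reach Proposition \ref{P:Second-der-product}.

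Inserting these into the triple integral, with the $\sqrt t$ scaling, gives integrals of the form $t^{2}\int\!\!\int\!\!\int r^{-1}s^{-1}u^{-2}(rsu^2/t^4)^{c}$, where $c=C_{\bar g}C_\alpha\sigma^{-2}$. Evaluated, these give $t^{-1/2}$, up to logarithms, when $c$ is large, and $t^{-2(c-1/2)}$ when $c<3/4$. After the square root, the first case yields the $\log t/t^{1/4}$ rate and the second yields $t^{-(c-1/2)}$. The threshold $c=3/4$ is where the two exponents cross, and the $\log t$ comes from the borderline integrals.

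For the second piece, we write $\theta_t-\theta^*$ by Duhamel's formula around the linearization $\bar g_\theta(\theta)\approx\bar g_{\theta\theta}(\theta^*)(\theta-\theta^*)$. The mean $\BE\mathsf{F}_t$ comes from the fluctuation term, handled again with the Poisson equation \eqref{E:Poisson-equation-prelimit}, and from the quadratic Taylor remainder. Both are $O(t^{-1/2})$ up to logs, or $t^{-(c-1/2)}$, via the $L^p$ rates of \cite{siri_spilio_2020}. For the variance, Itô isometry gives $t\int_0^t\alpha_s^2\Phi_{s,t}^2\,h(X_s,\theta_s)\,ds$. Replacing $h$ by $\bar h(\theta^*)$ through another Poisson equation and a continuity estimate in $\theta$ produces \eqref{E:Limiting-Variance} with an error of the same order (Propositions \ref{P:Pre-limit-Expectation-P-1}, \ref{P:Prelimit-Sec-2-P1}).

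The main obstacle is the second-order derivative. Its forcing term involves $D_r$ of the random exponent in $\Phi$, which is itself an integral of first derivatives multiplied by $g_{\theta\theta\theta}$ (the $\Gamma_3^f$ term). It also involves fluctuation integrals that must be shown to be small at an explicit rate. My first attempt would be to factor out the deterministic $(r/t)^{c}$ decay before applying Hölder. Then I would apply Poisson-equation corrections term by term, so that every random exponential has moments bounded uniformly in $t\ge t^*$; this is exactly where Assumption \ref{A:Tech-Cond} enters.
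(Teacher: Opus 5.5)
Your architecture is the paper's: the second-order Poincar\'e inequality of Proposition \ref{P:vidotto}, variation-of-constants representations of $D\theta_t$ and $D^2\theta_t$ with Poisson equations absorbing the fluctuation terms, and Duhamel plus Poisson equations for the mean and variance. Your two variations are harmless. Splitting through $\mathscr{N}(\BE\mathsf{F}_t,\operatorname{Var}\mathsf{F}_t)$ works. Your triple-integral form follows from Proposition \ref{P:vidotto} by Minkowski's inequality in the contraction variable and H\"older (the prefactor should be $(\operatorname{Var}\mathsf{F}_t)^{-1/2}$, which is immaterial).

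The gap is in the one quantitative step you display. Write $c\triangleq C_{\bar g}C_\alpha$ with $\sigma=1$. Your integrand amounts to $\|D^2_{r,u}\theta_t\|_4\approx u^{-1}(u/t)^{c}$, i.e.\ a second derivative of the same size as a first derivative. Then
\[
t^{2}\int_{[t^*,t]^3} r^{-1}s^{-1}u^{-2}\Big(\frac{rsu^{2}}{t^{4}}\Big)^{c}\,dr\,ds\,du\;\asymp\; t^{2-4c}\cdot t^{c}\cdot t^{c}\cdot t^{2c-1}=t ,
\]
so as written the Malliavin term gives a bound growing like $\sqrt t$, and the exponents you quote do not follow from it. What is missing is the two-scale structure of the second derivative, namely the pointwise bound of Lemma \ref{L:2-der-moments}:
\[
\|D^2_{r,u}\theta_t\|_{4}\le K t^{-c}\Big[(r\vee u)^{c-1}e^{-C^*|r-u|}+(r\wedge u)^{c-1}(r\vee u)^{-1}\Big].
\]
The contributions involving $DX$ or $D^2X$ are exponentially localized near the diagonal. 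The remaining ones, built from $D\theta$, have product form: roughly the size of $D_{r\wedge u}\theta_{r\vee u}$ times that of $D_{r\vee u}\theta_t$. That is one more learning-rate factor than your heuristic allows.

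With this bound your route closes, in either of two ways.

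\textbf{Decoupled.} Cauchy--Schwarz in $(r,s)$ bounds your triple integral by the square root of the product of the quantities in Propositions \ref{P:First-der-product} and \ref{P:Second-der-product}. This gives $Kt^{-5/2}$ up to logarithms for $c\ge\frac34$, and $Kt^{-1-2c}$ below. After multiplying by $t^2$ these are exactly your quoted $t^{-1/2}$ and $t^{-2(c-1/2)}$, and this is the paper's route. Note that Proposition \ref{P:Second-der-product}, which you name as your target, controls a four-fold integral, not your triple integral, without this extra step.

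\textbf{Direct.} Evaluating the triple integral directly is sharper. One gets $\int_{t^*}^t\|D_r\theta_t\|_4\|D^2_{r,u}\theta_t\|_4\,dr\le Kt^{-2c}\big(u^{2c-2}+u^{c-1}t^{c-1}\big)$ for $c\neq1$, with $1+\log(t/u)$ replacing the bracket at $c=1$. So the integral is $O(t^{-3})$ for $c>\frac34$ (times $\log t$ at $c=\frac34$) and $O(t^{-4c})$ for $\frac12<c<\frac34$. The Malliavin contribution is then $O(t^{-1/2})$, resp.\ $O(t^{1-2c})$, and in your split the stated rates are driven by $|\BE\mathsf{F}_t|+|\operatorname{Var}\mathsf{F}_t-\bar\Sigma|$.

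Finally, Assumption \ref{A:Tech-Cond} does not enter by making random exponentials uniformly bounded; the forward bound of Lemma \ref{L:Integrating-Factor-first-der} does not use it. The $dW$-terms in the variation-of-constants formula have integrands $\eta^*_{t,u}(\cdots)$ that depend on $W$ after time $u$, so H\"older alone does not handle them. The paper writes $\eta^*_{t,u}=\eta^*_{t,r}(\eta^*_{u,r})^{-1}$ and applies the martingale moment inequality to the adapted part. It then pays for the growing negative moments $\BE[(\eta^*_{u,r})^{-p}]\le K(u/r)^{pC_\alpha K_{g_{\theta\theta}}}$ of Lemma \ref{L:L1}. For $\Gamma^f_3$, which has no $DX$ factor supplying exponential decay, this growth is integrable against the decay of $D_{r_1}\theta_u D_{r_2}\theta_u$ precisely when $K^*_{g_{\theta\theta}}<\frac{1}{2C_\alpha}+2C_{\bar g}$ (Lemma \ref{L:Second-der-Gamma-f-3}).
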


\begin{remark}
In this manuscript, the positive constant $ K $ is always time-independent; however, it may depend on various other parameters such as the noise size $ \sigma$, the magnitude of the learning rate $C_\alpha$, and the growth-rate parameters of the functions $ f(x, \theta)$ and $\bar{g}(\theta)$. For simplicity, we set $ \sigma = 1$, as it appears in our calculations merely as a scalar.
\end{remark}

\begin{remark}[Comments on Assumption \ref{A:Tech-Cond}]\label{R:Tech-Cond}
The inequality \( K_{g_{\theta \theta}}^* < \frac{\sigma^2}{2C_\alpha}
+ 2 C_{\bar{g}} \) stated in Assumption \ref{A:Tech-Cond} is a purely
technical condition that arises from Lemma
\ref{L:Second-der-Gamma-f-3}. As a consequence of this assumption, we
observe that \( K_{g_{\theta \theta}}^* < 3 C_{\bar{g}} \), which is
consistent with the stability of the algorithm. The condition is also
expected to hold for $t>t^*$ when $t^*$ is sufficiently large if
convergence is to occur; indeed, around a local minimum the loss
function should behave approximately like a quadratic function. A careful inspection of the proof of Lemma \ref{L:Second-der-Gamma-f-3} shows that the condition $K_{g_{\theta \theta}}^* < \frac{1}{2C_\alpha} + 2 C_{\bar{g}}$ can be weakened to $K_{g_{\theta \theta}}^* < \frac{1}{2C_\alpha} + 3 C_{\bar{g}}$, which would, however, yield a slower convergence rate in Theorem \ref{T:Main-theorem}. We also note that in Lemma \ref{L:Second-der-Gamma-f-3}, if we assume $f_{\theta \theta \theta} = 0$, then the condition $K_{g_{\theta \theta}}^* < \frac{\sigma^2}{2C_\alpha} + 2 C_{\bar{g}}$ is not required for our calculations.
\end{remark}

\begin{remark}[Comments on the restriction of the domain  of integration in Section \ref{S:theorem-proof}]\label{R:Large-time-choice}
It is important to note that in the proof of Theorem \ref{T:Main-theorem}, we restrict the domain of integration from the interval \([0, t]\) to \([t^*, t]\), where $t^{*}$ is large enough. This restriction arises from the need to compute the second-order derivatives specifically within the interval \([t^*, t]\), as discussed in Proposition \ref{P:Second-der-product}, see also Lemma
\ref{L:Second-der-Gamma-f-3} and Remark \ref{R:Tech-Cond}. As discussed in Proposition \ref{P:Second-der-product}, $t^*$ is a sufficiently large number and its choice is specified in Lemma \ref{L:moment-bound} and Remark \ref{R:Rem-t*-choice}. Specifically, it reveals a natural and interesting interplay between $\gamma$, the constant from Assumption \ref{A:Moment-bounds}, and $C_{\alpha}$, the learning rate magnitude. 
\end{remark}

\begin{remark}[Comments on the multidimensional case]
For readability, and to prevent the paper from being unnecessarily
long, we present our results, proofs, and calculations in dimension
one, since extending them to the multidimensional case requires no
additional ideas -- only more involved computations, which amount to summing over the additional dimensions. Indeed, the key ingredient is Proposition \ref{P:vidotto}, which corresponds to \cite[Theorem 2.1]{vido_2020}. The multidimensional case can be obtained with the same methodology by using \cite[Theorem 2.2 and Remark 2.1]{vido_2020} instead of \cite[Theorem 2.1]{vido_2020}, and by summing over the dimensions, as suggested by the structure of the bound in \cite[Theorem 2.2]{vido_2020}.
\end{remark}

\begin{remark}[Uniform-in-time moments]
We note that, in many places in the proof of Theorem \ref{T:Main-theorem}, we construct several Poisson equations whose solutions grow polynomially in the variables $\theta$ and $x$. For relevant results concerning Poisson equations, we refer readers to Section \ref{S:Poisson-Equation}. It is therefore necessary to control uniform-in-time moments of the processes $\theta_t$ and $X_t$, specifically, $\sup_{t \ge 0} \BE\left[ |\theta_t|^{\mathsf{n}} \right]$ and $\sup_{t \ge 0} \BE\left[ |X_t|^{\mathsf{m}} \right]$ for some positive integers ${\mathsf{n}}$ and ${\mathsf{m}}$. Under the stated assumptions, \cite{pardoux2001poisson} shows that $\sup_{t \ge 0} \BE\left[ |X_t|^{\mathsf{m}} \right] < \infty$, and, \cite{siri_spilio_2020}  and subsection \ref{ss:MomentBoundTheta} prove that $\sup_{t \ge 0} \BE\left[ |\theta_t|^{\mathsf{n}} \right]< \infty$.
\end{remark}

\section{Proof of our Main Result: Theorem \ref{T:Main-theorem}}\label{S:theorem-proof}
In this section, we prove Theorem \ref{T:Main-theorem}. The main ingredient of the proof is the improved second-order Poincar\'e inequality of \cite[Theorem 2.1]{vido_2020}, which we recall below. After a suitable normalization of the pre-limit process to satisfy the assumptions of this theorem, the resulting Wasserstein distance bound can be expressed in terms of the first- and second-order Malliavin derivatives of the pre-limit process together with its expectation and variance. Consequently, the proof of the {\sc qclt} reduces to estimating these four quantities. Propositions \ref{P:First-der-product} and \ref{P:Second-der-product} provide the bounds required to control the first- and second-order Malliavin derivatives, respectively, while Propositions \ref{P:Pre-limit-Expectation-P-1} and \ref{P:Prelimit-Sec-2-P1} characterize the asymptotic behavior of the expectation and variance of the pre-limit process. Combining these estimates through the Poincar\'e inequality yields the convergence rate stated in Theorem \ref{T:Main-theorem}. 

\begin{proposition}\cite[Theorem 2.1]{vido_2020}\label{P:vidotto}
Let ${F} \in \mathbb{D}^{2,4}$ be such that $\mathbb{E}(F) = 0$ and
$\mathbb{E}(F^2) = \tilde{\sigma}^2$, and let $N \sim
\mathscr{N}(0,\tilde{\sigma}^2)$. Then,
\begin{align*}
d_W(F,N) &\leq \sqrt{\frac{8}{ \tilde{\sigma}^2 \pi}}\left[
  \int_{\BR_+^2}^{}\sqrt{\mathbb{E} \left[ \left( D^2F \otimes_1 D^2F
           \right)(x,y)^2 \right]\mathbb{E} \left[ DF(x)^2 \cdot DF(y)^2 \right]}\thinspace dx \thinspace dy\right]^{\frac{1}{2}},
\end{align*}
where the Malliavin operators $D$ and $D^2$ are those defined in
Subsection \ref{PrelimsOnMalliavin}, and $\otimes_1$ is the contraction operator defined as follows: for any two functions $\mathsf{f}$ and $\mathsf{g}$ in $L^2([0,t]),$
$$[\mathsf{f} \otimes_1 \mathsf{g}] \triangleq \int_0^t \mathsf{f}(x,z)\mathsf{g}(y,z)\thinspace dz.$$
\end{proposition}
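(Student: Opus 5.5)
Since this is \cite[Theorem 2.1]{vido_2020}, we simply import it. If I were to reprove it, I would follow the standard Malliavin--Stein route and then use Vidotto's refinement of the second-order Poincar\'e inequality.

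The first step is Stein's method for the Wasserstein distance combined with Malliavin integration by parts. For a Stein solution $\mathsf{h}$ with $|\mathsf{h}'|_\infty\le \sqrt{2/(\tilde\sigma^2\pi)}$, one has $\BE[F\mathsf{h}(F)]=\BE[\mathsf{h}'(F)\langle DF,-DL^{-1}F\rangle]$, where $L$ is the Ornstein--Uhlenbeck generator. This gives
$$d_W(F,N)\le \sqrt{\tfrac{2}{\tilde\sigma^2\pi}}\;\BE\bigl|\tilde\sigma^2-\langle DF,-DL^{-1}F\rangle\bigr|.$$
Since $\BE\langle DF,-DL^{-1}F\rangle=\BE[F^2]=\tilde\sigma^2$, Cauchy--Schwarz bounds the right-hand side by $\sqrt{\mathrm{Var}(G)}$, where $G\triangleq\langle DF,-DL^{-1}F\rangle$. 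The first-order Gaussian Poincar\'e inequality then gives $\mathrm{Var}(G)\le \BE\|DG\|^2$.

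The second step is to compute $DG$ using Mehler's formula. We have $-D_zL^{-1}F=\int_0^\infty e^{-s}P_sD_zF\,ds$, and similarly $-D^2_{x,z}L^{-1}F=\int_0^\infty e^{-2s}P_sD^2_{x,z}F\,ds$. Hence
$$D_xG=\int (D^2_{x,z}F)(-D_zL^{-1}F)\,dz+\int D_zF\,(-D^2_{x,z}L^{-1}F)\,dz.$$
By $(a+b)^2\le 2a^2+2b^2$, $\BE\|DG\|^2$ is controlled by two terms. Each has the form
$$\int\BE\Bigl[\Bigl(\int A(x,z)B(z)\,dz\Bigr)^2\Bigr]dx=\int\!\!\int \BE\bigl[(A\otimes_1 A)(z,w)\,B(z)B(w)\bigr]\,dz\,dw,$$
where one of $A,B$ is a (semigroup-smoothed) $D^2F$ and the other a (smoothed) $DF$.

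The third step is the refinement. Rather than applying H\"older globally, which produces $\sqrt{\BE\|D^2F\otimes_1D^2F\|^2}\cdot\sqrt{\BE\|DF\|^4}$ as in Nourdin--Peccati--Reinert, one applies Cauchy--Schwarz pointwise in $(z,w)$. This yields $\sqrt{\BE[(A\otimes_1A)(z,w)^2]\,\BE[B(z)^2B(w)^2]}$ inside the double integral. One then removes the semigroups $P_s$ and integrates out $s$; the factors $\int e^{-s}ds$ and $\int e^{-2s}ds$ are finite. The constant $8$ arises from combining $2/\pi$ with the factor $2\cdot 2$ from splitting and symmetrizing. Taking square roots gives the stated bound.

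I expect the main obstacle to be removing the semigroups $P_s$ in the cross term, i.e., showing $\BE[(P_sD_zF)^2(P_sD_wF)^2]\le \BE[D_zF^2D_wF^2]$ (and the analogous estimate for the contracted second derivatives). A naive conditional Jensen only handles one factor at a time. I would first try to write both factors through the same Mehler coupling $W\mapsto e^{-s}W+\sqrt{1-e^{-2s}}W'$. I would then apply Cauchy--Schwarz in $W'$ to $E'[a]^2E'[b]^2\le E'[a^2]E'[b^2]$, and if necessary apply a further H\"older step in $W'$ to reach $E'[a^2b^2]$-type quantities. Finally, I would use the invariance of the Gaussian law under the coupling to remove $P_s$ after taking full expectation.
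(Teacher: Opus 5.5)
Importing \cite[Theorem 2.1]{vido_2020} is exactly what the paper does; it gives no proof of its own, so your first sentence suffices as a justification of the statement.

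Your optional reproof sketch, however, has a genuine gap at exactly the step you flagged. You plan to prove $\BE[(P_sD_zF)^2(P_sD_wF)^2]\le \BE[(D_zF)^2(D_wF)^2]$, pointwise in $(z,w)$, which is how you would use it after Cauchy--Schwarz. This inequality is false in general, and so is its analogue with $-D_zL^{-1}F$. For a counterexample, take $h_1,h_2,h_3\in\mathfrak{H}$ orthonormal with disjoint supports and set $X_i=W(h_i)$. Let $F=X_1\gamma(X_3)+X_2\delta(X_3)$, where $\gamma,\delta\ge 0$ are smooth, bounded with bounded derivatives, not identically zero, and have disjoint supports. For $h_1(z)\neq 0$ and $h_2(w)\neq 0$ you get $D_zF\,D_wF=h_1(z)h_2(w)\gamma(X_3)\delta(X_3)=0$. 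On the other hand, $P_sD_zF\cdot P_sD_wF=h_1(z)h_2(w)(P_s\gamma)(X_3)(P_s\delta)(X_3)$, and $P_s\gamma,P_s\delta>0$ for $s>0$.

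Your coupling remedy cannot repair this. After $E'[a]^2E'[b]^2\le E'[a^2]E'[b^2]$, no H\"older step leads to $E'[a^2b^2]$; H\"older only gives $E'[a^2b^2]\le (E'[a^4]E'[b^4])^{1/2}$. In the example, $a^2b^2\equiv 0$. The other term is even worse, since $(-D^2L^{-1}F)\otimes_1(-D^2L^{-1}F)$ involves two different times.

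The missing idea is to move the time integral and the Mehler coupling \emph{outside} the square before expanding and applying Cauchy--Schwarz. Set $W^s\triangleq e^{-s}W+\sqrt{1-e^{-2s}}W'$ and $G=\langle DF,-DL^{-1}F\rangle$. Since quantities evaluated at $W$ do not depend on $W'$, you can write $D_xG=\int_0^\infty e^{-s}E'[\Phi_s(x)]\,ds$, where
\begin{equation*}
\Phi_s(x)\triangleq\int D^2_{x,z}F(W)\,D_zF(W^s)\,dz+e^{-s}\int D_zF(W)\,D^2_{x,z}F(W^s)\,dz .
\end{equation*}
Jensen's inequality for the probability measure $e^{-s}ds\otimes\mathbb{P}'$ then gives $\BE\|DG\|_{\mathfrak{H}}^2\le\int_0^\infty e^{-s}\,\BE E'\|\Phi_s\|_{\mathfrak{H}}^2\,ds$.

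Expanding now produces terms such as $\BE E'[(D^2F\otimes_1D^2F)(z,w)(W)\,D_zF(W^s)\,D_wF(W^s)]$. Both first derivatives sit at the same point $W^s$ under a single joint law. Pointwise Cauchy--Schwarz in $(z,w)$, together with the fact that $W^s$ has the same law as $W$, gives $\sqrt{\BE[(D^2F\otimes_1D^2F)(z,w)^2]\,\BE[(D_zF)^2(D_wF)^2]}$ directly, with no semigroup to remove. The other term is handled symmetrically. Using $(a+b)^2\le 2a^2+2b^2$ and $e^{-2s}\le 1$, you get $\BE\|DG\|_{\mathfrak{H}}^2\le 4I$, where $I$ is the double integral in the statement. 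This yields the constant $\sqrt{8/(\tilde\sigma^2\pi)}$.
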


To apply Proposition \ref{P:vidotto}, we define
$\tilde{\mathsf{F}}_t \triangleq \sqrt{\frac{\bar{\Sigma}}{\operatorname{Var}(\mathsf{F}_t)}}(\mathsf{F}_t - \BE(\mathsf{F}_t))$,
where $\mathsf{F}_t = \sqrt{t}(\theta_t-\theta^*)$ and $\bar{\Sigma}$ is the limiting variance defined in \eqref{E:Limiting-Variance}.
\begin{remark}
As we prepare to apply Proposition \ref{P:vidotto} to prove our result, it is essential for the process \(\tilde{\mathsf{F}}_t\) to belong to \(\mathbb{D}^{2,4}\). This requirement, in turn, necessitates that \(\theta_t\) also belongs to \(\mathbb{D}^{2,4}\). According to the definition of the Sobolev norm in Equation \eqref{E:Sobolev-Norm}, it suffices to demonstrate that \(\theta_t \in L^4(\Omega)\), \(D \theta_t \in L^4(\Omega; \mathfrak{H})\), and \(D^2 \theta_t \in L^4(\Omega; \mathfrak{H}^{\otimes 2})\). These conditions follow from the uniform moment bounds for \(\theta_t\) established in \cite{siri_spilio_2020}, as well as from Lemmas \ref{L:1-der-moments} and \ref{L:2-der-moments} for the first- and second-order Malliavin derivatives with $p=2$, respectively. Therefore, we conclude that \(\theta_t \in \mathbb{D}^{2,4}\).
\end{remark}

\begin{proof}[Proof of Theorem \ref{T:Main-theorem}]
We start noting $\mathbb{E}(\mathsf{F}_t) \neq 0$, $\mathbb{E}(\mathsf{F}_t^2) \neq \bar{\Sigma}$, and $N \sim \mathscr{N}(0,\bar\Sigma)$, and then applying triangle inequality to get
\begin{equation}\label{E:Rearranged-Equation}
\begin{aligned}
d_W \left( \mathsf{F}_t,N \right) &\leq d_W \left( \mathsf{F}_t,\tilde{\mathsf{F}}_t \right)  +
d_W \left( \tilde{\mathsf{F}}_t,N \right)\\
& \leq \mathbb{E} \left( \left|{\mathsf{F}_t - \tilde{\mathsf{F}}_t}\right| \right) + d_W \left(
  \tilde{\mathsf{F}}_t,N \right)\\
  &\leq \mathbb{E} \left( |{\mathsf{F}_t}|
    \right)\left|{1-\sqrt{\frac{\bar{\Sigma}}{\operatorname{Var}(\mathsf{F}_t)}}}\right|
    + \sqrt{\frac{\bar{\Sigma}}{\operatorname{Var}(\mathsf{F}_t)}}
    |{\mathbb{E}(\mathsf{F}_t)}| + d_W \left(
  \tilde{\mathsf{F}}_t,N \right),
\end{aligned}
\end{equation}
where the second inequality follows from the fact that the Wasserstein distance is bounded above by the $L^1(\Omega)$-norm, since its test functions are $1$-Lipschitz. Since $\mathbb{E}(\tilde{\mathsf{F}}_t) = 0$ and $\mathbb{E}(\tilde{\mathsf{F}}_t^2) =
\bar{\Sigma}$, Proposition \ref{P:vidotto} applies to the last term $d_W \left(
  \tilde{\mathsf{F}}_t,N \right)$ in \eqref{E:Rearranged-Equation}. Moreover, since
$D\tilde{\mathsf{F}}_t =
\sqrt{\frac{\bar{\Sigma}}{\operatorname{Var}(\theta_t)}}D\theta_t$, we obtain
\begin{equation}\label{E:Tilde-F-N}
\begin{aligned}
d_W \left(
  \tilde{\mathsf{F}}_t,N \right) \leq \frac{\sqrt{8}\bar{\Sigma}}{\sqrt{\pi}\operatorname{Var}(\theta_t)}\sqrt{\int_{\BR_+^2}^{}\sqrt{\mathbb{E} \left[ \left( D^2\theta_t \otimes_1 D^2\theta_t
           \right)(r,s)^2 \right]\mathbb{E} \left[ D\theta_t(r)^2 \cdot D\theta_t(s)^2 \right]} \thinspace dr \thinspace ds}.
\end{aligned}
\end{equation}
We now repeatedly apply H\"older's inequality to bound the right-hand side of \eqref{E:Tilde-F-N}, while restricting the domain of integration from $[0,t]$ to $[t^*,t]$ (see, Remark \ref{R:Large-time-choice}), obtaining\footnote{For convenience, we change the notation for the Malliavin derivative from $D\theta_t(r)$ to $D_r \theta_t.$}
\begin{equation}\label{E:Main-Eq-Holder}
\begin{aligned}
&\sqrt{\int_{r=t^*}^t \int_{s=t^*}^t  \sqrt{ \BE \left\{(D^2 \theta_t \otimes_1 D^2 \theta_t) (r, s)^2 \right\} \cdot \BE \left\{D_r \theta_t^2 \cdot D_s \theta_t^2 \right\}} \thinspace dr \thinspace ds} \\
& \qquad  \le \left[\int_{[t^*,t]^2} \sqrt{\BE [D_r \theta_t^4]} \sqrt{\BE [D_s \theta_t^4]} \thinspace dr \thinspace ds \right]^{\frac{1}{4}}  \\
& \qquad \qquad      \times  \left[ \int_{[t^*,t]^4} \left( \BE |D^2_{u,r} \theta_t|^4\right)^{\frac{1}{4}} \left( \BE |D^2_{u,s} \theta_t|^4\right)^{\frac{1}{4}} \left( \BE |D^2_{w,r} \theta_t|^4\right)^{\frac{1}{4}} \left( \BE |D^2_{w,s} \theta_t|^4\right)^{\frac{1}{4}} \thinspace du \thinspace  ds \thinspace dw \thinspace dr\right]^{\frac{1}{4}}.
\end{aligned}
\end{equation}
Combining \eqref{E:Tilde-F-N} and \eqref{E:Main-Eq-Holder} yields\footnote{We note that the limiting variance $\bar{\Sigma}$ and other constants are included in the generic constant $K$ in this section and elsewhere in the manuscript.}
 \begin{equation}\label{E:Main-Eq-Holder-2}
 \begin{aligned}
 d_W \left(
  \tilde{\mathsf{F}}_t,N \right) & \leq \frac{K}{\operatorname{Var}(\theta_t)}\left[\int_{[t^*,t]^2} \sqrt{\BE [D_r \theta_t^4]} \sqrt{\BE [D_s \theta_t^4]} \thinspace dr \thinspace ds \right]^{\frac{1}{4}}  \left[ \int_{[t^*,t]^4} \left( \BE |D^2_{u,r} \theta_t|^4\right)^{\frac{1}{4}} \times \right.\\
  & \qquad \qquad \qquad \qquad \quad \left. \left( \BE |D^2_{u,s} \theta_t|^4\right)^{\frac{1}{4}} \left( \BE |D^2_{w,r} \theta_t|^4\right)^{\frac{1}{4}} \left( \BE |D^2_{w,s} \theta_t|^4\right)^{\frac{1}{4}} \thinspace du \thinspace  ds \thinspace dw \thinspace dr\right]^{\frac{1}{4}}.
 \end{aligned}
 \end{equation}
From \eqref{E:Rearranged-Equation} and \eqref{E:Main-Eq-Holder-2}, we obtain
 \begin{equation}\label{E:Main-result-Proof-Eq-1}
 \begin{aligned}
d_W \left( \mathsf{F}_t,N \right) &\leq \mathbb{E} \left( |{\mathsf{F}_t}|
    \right)\left|{1-\sqrt{\frac{\bar{\Sigma}}{\operatorname{Var}(\mathsf{F}_t)}}}\right|
    + \sqrt{\frac{\bar{\Sigma}}{\operatorname{Var}(\mathsf{F}_t)}}
    |{\mathbb{E}(\mathsf{F}_t)}| \\
   & \qquad \quad + \frac{K}{\operatorname{Var}(\theta_t)}\left[\int_{[t^*,t]^2} \sqrt{\BE [D_r \theta_t^4]} \sqrt{\BE [D_s \theta_t^4]} \thinspace dr \thinspace ds \right]^{\frac{1}{4}}  \left[ \int_{[t^*,t]^4} \left( \BE |D^2_{u,r} \theta_t|^4\right)^{\frac{1}{4}} \times \right.\\
  & \qquad \qquad \qquad \qquad \qquad \quad \left. \left( \BE |D^2_{u,s} \theta_t|^4\right)^{\frac{1}{4}} \left( \BE |D^2_{w,r} \theta_t|^4\right)^{\frac{1}{4}} \left( \BE |D^2_{w,s} \theta_t|^4\right)^{\frac{1}{4}} \thinspace du \thinspace  ds \thinspace dw \thinspace dr\right]^{\frac{1}{4}}.
 \end{aligned}
 \end{equation}
We now apply Propositions \ref{P:Pre-limit-Expectation-P-1} and \ref{P:Prelimit-Sec-2-P1} to bound
 $\mathbb{E} \left( |{\mathsf{F}_t}|
    \right)\left|{1-\sqrt{\frac{\bar{\Sigma}}{\operatorname{Var}(\mathsf{F}_t)}}}\right|$ and $\sqrt{\frac{\bar{\Sigma}}{\operatorname{Var}(\mathsf{F}_t)}}
    |{\mathbb{E}(\mathsf{F}_t)}|$, respectively, obtaining
    \begin{equation}\label{E:Main-result-Proof-Eq-2}
    \begin{aligned}
    \mathbb{E} \left( |{\mathsf{F}_t}|
    \right)\left|{1-\sqrt{\frac{\bar{\Sigma}}{\operatorname{Var}(\mathsf{F}_t)}}}\right| + \sqrt{\frac{\bar{\Sigma}}{\operatorname{Var}(\mathsf{F}_t)}}|{\mathbb{E}(\mathsf{F}_t)}| & \le \frac{\mathbb{E} \left( |{\theta_t-\theta^*}| \right)}{\sqrt{{\operatorname{Var}(\theta_t-\theta^*)}}} \times \begin{cases} \frac{K+K \log t}{{t}^{\frac{1}{4}}},  & {C_{\bar{g}} C_\alpha \ge \frac{3}{4}}\\  \frac{K}{t^{C_{\bar{g}} C_\alpha- \frac{1}{2}}}, & \frac{1}{2} < C_{\bar{g}}C_\alpha < \frac{3}{4}\end{cases} \\
    & \qquad  + \frac{1}{\sqrt{\operatorname{Var}(\theta_t)}}\times \begin{cases} \frac{K}{t},  & {C_{\bar{g}}C_\alpha > 1}\\   \frac{K + K \log t }{t}, & {C_{\bar{g}}C_\alpha = 1} \\ \frac{K}{t^{C_{\bar{g}} C_\alpha}}, & \frac{1}{2} < C_{\bar{g}} C_\alpha < 1,\end{cases}
    \end{aligned}
    \end{equation}
whereas the term
    \begin{equation*}
    \begin{aligned}
   &  \frac{K}{\operatorname{Var}(\theta_t)}\left[\int_{[t^*,t]^2} \sqrt{\BE [D_r \theta_t^4]} \sqrt{\BE [D_s \theta_t^4]} \thinspace dr \thinspace ds \right]^{\frac{1}{4}}  \left[ \int_{[t^*,t]^4} \left( \BE |D^2_{u,r} \theta_t|^4\right)^{\frac{1}{4}} \times \right.\\
  & \qquad \qquad \qquad \qquad \qquad \quad \qquad \qquad \qquad \left. \left( \BE |D^2_{u,s} \theta_t|^4\right)^{\frac{1}{4}} \left( \BE |D^2_{w,r} \theta_t|^4\right)^{\frac{1}{4}} \left( \BE |D^2_{w,s} \theta_t|^4\right)^{\frac{1}{4}} \thinspace du \thinspace  ds \thinspace dw \thinspace dr\right]^{\frac{1}{4}}
    \end{aligned}
    \end{equation*}
in \eqref{E:Main-result-Proof-Eq-1} can be bounded by combining Propositions \ref{P:First-der-product} and \ref{P:Second-der-product}, which yields
    \begin{equation}\label{E:Main-result-Proof-Eq-3}
    \begin{aligned}
    &  \frac{K}{\operatorname{Var}(\theta_t)}\left[\int_{[t^*,t]^2} \sqrt{\BE [D_r \theta_t^4]} \sqrt{\BE [D_s \theta_t^4]} \thinspace dr \thinspace ds \right]^{\frac{1}{4}}  \left[ \int_{[t^*,t]^4} \left( \BE |D^2_{u,r} \theta_t|^4\right)^{\frac{1}{4}} \times \right.\\
  & \qquad \qquad \qquad \qquad \qquad \quad \qquad   \left. \left( \BE |D^2_{u,s} \theta_t|^4\right)^{\frac{1}{4}} \left( \BE |D^2_{w,r} \theta_t|^4\right)^{\frac{1}{4}} \left( \BE |D^2_{w,s} \theta_t|^4\right)^{\frac{1}{4}} \thinspace du \thinspace  ds \thinspace dw \thinspace dr\right]^{\frac{1}{4}}\\
  & \qquad \qquad \qquad \qquad \qquad \qquad \qquad \qquad \qquad \qquad   \le \frac{1}{\operatorname{Var}(\theta_t)}\times \begin{cases} \frac{K}{{t}^{\frac{5}{4}}},  & {C_{\bar{g}} C_\alpha > \frac{5}{4}}\\
\frac{K + K(\log t)^{\frac{1}{4}}}{{t}^{\frac{5}{4}}},  & {\frac{3}{4} \le C_{\bar{g}} C_\alpha \le \frac{5}{4}} \\
  \frac{K}{t^{C_{\bar{g}} C_\alpha+ \frac{1}{2}}}, & \frac{1}{2} < C_{\bar{g}}C_\alpha < \frac{3}{4}.\end{cases}
    \end{aligned}
    \end{equation}

Putting \eqref{E:Main-result-Proof-Eq-1}, \eqref{E:Main-result-Proof-Eq-2}, and \eqref{E:Main-result-Proof-Eq-3} together, we obtain
\begin{equation*}
\begin{aligned}
d_W \left( \mathsf{F}_t,N \right) &\leq  \frac{\mathbb{E} \left( |{\theta_t-\theta^*}| \right)}{\sqrt{{\operatorname{Var}(\theta_t-\theta^*)}}} \times \begin{cases} \frac{K+K \log t}{{t}^{\frac{1}{4}}},  & {C_{\bar{g}} C_\alpha \ge \frac{3}{4}}\\  \frac{K}{t^{C_{\bar{g}} C_\alpha- \frac{1}{2}}}, & \frac{1}{2} < C_{\bar{g}}C_\alpha < \frac{3}{4}\end{cases} \\
    & \qquad \qquad \qquad \qquad \qquad \quad  + \frac{1}{\sqrt{\operatorname{Var}(\theta_t)}}\times \begin{cases} \frac{K}{t},  & {C_{\bar{g}}C_\alpha > 1}\\   \frac{K + K \log t }{t}, & {C_{\bar{g}}C_\alpha = 1} \\ \frac{K}{t^{C_{\bar{g}} C_\alpha}}, & \frac{1}{2} < C_{\bar{g}} C_\alpha < 1\end{cases} \\
    & \qquad \qquad \qquad \qquad \qquad \quad  + \frac{1}{\operatorname{Var}(\theta_t)}\times \begin{cases} \frac{K}{{t}^{\frac{5}{4}}},  & {C_{\bar{g}} C_\alpha > \frac{5}{4}}\\
\frac{K + K(\log t)^{\frac{1}{4}}}{{t}^{\frac{5}{4}}},  & {\frac{3}{4} \le C_{\bar{g}} C_\alpha \le \frac{5}{4}} \\
  \frac{K}{t^{C_{\bar{g}} C_\alpha+ \frac{1}{2}}}, & \frac{1}{2} < C_{\bar{g}}C_\alpha < \frac{3}{4}.\end{cases}
\end{aligned}
\end{equation*}
Finally, using the bound $\BE \left[ |\theta_t - \theta^*| \right] \le \frac{K}{\sqrt{t}}$ \cite[Theorem 1]{siri_spilio_2020} and noting that $\operatorname{Var}(\theta_t)  = \frac{\bar{\Sigma}}{t}$ for sufficiently large $t$ \cite[Theorem 2]{siri_spilio_2020}, we obtain
$$d_W \left( \mathsf{F}_t,N \right) \leq \begin{cases} \frac{K \log t}{{t}^{\frac{1}{4}}},  & {C_{\bar{g}} C_\alpha \ge \frac{3}{4}}\\  \frac{K}{t^{C_{\bar{g}} C_\alpha- \frac{1}{2}}}, & \frac{1}{2} < C_{\bar{g}}C_\alpha < \frac{3}{4}\end{cases}$$
which completes the proof.
\end{proof}

\section{Numerical Examples and Simulation}\label{S:simulation}
This section provides numerical illustrations of our theoretical findings, in particular Theorem \ref{T:Main-theorem}, through a few simple examples. In Example \ref{Ex:XInd}, we consider the case of \(X\)-independent dynamics, for which an explicit expression for the limiting variance \(\bar{\Sigma}\) can be derived. In Example \ref{Ex:OU}, we study the Ornstein--Uhlenbeck ({\sc ou}) process, and in Example \ref{Ex:Cubic}, we investigate a nonlinear setting with a cubic drift in \(X\). For Examples \ref{Ex:OU} and \ref{Ex:Cubic}, we provide numerical estimates of the limiting variances.

Before presenting the examples, we recall that in dimension one the Wasserstein distance between two probability measures $\mu$ and $\nu$ on $\BR$, with distribution functions $\mathscr{D}_\mu$ and $\mathscr{D}_\nu$, can be written as
\begin{align*}
W_1(\mu,\nu) \triangleq  \int_0^1 |\mathscr{D}_\mu^{-1}(u)- \mathscr{D}_\nu^{-1}(u)| \thinspace du.
\end{align*}
In particular, for empirical measures $\mu = \frac{1}{N}\sum_{i=1}^N \delta_{x_i}$ and $\nu = \frac{1}{N}\sum_{i=1}^N \delta_{y_i}$ with ordered statistics $x_{(1)} \le \cdots \le x_{(N)}$ and $y_{(1)} \le \cdots \le y_{(N)}$, this simplifies to
\begin{align*}
W_1(\mu,\nu) = {\frac{1}{N}\sum_{i=1}^N \left|x_{(i)} - y_{(i)} \right|},
\end{align*}
which is the formula used in the numerical examples below.

\begin{example}\label{Ex:XInd}
We consider the dynamics of the process $\theta_t$ governed by the {\sc sde}
$$d \theta_t = -\alpha_t g_{\theta}(\theta_t) \thinspace dt + \alpha_t \thinspace dW_t,$$
where \(W_t\) is a one-dimensional Brownian motion and $f_{\theta}(x,\theta)=1, \thinspace f^*(x)=f(x, \theta^*)=\theta^*$ is the unknown function, through the unknown parameter $\theta^*$. In this case,
$g(x,\theta) = \frac{1}{2}\left[ f(x,\theta)- f^*(x) \right]^2 = \frac{1}{2}(\theta - \theta^*)^2$,
and the convexity constant satisfies $C_{\bar{g}} =1.$ We use {\sc sgdct} to estimate the unknown parameter \(\theta^*\). In the estimated model \(f(x, \theta)\), the parameter \(\theta\) evolves according to the {\sc sde} above, where \(\alpha_t = \frac{C_\alpha}{1 + t}\) is the learning rate, with \(C_\alpha\) denoting the learning-rate magnitude and \(f^*(x) = f(x, \theta^*)\).

We fix $\theta^* =2.3$ and apply the Euler--Maruyama method to generate $1100$ sample paths of $\theta_t$. In this simulation, we use
$t= 5000, \thinspace dt =0.1,$ and $N = t/dt$. We consider four learning-rate magnitudes,
$C_\alpha =0.43, \thinspace 0.72, \thinspace 0.78$ and $1.0.$
\begin{figure}[!htbp]
\centering
\begin{subfigure}{.5\textwidth}
  \centering
  \includegraphics[width=.97\linewidth]{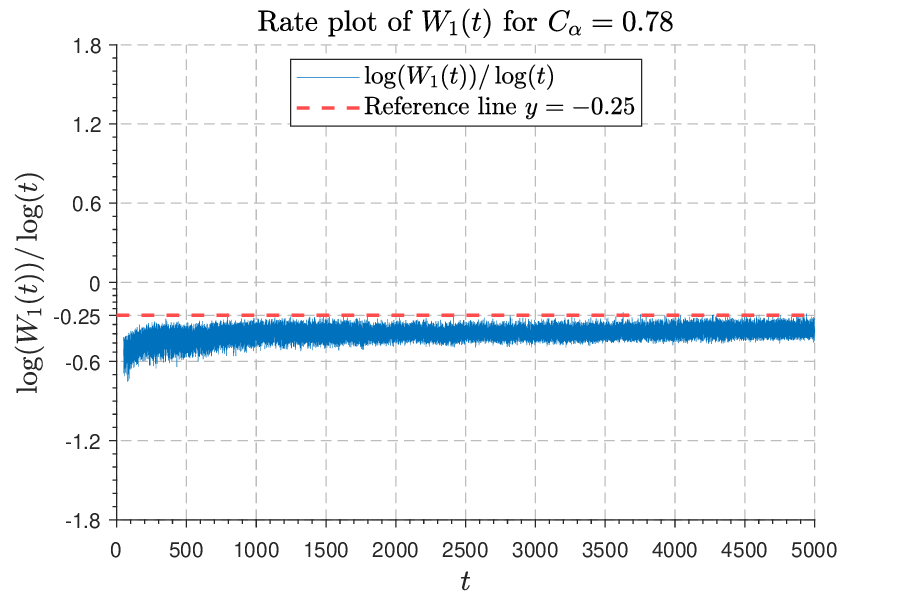}
  \caption{$\log(W_1(t))/\log(t)$}
  \label{fig:sub1}
\end{subfigure}%
\begin{subfigure}{.5\textwidth}
  \centering
  \includegraphics[width=.88\linewidth]{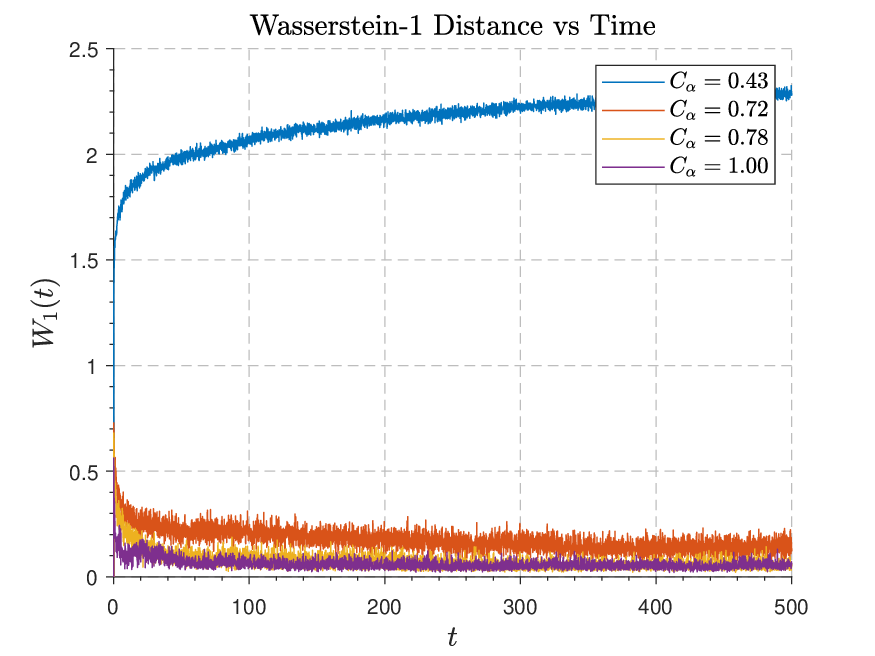}
  \caption{$W_1(t)$}
  \label{fig:sub2}
\end{subfigure}
\caption{X-independent dynamics: The quantities $\frac{\log(d_W(\mathsf{F}_t, N))}{\log(t)}$ and \( d_W(\mathsf{F}_t, N) \) are examined over $1100$ sample paths with $t = 5000.$ For notational convenience, we denote the Wasserstein distance by \(W_1(t)\) in all figures. Since \(C_{\bar{g}} = 1\), the values of \(C_\alpha C_{\bar{g}}\) are \(0.43, 0.72, 0.78,\) and \(1.0\). For visualization, in Figure \ref{fig:sub2} we display trajectories only up to \(t = 500\).}
\label{fig:OU-1}
\end{figure}


We analyze the Wasserstein distance, i.e., the error \( d_W(\mathsf{F}_t, N) \), where \( N \sim \mathscr{N}(0, \bar{\Sigma}) \) and \( \mathsf{F}_t = \sqrt{t}(\theta_t - \theta^*) \). We also examine the quantity \( \frac{\log(d_W(\mathsf{F}_t, N))}{\log(t)} \), which is particularly informative for assessing the predicted convergence rates. In this setting, the limiting variance admits the explicit expression
$\bar{\Sigma} = C_\alpha^2 \int_0^\infty e^{-2s \left[C_\alpha g_{\theta \theta}(\theta^*)- \frac{1}{2} \right]}ds = C_\alpha^2 \int_0^\infty e^{-2s (C_\alpha - \frac{1}{2})}ds = \frac{C_\alpha^2}{2 (C_\alpha - 0.5)}$,
provided $C_\alpha - 0.5> 0.$ In Figure \ref{fig:sub1}, we observe that in the regime \( C_\alpha C_{\bar{g}} = 0.78 > 0.75 \), the quantity \( \frac{\log(d_W(\mathsf{F}_t, N))}{\log(t)} \) falls below \(-\frac{1}{4}\), in agreement with the theoretical prediction
\( \log(d_W(\mathsf{F}_t, N)) \le \log (K) + \log \log(t) - \frac{1}{4} \log(t) \).
In Figure \ref{fig:sub2}, we compare the behavior of \( d_W(\mathsf{F}_t, N) \) for the four learning-rate magnitudes $C_\alpha =0.43, \thinspace 0.72, \thinspace 0.78$ and $1.0.$ We note that the trajectory corresponding to $C_\alpha = 0.43$ does not converge, since $C_\alpha C_{\bar{g}}< 0.5.$
\end{example}

\begin{example}\label{Ex:OU}
We next consider the {\sc ou} process governed by the {\sc sde}
$$dX_t = -c^* X_t \thinspace dt + dW_t,$$
where \(W_t\) is a one-dimensional Brownian motion and \(f^*(x) \triangleq -c^*x\) is the unknown function through the unknown parameter $c^*$. We use {\sc sgdct} to estimate the parameter \(\theta^* \triangleq c^*\). In the estimated model \(f(x, \theta)\), the parameter \(\theta\) evolves according to
$d\theta_t = -\alpha_t X_t \left[dX_t + \theta_t X_t \thinspace dt\right],$
where \(\alpha_t = \frac{C_\alpha}{1 + t}\) is the learning rate, with \(C_\alpha\) denoting the learning-rate magnitude and \(f^*(x) = f(x, \theta^*)\).
\begin{figure}[H]
\begin{center}
\includegraphics[height=5.5cm,width=9cm]{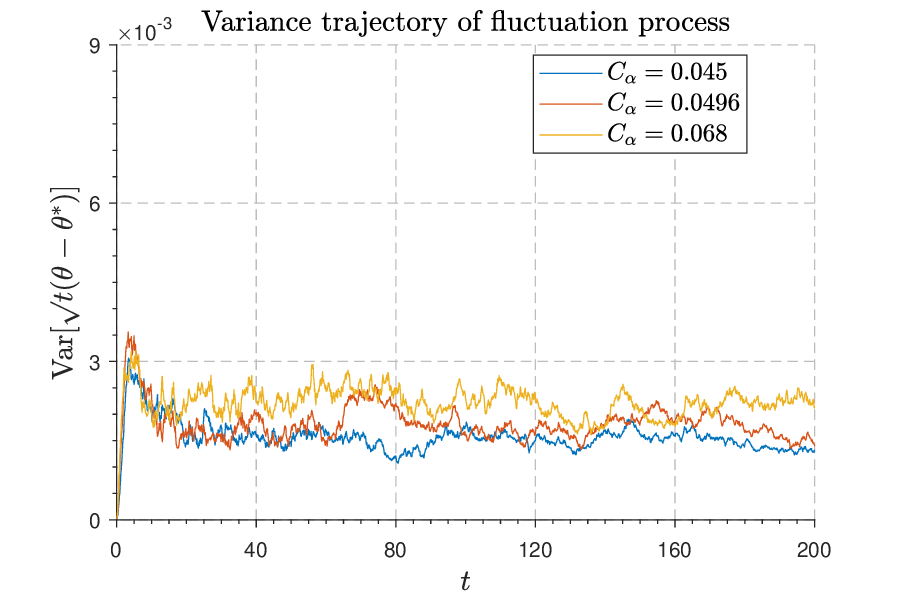}
\caption{{\sc ou} process: We numerically estimate the limiting variance $\bar{\Sigma}$ for three values of \(C_\alpha \): \(0.045\), \(0.0496\), and \(0.068\). The remaining parameters are $t= 7000, \thinspace dt = 0.1, \thinspace \theta^* =  0.031$. Since \(C_{\bar{g}} = 1/2\theta^* = 1/0.062 \), the corresponding values of \(C_\alpha C_{\bar{g}}\) are $0.72$, $0.8$, and $1.1$. At \(t = 6500\), we obtain the estimates \(\bar{\Sigma} \approx 0.0016\), \(0.002\), and \(0.0028\), respectively. For visualization, we display trajectories only up to \(t = 200\).}\label{F:SGD-W-1}
\end{center}
\end{figure}

We fix $\theta^*=0.031$ and apply the Euler--Maruyama method to generate $150$ sample paths of the processes $X_t$ and $\theta_t$. We use
$t= 7000,$ $dt =0.1$, $N = t/dt$, and $150$ Monte Carlo runs. We consider three learning-rate magnitudes $C_\alpha = 0.045, \thinspace 0.0496, \thinspace 0.068$. Since $C_{\bar{g}} =1/2\theta^*$, the corresponding values of $C_\alpha C_{\bar{g}}$ are $0.72$, $0.8$, and $1.1$.

We analyze the Wasserstein distance, i.e., the error \( d_W(\mathsf{F}_t, N) \), where \( N \sim \mathscr{N}(0, \bar{\Sigma}) \) and \( \mathsf{F}_t = \sqrt{t}(\theta_t - \theta^*) \). We also examine the quantity \( \frac{\log(d_W(\mathsf{F}_t, N))}{\log(t)} \), which is central for validating the predicted rates. Since the definition of the limiting variance involves the solution of a Poisson equation, we estimate \(\bar{\Sigma}\) numerically for the three learning-rate magnitudes. As shown in Figure \ref{F:SGD-W-1}, at $t= 6500$ we obtain
\(\bar{\Sigma} \approx 0.0016\), \(0.002\), and \(0.0028\) for \(C_\alpha = 0.045\), \(0.0496\), and \(0.068\), respectively. In Figure \ref{fig:sub11}, we observe that in the regime \( C_\alpha C_{\bar{g}} = 1.1 > 0.75 \), the quantity \( \frac{\log(d_W(\mathsf{F}_t, N))}{\log(t)} \) falls below \(-\frac{1}{4}\), consistent with the prediction
\( \log(d_W(\mathsf{F}_t, N)) \le \log (K) + \log \log(t) - \frac{1}{4} \log(t) \).
In Figure \ref{fig:sub22}, we compare the behavior of \( d_W(\mathsf{F}_t, N) \) for the three learning-rate magnitudes $C_\alpha =0.045, \thinspace 0.0496,$ and $0.068.$
\begin{figure}[!htbp]
\centering
\begin{subfigure}{.5\textwidth}
  \centering
  \includegraphics[width=.97\linewidth]{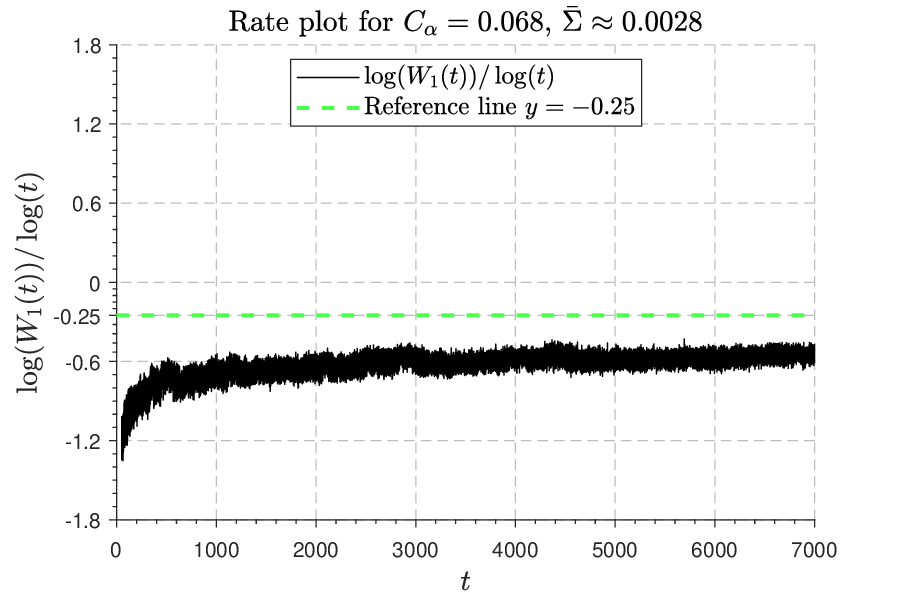}
  \caption{$\log(W_1(t))/\log(t)$}
  \label{fig:sub11}
\end{subfigure}%
\begin{subfigure}{.5\textwidth}
  \centering
  \includegraphics[height=5.3cm,width=8.5cm]{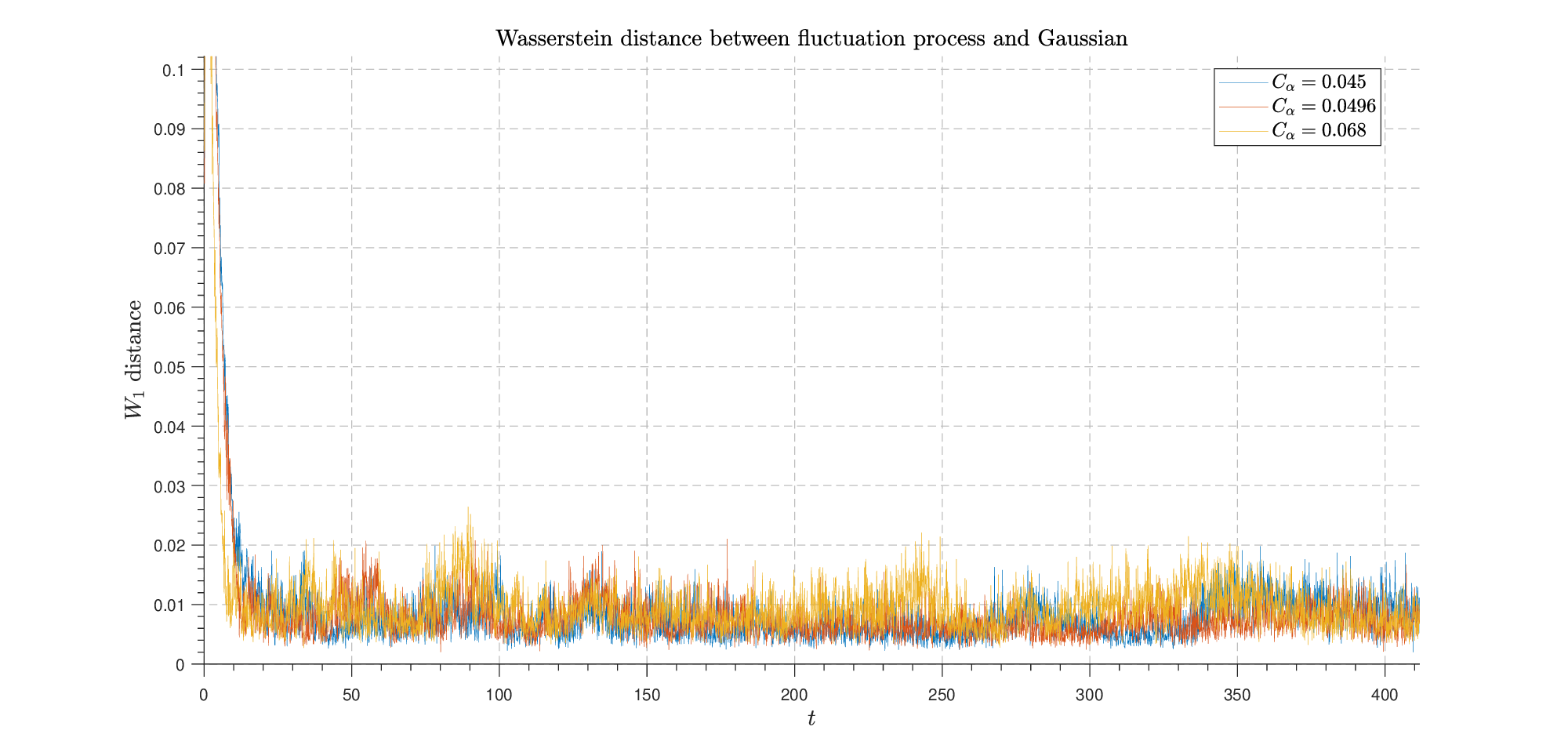}
  \caption{$W_1(t)$}
  \label{fig:sub22}
\end{subfigure}
\caption{{\sc ou} process: The quantities $\frac{\log(d_W(\mathsf{F}_t, N))}{\log(t)}$ and \( d_W(\mathsf{F}_t, N) \) are examined over $150$ sample paths and $150$ Monte Carlo runs with $t = 7000.$ For notational convenience, we denote the Wasserstein distance by \(W_1(t)\) in all figures. For visualization, in Figure \ref{fig:sub22} we display trajectories only up to \(t = 400\).}
\label{fig:OU-1}
\end{figure}

\end{example}

\begin{example}\label{Ex:Cubic}
We now illustrate Theorem \ref{T:Main-theorem} for the process $X_t$ governed by the {\sc sde}
$$dX_t = -c^* X_t^3 dt + dW_t,$$
where \(W_t\) is a one-dimensional Brownian motion and \(f^*(x) \triangleq -c^*x^3\) is the unknown function through the unknown parameter $c^*$. We use {\sc sgdct} to estimate the parameter \(\theta^* \triangleq c^*\). In the estimated model \(f(x, \theta)\), the parameter \(\theta\) evolves according to
$d\theta_t = -\alpha_t X_t^3 \left[dX_t + \theta_t X_t^3 \thinspace dt\right],$
where \(\alpha_t = \frac{C_\alpha}{1 + t}\) is the learning rate, with \(C_\alpha\) denoting the learning-rate magnitude and \(f^*(x) = f(x, \theta^*)\).

\begin{figure}[H]
\begin{center}
\includegraphics[height=5.3cm,width=9cm]{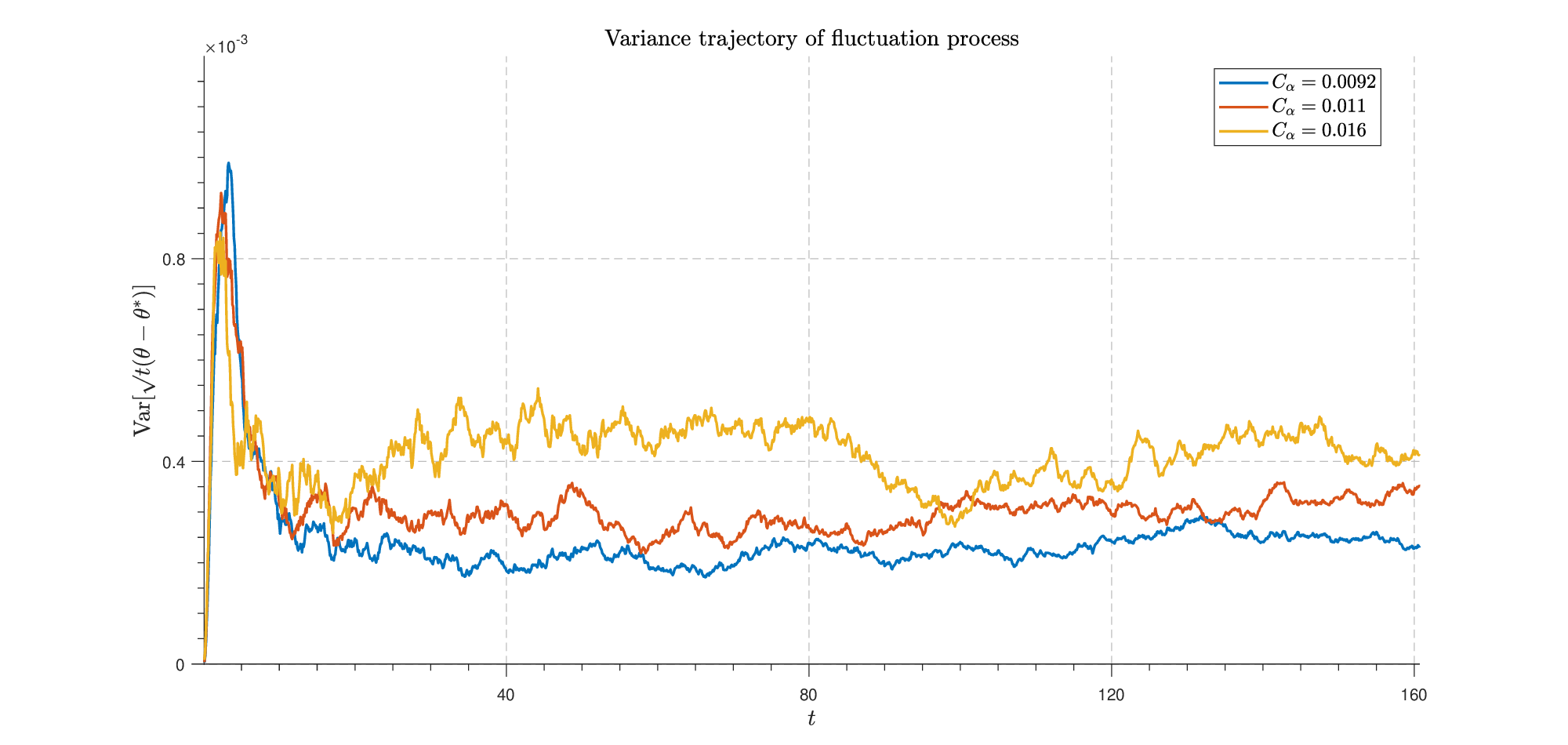}
\caption{Cubic drift: We numerically estimate the limiting variance $\bar{\Sigma}$ for three values of \(C_\alpha \): \(0.0092\), \(0.011\), and \(0.016\). The remaining parameters are $t= 2000, \thinspace dt = 0.1, \thinspace \theta^* =  0.035$. Since $C_{\bar{g}} \approx 0.253 \left( \frac{2}{\theta^*} \right)^{\frac{3}{2}}$, the corresponding values of $C_{\bar{g}} C_\alpha$ are $1.01$, $1.21$ and $1.7$. At \(t = 1600\), we obtain the estimates \(\bar{\Sigma} \approx  0.0003\), \(0.00034\), and \(0.00038\), respectively. For visualization, we display trajectories only up to \(t = 160\).}\label{fig:sub5}
\end{center}
\end{figure}

We next derive an explicit expression for the convexity constant $C_{\bar{g}}$. For the distance function $g(x,\theta) = \frac{1}{2}x^6 (\theta- \theta^*)^2$ and the objective function $\bar{g}(\theta) = \int_{\BR} g(x,\theta) \mu(dx),$ the density of the invariant measure $\mu$ is given by $\frac{e^{\frac{-c^*}{2}x^4}}{Z_{c^*}}$, where $Z_{c^*} \triangleq \int_{\BR} e^{\frac{-c^*}{2}x^4} dx$; see \cite[Section 4.2.2]{dawson1983critical}. Hence,
$$\bar{g}(\theta) = \frac{(\theta - c^*)^2}{2 Z_{c^*}} \int_{\BR} x^6 e^{\frac{-c^*}{2}x^4} dx.$$
Using elementary integration, we find
$Z_{c^*} = \frac{1}{c^*} \left( \frac{2}{c^*} \right)^{-\frac{3}{4}}\Gamma(\frac{1}{4})$,
and we compute $\int_{\BR} x^6 e^{\frac{-c^*}{2}x^4} dx$ by integration by parts. It follows that the strong convexity constant is
$$C_{\bar{g}} = \frac{1}{ Z_{c^*}} \int_{\BR} x^6 e^{\frac{-c^*}{2}x^4} dx = \frac{\Gamma(\frac{7}{4})}{\Gamma(\frac{1}{4})} \left( \frac{2}{c^*} \right)^{\frac{3}{2}} \approx 0.253 \left( \frac{2}{c^*} \right)^{\frac{3}{2}} .$$

We fix $\theta^*=0.035$ and apply the Euler--Maruyama method to generate $100$ sample paths of the processes $X_t$ and $\theta_t$. We use
$t= 10000,$ $dt=0.1$, $N = t/dt$, and $100$ Monte Carlo runs. We consider three learning-rate magnitudes
$C_\alpha = 0.0092, \thinspace 0.011, \thinspace 0.016,$
for which the corresponding values of $C_{\bar{g}} C_\alpha$ are $1.01$, $1.21$ and $1.7$.

We analyze the Wasserstein distance, i.e., the error \( d_W(\mathsf{F}_t, N) \), where \( N \sim \mathscr{N}(0, \bar{\Sigma}) \) and \( \mathsf{F}_t = \sqrt{t}(\theta_t - \theta^*) \). We also examine the quantity \( \frac{\log(d_W(\mathsf{F}_t, N))}{\log(t)} \), which is used to assess the convergence rate predicted by Theorem \ref{T:Main-theorem}. Since the limiting variance involves a Poisson equation, we estimate \(\bar{\Sigma}\) numerically for the three learning-rate magnitudes. As shown in Figure \ref{fig:sub5}, at $t= 1700$ we obtain
\(\bar{\Sigma} \approx 0.0003\), \(0.0002\), and \(0.00034\) for \(C_\alpha = 0.0092\), \(0.011\), and \(0.016\), respectively. In Figure \ref{fig:sub15}, we observe that in the regime \( C_\alpha C_{\bar{g}} = 1.21 > 0.75 \), the quantity \( \frac{\log(d_W(\mathsf{F}_t, N))}{\log(t)} \) falls below \(-\frac{1}{4}\), consistent with the bound
\( \log(d_W(\mathsf{F}_t, N)) \le \log (K) + \log \log(t) - \frac{1}{4} \log(t) \).
In Figure \ref{fig:sub16}, we compare the behavior of \( d_W(\mathsf{F}_t, N) \) for the three learning-rate magnitudes $C_\alpha =0.0092, \thinspace 0.011,$ and $0.016.$
\begin{figure}[!htbp]
\centering
\begin{subfigure}{.5\textwidth}
  \centering
  \includegraphics[width=.97\linewidth]{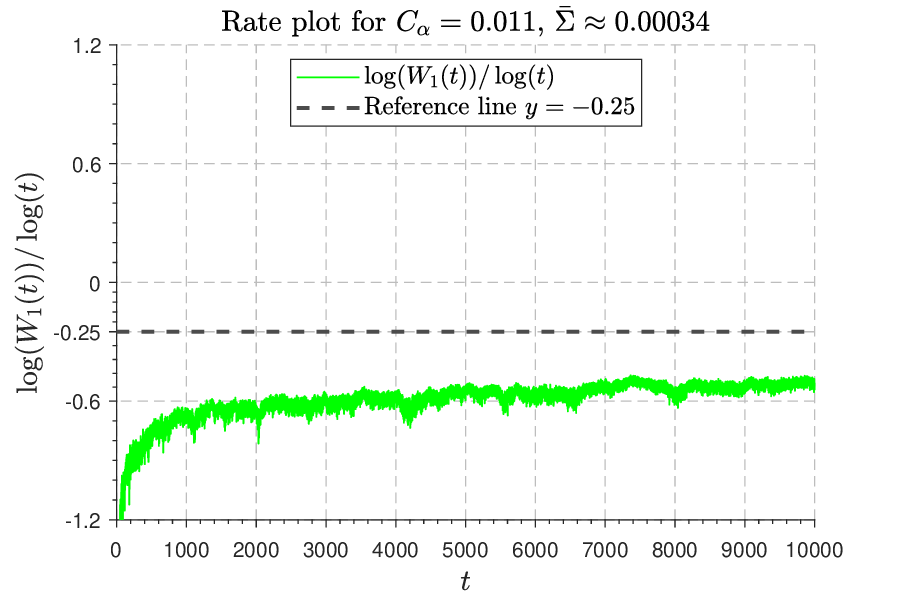}
  \caption{$ \log(W_1(t))/\log(t) $}
  \label{fig:sub15}
\end{subfigure}%
\begin{subfigure}{.5\textwidth}
  \centering
  \includegraphics[height=5.4cm,width=8.5cm]{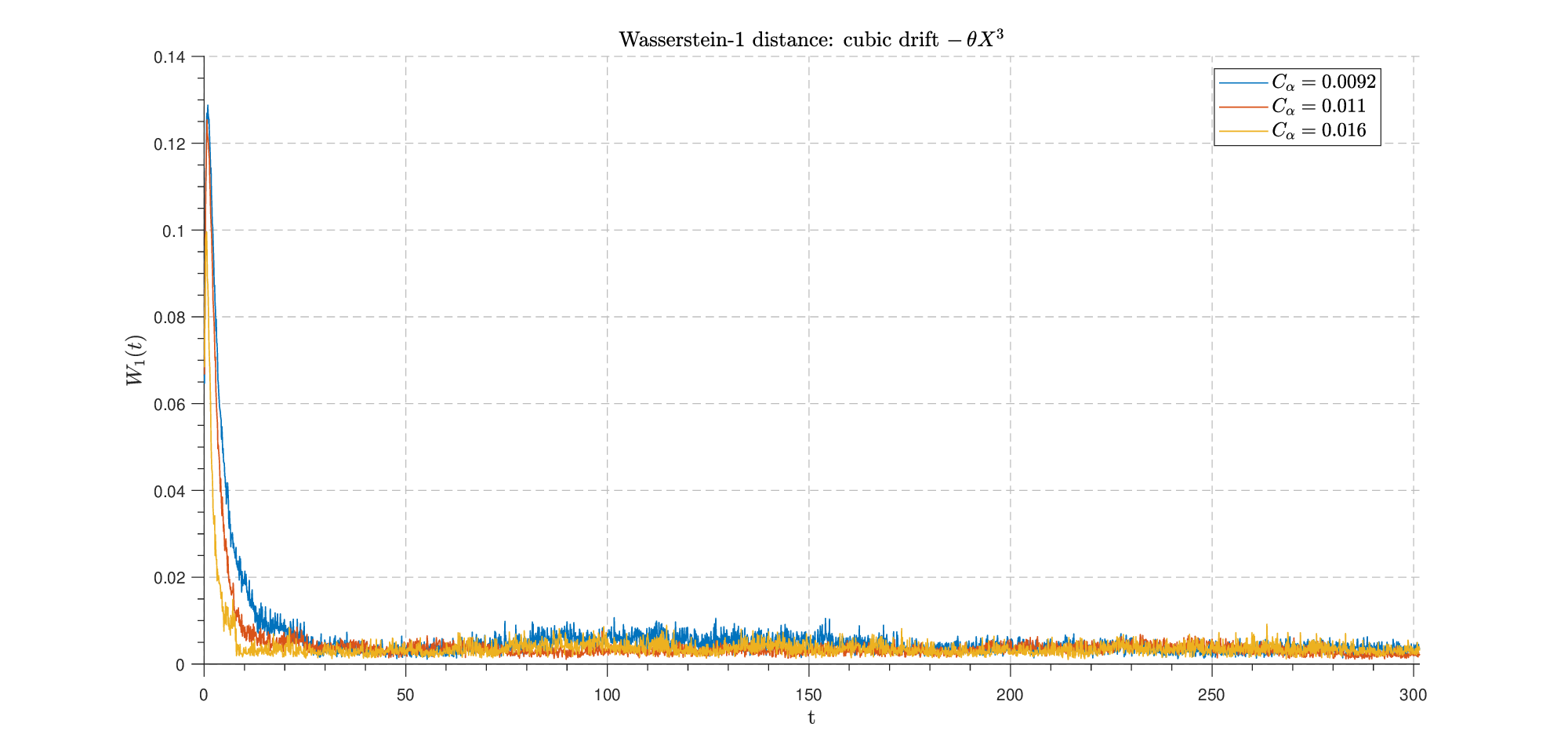}
  \caption{$W_1(t)$}
  \label{fig:sub16}
\end{subfigure}
\caption{Cubic drift: The quantities $\frac{\log(d_W(\mathsf{F}_t, N))}{\log(t)}$ and \( d_W(\mathsf{F}_t, N) \) are examined over $100$ sample paths and $100$ Monte Carlo runs with $t = 10000.$ For notational convenience, we denote the Wasserstein distance by \(W_1(t)\) in all figures. For visualization, in Figure \ref{fig:sub16} we display trajectories only up to \(t = 300\).}
\label{fig:OU}
\end{figure}

\end{example}

\begin{remark}
As these simulations suggest, our theoretical bound on the actual error is likely suboptimal. Indeed, by applying the Cauchy--Schwarz inequality to the terms $\mathbb{E} \left[ \left( D^2F \otimes_1 D^2F
           \right)(x,y)^2 \right]$ and $\mathbb{E} \left[
           DF(x)^2 \cdot DF(y)^2 \right]$ in Proposition
         \ref{P:vidotto}, one recovers \cite[Corollary 4.2]{NPR09}, which is known to yield suboptimal rates; see \cite[Remarks 4.3 and 6.2]{NPR09} and \cite[Remark
         4.1]{vido_2020}. Avoiding Cauchy--Schwarz in Proposition \ref{P:vidotto} comes at a significant cost: one must compute the terms $\mathbb{E} \left[ \left( D^2F \otimes_1 D^2F
           \right)(x,y)^2 \right]$ and $\mathbb{E} \left[
           DF(x)^2 \cdot DF(y)^2 \right]$ directly, which is substantially more difficult and appears intractable in the present setting of stochastic differential equations. For this reason, we ultimately apply Cauchy--Schwarz so as not to introduce additional complexity into an already long and technical proof. Obtaining sharper bounds through a direct treatment of the terms in Proposition \ref{P:vidotto} therefore remains an open problem.
\end{remark}

\color{black}

\section{First-order Malliavin Derivatives}\label{S:First-order-derivative}
In this section, we bound the quantity $\int_{[t^*,t]^2} \sqrt{\BE [D_r \theta_t^4]} \sqrt{\BE [D_s \theta_t^4]} \thinspace ds \thinspace dr$ by means of Proposition \ref{P:First-der-product} below. This result is required in the proof of Theorem \ref{T:Main-theorem}. The proof of Proposition \ref{P:First-der-product} is built upon two principal ingredients: Lemmas \ref{L:Integrating-Factor-first-der} and \ref{L:1-der-moments}. These lemmas establish uniform moment estimates for the integrating factor process $\eta_{t,r}^*$, defined in \eqref{E:Integrating-factor-1-der}, and the first-order Malliavin derivative $D_r \theta_t$, which satisfies \eqref{E:first-order-Mal-Der-theta}, respectively. Their proofs are provided separately in Section \ref{S:Proof-Lemma-3.2-3.3}. Further, the proof of Lemma \ref{L:Integrating-Factor-first-der} relies on a series of auxiliary Lemmas \ref{L:L-1-deri-1}--\ref{L:L-5-der-1}, which establish the uniform-in-time bounds needed to control the fluctuation term appearing in the integrating factor process through the solution of an associated Poisson equation. For $i \in \{1,2,3\}$ and the process $\zeta_t \triangleq \zeta(X_t, \theta_t)$, the typical quantities considered in these lemmas are of the form $\BE[ e^{ \int_r^t \alpha_u^i \zeta_u \thinspace du}]$ and $\BE[ e^{ \int_r^t \alpha_u^i \zeta_u \thinspace dW_u}]$.

On the other hand, the proof of Lemma \ref{L:1-der-moments} is based on Lemmas \ref{L:L-6-initial-cond-der-1}--\ref{L:L-8-deri-1}, where we derive quantitative moment estimates for the terms involving the Malliavin derivative of the state process. In particular, these lemmas show that the quantities of interest decay at the rate ${r^{ 2p C_{\bar{g}} C_\alpha-2p}}/{t^{ 2p C_{\bar{g}} C_\alpha}}$ as $t \to \infty.$ For a generic process $\xi_t \triangleq \xi(X_t, \theta_t)$, the principal quantities to be estimated are of the form $\BE [( \int_r^t  \alpha_u \xi_u D_r X_u \thinspace du)^{2p}]$ and $\BE[( \int_r^t  \alpha_u \xi_uD_r X_u \thinspace dW_u)^{2p}].$ The proofs of these auxiliary lemmas are presented in the remainder of this section. The above discussion is summarized in Figure \ref{F:Fig-Proof-Dec-1}.

\begin{figure}[!htbp]
\begin{equation*}
\begin{tikzpicture}[
  level 1/.style={sibling distance=45mm},
  level 2/.style={sibling distance=37mm},
  every node/.style={draw, rounded corners, align=center}
]
\node {Proposition \ref{P:First-der-product}}
  child {node {Lemma \ref{L:Integrating-Factor-first-der}}
    child {node {Lemmas \ref{L:L-1-deri-1}--\ref{L:L-5-der-1}}
    }
  }
  child {node {Lemma \ref{L:1-der-moments}}
    child {node {Lemmas \ref{L:L-6-initial-cond-der-1}--\ref{L:L-8-deri-1}}
    }
  };
\end{tikzpicture}
\end{equation*}
\caption{Schematic overview of the proof of Proposition \ref{P:First-der-product}.}\label{F:Fig-Proof-Dec-1}
\end{figure}


The first-order Malliavin derivatives of the processes $X_t$ and $\theta_t$, which satisfy \eqref{E:Process-X} and \eqref{E:Process-theta}, respectively, are given by
\begin{equation}\label{E:first-order-Mal-Der-X}
D_r X_t  = 1+ \int_r^t f_x^* (X_u) D_rX_u \thinspace du.
\end{equation}
\begin{multline}\label{E:first-order-Mal-Der-theta}
\begin{aligned}
D_r \theta_t& = \alpha_r f_\theta(X_r,\theta_r)- \int_r^t \left[\alpha_u \bar{g}_{\theta \theta}(\theta_u)D_r \theta_u \right]du + \int_r^t  \alpha_u \left[ \bar{g}_{\theta \theta}(\theta_u)-{g}_{\theta \theta}(X_u,\theta_u)\right]D_r \theta_u \thinspace du \\
& \qquad \qquad \qquad \qquad \qquad  - \int_r^t  \alpha_u {g}_{x \theta}(X_u,\theta_u)D_r X_u \thinspace du
+ \int_r^t \left[\alpha_u {f}_{\theta \theta}(X_u,\theta_u)D_r \theta_u \right]dW_u \\
& \qquad \qquad \qquad \qquad \qquad \qquad \qquad \qquad \qquad \qquad \qquad \qquad   + \int_r^t \left[\alpha_u {f}_{x \theta}(X_u,\theta_u)D_r X_u \right]dW_u.
\end{aligned}
\end{multline}

To obtain an explicit representation for $D_r \theta_t$, we introduce the integrating factor
\begin{equation}\label{E:Integrating-factor-1-der}
\eta_{t,r}^* \triangleq e^{- \int_r^t \alpha_u \bar{g}_{\theta \theta}(\theta_u) \thinspace du + \int_r^t \alpha_u \left[ \bar{g}_{\theta \theta}(\theta_u)-{g}_{\theta \theta}(X_u,\theta_u)\right] \thinspace du + \int_r^t \alpha_u f_{\theta \theta}(X_u, \theta_u)\thinspace dW_u -\frac{1}{2} \int_r^t \alpha_u^2 f_{\theta \theta}(X_u, \theta_u)^2 du};
\end{equation}
hence, the solution of \eqref{E:first-order-Mal-Der-theta} can be written as
\begin{equation}\label{E:First-der-Solution}
\begin{aligned}
D_r \theta_t& = \eta_{t,r}^* \alpha_r f_\theta(X_r,\theta_r)
 - \int_r^t  \alpha_u \eta_{t,u}^* {g}_{x \theta}(X_u,\theta_u)D_r X_u \thinspace du \\
 & \qquad \qquad   - \int_r^t \alpha_u^2  \eta_{t,u}^* {f}_{x \theta}(X_u,\theta_u) {f}_{\theta \theta}(X_u,\theta_u) D_r X_u \thinspace du + \int_r^t \left[\alpha_u \eta_{t,u}^* {f}_{x \theta}(X_u,\theta_u)D_r X_u \right]dW_u.
\end{aligned}
\end{equation}

\begin{proposition}\label{P:First-der-product}
Let $D_r \theta_t$ be the solution of \eqref{E:first-order-Mal-Der-theta}, represented in \eqref{E:First-der-Solution}. Then, for any $t > r \ge t^*,$ and  $C_{\bar{g}} C_\alpha> \frac{1}{2}$, there exists a time-independent positive constant $K$ such that
$$\int_{[t^*,t]^2} \sqrt{\BE [D_r \theta_t^4]} \sqrt{\BE [D_s \theta_t^4]} \thinspace ds \thinspace dr \le \frac{K}{t^2}.$$
\end{proposition}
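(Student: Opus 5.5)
The plan is to reduce the double integral to a pointwise-in-$r$ bound of the form
\[
\sqrt{\BE[D_r\theta_t^4]}\le K\,\frac{r^{2C_{\bar g}C_\alpha-2}}{t^{2C_{\bar g}C_\alpha}}.
\]
This is the case $p=2$ of the rate $r^{2pC_{\bar g}C_\alpha-2p}/t^{2pC_{\bar g}C_\alpha}$ announced for Lemma \ref{L:1-der-moments}, with the square root taken. Granting it, the double integral factorizes as $\big(\int_{t^*}^t \sqrt{\BE[D_r\theta_t^4]}\,dr\big)^2$. Since $C_{\bar g}C_\alpha>\frac12$, the exponent satisfies $2C_{\bar g}C_\alpha-2>-1$, so
\[
\int_{t^*}^t r^{2C_{\bar g}C_\alpha-2}\,dr\le K\,t^{2C_{\bar g}C_\alpha-1}.
\]
Hence $\int_{t^*}^t\sqrt{\BE[D_r\theta_t^4]}\,dr\le K/t$, and squaring gives the claimed $K/t^2$.

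The pointwise bound will come from the representation \eqref{E:First-der-Solution}. Using $(a_1+\dots+a_4)^4\le K\sum a_i^4$, I bound the fourth moments of four terms separately.

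\textbf{Initial-condition term $\eta^*_{t,r}\alpha_r f_\theta(X_r,\theta_r)$.} By H\"older, its fourth moment is at most $\alpha_r^4\,(\BE|\eta^*_{t,r}|^8)^{1/2}(\BE|f_\theta|^8)^{1/2}$. The second factor is bounded by Assumption \ref{A:Growth-f-g} together with the uniform moment bounds for $X$ and $\theta$. For the first factor I invoke Lemma \ref{L:Integrating-Factor-first-der}, expecting $\BE|\eta^*_{t,r}|^{q}\le K(r/t)^{qC_{\bar g}C_\alpha}$. This uses $\bar g_{\theta\theta}\ge C_{\bar g}$, which gives $e^{-\int_r^t\alpha_u\bar g_{\theta\theta}\,du}\le (r/t)^{C_{\bar g}C_\alpha}$ up to constants. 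The remaining exponential factors (the fluctuation integral $\int\alpha_u[\bar g_{\theta\theta}-g_{\theta\theta}]du$, the stochastic integral, and the $\alpha^2$ correction) are controlled by Lemmas \ref{L:L-1-deri-1}--\ref{L:L-5-der-1} via a Poisson equation, which turns the fluctuation into boundary terms of size $O(\alpha)$ plus martingales with square-integrable quadratic variation $\int\alpha^2<\infty$. The result is $\alpha_r^4(r/t)^{4C_{\bar g}C_\alpha}\le K\,r^{4C_{\bar g}C_\alpha-4}/t^{4C_{\bar g}C_\alpha}$.

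\textbf{The three integrals involving $D_rX_u$.} By Assumption \ref{A:f*-growth}, $0<D_rX_u\le e^{-C^*(u-r)}$, so these terms carry exponential decay in $u-r$. This is the content of Lemmas \ref{L:L-6-initial-cond-der-1}--\ref{L:L-8-deri-1}.
\begin{itemize}
\item For the $du$-integrals, I apply H\"older in time with weight $e^{-C^*(u-r)}$ and then the integrating-factor moments $\BE|\eta^*_{t,u}|^q\le K(u/t)^{qC_{\bar g}C_\alpha}$. The effective integrand is $\alpha_u(u/t)^{C_{\bar g}C_\alpha}e^{-C^*(u-r)}$, which integrates to about $\alpha_r(r/t)^{C_{\bar g}C_\alpha}$. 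This is the same rate as the initial-condition term.
\item For the $dW$-integral, the integrand $\eta^*_{t,u}$ is not adapted. I would first write $\eta^*_{t,u}=\eta^*_{t,r}/\eta^*_{u,r}$ to pull out the non-adapted factor, then apply BDG (\cite[Proposition 3.26]{KS91}) to the adapted part. The result is $\big(\int_r^t\alpha_u^2 (u/t)^{2C_{\bar g}C_\alpha}e^{-2C^*(u-r)}du\big)^{2}\lesssim \alpha_r^4(r/t)^{4C_{\bar g}C_\alpha}$.
\end{itemize}

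\textbf{Main obstacle.} The hard part is the moment bound for $\eta^*$, that is, Lemma \ref{L:Integrating-Factor-first-der}. Exponential moments of the fluctuation $\int_r^t\alpha_u[\bar g_{\theta\theta}-g_{\theta\theta}]du$ must be controlled without losing the power $(r/t)^{C_{\bar g}C_\alpha}$. When the Poisson-equation corrector grows polynomially in $(x,\theta)$, the boundary terms are not bounded. The first thing I would try is to use $r\ge t^*$, so that $\alpha_r$ is small, and split the exponentials with H\"older so that each piece has a uniformly bounded exponential moment.
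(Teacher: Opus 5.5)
Your proposal is correct and follows the paper's route: the proposition follows in one line from Lemma~\ref{L:1-der-moments} with $p=2$, which gives $\sqrt{\BE[(D_r\theta_t)^4]}\le K r^{2C_{\bar{g}}C_\alpha-2}/t^{2C_{\bar{g}}C_\alpha}$, together with $\int_{t^*}^t r^{2C_{\bar{g}}C_\alpha-2}\,dr\le K t^{2C_{\bar{g}}C_\alpha-1}$ (valid since $C_{\bar{g}}C_\alpha>\frac12$), and your sketch of that lemma uses the same decomposition as Lemmas~\ref{L:L-6-initial-cond-der-1}--\ref{L:L-8-deri-1} and \ref{L:Integrating-Factor-first-der}. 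One small correction to that sketch: after writing $\eta^*_{t,u}=\eta^*_{t,r}(\eta^*_{u,r})^{-1}$ in the stochastic integral, the adapted integrand contains $(\eta^*_{u,r})^{-1}$, whose moments do not decay like $(u/t)^{C_{\bar{g}}C_\alpha}$ but grow polynomially in $u/r$ (Lemma~\ref{L:L1}); this growth is then absorbed by the factor $e^{-2C^*(u-r)}$ coming from $D_rX_u$.
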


Before proving Proposition \ref{P:First-der-product}, we state two auxiliary results, Lemmas \ref{L:Integrating-Factor-first-der} and \ref{L:1-der-moments}. Their proofs are given in Section \ref{S:Proof-Lemma-3.2-3.3}.

\begin{lemma}\label{L:Integrating-Factor-first-der}
Let $\eta_{t,r}^*$ be defined in \eqref{E:Integrating-factor-1-der}. Then, for any $p \in \BN,$, $t > r,$ and  $C_{\bar{g}} C_\alpha> \frac{1}{2}$, there exists a time-independent positive constant $K$ such that
$$\BE\left[ \left(\eta_{t,r}^*\right)^{2p} \right] \le K \left( \frac{r}{t}\right)^{ 2p C_{\bar{g}}C_\alpha}.$$
\end{lemma}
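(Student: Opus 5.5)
We bound $\BE[(\eta^*_{t,r})^{2p}]$ by splitting the exponent into three pieces and treating each by H\"older's inequality. Raising \eqref{E:Integrating-factor-1-der} to the power $2p$ gives
\begin{equation*}
(\eta_{t,r}^*)^{2p} = e^{-2p\int_r^t \alpha_u \bar{g}_{\theta\theta}(\theta_u)\,du}\cdot e^{2p\int_r^t \alpha_u[\bar{g}_{\theta\theta}(\theta_u)-g_{\theta\theta}(X_u,\theta_u)]\,du}\cdot e^{2p\int_r^t \alpha_u f_{\theta\theta}(X_u,\theta_u)\,dW_u - p\int_r^t\alpha_u^2 f_{\theta\theta}^2\,du}.
\end{equation*}
The first factor is deterministic-type. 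By strong convexity (Assumption \ref{A:Growth-f-g}(2)) we have $\bar{g}_{\theta\theta}\ge C_{\bar{g}}$, and with $\alpha_u = C_\alpha/(C_0+u)$ this gives
\begin{equation*}
e^{-2p\int_r^t \alpha_u \bar g_{\theta\theta}(\theta_u)\,du}\le \Big(\frac{C_0+r}{C_0+t}\Big)^{2pC_{\bar g}C_\alpha}\le K\Big(\frac{r}{t}\Big)^{2pC_{\bar g}C_\alpha}.
\end{equation*}
This bound is pathwise, so it can be pulled out of the expectation. The remaining task is to show that $\BE$ of the product of the second and third factors is bounded by a time-independent constant.

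\textbf{Martingale factor.} Write $M_t=\int_r^t \alpha_u f_{\theta\theta}\,dW_u$. By Cauchy--Schwarz, the expectation of the product is at most the square root of $\BE e^{4p\int_r^t\alpha_u[\bar g_{\theta\theta}-g_{\theta\theta}]\,du}$ times the square root of $\BE e^{4pM_t - 2p\langle M\rangle_t}$. For the second of these, write it as a true exponential martingale $e^{8pM - 32p^2\langle M\rangle}$ (raised to the power $1/2$, via a further Cauchy--Schwarz) times $e^{(16p^2-2p)\langle M\rangle}$. Then $\BE$ of the martingale part is at most $1$, and the correction is $\exp(Kp^2\int_r^t\alpha_u^2 f_{\theta\theta}^2\,du)$. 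By Assumption \ref{A:Growth-f-g}(1), $|f_{\theta\theta}|\le K(1+|x|^q)$ with no $\theta$-dependence. Combined with $\int_r^\infty\alpha_u^2\,du\le K/r$ and exponential moments of polynomials of the ergodic $X$, this handles the correction. To avoid needing exponential moments of $|X|^{2q}$ directly, I would instead use Jensen in time: $e^{\int \alpha_u^2 \phi_u\,du}\le \int \frac{\alpha_u^2}{A}e^{A\phi_u}\,du$ with $A=\int_r^t\alpha_u^2\,du\le K/r$. Then only $\sup_u\BE e^{(K/r)|X_u|^{2q}}$ is needed, which is finite for $r\ge t^*$ large, given the Gaussian-type tails from Assumption \ref{A:f*-growth} when $2q\le 2$. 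Otherwise one may need the polynomial-tail argument of the auxiliary lemmas. This mirrors the auxiliary results referenced as Lemmas \ref{L:L-1-deri-1}--\ref{L:L-5-der-1}.

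\textbf{Fluctuation factor (main obstacle).} The term $\int_r^t\alpha_u[\bar g_{\theta\theta}(\theta_u)-g_{\theta\theta}(X_u,\theta_u)]\,du$ is not small pathwise; it is only small through averaging. I would introduce the Poisson equation $\mathcal L_x\Phi(x,\theta)=g_{\theta\theta}(x,\theta)-\bar g_{\theta\theta}(\theta)$, where $\mathcal L_x$ is the generator of $X$. Its solution $\Phi$ and derivatives grow polynomially, by Section \ref{S:Poisson-Equation} and \cite{pardoux2003poisson}. Applying It\^o's formula to $\alpha_u\Phi(X_u,\theta_u)$ rewrites the integral as a sum of pieces:
\begin{itemize}
\item boundary terms $\alpha_r\Phi(X_r,\theta_r)-\alpha_t\Phi(X_t,\theta_t)$;
\item $\int\dot\alpha_u\Phi\,du$, of order $\alpha^2$;
\item $\int\alpha_u\Phi_\theta\,d\theta_u$, which contributes $\int\alpha_u^2(\cdots)\,du$ and $\int \alpha_u^2(\cdots)\,dW_u$;
\item a cross-variation term of order $\alpha_u^2$;
\item the martingale $\int\alpha_u\Phi_x\,dW_u$.
\end{itemize}
Every non-martingale term carries an explicit factor of $\alpha$ or $\alpha^2$. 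Each of these terms is then handled by H\"older in the same way as the martingale factor: either bounded via $\alpha_r\le K/r$ with moment bounds, or turned into an exponential martingale plus a quadratic-variation correction $\int\alpha_u^2(\cdot)\,du$. The delicate point is that $\Phi$ depends polynomially on $\theta$, so exponential moments of $\alpha_r|\theta|^k$ are needed. The smallness of $\alpha_r$ for $r\ge t^*$ (with $t^*$ as in Lemma \ref{L:moment-bound}) is what makes these finite, and possibly needs the dissipativity of Assumption \ref{A:Moment-bounds}. Collecting the pieces gives $\BE(\eta^*_{t,r})^{2p}\le K(r/t)^{2pC_{\bar g}C_\alpha}$.
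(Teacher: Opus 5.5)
Your route is the paper's. The convexity factor gives $(r/t)^{2pC_{\bar g}C_\alpha}$ pathwise. The fluctuation term is rewritten through the Poisson equation \eqref{E:Poisson-equation} and It\^o's formula for $\alpha_u\Phi(X_u,\theta_u)$, exactly as in Lemma \ref{L:L-1-deri-1}; your list of pieces omits the term $\tfrac12\int_r^t\alpha_u^3 f_\theta^2\Phi_{\theta\theta}\,du$, which is harmless. H\"older then reduces everything to finitely many exponential moments that must be bounded by time-independent constants, which is what Lemmas \ref{L:L-2-deri-1}--\ref{L:L-5-der-1} claim. The difference lies only in how those moments are attacked. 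The paper merges $f_{\theta\theta}$ and $-\Phi_x$ into the single integrand $\zeta_1$ of \eqref{E:Zeta-intermediate-terms-deri-1}. It then expands each exponential in a power series and controls the terms by a martingale moment inequality and the uniform polynomial moments of $X_t,\theta_t$. Your exponential-supermartingale/Cauchy--Schwarz step and Jensen in time instead reduce every factor, correctly, to a quantity of the form $\sup_{u\ge r}\BE\exp\bigl((c/r)\,|\phi(X_u,\theta_u)|\bigr)$ with $\phi$ of polynomial growth.

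\textbf{The gap.} You never bound these quantities, and that is the entire content of the lemma. Neither of your suggested remedies works.

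A small prefactor $c/r$ cannot compensate for tails. Assumption \ref{A:f*-growth} only guarantees Gaussian-type tails for $X$; the Ornstein--Uhlenbeck process of Example \ref{Ex:OU} satisfies it. For such $X$, $\BE e^{\epsilon|X_u|^m}=\infty$ for every $\epsilon>0$ once $m>2$. So your bound for the correction $\int_r^t\alpha_u^2f_{\theta\theta}^2\,du$ already fails when $q>1$, and $\Phi,\Phi_x,\zeta_2,\ldots$ generally carry higher powers of $x$ still.

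For $\theta$ only polynomial moments are available, from \cite{siri_spilio_2020} and Lemma \ref{L:moment-bound}. Assumption \ref{A:Moment-bounds} does not by itself yield exponential integrability of $\alpha_r|\Phi(X_r,\theta_r)|$, where $\Phi$ grows like $(1+|\theta|^2)(1+|x|^m)$.

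Falling back on ``the polynomial-tail argument of the auxiliary lemmas'' does not close this either. In Lemma \ref{L:L-2-deri-1} the series $\sum_k\frac{c^k}{k!}\BE|\Phi|^k$ is just $\BE e^{c|\Phi|}$ again, and finiteness of each $\BE|\Phi|^k$ does not make it converge. Lemmas \ref{L:L-3-deri-1} and \ref{L:L-4-der-1} likewise treat $(\BE|\zeta_i|^k)^{1/k}$, and the martingale-moment constant, as bounded uniformly in $k$.

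So neither argument establishes the statement under the stated polynomial-growth hypotheses. To finish your proof you need an extra assumption that makes these exponential moments finite. One option is boundedness of $f_{\theta\theta}$ and of $\Phi,\Phi_x,\zeta_2,\zeta_3,\zeta_4$. Another is at most quadratic growth in $x$ of every functional whose exponential moment you take, together with exponential integrability of $\theta_t$ for $t\ge t^*$. Your reduction has the merit of making explicit exactly which bound is required, which the paper's power-series argument obscures.
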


\begin{lemma}\label{L:1-der-moments}
Let $D_r \theta_t$ be the solution of \eqref{E:first-order-Mal-Der-theta}, given in \eqref{E:First-der-Solution}. Then, for any $t > r,$ $p \in \BN,$ and  $C_{\bar{g}} C_\alpha> \frac{1}{2}$, there exists a time-independent positive constant $K$ such that
$$\BE \left[ \left(D_r \theta_t \right)^{2p} \right] \le K \frac{r^{ 2p C_{\bar{g}}C_\alpha-2p}}{t^{ 2p C_{\bar{g}}C_\alpha}}. $$
\end{lemma}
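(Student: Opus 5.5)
We would prove Lemma \ref{L:1-der-moments} directly from the explicit representation \eqref{E:First-der-Solution}. We split $D_r\theta_t$ into its four terms and use $(a_1+\cdots+a_4)^{2p}\le K\sum a_i^{2p}$, so it suffices to bound the $2p$-th moment of each term by $K r^{2pC_{\bar g}C_\alpha-2p}t^{-2pC_{\bar g}C_\alpha}$.

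\emph{Initial-condition term.} By Cauchy--Schwarz,
\[
\BE[(\eta^*_{t,r}\alpha_r f_\theta(X_r,\theta_r))^{2p}]\le \alpha_r^{2p}\,\BE[(\eta^*_{t,r})^{4p}]^{1/2}\,\BE[f_\theta(X_r,\theta_r)^{4p}]^{1/2}.
\]
Lemma \ref{L:Integrating-Factor-first-der} (applied with $2p$ in place of $p$) gives the factor $K(r/t)^{2pC_{\bar g}C_\alpha}$. Assumption \ref{A:Growth-f-g}(1) together with the uniform moment bounds on $X$ and $\theta$ controls the $f_\theta$ factor. Finally $\alpha_r^{2p}\le Kr^{-2p}$. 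This term yields exactly the target rate.

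\emph{Malliavin derivative of $X$.} For the remaining three terms we first note that \eqref{E:first-order-Mal-Der-X} is a linear ODE. Hence $D_rX_u=\exp(\int_r^u f^*_x(X_v)dv)\le e^{-C^*(u-r)}$ deterministically, by Assumption \ref{A:f*-growth}.

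\emph{The two $du$-integrals.} For the drift-type term we apply Minkowski's inequality in $L^{2p}(\Omega)$, then Hölder and Lemma \ref{L:Integrating-Factor-first-der}:
\[
\Big\|\int_r^t \alpha_u\eta^*_{t,u}g_{x\theta}D_rX_u\,du\Big\|_{2p}\le K\int_r^t \frac1u\Big(\frac ut\Big)^{C_{\bar g}C_\alpha}e^{-C^*(u-r)}\,du .
\]
The exponential localizes the integral near $u=r$, so it is bounded by $K r^{C_{\bar g}C_\alpha-1}t^{-C_{\bar g}C_\alpha}$. To see this, note that $u^{C_{\bar g}C_\alpha-1}\le K r^{C_{\bar g}C_\alpha-1}(1+(u-r))^{m}$ for $u\ge r\ge t^*$ and some $m$, and that $\int_0^\infty (1+v)^m e^{-C^*v}dv<\infty$. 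Raising to the power $2p$ gives the claim. The $\alpha_u^2$ term is handled identically and is even smaller.

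\emph{The stochastic integral.} Here the difficulty is that $\eta^*_{t,u}$ is not adapted in $u$, since it depends on $W$ on $[u,t]$. We would therefore write $\eta^*_{t,u}=\eta^*_{t,r}(\eta^*_{u,r})^{-1}$, using the multiplicative structure of the exponential. This gives
\[
\int_r^t\alpha_u\eta^*_{t,u}f_{x\theta}D_rX_u\,dW_u=\eta^*_{t,r}\int_r^t\alpha_u(\eta^*_{u,r})^{-1}f_{x\theta}D_rX_u\,dW_u .
\]
Now the integrand is adapted. We apply Cauchy--Schwarz to split off $\eta^*_{t,r}$, which costs $(r/t)^{2pC_{\bar g}C_\alpha}$ by Lemma \ref{L:Integrating-Factor-first-der}. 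We then apply the BDG inequality to the remaining integral, leaving $\BE[(\int_r^t\alpha_u^2(\eta^*_{u,r})^{-2}f_{x\theta}^2(D_rX_u)^2du)^{2p}]$.

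\emph{Main obstacle.} This last step is the one we expect to be hardest. It needs moment bounds for the inverse factor $(\eta^*_{u,r})^{-1}$, which can grow like $(u/r)^{C_{\bar g}C_\alpha}$ plus fluctuation corrections. The exponential decay of $D_rX_u$ should absorb this growth, again localizing near $u=r$. However, controlling $\BE[(\eta^*_{u,r})^{-q}]$ requires redoing the Poisson-equation argument behind Lemma \ref{L:Integrating-Factor-first-der}, now with the signs reversed. That argument is where the fluctuation term $\int\alpha_u(\bar g_{\theta\theta}-g_{\theta\theta})du$ and the exponential martingale must be handled via Lemmas \ref{L:L-1-deri-1}--\ref{L:L-5-der-1}.

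As a fallback, we would instead write the term via Itô's formula for $D_r\theta$ directly, that is, a Gronwall-type moment estimate on \eqref{E:first-order-Mal-Der-theta}. The strong convexity $\bar g_{\theta\theta}\ge C_{\bar g}$ gives the decay $-2pC_{\bar g}\alpha_u$ in the differential of $\BE[(D_r\theta_u)^{2p}]$, and the forcing terms decay like $e^{-C^*(u-r)}$. Integrating this differential inequality with the integrating factor $(u/t)^{2pC_{\bar g}C_\alpha}$ again yields the target rate.
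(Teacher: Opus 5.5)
Your main argument is the paper's. The paper proves Lemma \ref{L:1-der-moments} by splitting \eqref{E:First-der-Solution} into the same four terms and bounding them in Lemmas \ref{L:L-6-initial-cond-der-1}, \ref{L:L-7-deri-1} and \ref{L:L-8-deri-1}, with the same tools you use:
\begin{itemize}
\item Cauchy--Schwarz and Lemma \ref{L:Integrating-Factor-first-der} for the initial term;
\item $D_rX_u\le e^{-C^*(u-r)}$ and Lemma \ref{L:Integration-f-g-intermediate} for the two $du$-integrals;
\item the factorization $\eta^*_{t,u}=\eta^*_{t,r}(\eta^*_{u,r})^{-1}$, Cauchy--Schwarz and BDG for the $dW$-integral.
\end{itemize}

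The step you leave open, the negative moments of $\eta^*$, is Lemma \ref{L:L1} in the paper, and it is much cheaper than you expect: no Poisson equation is used. The paper writes $(\eta^*_{u,r})^{-1}=\exp\bigl(\int_r^u\alpha_v g_{\theta\theta}(X_v,\theta_v)\,dv-\int_r^u\alpha_v f_{\theta\theta}\,dW_v+\frac12\int_r^u\alpha_v^2 f_{\theta\theta}^2\,dv\bigr)$ and separates the three factors by H\"older. The last two are handled as in Lemmas \ref{L:L-4-der-1} and \ref{L:L-3-deri-1}. The first is bounded crudely by $\exp\bigl(cK_{g_{\theta\theta}}\int_r^u\alpha_v\,dv\bigr)=(u/r)^{cC_\alpha K_{g_{\theta\theta}}}$, with $c$ depending only on the moment order. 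Any polynomial growth in $u/r$ is absorbed by $(D_rX_u)^2\le e^{-2C^*(u-r)}$ through Lemma \ref{L:Integration-f-g-intermediate}. This gives $\BE\bigl[\bigl(\int_r^t\alpha_u^2(\eta^*_{u,r})^{-2}f_{x\theta}^2(D_rX_u)^2\,du\bigr)^{2p}\bigr]\le Kr^{-4p}$, which is all that is needed.

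The route you suggest, the Poisson argument ``with the signs reversed,'' would not go through as you describe. After inversion the mean part is $e^{+\int_r^u\alpha_v\bar g_{\theta\theta}(\theta_v)\,dv}$. Strong convexity then gives only a lower bound, so the growth is at least, not at most, $(u/r)^{C_{\bar g}C_\alpha}$. An upper bound has to come from upper control of $g_{\theta\theta}$ itself. The paper gets it by expanding the exponential in a power series with a bound on $(\BE|g_{\theta\theta}(X_s,\theta_s)|^k)^{1/k}$ uniform in $k$. This is the same device as in Lemmas \ref{L:L-2-deri-1}--\ref{L:L-4-der-1}, so it costs nothing beyond what you already accept through Lemma \ref{L:Integrating-Factor-first-der}.

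Your fallback is flawed. In \eqref{E:first-order-Mal-Der-theta} the total coefficient of $D_r\theta_u\,du$ is $-\alpha_u g_{\theta\theta}(X_u,\theta_u)$, not $-\alpha_u\bar g_{\theta\theta}(\theta_u)$. Strong convexity holds only for the $\mu$-average, and two terms in $d(D_r\theta_u)^{2p}$ prevent the estimate from closing:
\begin{itemize}
\item the fluctuation contribution $2p\,\alpha_u[\bar g_{\theta\theta}(\theta_u)-g_{\theta\theta}(X_u,\theta_u)](D_r\theta_u)^{2p}$ has no sign, and cannot be separated in expectation because $X_u$ and $D_r\theta_u$ are dependent;
\item the It\^o correction $p(2p-1)\alpha_u^2 f_{\theta\theta}^2(D_r\theta_u)^{2p}$ carries an unbounded polynomial weight.
\end{itemize}
So no closed Gronwall inequality with decay rate $2pC_{\bar g}\alpha_u$ holds for $\BE[(D_r\theta_u)^{2p}]$. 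Removing exactly this fluctuation is why the paper works with the integrating factor $\eta^*$ and the Poisson equation \eqref{E:Poisson-equation}.
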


We now prove Proposition \ref{P:First-der-product}.
\begin{proof}[Proof of Proposition \ref{P:First-der-product}]
Applying Lemma \ref{L:1-der-moments} with $p=2$ and $C_{\bar{g}}C_\alpha> \frac{1}{2}$ yields
\begin{equation*}
\begin{aligned}
\int_{[t^*,t]^2} \sqrt{\BE [D_r \theta_t^4]} \sqrt{\BE [D_s \theta_t^4]} \thinspace ds \thinspace dr & \le \frac{K}{t^{4C_{\bar{g}}C_\alpha}} \int_{[t^*,t]^2} \sqrt{r^{4C_{\bar{g}}C_\alpha-4}} \sqrt{s^{4C_{\bar{g}} C_\alpha-4}} \thinspace ds \thinspace dr \le \frac{K}{t^2}.
\end{aligned}
\end{equation*}
\end{proof}

\subsection{Proofs of Lemmas \ref{L:Integrating-Factor-first-der} and \ref{L:1-der-moments}}\label{S:Proof-Lemma-3.2-3.3}
Before proving Lemma \ref{L:Integrating-Factor-first-der}, we construct a Poisson equation in order to handle the fluctuation term $\int_r^t \alpha_u \left[ \bar{g}_{\theta \theta}(\theta_u)-{g}_{\theta \theta}(X_u,\theta_u)\right] du$ appearing in the definition of $\eta_{t,r}^*$ in \eqref{E:Integrating-factor-1-der}. Let $\mathscr{L}_x \triangleq f^*(x) \frac{d}{dx} + \frac{1}{2} \frac{d^2}{dx^2}$ denote the infinitesimal generator of the process $X$ solving \eqref{E:Process-X}, and let $\Phi(x,\theta)$ be the solution of the Poisson equation
\begin{equation}\label{E:Poisson-equation}
\mathscr{L}_x \Phi (x, \theta)= \mathsf{G}(x,\theta), \quad {\text{where}} \quad \mathsf{G}(x,\theta) \triangleq \bar{g}_{\theta \theta}(\theta)-{g}_{\theta \theta}(x,\theta).
\end{equation}
We further define
\begin{equation}\label{E:Zeta-intermediate-terms-deri-1}
\begin{aligned}
\zeta_1(X_t, \theta_t) & \triangleq f_{\theta \theta}(X_t, \theta_t) - \Phi_x(X_t, \theta_t), \\
\zeta_2(X_t, \theta_t) &\triangleq \frac{1}{C_\alpha}\Phi(X_t, \theta_t) + g_\theta(X_t, \theta_t)\Phi_\theta(X_t, \theta_t)- \frac{1}{2}\Phi_{x \theta}(X_t,\theta_t)f_\theta(X_t,\theta_t)- \frac{1}{2}f_{\theta \theta}(X_t, \theta_t)^2,\\
 \zeta_3(X_t, \theta_t)  & \triangleq -\frac{1}{2}f_{\theta}(X_t, \theta_t)^2 \Phi_{\theta \theta}(X_t, \theta_t),\quad \text{and} \quad
\zeta_4(X_t, \theta_t)  \triangleq -f_{\theta}(X_t, \theta_t) \Phi_{\theta}(X_t, \theta_t).
\end{aligned}
\end{equation}

To prepare the proof of Lemma \ref{L:Integrating-Factor-first-der}, we first state a collection of auxiliary lemmas, whose proofs are deferred to Section \ref{S:Proofs-lemmas-3.4-3.11}.
\begin{lemma}\label{L:L-1-deri-1}
Let $\Phi$ be the solution of \eqref{E:Poisson-equation}, and let $\mathsf{G}(x,\theta)$ be defined in \eqref{E:Poisson-equation}. Then, for any $t > r,$ we have
\begin{equation*}
\begin{aligned}
\int_r^t \alpha_u \mathsf{G}(X_u, \theta_u)\thinspace du
& \triangleq [\alpha_t \Phi(X_t, \theta_t) - \alpha_r \Phi (X_r, \theta_r) ] -   \int_r^t  \frac{d \alpha_u}{du}  \Phi(X_u, \theta_u) \thinspace du\\
& + \int_r^t \alpha_u^2 g_\theta(X_u, \theta_u) \Phi_\theta (X_u, \theta_u) \thinspace du
 - \frac{1}{2}\int_r^t \alpha_u^3 [f_\theta (X_u, \theta_u)]^2 \Phi_{\theta \theta}(X_u, \theta_u) \thinspace du  \\
 & - \frac{1}{2}\int_r^t \alpha_u^2 f_\theta (X_u, \theta_u) \Phi_{x \theta}(X_u, \theta_u) \thinspace du \\
& - \int_r^t \alpha_u \Phi_{x}(X_u, \theta_u) \thinspace dW_u -  \int_r^t \alpha_u^2 \Phi_{\theta}(X_u, \theta_u)f_\theta (X_u, \theta_u) \thinspace dW_u.
\end{aligned}
\end{equation*}
\end{lemma}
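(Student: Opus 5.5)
The identity follows from applying It\^o's formula to the time-dependent function $(u,x,\theta)\mapsto \alpha_u \Phi(x,\theta)$ along the joint process $(X_u,\theta_u)$ on $[r,t]$. The Poisson equation \eqref{E:Poisson-equation} is then used to replace the generator term $\alpha_u\mathscr{L}_x\Phi$ by $\alpha_u \mathsf{G}$, and the resulting equation is solved for $\int_r^t \alpha_u \mathsf{G}(X_u,\theta_u)\,du$. Throughout, $\sigma=1$, so that $\mathscr{L}_x = f^*\partial_x + \frac12\partial_{xx}$.

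First I record the dynamics of both components driven by the same Brownian motion:
\[
dX_u = f^*(X_u)\,du + dW_u, \qquad d\theta_u = -\alpha_u g_\theta(X_u,\theta_u)\,du + \alpha_u f_\theta(X_u,\theta_u)\,dW_u ,
\]
where the second is \eqref{E:SGDCT-theta-updates-V-2}. The quadratic (co)variations are therefore $d\langle X\rangle_u = du$, $d\langle X,\theta\rangle_u = \alpha_u f_\theta\,du$, and $d\langle\theta\rangle_u = \alpha_u^2 f_\theta^2\,du$.

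Writing $\Phi_u$ for $\Phi(X_u,\theta_u)$, It\^o's formula gives
\[
d(\alpha_u\Phi_u) = \dot\alpha_u \Phi_u\,du + \alpha_u\Big[\Phi_x\,dX_u + \Phi_\theta\,d\theta_u + \tfrac12\Phi_{xx}\,du + \Phi_{x\theta}\alpha_u f_\theta\,du + \tfrac12 \Phi_{\theta\theta}\alpha_u^2 f_\theta^2\,du\Big].
\]
Grouping $\Phi_x f^* + \tfrac12 \Phi_{xx} = \mathscr{L}_x\Phi = \mathsf{G}$ produces the term $\alpha_u\mathsf{G}\,du$. The remaining terms are:
\begin{itemize}
\item the martingale parts $\alpha_u\Phi_x\,dW_u$ and $\alpha_u^2\Phi_\theta f_\theta\,dW_u$;
\item the drift $-\alpha_u^2 g_\theta\Phi_\theta\,du$;
\item the cross-variation term $\alpha_u^2 f_\theta\Phi_{x\theta}\,du$;
\item the second-order term $\tfrac12\alpha_u^3 f_\theta^2\Phi_{\theta\theta}\,du$.
\end{itemize}
Integrating from $r$ to $t$ and isolating $\int_r^t\alpha_u\mathsf{G}\,du$ gives exactly the displayed formula, with every remaining term moved to the other side with its sign flipped.

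One discrepancy needs checking: the cross term. A direct computation gives coefficient $1$ for $\alpha_u^2 f_\theta\Phi_{x\theta}$, whereas the statement shows $\tfrac12$. I will recheck the convention, in particular whether the noise in $\theta$ carries a different normalization, and otherwise follow the paper's stated expression. Only the structure matters downstream: each term is $O(\alpha_u^2)$ or smaller, or is a boundary term or a martingale.

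The only genuine obstacle is justifying It\^o's formula. This requires $\Phi \in C^{2}$ in $x$ and $\theta$ jointly, including the mixed derivative $\Phi_{x\theta}$, with polynomial growth so that the stochastic integrals are true martingales. I would obtain this from the Pardoux--Veretennikov regularity results for Poisson equations recalled in Section~\ref{S:Poisson-Equation}. Those results need $\mathsf{G}$ centered with respect to $\mu$, which holds by the definition of $\bar g$, together with the regularity and growth in Assumptions~\ref{A:Well-Posedness} and~\ref{A:Growth-f-g}. Combined with the uniform moment bounds on $X_t$ and $\theta_t$, this justifies the identity.
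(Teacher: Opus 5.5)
Your argument is the same as the paper's: It\^o's formula for $\alpha_u\Phi(X_u,\theta_u)$, replacement of $\mathscr{L}_x\Phi$ by $\mathsf{G}$ using \eqref{E:Poisson-equation}, and rearrangement. The discrepancy you flag is genuine, so trust your computation rather than defer to the stated expression: since $d\langle X,\theta\rangle_u=\sigma\cdot\alpha_u f_\theta\sigma^{-1}\,du=\alpha_u f_\theta\,du$ for every $\sigma$, the cross-variation term has coefficient $1$ and the correct identity contains $-\int_r^t\alpha_u^2 f_\theta\Phi_{x\theta}\,du$; the $\tfrac12$ is a slip that the paper's own It\^o expansion repeats (it also propagates into $\zeta_2$ and into the analogous expansions for $\Psi$ and $\Upsilon$ in Section~\ref{S:Pre-limit-Exp-Var}), but it is harmless downstream because these terms are only ever bounded in absolute value via polynomial growth and moment bounds.
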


\begin{lemma}\label{L:L-2-deri-1}
Let $\Phi$ be the solution of \eqref{E:Poisson-equation}. Then, for any $p \in \BN,$ there exists a time-independent positive constant $K$ such that
\begin{equation*}
\BE \left[e^{p\{\alpha_t \Phi(X_t, \theta_t) - \alpha_r \Phi (X_r, \theta_r) \}} \right] \le K.
\end{equation*}
\end{lemma}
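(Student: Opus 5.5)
The plan is to reduce the claim to a polynomial-growth estimate for $\Phi$ combined with the uniform-in-time moment bounds for $X_t$ and $\theta_t$; the decay $\alpha_t\to 0$ should make the exponent harmless. First I would apply Cauchy--Schwarz to split the two factors:
\begin{equation*}
\BE \left[e^{p\{\alpha_t \Phi(X_t, \theta_t) - \alpha_r \Phi (X_r, \theta_r) \}} \right] \le \left(\BE\left[ e^{2p\alpha_t \Phi(X_t,\theta_t)}\right]\right)^{\frac12}\left(\BE\left[ e^{-2p\alpha_r \Phi(X_r,\theta_r)}\right]\right)^{\frac12}.
\end{equation*}
Both factors have the same form, so it suffices to prove $\sup_{s\ge 0}\BE[e^{c\,\alpha_s|\Phi(X_s,\theta_s)|}]\le K$ for every fixed $c>0$.

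Next I would control $\Phi$ using the Poisson equation results recalled in Section \ref{S:Poisson-Equation} (Pardoux--Veretennikov type). Assumption \ref{A:f*-growth} gives strong recurrence of $X$. The source $\mathsf{G}(x,\theta)=\bar g_{\theta\theta}(\theta)-g_{\theta\theta}(x,\theta)$ is centered with respect to $\mu$, and by Assumption \ref{A:Growth-f-g} it grows at most polynomially. From this I expect the solution $\Phi$, which is then of the form $-\int_0^\infty \BE_x[\mathsf{G}(X_s,\theta)]\,ds$, to grow at most polynomially: $|\Phi(x,\theta)|\le K(1+|x|^{m}+|\theta|^{n})$.

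The main obstacle is that an exponential moment of a polynomial in $(X_s,\theta_s)$ is not implied by polynomial moment bounds. There are two possible routes.
\begin{enumerate}[label=(\roman*)]
\item \emph{Sharper growth.} If $\Phi$ grows at most linearly, I would exploit the uniform exponential moments of $X$ coming from the dissipativity $f^*_x\le -C^*$ (e.g. $\sup_s\BE e^{\lambda X_s^2}<\infty$ for small $\lambda$). For $\theta_s$, I would use Itô's formula on $e^{\lambda|\theta_s|}$ together with Assumption \ref{A:Moment-bounds}(1).
\item \emph{Expansion of the exponential.} Otherwise I would write $\BE[e^{c\alpha_s|\Phi|}]=\sum_k \frac{(c\alpha_s)^k}{k!}\BE|\Phi|^k$, bound $\BE|\Phi|^k\le K^k(1+\sup_s\BE|X_s|^{mk}+\sup_s\BE|\theta_s|^{nk})$, and then need the $k$-th moments to grow at most like $(k!)^{\beta}C^k$ with $\beta<1$. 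For $X$ this comes from its Gaussian-type tails. For $\theta$ it would have to come from the proof of the uniform moment bounds in subsection \ref{ss:MomentBoundTheta}, by tracking the $k$-dependence of the constants.
\end{enumerate}

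Since $\alpha_s\le C_\alpha/C_0$, the prefactor is uniformly bounded; for large $s$ it is small, which makes the series converge easily. For $s$ in a compact range, finiteness follows from the same moment estimates. I would try route (ii) first, since it relies only on moment bounds already invoked in the paper. The delicate point is verifying the factorial growth of the $\theta$-moments; if that fails, I would instead restrict to $s\ge t^*$ with $\alpha_{t^*}$ small, consistent with Remark \ref{R:Large-time-choice}.
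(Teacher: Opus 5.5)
Your route (ii) is exactly the paper's argument. The paper bounds the exponent by a multiple of $\alpha_{r_0}\bigl(|\Phi(X_t,\theta_t)|+|\Phi(X_r,\theta_r)|\bigr)$, expands the exponential and uses monotone convergence. It then simply asserts that $\sum_k \frac{K^k}{k!}\bigl(\BE|\Phi(X_t,\theta_t)|^k+\BE|\Phi(X_r,\theta_r)|^k\bigr)$ is bounded, because $\Phi$ grows polynomially and $\sup_s\BE|X_s|^n$, $\sup_s\BE|\theta_s|^n$ are finite for each fixed $n$. You rightly notice that this does not follow, since finiteness of each moment says nothing about how the moments grow in $k$. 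But your proposal leaves exactly this step open, and the tools you list cannot close it:
\begin{enumerate}
\item Gaussian tails of $X$ do \emph{not} give $\beta<1$. If $|\Phi|$ grows like $|x|^m$, then $\BE|X_s|^{mk}$ is of order $C^k(k!)^{m/2}$. So the series converges for all prefactors when $m<2$, only for small $\alpha_s$ when $m=2$, and for no $\alpha_s>0$ when $m>2$ (equivalently, $\BE e^{\lambda|X|^m}=\infty$ for every $\lambda>0$).
\item Route (i)'s hypothesis already fails in the paper's own Ornstein--Uhlenbeck example. There $g_{\theta\theta}=x^2$ and $\Phi=x^2/(2c^*)+\mathrm{const}$ is quadratic, not linear.
\item Shrinking $\alpha_{t^*}$ rescues only the borderline quadratic case.
\item On a compact time range, finiteness of an exponential moment does not follow from polynomial moment bounds either.
\end{enumerate}

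In fact the gap cannot be closed in the stated generality, because the claim can fail. Take $f^*(x)=-c^*x$ and the model $f(x,\theta)=\theta(1+x^2)$. This is compatible with Assumptions \ref{A:f*-growth}, \ref{A:Moment-bounds} (with $\kappa(x)=\tfrac34(1+x^2)$, $R\ge c^*$) and \ref{A:Growth-f-g} (with $q=4$). Then $g_{\theta\theta}=(1+x^2)^2$, and a direct check shows that $\Phi(x)=\frac{x^4}{4c^*}+\bigl(\frac{1}{c^*}+\frac{3}{4(c^*)^2}\bigr)x^2+\mathrm{const}$ solves \eqref{E:Poisson-equation}. For $t>r$, $X_t$ is conditionally Gaussian given $\mathcal{F}_r$ with positive variance, so $\BE[e^{p\alpha_t\Phi(X_t)}\mid\mathcal{F}_r]=\infty$. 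Since $e^{-p\alpha_r\Phi(X_r)}>0$, the left-hand side of the lemma is $+\infty$ for every $t>r$ and every $p\in\BN$.

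A correct statement needs extra structure, for example:
\begin{itemize}
\item $|\Phi(x,\theta)|\le K(1+x^2+\theta^2)$;
\item uniform sub-Gaussian bounds $\sup_s\BE e^{\lambda_0(X_s^2+\theta_s^2)}<\infty$;
\item $r\ge r_0(p)$ with $2pK\alpha_{r_0}\le\lambda_0$.
\end{itemize}
Under such hypotheses, your Cauchy--Schwarz reduction plus smallness of $\alpha$ finishes the proof directly, with no series expansion needed.
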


\begin{lemma}\label{L:L-3-deri-1}
Let $\Phi$ be the solution of \eqref{E:Poisson-equation}, and let $\zeta_2$ be defined in \eqref{E:Zeta-intermediate-terms-deri-1}. Then, for any $p \in \BN,$ there exists a time-independent positive constant $K$ such that
\begin{equation*}
\BE \left[ e^{p \int_r^t \alpha_u^2 \zeta_2 (X_u, \theta_u) \thinspace du} \right] \le K.
\end{equation*}
\end{lemma}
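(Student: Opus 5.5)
The plan is to write $\zeta_2$ out term by term, bound each piece by a polynomial in $(x,\theta)$, and then use the fact that $\int_r^\infty \alpha_u^2\,du$ is finite to turn the exponential moment into one controlled by uniform-in-time moment bounds. Recall
\[
\zeta_2=\tfrac{1}{C_\alpha}\Phi+g_\theta\Phi_\theta-\tfrac12\Phi_{x\theta}f_\theta-\tfrac12 f_{\theta\theta}^2 .
\]

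\textbf{Step 1: polynomial growth of $\zeta_2$.} By the Poisson equation results of Section \ref{S:Poisson-Equation} (Pardoux--Veretennikov), $\Phi$, $\Phi_\theta$ and $\Phi_{x\theta}$ grow at most polynomially in $x$ and $\theta$. Assumption \ref{A:Growth-f-g} bounds $f$, $f_\theta$, $f_{\theta\theta}$, and hence also $g_\theta=(f-f^*)f_\theta$, by polynomials. Combining these, there are constants $K,m$ with $|\zeta_2(x,\theta)|\le K(1+|x|^m+|\theta|^m)$.

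\textbf{Step 2: Jensen in time.} Set $\Lambda\triangleq\int_r^t\alpha_u^2\,du$. With $\alpha_u=C_\alpha/(C_0+u)$ we have $\Lambda\le\int_0^\infty\alpha_u^2\,du<\infty$ uniformly in $r,t$. Applying Jensen's inequality to the probability measure $\alpha_u^2\,du/\Lambda$ on $[r,t]$ gives
\[
\BE\, e^{p\int_r^t\alpha_u^2\zeta_2\,du}\le \frac1\Lambda\int_r^t\alpha_u^2\,\BE\, e^{p\Lambda|\zeta_2(X_u,\theta_u)|}\,du\le\sup_u \BE\, e^{p\Lambda K(1+|X_u|^m+|\theta_u|^m)} .
\]
This reduces the lemma to a uniform-in-time exponential moment of a polynomial of $(X_u,\theta_u)$, with a small prefactor.

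\textbf{Main obstacle: uniform exponential moments.} The second step above is where the difficulty lies. Polynomial moments are available uniformly in time, but exponential moments of polynomials in $X$ and $\theta$ are not automatic. Two routes seem available.

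\emph{First route (what I would try first).} For $X$, Assumption \ref{A:f*-growth} gives $f_x^*\le -C^*$, so the Lyapunov function $e^{\epsilon x^2}$ yields $\sup_u\BE\, e^{\epsilon X_u^2}<\infty$ for small $\epsilon$. Higher powers require a stronger dissipativity of $f^*$. In the same way, Assumption \ref{A:Moment-bounds} with Itô's formula applied to $e^{\epsilon|\theta|^2}$ should give exponential moments for $\theta$. The weak point is that the noise $\alpha_u f_\theta\,dW_u$ has coefficient growing polynomially in $x$; this is tamed only because $\alpha_u^2$ is integrable.

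\emph{Second route (more robust).} Avoid taking the exponential of the polynomial directly. Expand $e^{y}=\sum_k y^k/k!$ and bound $\BE\big(\int_r^t\alpha_u^2|\zeta_2|\,du\big)^k$ by $\Lambda^{k}\sup_u\BE|\zeta_2(X_u,\theta_u)|^k$ using Hölder. One then has to check that the growth in $k$ of these polynomial moments is summable against $1/k!$. Alternatively, restrict to $r\ge t^*$, where $\Lambda\lesssim 1/t^*$ is small, as permitted by Remark \ref{R:Large-time-choice}. In either version, the constant $K$ obtained is independent of $r$ and $t$.
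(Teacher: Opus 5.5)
Your Jensen reduction is valid, and you have located the crux correctly. The gap is that the proposal stops exactly where the proof has to happen: you never show
$\sup_u \BE\, e^{p\Lambda|\zeta_2(X_u,\theta_u)|}<\infty$, and neither of your routes can supply this under the paper's hypotheses.

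\textbf{Route one.} Assumption \ref{A:f*-growth} admits the Ornstein--Uhlenbeck process, whose marginals are Gaussian. Hence $\BE\, e^{\epsilon|X_u|^m}=\infty$ for every $\epsilon>0$ once $m>2$. But $\zeta_2$, which contains $\Phi/C_\alpha$, $g_\theta\Phi_\theta$ and $f_{\theta\theta}^2$, in general has degree larger than $2$ in $x$. In addition, the paper provides only polynomial moments of $\theta$.

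\textbf{Route two.} This is precisely the paper's own proof. The paper expands the exponential, bounds the $k$-fold integrand via H\"older by $\prod_i[\BE|\zeta_2(X_{u_i},\theta_{u_i})|^k]^{1/k}$, and then replaces this product by a $k$-independent constant, citing polynomial growth and uniform polynomial moment bounds. That replacement is exactly the check you flag, and it fails in general. Indeed, $\sum_k \frac{(p\Lambda)^k}{k!}\sup_u\BE|\zeta_2(X_u,\theta_u)|^k \ge \sup_u \BE\, e^{p\Lambda|\zeta_2(X_u,\theta_u)|}$. So route two needs the same exponential moment as route one and is not more robust. Shrinking $\Lambda$ by taking $r\ge t^*$ does not help either, since $\BE\, e^{\epsilon G^4}=\infty$ for a Gaussian $G$ and every $\epsilon>0$.

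The obstruction is not an artifact of Jensen or H\"older; the time integral does not rescue it. Take $\sigma=1$, $f^*(x)=-x$ and $f(x,\theta)=-x+\theta(1+x^2)$.
\begin{itemize}
\item Assumptions \ref{A:f*-growth}, \ref{A:Moment-bounds} and \ref{A:Growth-f-g} hold, with $\kappa(x)=(1+x^2)^2$ and $C_{\bar g}=\tfrac{11}{4}$.
\item $f_{\theta\theta}=0$, and the Poisson equation \eqref{E:Poisson-equation} is solved by $\Phi(x)=\tfrac14x^4+\tfrac74x^2+c$, which does not depend on $\theta$. Hence $\zeta_2=\Phi/C_\alpha$.
\item Conditionally on $X_r=y$ with $y$ large, the OU path stays above $y/2$ on a short time interval with probability bounded below uniformly in $y$. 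Since $\BE\, e^{\epsilon X_r^4}=\infty$ for every $\epsilon>0$, this gives $\BE\, e^{p\int_r^t\alpha_u^2\zeta_2(X_u,\theta_u)\,du}=+\infty$ for every $t>r$.
\end{itemize}

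So the lemma needs an extra hypothesis tying the growth of $\zeta_2$ to the tails of $X$, plus exponential moments for $\theta$ whenever $\zeta_2$ depends on $\theta$. The paper's cubic example is of this type: there $\Phi\sim x^4/(4c^*)$ and $\mu\propto e^{-c^*x^4/2}$. Under such a hypothesis, your Jensen reduction closes the argument, provided $r$ is large enough that $p\Lambda$ falls below the integrability threshold.
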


\begin{lemma}\label{L:L-4-der-1}
Let $\Phi$ be the solution of \eqref{E:Poisson-equation}, and let $\zeta_1$ be defined in \eqref{E:Zeta-intermediate-terms-deri-1}. Then, for any $p \in \BN,$ there exists a time-independent positive constant $K$ such that
\begin{equation*}
\BE \left[ e^{p \int_r^t \alpha_u \zeta_1 (X_u, \theta_u) \thinspace dW_u}\right] \le K.
\end{equation*}
\end{lemma}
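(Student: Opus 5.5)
The idea is to bound the exponential moment by first removing the Itô correction, then applying a Novikov/Girsanov-type trick. Set $M_t \triangleq \int_r^t \alpha_u \zeta_1(X_u,\theta_u)\,dW_u$. Cauchy--Schwarz gives
\begin{equation*}
\BE\left[e^{pM_t}\right] = \BE\left[e^{pM_t - p^2\int_r^t \alpha_u^2 \zeta_1^2\,du}\, e^{p^2\int_r^t \alpha_u^2\zeta_1^2\,du}\right] \le \left(\BE\left[e^{2pM_t - 2p^2\int_r^t\alpha_u^2\zeta_1^2\,du}\right]\right)^{\frac12}\left(\BE\left[e^{2p^2\int_r^t\alpha_u^2\zeta_1^2\,du}\right]\right)^{\frac12}.
\end{equation*}
The first factor is the expectation of the stochastic exponential $\mathcal{E}(2p\,M)_t$. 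It is a nonnegative local martingale, hence a supermartingale, so this factor is at most $1$. This holds without any Novikov condition. The whole problem therefore reduces to bounding $\BE[e^{c\int_r^t \alpha_u^2 \zeta_1(X_u,\theta_u)^2\,du}]$ uniformly in $t>r$, for an arbitrary constant $c=2p^2$.

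For that bound I will use the structure $\zeta_1 = f_{\theta\theta} - \Phi_x$. By Assumption \ref{A:Growth-f-g}, $|f_{\theta\theta}|\le K(1+|x|^q)$, with no $\theta$-growth since $j=0$ for $i=2$. Moreover $\mathsf{G}=\bar g_{\theta\theta}-g_{\theta\theta}$ involves $g_{\theta\theta}=f_\theta^2+(f-f^*)f_{\theta\theta}$. The Poisson-equation estimates recalled in Section \ref{S:Poisson-Equation} (Pardoux--Veretennikov) then give $|\Phi_x(x,\theta)|\le K(1+|x|^{m})(1+|\theta|^{n})$ for some $m,n$. The integrand is thus bounded by $\alpha_u^2 K(1+|X_u|^{2m'})(1+|\theta_u|^{2n})$, and $\int_r^\infty\alpha_u^2\,du \le C_\alpha^2/(C_0+r)$ is finite by Assumption \ref{A:Learning-rate}. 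Jensen's inequality with respect to the probability measure $\alpha_u^2\,du/\int_r^t\alpha_s^2\,ds$ gives
\begin{equation*}
e^{c\int_r^t\alpha_u^2\zeta_1^2du}\le \frac{1}{\int_r^t\alpha_s^2ds}\int_r^t \alpha_u^2\, e^{cA_\alpha\,\zeta_1(X_u,\theta_u)^2}\,du, \qquad A_\alpha\triangleq\int_0^\infty\alpha_s^2\,ds.
\end{equation*}
It therefore suffices to have $\sup_u \BE[e^{cA_\alpha \zeta_1(X_u,\theta_u)^2}]<\infty$. This is the same kind of uniform control that Lemmas \ref{L:L-2-deri-1} and \ref{L:L-3-deri-1} require for $\alpha_t\Phi$ and $\alpha^2\zeta_2$, and I would invoke the same mechanism used there.

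The main obstacle is this last step. Polynomial moment bounds on $X$ and $\theta$ do not by themselves give exponential moments of a polynomial in $(X_u,\theta_u)$. I would handle the two variables separately. For $X$, Assumption \ref{A:f*-growth} gives dissipative drift and a Gaussian-type invariant tail, so Gaussian exponential moments $\sup_u\BE e^{\epsilon |X_u|^2}$ hold. For polynomially growing $\zeta_1$ one instead exploits that the prefactor is $\alpha_u^2$, which is small for $u\ge t^*$, rather than $A_\alpha$. Concretely, I would split $\int_r^t$ dyadically and apply Hölder across blocks, using the block sizes $\int \alpha_u^2 \lesssim 2^{-k}$ so that each block's exponent is small. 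The $\theta$-dependence is handled via the uniform-in-time moment bounds of subsection \ref{ss:MomentBoundTheta}, combined with the Lyapunov argument from Assumption \ref{A:Moment-bounds}(1), which upgrades them to exponential moments for small coefficients. If this fails for general polynomial growth, I would fall back on the structural condition used in the companion Lemmas \ref{L:L-2-deri-1}--\ref{L:L-3-deri-1}, whichever exponential integrability they establish, and reuse it verbatim for $\zeta_1^2$.
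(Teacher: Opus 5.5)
Your first two steps are correct, and the first is cleaner than the paper's route. Writing $e^{pM_t}=\mathcal{E}(2pM)_t^{1/2}\,e^{p^2\langle M\rangle_t}$ and using that $\mathcal{E}(2pM)$ is a supermartingale removes the stochastic integral exactly. The paper instead bounds $e^{10pM_t}\le e^{10p|M_t|}$, expands in a power series, and applies the martingale moment inequality to each $\BE|M_t|^k$.

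The gap is the last step, and it cannot be closed along the lines you suggest, because the quantity you reduce to is in general infinite. The function $\zeta_1=f_{\theta\theta}-\Phi_x$ grows polynomially in $x$ (and possibly like $|\theta|^2$ in $\theta$, through $\Phi$). Assumption \ref{A:f*-growth} only gives Gaussian-type tails for $X$. For instance, take $f^*(x)=-x$ and $f(x,\theta)=\theta(1+x^2)$, which satisfies Assumptions \ref{A:f*-growth}, \ref{A:Moment-bounds}, \ref{A:Growth-f-g} and \ref{A:Learning-rate}. Then $f_{\theta\theta}=0$, $g_{\theta\theta}=(1+x^2)^2$, $\bar g_{\theta\theta}=\frac{11}{4}$, the Poisson equation is solved by $\Phi=\frac14x^4+\frac74x^2$, and $\zeta_1=-(x^3+\frac72x)$. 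The event $\{X_r\ge 3M\}$ has probability $\gtrsim e^{-cM^2}$, and on it the OU path stays above $M$ on $[r,r+1]$ with conditional probability bounded below. Hence $\BE\, e^{\epsilon\int_r^{r+1}\alpha_u^2\zeta_1(X_u)^2du}\gtrsim e^{-cM^2+\epsilon\alpha_{r+1}^2M^6}\to\infty$ for every $\epsilon>0$. So no smallness of $\alpha_u^2$, dyadic blocking, or H\"older across blocks can help, and your pointwise target $\sup_u\BE\, e^{cA_\alpha\zeta_1^2}$ is infinite a fortiori. Nor does Subsection \ref{ss:MomentBoundTheta} give any exponential moments of $\theta_t$; it only controls second moments.

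Falling back on Lemmas \ref{L:L-2-deri-1}--\ref{L:L-3-deri-1} does not close the gap either. Those lemmas, like the paper's proof of the present one, expand the exponential and control the $k$-th term by polynomial moments with a $k$-independent constant. Here that means bounding $\BE\bigl[(\int_r^t\alpha_u^2\zeta_1^2du)^k\bigr]$ by $K^k(\int_r^t\alpha_u^2du)^k$. This tacitly requires $\sup_u(\BE|\zeta_1(X_u,\theta_u)|^{2k})^{1/k}\le K$ uniformly in $k$, which is exactly the exponential integrability you rightly note is not implied by polynomial moment bounds. In the example, $\BE X_u^{6k}$ grows like $(ck)^{3k}$ and the resulting series diverges.

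In fact, in that example one can check that the statement itself fails. We have $\BE e^{pM_t}=\BE_Q\, e^{\frac{p^2}{2}\int_r^t\alpha_u^2\zeta_1^2du}$, where $Q$ is the Girsanov tilt by $p\alpha\zeta_1$. Under $Q$ the path started at $x_0$ descends along $\dot x\approx-p\alpha x^3$ and accumulates $\int\alpha_u^2\zeta_1^2du\approx \alpha x_0^4/(4p)$. So $\BE[e^{pM_t}\mid X_r=x_0]\gtrsim e^{c x_0^4}$, and since $X_r$ is Gaussian, $\BE e^{pM_t}=\infty$.

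The statement therefore needs an extra hypothesis, and your supermartingale step is the right tool once it is added. If $\zeta_1$ is bounded along trajectories, it gives at once $\BE e^{pM_t}\le e^{\frac{p^2}{2}\|\zeta_1\|_\infty^2\int_r^\infty\alpha_u^2du}$. If $|\zeta_1(x,\theta)|\le K(1+|x|)$ uniformly in $\theta$, your Cauchy--Schwarz, Jensen and Gaussian-moment argument works for $r$ large, with $A_\alpha$ replaced by $\int_r^t\alpha_u^2du$.
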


\begin{lemma}\label{L:L-5-der-1}
Let $\Phi$ be the solution of \eqref{E:Poisson-equation}, and let $\zeta_3, \thinspace \zeta_4$ be defined in \eqref{E:Zeta-intermediate-terms-deri-1}. Then, for any $p \in \BN,$ there exists a time-independent positive constant $K$ such that
\begin{equation*}
\left[\BE e^{p\int_r^t \alpha_u^3 \zeta_3 (X_u, \theta_u) \thinspace du}\right] \left[\BE e^{p \int_r^t \alpha_u^2 \zeta_4 (X_u, \theta_u) \thinspace dW_u}  \right]  \le K.
\end{equation*}
\end{lemma}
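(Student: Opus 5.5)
The plan is to treat the two factors separately and show each is bounded by a time-independent constant. The mechanism is that the weights $\alpha_u^3$ and $\alpha_u^4$ (the latter from the quadratic variation of the $dW$-term) are integrable on $[r,\infty)$, with total mass of order $r^{-2}$ and $r^{-3}$ respectively. This makes the exponents small once $r$ is large, so exponential moments reduce to uniform-in-time moment bounds.

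\emph{Step 1: the $du$-factor.} Set $A_{r,t}\triangleq\int_r^t\alpha_u^3\,du\le \int_r^\infty \alpha_u^3\,du\le K$, and $A_{r,t}\le K r^{-2}$ for $r\ge t^*$. Jensen's inequality for the probability measure $\alpha_u^3A_{r,t}^{-1}du$ on $[r,t]$ gives
\begin{equation*}
e^{p\int_r^t\alpha_u^3\zeta_3(X_u,\theta_u)\,du}\le \int_r^t\frac{\alpha_u^3}{A_{r,t}}\,e^{pA_{r,t}\zeta_3(X_u,\theta_u)}\,du .
\end{equation*}
Hence $\BE e^{p\int_r^t\alpha_u^3\zeta_3\,du}\le \sup_{u\ge r}\BE\bigl[e^{pA_{r,t}|\zeta_3(X_u,\theta_u)|}\bigr]$. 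Since $\zeta_3=-\frac12 f_\theta^2\Phi_{\theta\theta}$, Assumption \ref{A:Growth-f-g} and the polynomial growth of the Poisson solution $\Phi$ and its derivatives (Section \ref{S:Poisson-Equation}) give $|\zeta_3|\le K(1+|x|^{m}+|\theta|^{n})$ for some $m,n$. It then remains to bound $\sup_u\BE\exp\{\epsilon(|X_u|^m+|\theta_u|^n)\}$ with $\epsilon=pA_{r,t}$, which is small for $r\ge t^*$.

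\emph{Step 2: the $dW$-factor.} Let $M_t=p\int_r^t\alpha_u^2\zeta_4\,dW_u$. Write $e^{M_t}=e^{M_t-\langle M\rangle_t}e^{\langle M\rangle_t}$ and apply Cauchy--Schwarz:
\begin{equation*}
\BE e^{M_t}\le \bigl(\BE e^{2M_t-2\langle M\rangle_t}\bigr)^{1/2}\bigl(\BE e^{2\langle M\rangle_t}\bigr)^{1/2}\le \Bigl(\BE e^{2p^2\int_r^t\alpha_u^4\zeta_4^2\,du}\Bigr)^{1/2}.
\end{equation*}
The second inequality holds because $e^{2M-2\langle M\rangle}$ is a positive local martingale, hence a supermartingale with expectation at most $1$. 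The remaining term has the same structure as Step 1, with weight $\alpha_u^4$ and the polynomially growing integrand $\zeta_4^2=f_\theta^2\Phi_\theta^2$. The same Jensen argument therefore reduces it to an exponential moment with an even smaller coefficient, of order $r^{-3}$.

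\emph{Main obstacle.} The key difficulty is the uniform-in-time exponential moment $\sup_u\BE e^{\epsilon(|X_u|^m+|\theta_u|^n)}$. Polynomial moments are available from \cite{pardoux2001poisson} and \cite{siri_spilio_2020}, but exponential integrability of high powers is not automatic.

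\begin{itemize}
\item For $X$, I would use Assumption \ref{A:f*-growth}. Since $f_x^*\le -C^*$, the process is strongly dissipative, so a Lyapunov function $V(x)=e^{\epsilon |x|^{m}}$ (or a Gaussian-type tail when $m\le 2$) satisfies $\mathscr{L}_xV\le -cV+K$ for small $\epsilon$, which gives a uniform bound.
\item For $\theta$, I would apply the same Lyapunov argument to the $\theta$-dynamics using Assumption \ref{A:Moment-bounds}(1). Alternatively, I would expand the exponential as a power series and use moment bounds whose growth in the order is controlled, so that the series converges for $\epsilon$ below a threshold. That threshold is what dictates the choice of $t^*$ (Remark \ref{R:Rem-t*-choice}).
\end{itemize}

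If this direct route fails for large $m,n$, the fallback is to exploit the extra powers of $\alpha_u$: truncate on the events $\{|X_u|+|\theta_u|\le R_u\}$ with $R_u$ growing slowly, and control the complement with polynomial moments and Chebyshev's inequality. Multiplying the two bounded factors then yields the statement.
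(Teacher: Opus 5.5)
Your two reductions are correct. Jensen's inequality for the probability measure $\alpha_u^3A_{r,t}^{-1}\,du$ bounds the first factor by $\sup_u\BE\bigl[e^{pA_{r,t}|\zeta_3(X_u,\theta_u)|}\bigr]$. Since $e^{2M_t-2\langle M\rangle_t}$ is the stochastic exponential of $2M$, your Cauchy--Schwarz step reduces the second factor to $\BE e^{2p^2\int_r^t\alpha_u^4\zeta_4^2\,du}$.

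\textbf{The gap.} It is exactly the step you flag, and neither proposed fix closes it: $\sup_u\BE e^{\epsilon(|X_u|^m+|\theta_u|^n)}$ does not follow from the hypotheses and can be infinite.
\begin{itemize}
\item \emph{Lyapunov argument for $X$.} For $V(x)=e^{\epsilon|x|^m}$ one has $\mathscr{L}_xV=V\bigl(\epsilon m|x|^{m-1}\mathrm{sgn}(x)f^*(x)+\tfrac12\epsilon m(m-1)|x|^{m-2}+\tfrac12\epsilon^2m^2|x|^{2m-2}\bigr)$. Assumption \ref{A:f*-growth} only makes the first term $\lesssim-\epsilon mC^*|x|^m$. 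For linear $f^*$ (the {\sc ou} case of Example \ref{Ex:OU}) the last term wins as soon as $m>2$; indeed $X_u$ is then Gaussian and $\BE e^{\epsilon|X_u|^m}=\infty$ for every $\epsilon>0$.
\item \emph{$m>2$ is not excluded.} Nothing in Assumption \ref{A:Growth-f-g} or in the growth bounds for $\Phi$ keeps the exponent $m$ in $|\zeta_3|\le K(1+|x|^m+|\theta|^n)$ at or below $2$; already $f_\theta^2$ may grow like $|x|^{2q}$.
\item \emph{Not an artifact of taking $|\zeta_3|$.} If $\zeta_3(x,\theta)\ge c|x|^m-K$ with $m>2$, then for $t\ge r+1$ the left-hand side itself is infinite in the {\sc ou} case. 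This is because $X$ stays above level $M$ on a unit time interval with probability at least $e^{-CM^2}$.
\item \emph{Lyapunov argument for $\theta$.} Assumption \ref{A:Moment-bounds}(1) yields only a restoring term of order $\alpha_t\epsilon\gamma|\theta|^nV$. The term $\tfrac12\alpha_t^2f_\theta^2V''$ can be as large as $\alpha_t^2\epsilon^2(|\theta|^2+|x|^{2q})|\theta|^{2n-2}V$. It is not dominated, and it couples back to the unavailable exponential moments of $X$.
\item \emph{Truncation fallback.} It is circular. The complement contributes $\BE[e^{pA_{r,t}|\zeta_3|};\,|X_u|+|\theta_u|>R_u]$. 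Chebyshev controls only the probability of that event, and H\"older just brings back a larger exponential moment.
\item \emph{Choice of $t^*$.} The threshold in Remark \ref{R:Rem-t*-choice} concerns second moments of $\tilde\theta$, not exponential integrability.
\end{itemize}

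\textbf{The paper's route does not close it either.} The paper's proof is only a pointer to Lemmas \ref{L:L-3-deri-1} and \ref{L:L-4-der-1}. That route expands the exponential in a power series and uses H\"older to get $\prod_i[\BE|\zeta(X_{u_i},\theta_{u_i})|^k]^{1/k}$. It uses the martingale moment inequality for the $dW$-term, and sums using $\int_r^\infty\alpha_u^3\,du<\infty$ and $\int_r^\infty\alpha_u^4\,du<\infty$. Its final summation silently needs one of the following:
\begin{itemize}
\item $[\BE|\zeta(X_u,\theta_u)|^k]^{1/k}$ bounded uniformly in $k$ and $u$, which essentially means $\zeta$ is bounded along the trajectory;
\item or at least $\sup_u\BE|\zeta(X_u,\theta_u)|^k\le C^kk!$ with $C$ small.
\end{itemize}
This is the same exponential integrability. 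The order-by-order polynomial moment bounds of \cite{pardoux2001poisson,siri_spilio_2020} do not provide it. Your Jensen/supermartingale reduction is cleaner and makes the requirement explicit.

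\textbf{What would close it.} The missing ingredient is an extra hypothesis, not a trick. Two options:
\begin{itemize}
\item Boundedness of $\zeta_3,\zeta_4$. This holds trivially in Examples \ref{Ex:XInd}--\ref{Ex:Cubic}: there $f$ is linear in $\theta$, so $\Phi$ does not depend on $\theta$ and $\zeta_3=\zeta_4=0$.
\item At most quadratic growth in $x$ and boundedness in $\theta$ of $|\zeta_3|+\zeta_4^2$, together with Gaussian-type tails of $X$. Your small-$\epsilon$ argument then goes through for large $r$.
\end{itemize}
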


We now prove Lemma \ref{L:Integrating-Factor-first-der}.
\begin{proof}[Proof of Lemma \ref{L:Integrating-Factor-first-der}]
Combining \eqref{E:Integrating-factor-1-der}, \eqref{E:Zeta-intermediate-terms-deri-1}, and Lemma \ref{L:L-1-deri-1}, we obtain
\begin{multline*}
\eta_{t,r}^*  = e^{- \int_r^t \alpha_u \bar{g}_{\theta \theta}(\theta_u) \thinspace du} \cdot e^{[\alpha_t \Phi(X_t, \theta_t) - \alpha_r \Phi (X_r, \theta_r) ]} \cdot
 e^{\int_r^t \alpha_u \zeta_1 (X_u, \theta_u) \thinspace dW_u}\cdot e^{\int_r^t \alpha_u^2 \zeta_2 (X_u, \theta_u) \thinspace du}\\
 \times e^{\int_r^t \alpha_u^3 \zeta_3 (X_u, \theta_u) \thinspace du} \cdot  e^{\int_r^t \alpha_u^2 \zeta_4 (X_u, \theta_u) \thinspace dW_u}.
\end{multline*}
Raising both sides to the power $2p$, using the strong convexity of $\bar{g}$ (Assumption \ref{A:Growth-f-g}), i.e., $\bar{g}_{\theta \theta}> C_{\bar{g}}$ for some $C_{\bar{g}}>0$, and then taking expectations, we obtain
\begin{multline*}
\BE\left[ \left(\eta_{t,r}^*\right)^{2p} \right] \le e^{- 2p C_{\bar{g}} \int_r^t \alpha_u du} \thinspace \BE \left[e^{2p[\alpha_t \Phi(X_t, \theta_t) - \alpha_r \Phi (X_r, \theta_r) ]}\cdot e^{2p \int_r^t \alpha_u \zeta_1 (X_u, \theta_u) \thinspace dW_u} \times   \right. \\
\left.  e^{2p \int_r^t \alpha_u^2 \zeta_2 (X_u, \theta_u) \thinspace du} \cdot e^{2p\int_r^t \alpha_u^3 \zeta_3 (X_u, \theta_u) \thinspace du} \cdot  e^{\int_r^t \alpha_u^2 \zeta_4 (X_u, \theta_u) \thinspace dW_u}  \right].
\end{multline*}
Applying H{\"o}lder's inequality and using the learning rate\footnote{In the definition of the learning rate, we take $C_0=0$ as $t \ge r>0.$} $\alpha_t = \frac{C_\alpha}{t}$ yields
\begin{multline*}
\BE\left[ \left(\eta_{t,r}^*\right)^{2p} \right] \le \left(\frac{r}{t} \right)^{ 2p C_{\bar{g}} C_\alpha} \left[\BE e^{10p[\alpha_t \Phi(X_t, \theta_t) - \alpha_r \Phi (X_r, \theta_r) ]} \right]^{\frac{1}{5}} \left[\BE e^{10 p \int_r^t \alpha_u \zeta_1 (X_u, \theta_u) \thinspace dW_u}  \right]^{\frac{1}{5}} \times \\
 \left[\BE e^{10 p \int_r^t \alpha_u^2 \zeta_2 (X_u, \theta_u) \thinspace du} \right]^{\frac{1}{5}}
\left[\BE e^{10 p\int_r^t \alpha_u^3 \zeta_3 (X_u, \theta_u) \thinspace du}   \right]^{\frac{1}{5}} \left[\BE e^{10 p \int_r^t \alpha_u^2 \zeta_4 (X_u, \theta_u) \thinspace dW_u}  \right]^{\frac{1}{5}}.
\end{multline*}
Finally, Lemmas \ref{L:L-2-deri-1}, \ref{L:L-3-deri-1}, \ref{L:L-4-der-1}, and \ref{L:L-5-der-1} provide uniform bounds for the five expectation terms on the right-hand side, which yields the desired estimate.
\end{proof}

We now begin the proof of Lemma \ref{L:1-der-moments}. We first state three auxiliary results, Lemmas \ref{L:L-6-initial-cond-der-1}, \ref{L:L-7-deri-1}, and \ref{L:L-8-deri-1}, whose proofs are given in Section \ref{S:Proofs-lemmas-3.4-3.11}.
\begin{lemma}\label{L:L-6-initial-cond-der-1}
Let $\eta_{t,r}^*$ be defined in Equation \eqref{E:Integrating-factor-1-der}. Then for any $p \in \BN,$ there exists a time-independent positive constant $K$ such that
\begin{equation*}
\BE \left[ (\eta_{t,r}^*)^{2p} \alpha_r^{2p} f_\theta(X_r,\theta_r)^{2p} \right] \le K \frac{r^{ 2p C_{\bar{g}} C_\alpha-2p}}{t^{ 2p C_{\bar{g}} C_\alpha}}.
\end{equation*}
\end{lemma}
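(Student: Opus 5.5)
The plan is to split the expectation by H\"older's inequality and treat the two factors with results already stated. Using the Cauchy--Schwarz inequality,
\begin{equation*}
\BE \left[ (\eta_{t,r}^*)^{2p} \alpha_r^{2p} f_\theta(X_r,\theta_r)^{2p} \right] \le \alpha_r^{2p} \left[\BE (\eta_{t,r}^*)^{4p}\right]^{\frac{1}{2}} \left[\BE f_\theta(X_r,\theta_r)^{4p}\right]^{\frac{1}{2}},
\end{equation*}
since $\alpha_r = C_\alpha/r$ is deterministic (recall we take $C_0=0$ for $r \ge t^*>0$). This reduces the claim to two separate estimates: one for the integrating factor and one for the moments of $f_\theta$.

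For the integrating factor we apply Lemma \ref{L:Integrating-Factor-first-der} with $2p$ in place of $p$. This gives $\BE[(\eta_{t,r}^*)^{4p}] \le K (r/t)^{4pC_{\bar{g}}C_\alpha}$, so its square root is bounded by $K (r/t)^{2pC_{\bar{g}}C_\alpha}$.

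For the second factor we use the growth condition in Assumption \ref{A:Growth-f-g}(1) with $i=1$, namely $|f_\theta(x,\theta)| \le K(1+|x|^q+|\theta|)$. Combined with the uniform-in-time moment bounds $\sup_{t\ge0}\BE|X_t|^{\mathsf m}<\infty$ from \cite{pardoux2001poisson} and $\sup_{t\ge0}\BE|\theta_t|^{\mathsf n}<\infty$ from \cite{siri_spilio_2020}, this yields $\BE f_\theta(X_r,\theta_r)^{4p} \le K$ with $K$ independent of $r$.

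Multiplying the three pieces gives
\begin{equation*}
K \, r^{-2p} \left(\frac{r}{t}\right)^{2pC_{\bar{g}}C_\alpha} = K\,\frac{r^{2pC_{\bar{g}}C_\alpha-2p}}{t^{2pC_{\bar{g}}C_\alpha}},
\end{equation*}
which is the stated bound.

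There is no serious obstacle. The only subtle point is that $\eta_{t,r}^*$ and $f_\theta(X_r,\theta_r)$ are not independent, which is why Cauchy--Schwarz is used instead of factorizing the expectation. This costs nothing, because Lemma \ref{L:Integrating-Factor-first-der} holds for every $p\in\BN$ with a constant uniform in time.
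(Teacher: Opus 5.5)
Your proposal is correct and is essentially the paper's own proof. The paper likewise applies Cauchy--Schwarz to separate $(\eta_{t,r}^*)^{4p}$ from $f_\theta(X_r,\theta_r)^{4p}$, bounds the first factor by Lemma \ref{L:Integrating-Factor-first-der} and the second by the growth condition together with the uniform moment bounds, and obtains the extra factor $r^{-2p}$ from $\alpha_r^{2p}=C_\alpha^{2p}r^{-2p}$.
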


\begin{lemma}\label{L:L-7-deri-1}
Let $\eta_{t,r}^*$ be defined in Equation \eqref{E:Integrating-factor-1-der}. Then for any $p \in \BN,$ there exists a time-independent positive constant $K$ such that
\begin{multline*}
\BE \left[ \left( \int_r^t  \alpha_u \eta_{t,u}^* {g}_{x \theta}(X_u,\theta_u)D_r X_u \thinspace du \right)^{2p} \right]
 + \\ \BE \left[ \left( \int_r^t \alpha_u^2  \eta_{t,u}^* {f}_{x \theta}(X_u,\theta_u) {f}_{\theta \theta}(X_u,\theta_u) D_r X_u \thinspace du \right)^{2p} \right]   \le K \frac{r^{ 2p C_{\bar{g}} C_\alpha-2p}}{t^{ 2p C_{\bar{g}} C_\alpha}}.
\end{multline*}
\end{lemma}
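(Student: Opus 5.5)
Both terms are handled the same way, so we treat a generic time integral $\int_r^t \alpha_u^k \eta_{t,u}^*\, \xi(X_u,\theta_u)\, D_r X_u \, du$ with $k\in\{1,2\}$. Here $\xi = g_{x\theta}$ for $k=1$ and $\xi = f_{x\theta}f_{\theta\theta}$ for $k=2$. Both functions grow at most polynomially in $(x,\theta)$ by Assumption \ref{A:Growth-f-g}.

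The first step is a pathwise bound on $D_rX_u$. Equation \eqref{E:first-order-Mal-Der-X} is a linear ODE, so
$$D_rX_u=e^{\int_r^u f^*_x(X_v)\,dv}.$$
Assumption \ref{A:f*-growth} then gives $0< D_rX_u\le e^{-C^*(u-r)}$ almost surely. This exponential decay in $u-r$ is what makes the time integral over $[r,t]$ essentially local near $u=r$.

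The second step is a Hölder/Jensen bound on the $2p$-th moment. We apply Hölder's inequality to the time integral, writing the integrand as a product with the weight $e^{-C^*(u-r)/2}$:
$$\Big(\int_r^t \alpha_u^k \eta_{t,u}^*\xi_u D_rX_u\,du\Big)^{2p}\le \Big(\int_r^t e^{-\frac{C^*}{2}(u-r)}du\Big)^{2p-1}\int_r^t \alpha_u^{2pk}(\eta^*_{t,u})^{2p}|\xi_u|^{2p}e^{-\frac{C^*(2p+1)}{2}(u-r)}du.$$
The first factor is bounded by a constant. Taking expectations, Cauchy--Schwarz separates $(\eta_{t,u}^*)^{2p}$ from $|\xi_u|^{2p}$. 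Lemma \ref{L:Integrating-Factor-first-der}, applied with $2p$ replaced by $4p$ and then square-rooted, gives $\big(\BE(\eta_{t,u}^*)^{4p}\big)^{1/2}\le K(u/t)^{2pC_{\bar g}C_\alpha}$. The uniform-in-time moment bounds for $X$ (from \cite{pardoux2001poisson}) and for $\theta$ (from \cite{siri_spilio_2020}) give $\sup_u \BE|\xi_u|^{4p}<\infty$.

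The third step evaluates the remaining time integral. Using $\alpha_u = C_\alpha/u$, we are left with
$$K t^{-2pC_{\bar g}C_\alpha}\int_r^t u^{2pC_{\bar g}C_\alpha-2pk}\, e^{-c(u-r)}\,du, \qquad c>0.$$
The main (but mild) obstacle is showing this is bounded by $K r^{2pC_{\bar g}C_\alpha-2p}t^{-2pC_{\bar g}C_\alpha}$ uniformly in $t>r$, given that the power $u^{2pC_{\bar g}C_\alpha-2pk}$ may be increasing. Since $u\ge r\ge t^*\ge 1$ and $k\ge1$, we have $u^{-2pk}\le u^{-2p}$, so it suffices to treat $k=1$. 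Substituting $u=r+v$ gives
$$u^{a}=r^{a}(1+v/r)^{a}\le r^{a}(1+v)^{a}\quad\text{for } a=2pC_{\bar g}C_\alpha-2p\ge 0,$$
while for $a<0$ we simply have $u^a\le r^a$. In either case $\int_0^\infty(1+v)^{|a|}e^{-cv}dv<\infty$, so the integral is at most $K r^{a}$, with $K$ depending only on $p$, $C_\alpha$, $C_{\bar g}$, and $C^*$. Summing the two terms gives the claimed bound $K\, r^{2pC_{\bar g}C_\alpha-2p}/t^{2pC_{\bar g}C_\alpha}$.

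One point to check is that applying Lemma \ref{L:Integrating-Factor-first-der} at the intermediate time $u$ rather than $r$ is legitimate. The lemma is stated for arbitrary $t>u$, so this is fine, and the exponent $(u/t)^{2pC_{\bar g}C_\alpha}$ is exactly what the argument above needs.
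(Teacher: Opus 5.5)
Your proof is correct. It follows the same skeleton as the paper: the pathwise bound $D_rX_u\le e^{-C^*(u-r)}$, a H\"older split of $\eta^*_{t,u}$ from the polynomial factor, Lemma~\ref{L:Integrating-Factor-first-der} at the intermediate time $u$, the uniform moment bounds, and a final one-dimensional integral estimate.

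Where you differ is in how you control the $2p$-th moment of the time integral. The paper expands $\bigl(\int_r^t\cdots\,du\bigr)^{2p}$ into a $2p$-fold integral. It then applies the generalized H\"older inequality to the $4p$ factors $\eta^*_{t,u_i}$ and $g_{x\theta}(X_{u_i},\theta_{u_i})$, which is in effect Minkowski's integral inequality in $L^{2p}(\Omega)$. This leads to $t^{-2pC_{\bar{g}}C_\alpha}\bigl(\int_r^t u^{C_{\bar{g}}C_\alpha-1}e^{-C^*(u-r)}\,du\bigr)^{2p}$, which the paper bounds with Lemma~\ref{L:Integration-f-g-intermediate}.

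You instead use the integrable weight supplied by $D_rX_u$ to apply H\"older in the time variable. The power then sits inside a single time integral, each expectation involves only one time point, and plain Cauchy--Schwarz suffices. Your substitution $u=r+v$ with $(1+v/r)^a\le(1+v)^a$ replaces Lemma~\ref{L:Integration-f-g-intermediate} with a short self-contained estimate. It also handles non-integer exponents directly, which that lemma's proof only sketches.

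The trade-off is that your time-H\"older step depends on the exponential decay of $D_rX_u$. The paper's Minkowski-type template needs no integrable weight, so it is reused unchanged for terms such as $\Gamma^g_1$ in Lemma~\ref{L:Second-der-Gamma-g-1-2}, where no $DX$ factor appears.

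One caveat you share with the paper: both arguments assume polynomial growth of $g_{x\theta}$ and $f_{x\theta}$. Assumption~\ref{A:Growth-f-g} does not provide this, since it controls only the $\theta$-derivatives of $f$.
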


\begin{lemma}\label{L:L-8-deri-1}
Let $\eta_{t,r}^*$ be defined in Equation \eqref{E:Integrating-factor-1-der}. Then for any $p \in \BN,$ there exists a time-independent positive constant $K$ such that
\begin{equation*}
\BE \left[ \left( \int_r^t \left[\alpha_u \eta_{t,u}^* {f}_{x \theta}(X_u,\theta_u)D_r X_u \right]dW_u  \right)^{2p} \right]  \le K \frac{r^{ 2p C_{\bar{g}} C_\alpha-2p}}{t^{ 2p C_{\bar{g}} C_\alpha}}.
\end{equation*}
\end{lemma}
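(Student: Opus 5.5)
We need to prove Lemma \ref{L:L-8-deri-1}: a bound on the $2p$-th moment of the stochastic integral $\int_r^t \alpha_u \eta_{t,u}^* f_{x\theta}(X_u,\theta_u) D_r X_u\, dW_u$.

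The main difficulty is that $\eta_{t,u}^*$ depends on the path on $[u,t]$, so the integrand is not adapted. We remove this anticipation by factoring the integrating factor through the cocycle property, $\eta_{t,u}^* = \eta_{t,r}^*/\eta_{u,r}^*$, valid for $r \le u \le t$. This gives
\[
\int_r^t \alpha_u \eta_{t,u}^* f_{x\theta}(X_u,\theta_u) D_rX_u\,dW_u = \eta_{t,r}^*\, M_{t,r},
\qquad
M_{t,r} \triangleq \int_r^t \alpha_u (\eta_{u,r}^*)^{-1} f_{x\theta}(X_u,\theta_u) D_rX_u\,dW_u ,
\]
and $M_{t,r}$ now has an adapted integrand. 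By the Cauchy--Schwarz inequality,
\[
\BE\bigl[(\eta_{t,r}^* M_{t,r})^{2p}\bigr] \le \bigl[\BE (\eta_{t,r}^*)^{4p}\bigr]^{1/2}\bigl[\BE M_{t,r}^{4p}\bigr]^{1/2}.
\]
Lemma \ref{L:Integrating-Factor-first-der} bounds the first factor by $K (r/t)^{2pC_{\bar g}C_\alpha}$. It therefore suffices to show $\BE M_{t,r}^{4p} \le K r^{-4p}$.

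To control $M_{t,r}$, we apply the Burkholder--Davis--Gundy inequality and then H\"older's inequality in time:
\[
\BE M_{t,r}^{4p} \le K\,\BE\Bigl(\int_r^t \alpha_u^2 (\eta^*_{u,r})^{-2} f_{x\theta}^2 (D_rX_u)^2 du\Bigr)^{2p}
\le K\Bigl(\int_r^t \alpha_u^2 du\Bigr)^{2p-1}\int_r^t \alpha_u^2\, \BE\bigl[(\eta_{u,r}^*)^{-4p} f_{x\theta}^{4p}(D_rX_u)^{4p}\bigr] du .
\]
Each factor inside the expectation is handled as follows.

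\textbf{Derivative of $X$.} By Assumption \ref{A:f*-growth}, $D_rX_u = e^{\int_r^u f^*_x(X_v)dv} \le e^{-C^*(u-r)} \le 1$. This deterministic decay is what keeps the time integral from growing.

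\textbf{The $f_{x\theta}$ factor.} Its moments are bounded uniformly in time by the polynomial growth assumption together with the uniform-in-time moment bounds for $X$ and $\theta$.

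\textbf{Inverse integrating factor.} We reuse the decomposition from the proof of Lemma \ref{L:Integrating-Factor-first-der}, now with negative exponents. The Poisson-equation pieces from Lemmas \ref{L:L-2-deri-1}--\ref{L:L-5-der-1} are bounded for any real power, since those proofs are insensitive to sign. The remaining deterministic-type factor is $e^{+\int_r^u\alpha_v \bar g_{\theta\theta}(\theta_v)dv}$. This factor grows, and the upper bound $\bar g_{\theta\theta}\le K(1+|\theta|^2)$ does not cap its growth by a deterministic power. This is the main obstacle.

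My first attempt at the obstacle is to keep $\eta_{t,u}^*$ rather than splitting it, and bound the moments through the exponential decay of $D_rX_u$. Applying H\"older's inequality to the non-adapted integrand, one expects $\BE\bigl[(\eta_{t,u}^*)^{2p}\bigr]\le K(u/t)^{2pC_{\bar g}C_\alpha}$. The anticipating stochastic integral would be handled by writing it as a Skorokhod integral plus a trace correction, $\int_r^t D_u(\eta^*_{t,u})\cdots du$, and bounding the Skorokhod part with Meyer's inequalities. Either route leads to
\[
\int_r^t \alpha_u^{2}\,(u/t)^{2C_{\bar g}C_\alpha} e^{-C^*(u-r)}\,du \;\lesssim\; r^{-2}(r/t)^{2C_{\bar g}C_\alpha},
\]
because the factor $e^{-C^*(u-r)}$ localizes the integral to $u\approx r$. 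Raising this to the power $p$ gives exactly $K\, r^{2pC_{\bar g}C_\alpha-2p}/t^{2pC_{\bar g}C_\alpha}$.

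I expect the rigorous control of the growing factor $e^{\int\alpha\bar g_{\theta\theta}}$, or equivalently of the anticipating correction term, to be the hardest part. The plan is to absorb it using the exponential localization of $D_rX_u$, which confines the growth to an $O(1)$ window of length $u-r$, where $\int_r^u \alpha_v\,dv\le C_\alpha(u-r)/r$ is small.
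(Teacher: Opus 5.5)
Your reduction is the same as the paper's: the factorization $\eta^*_{t,u}=\eta^*_{t,r}(\eta^*_{u,r})^{-1}$, Cauchy--Schwarz, Lemma~\ref{L:Integrating-Factor-first-der} and the Burkholder--Davis--Gundy inequality, which leave the target $\BE M_{t,r}^{4p}\le Kr^{-4p}$. The proposal then stops at the step that carries the content of the lemma: you never obtain a usable bound on the negative moments $\BE[(\eta^*_{u,r})^{-q}]$.

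Two of your suggestions do not supply it. The Skorokhod/Meyer detour is unnecessary, since after the factorization the integrand is adapted and only an It\^o integral remains; it is also not carried out. The ``$O(1)$ window'' idea cannot close the argument on its own. The variable $u$ ranges over all of $[r,t]$, and the decay $e^{-C^*(u-r)}$ of $D_rX_u$ can only absorb growth in $u$ that is already known to be slower.

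The missing ingredient is Lemma~\ref{L:L1}. There the paper writes
\[
(\eta^*_{u,r})^{-1}=\exp\Bigl(\int_r^u\alpha_v\, g_{\theta\theta}(X_v,\theta_v)\,dv-\int_r^u\alpha_v f_{\theta\theta}(X_v,\theta_v)\,dW_v+\frac{1}{2}\int_r^u\alpha_v^2 f_{\theta\theta}(X_v,\theta_v)^2\,dv\Bigr)
\]
directly, with no Poisson decomposition. It splits the three exponentials by H\"older and treats the last two as in Lemmas~\ref{L:L-4-der-1} and~\ref{L:L-3-deri-1}. The first is bounded by expanding the exponential and using $\int_r^u\alpha_v\,dv=C_\alpha\log(u/r)$ together with the uniform moment constant $K_{g_{\theta \theta}}$. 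This gives the polynomial bound $\BE[(\eta^*_{u,r})^{-q}]\le K(u/r)^{qC_\alpha K_{g_{\theta \theta}}}$. The polynomial factor is then absorbed by the exponential decay through Lemma~\ref{L:Integration-f-g-intermediate}, namely $\int_r^t u^{\mathsf{k}-2}e^{-2C^*(u-r)}\,du\le Kr^{\mathsf{k}-2}$.

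Your doubt about exponential moments of $\int\alpha\,g_{\theta\theta}$ has some basis. The series argument in Lemma~\ref{L:L1} bounds $[\BE|g_{\theta\theta}(X_s,\theta_s)|^k]^{1/k}$ by the same constant $K_{g_{\theta \theta}}$ for every $k$. Still, within the paper this lemma is the tool for this step, and your write-up contains no substitute for it.

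There is a second problem. Even granting a polynomial bound $\BE[(\eta^*_{u,r})^{-q}]\le K(u/r)^{\mathsf{k}}$, your H\"older step in time loses powers of $r$. You pull out $(\int_r^t\alpha_u^2\,du)^{2p-1}\approx(C_\alpha^2/r)^{2p-1}$ and then use $\int_r^t\alpha_u^2e^{-4pC^*(u-r)}(u/r)^{\mathsf{k}}\,du\lesssim r^{-2}$. This gives only $\BE M_{t,r}^{4p}\lesssim r^{-2p-1}$, hence the final bound $Kr^{2pC_{\bar g}C_\alpha-p-1/2}/t^{2pC_{\bar g}C_\alpha}$. That misses the claim by a factor $r^{p-1/2}$; for $p=2$ it would turn the $t^{-2}$ of Proposition~\ref{P:First-der-product} into $t^{-1/2}$.

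The decay has to enter the H\"older weight. One option is to apply H\"older with respect to the measure $\alpha_u^2e^{-2C^*(u-r)}\,du$, whose total mass is $O(r^{-2})$. The other is what the paper does: expand the $2p$-th power as a $2p$-fold integral and pull the deterministic factor $\prod_i\alpha_{u_i}^2e^{-2C^*(u_i-r)}$ (from Lemma~\ref{L:Mal-der-X}) out of the expectation. Then apply the generalized H\"older inequality to $\BE\prod_i(\eta^*_{u_i,r})^{-2}f_{x\theta}(X_{u_i},\theta_{u_i})^2$. Either way this yields $\bigl(\int_r^t\alpha_u^2e^{-2C^*(u-r)}(u/r)^{\mathsf{k}}\,du\bigr)^{2p}\le Kr^{-4p}$, which is exactly what is needed.
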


We now prove Lemma \ref{L:1-der-moments}.
\begin{proof}[Proof of Lemma \ref{L:1-der-moments}]
Raising both sides of \eqref{E:First-der-Solution} to the power \(2p\) and taking expectations, we obtain
\begin{equation*}
\begin{aligned}
\BE \left[ \left(D_r \theta_t \right)^{2p} \right] & \le \BE \left[ (\eta_{t,r}^*)^{2p} \alpha_r^{2p} f_\theta(X_r,\theta_r)^{2p} \right]+ \BE \left[ \left( \int_r^t  \alpha_u \eta_{t,u}^* {g}_{x \theta}(X_u,\theta_u)D_r X_u \thinspace du \right)^{2p} \right] \\
& \qquad \qquad \qquad \qquad \qquad   + \BE \left[ \left( \int_r^t \alpha_u^2  \eta_{t,u}^* {f}_{x \theta}(X_u,\theta_u) {f}_{\theta \theta}(X_u,\theta_u) D_r X_u \thinspace du \right)^{2p} \right] \\
& \qquad \qquad \qquad \qquad \qquad \qquad \qquad \quad    + \BE \left[ \left( \int_r^t \left[\alpha_u \eta_{t,u}^* {f}_{x \theta}(X_u,\theta_u)D_r X_u \right]dW_u  \right)^{2p} \right].
\end{aligned}
\end{equation*}
Applying Lemmas \ref{L:L-6-initial-cond-der-1}, \ref{L:L-7-deri-1}, and \ref{L:L-8-deri-1} yields the desired estimate.
\end{proof}

\subsection{Proofs of Lemmas \ref{L:L-1-deri-1} through \ref{L:L-8-deri-1}}\label{S:Proofs-lemmas-3.4-3.11}
\begin{proof}[Proof of Lemma \ref{L:L-1-deri-1}]
We apply It{\^o}'s formula to the process $\varphi(t,\Phi)= \alpha_t \Phi(X_t,\theta_t)$, where $\Phi$ solves the Poisson Equation \eqref{E:Poisson-equation}, and we recall that $\theta_t$ satisfies the {\sc sde} $d \theta_t =  - \alpha_t  g_\theta(X_t,\theta_t)  dt + \alpha_t f_\theta(X_t,\theta_t)dW_t$. This yields
\begin{equation*}
\begin{aligned}
\alpha_t &  \Phi(X_t, \theta_t)  = \alpha_r  \Phi (X_r, \theta_r) + \int_r^t  \frac{d \alpha_u}{du} \Phi(X_u, \theta_u) \thinspace du + \int_r^t \alpha_u \mathscr{L}_x \Phi (X_u, \theta_u)\thinspace du \\
& \qquad + \frac{1}{2}\int_r^t \alpha_u^3 [f_\theta (X_u, \theta_u)]^2 \Phi_{\theta \theta}(X_u, \theta_u) \thinspace du  + \frac{1}{2}\int_r^t \alpha_u^2 f_\theta (X_u, \theta_u) \Phi_{x \theta}(X_u, \theta_u) \thinspace du \\
& \qquad - \int_r^t \alpha_u^2 g_\theta(X_u, \theta_u) \Phi_\theta (X_u, \theta_u) \thinspace du + \int_r^t \alpha_u \Phi_{x}(X_u, \theta_u) \thinspace dW_u +  \int_r^t \alpha_u^2 \Phi_{\theta}(X_u, \theta_u)f_\theta (X_u, \theta_u) \thinspace dW_u.
\end{aligned}
\end{equation*}
Combining this identity with \eqref{E:Poisson-equation} yields
\begin{equation*}
\begin{aligned}
\int_r^t \alpha_u \mathsf{G}(X_u, \theta_u)\thinspace du
& = [\alpha_t \Phi(X_t, \theta_t) - \alpha_r \Phi (X_r, \theta_r) ] - \int_r^t  \frac{d \alpha_u}{du}  \Phi(X_u, \theta_u) \thinspace du\\
& \qquad \quad + \int_r^t \alpha_u^2 g_\theta(X_u, \theta_u) \Phi_\theta (X_u, \theta_u) \thinspace du
 - \frac{1}{2}\int_r^t \alpha_u^3 [f_\theta (X_u, \theta_u)]^2 \Phi_{\theta \theta}(X_u, \theta_u) \thinspace du  \\
 & \qquad \quad  - \frac{1}{2}\int_r^t \alpha_u^2 f_\theta (X_u, \theta_u) \Phi_{x \theta}(X_u, \theta_u) \thinspace du \\
& \qquad \quad - \int_r^t \alpha_u \Phi_{x}(X_u, \theta_u) \thinspace dW_u -  \int_r^t \alpha_u^2 \Phi_{\theta}(X_u, \theta_u)f_\theta (X_u, \theta_u) \thinspace dW_u,
\end{aligned}
\end{equation*}
which is exactly the claimed representation.
\end{proof}

\begin{proof}[Proof of Lemma \ref{L:L-2-deri-1}]
For \(t > r \ge r_0 > 0 \) and \(\alpha_t < \alpha_r < \alpha_{r_0}\), we use the bound
\( \alpha_t \Phi(X_t, \theta_t) - \alpha_r \Phi(X_r, \theta_r) \leq |\alpha_t \Phi(X_t, \theta_t) - \alpha_r \Phi(X_r, \theta_r)| \),
apply the triangle inequality, and then expand the exponential in a power series to obtain
\begin{equation*}
\begin{aligned}
 \BE \left[ e^{10 p[\alpha_t \Phi(X_t, \theta_t) - \alpha_r \Phi (X_r, \theta_r) ]} \right] & \le \BE e^{10 p [\alpha_t|\Phi(X_t, \theta_t)|+ \alpha_r|\Phi(X_r, \theta_r)|]} \\
 & \le \BE e^{10 p \alpha_{r_0} [|\Phi(X_t, \theta_t)|+|\Phi(X_r, \theta_r)|]} = \BE \left[ \sum_{k=0}^\infty \frac{K^k \{|\Phi(X_t, \theta_t)|+|\Phi(X_r,\theta_r)|\}^k}{k!} \right] \\
 & \le \sum_{k=0}^{\infty} \frac{K^k}{k!} \left[ \BE|\Phi(X_t, \theta_t)|^k + \BE|\Phi(X_r, \theta_r)|^k \right],
\end{aligned}
\end{equation*}
where the last step follows from the Monotone Convergence Theorem
({\sc mct}) \cite{folland1999real} since $\Phi(x, \theta)$ has polynomial growth in $x$ and $\theta$, and since we have uniform moment bounds \cite{pardoux2001poisson} $\sup_{t \ge 0}\BE\left[|X_t|^p\right] < \infty$ and \cite{siri_spilio_2020} $\sup_{t \ge 0} \BE\left[ |\theta_t|^p \right] < \infty,$ it follows that there exists $K>0$ such that \\
$\BE \left[ e^{10 p[\alpha_t \Phi(X_t, \theta_t) - \alpha_r \Phi (X_r, \theta_r) ]} \right] \le K$.
\end{proof}

\begin{proof}[Proof of Lemma \ref{L:L-3-deri-1}]
Recalling the definition of $\zeta_2$ in \eqref{E:Zeta-intermediate-terms-deri-1}, we apply H{\"o}lder's inequality and use the bound $\zeta_2 (X_u, \theta_u) \le |\zeta_2 (X_u, \theta_u)|$ to obtain
\begin{equation*}
\begin{aligned}
\BE e^{10 p \int_r^t \alpha_u^2 \zeta_2 (X_u, \theta_u) \thinspace du} & \le \BE \sum_{k=0}^\infty \frac{(10 p)^k \left( \int_r^t \alpha_u^2 \zeta_2 (X_u, \theta_u) \thinspace du \right)^k}{k!}\\
& = \sum_{k=0}^{\infty} \frac{(10 p)^k}{k!} \left\{ \int_r^t \cdots \int_r^t \BE \left[ \prod_{i=1}^k \alpha^2_{s_i} \zeta_2 (X_{u_i}, \theta_{u_i}) \right]du_1 \cdots du_k  \right\} \\
& = \sum_{k=0}^{\infty} \frac{(10 p)^k}{k!} \left\{ \int_r^t \cdots \int_r^t \prod_{i=1}^k \alpha^2_{s_i} \left[ \BE \zeta_2 (X_{u_i}, \theta_{u_i})^k \right]^{\frac{1}{k}} du_1 \cdots du_k  \right\}\\
& \le \sum_{k=0}^{\infty} \frac{(10 p)^k}{k!} \left\{ \int_r^t \cdots \int_r^t \prod_{i=1}^k \alpha^2_{s_i} \left[ \BE |\zeta_2 (X_{u_i}, \theta_{u_i})|^k \right]^{\frac{1}{k}} du_1 \cdots du_k  \right\}.
\end{aligned}
\end{equation*}
Using the polynomial growth of $\zeta_2(x, \theta)$ in $x$ and $\theta$, the uniform moment bounds for $X_t$ and $\theta_t$ (namely, \cite{pardoux2001poisson} $\sup_{t \ge 0}\BE\left[|X_t|^p\right] < \infty$ and \cite{siri_spilio_2020} $\sup_{t \ge 0} \BE\left[ |\theta_t|^p \right] < \infty$), and Assumption \ref{A:Learning-rate} (so that $\int_r^t \alpha_u^2 \thinspace du < \infty$), we obtain a constant $K>0$ such that
$\BE e^{10 p \int_r^t \alpha_u^2 \zeta_2 (X_u, \theta_u) \thinspace du} \le \sum_{k=0}^{\infty} \frac{(10p)^k}{k!} \left( \int_r^t \alpha_u^2 \thinspace du \right)^k \le K.$
\end{proof}

\begin{proof}[Proof of Lemma \ref{L:L-4-der-1}]
Recalling the definition of $\zeta_1$ in
\eqref{E:Zeta-intermediate-terms-deri-1}, we use
\begin{equation*}
\int_r^t \alpha_u \zeta_1 (X_u, \theta_u) \thinspace dW_u \le \left|\int_r^t \alpha_u \zeta_1 (X_u, \theta_u) \thinspace dW_u \right|,
\end{equation*}
expand the exponential in a power series, and apply {\sc mct} (justified by Assumption \ref{A:Learning-rate}) to obtain
\begin{equation*}
\begin{aligned}
\BE \left[e^{10 p \int_r^t \alpha_u \zeta_1 (X_u, \theta_u) \thinspace dW_u} \right]  & \le \BE e^{10p \left|\int_r^t \alpha_u \zeta_1 (X_u, \theta_u) \thinspace dW_u \right|} \\
& = \BE \left[ \sum_{k=0}^\infty \frac{(10 p)^k}{k!} \left|\int_r^t \alpha_u \zeta_1 (X_u, \theta_u) \thinspace dW_u \right|^k \right] \\
& = \sum_{k=0}^{\infty} \frac{(10 p)^k}{k!} \BE \left[ \left|\int_r^t \alpha_u \zeta_1 (X_u, \theta_u) \thinspace dW_u \right|^k \right].
\end{aligned}
\end{equation*}
Using the martingale moment inequality \cite[Proposition 3.26]{KS91} and Jensen's inequality for concave functions yields
\begin{equation*}
\begin{aligned}
\BE \left[e^{10 p \int_r^t \alpha_u \zeta_1 (X_u, \theta_u) \thinspace dW_u} \right] & \le K \sum_{k=0}^{\infty} \frac{(10p)^k}{k!} \BE \left[ \left( \int_r^t \alpha_u^2 \thinspace \zeta_1 (X_u, \theta_u)^2 \thinspace du \right)^{\frac{k}{2}} \right] \\
& \le K \sum_{k=0}^{\infty} \frac{(10p)^k}{k!} \sqrt{\BE \left[ \left( \int_r^t \alpha_u^2 \thinspace \zeta_1 (X_u, \theta_u)^2 \thinspace du \right)^k \right]}.
\end{aligned}
\end{equation*}
By H{\"o}lder's inequality, the polynomial growth of $\zeta_1(x, \theta)$, and the uniform moment bounds for $X_t$ and $\theta_t$ \cite{siri_spilio_2020,pardoux2001poisson}, we obtain
\begin{equation*}
\begin{aligned}
\BE \left[e^{10 p \int_r^t \alpha_u \zeta_1 (X_u, \theta_u) \thinspace dW_u} \right]  & \le K \sum_{k=0}^{\infty} \frac{(10 p)^k}{k!} \sqrt{\BE \left[ \int_r^t \cdots \int_r^t \prod_{i=1}^k \alpha^2_{u_i} \zeta_1(X_{u_i}, \theta_{u_i})^2 \thinspace du_1 \cdots du_k \right] }\\
& \le K \sum_{k=0}^{\infty} \frac{(10p)^k}{k!} \sqrt{ \left( \int_r^t \alpha_u^2 \thinspace du \right)^k}.
\end{aligned}
\end{equation*}
Assumption \ref{A:Learning-rate} and $\int_r^t \alpha_u^2 du < \infty$ then yield the desired bound.
\end{proof}

\begin{proof}[Proof of Lemma \ref{L:L-5-der-1}]
The proof follows from arguments analogous to those used in the proofs
of Lemmas \ref{L:L-3-deri-1} and \ref{L:L-4-der-1}. For brevity, we omit the details.
\end{proof}

\begin{proof}[Proof of Lemma \ref{L:L-6-initial-cond-der-1}]
By H\"older's inequality, Assumption \ref{A:Growth-f-g} on the growth of $f_\theta$, and the uniform moment bounds for $X_t$ and $\theta_t$, we obtain
\begin{equation*}
\begin{aligned}
\BE \left[ (\eta_{t,r}^*)^{2p} \alpha_r^{2p} f_\theta(X_r,\theta_r)^{2p} \right] & \le \alpha_r^{2p} \sqrt{\BE \left[(\eta_{t,r}^*)^{4p}\right]} \sqrt{\BE \left[f_\theta(X_r,\theta_r)^{4p} \right]} \le K \frac{r^{ 2p C_{\bar{g}}C_\alpha-2p}}{t^{ 2p C_{\bar{g}} C_\alpha}}.
\end{aligned}
\end{equation*}
The last inequality follows from Lemma \ref{L:Integrating-Factor-first-der}, which controls $\BE \left[(\eta_{t,r}^*)^{4p}\right]$.
\end{proof}

\begin{proof}[Proof of Lemma \ref{L:L-7-deri-1}]
We first prove the bound for $\BE \left[ \left( \int_r^t  \alpha_u
    \thinspace \eta_{t,u}^* \thinspace {g}_{x \theta}(X_u,\theta_u)D_r
    X_u \thinspace du \right)^{2p} \right]$. The bound for $\BE \left[ \left( \int_r^t \alpha_u^2  \eta_{t,u}^* {f}_{x \theta}(X_u,\theta_u) {f}_{\theta \theta}(X_u,\theta_u) D_r X_u \thinspace du \right)^{2p} \right]$ follows by the same argument. H\"older's inequality and Assumption \ref{A:f*-growth}, together with the bound on $D_rX_u$ solving \eqref{E:first-order-Mal-Der-theta}, yield
\begin{equation*}
\begin{aligned}
& \BE \left[ \left( \int_r^t  \alpha_u \thinspace \eta_{t,u}^* \thinspace {g}_{x \theta}(X_u,\theta_u)D_r X_u \thinspace du \right)^{2p} \right] \\
& \qquad \quad \le \underbrace{\int_r^t \cdots \int_r^t}_{2p-\text{times}} \left\{ \prod_{i=1}^{2p} \alpha_{u_i}e^{-C^*(u_i-r)} \right\} \BE \left[ \prod_{i=1}^{2p} \eta_{t,u_i}^* \thinspace {g}_{x \theta}(X_{u_i},\theta_{u_i}) \right]du_1 \cdots du_{2p}  \\
& \qquad \quad \le  \int_r^t \cdots \int_r^t \left\{ \prod_{i=1}^{2p} \alpha_{u_i}e^{-C^*(u_i-r)} \right\} \left[ \prod_{i=1}^{2p} \left( \BE \left[ {\eta_{t,u_i}^*}^{4p} \right] \right)^{\frac{1}{4p}} \Big( \BE \left[{g}_{x \theta}(X_{u_i},\theta_{u_i})^{4p} \right] \Big)^{\frac{1}{4p}} \right] du_1 \cdots du_{2p}.  \\
\end{aligned}
\end{equation*}
Using Assumption \ref{A:Growth-f-g} for the growth of $g_{x \theta}$, the uniform moment bounds for $X_t$ and $\theta_t$, and Lemma \ref{L:Integrating-Factor-first-der}, we obtain
\begin{equation*}
\begin{aligned}
\BE \left[ \left( \int_r^t  \alpha_u \thinspace \eta_{t,u}^* \thinspace {g}_{x \theta}(X_u,\theta_u)D_r X_u \thinspace du \right)^{2p} \right] & \le \int_r^t \cdots \int_r^t \left\{ \prod_{i=1}^{2p} \alpha_{u_i}e^{-C^*(u_i-r)} \right\} \left\{ \prod_{i=1}^{2p}\left(\frac{u_i}{t} \right)^{C_{\bar{g}}C_\alpha} \right\} du_1 \cdots du_{2p} \\
& = \frac{K}{t^{2pC_{\bar{g}} C_\alpha}} \left( \int_r^t u^{C_{\bar{g}} C_\alpha-1}e^{-C^*(u-r)} \thinspace du \right)^{2p}.
\end{aligned}
\end{equation*}
For $C_{\bar{g}} C_\alpha > \frac{1}{2}$ and $t \ge r \ge 1,$ Lemma \ref{L:Integration-f-g-intermediate} (with $\mathsf{D}=C_{\bar{g}} C_\alpha -1$ and $r_1=r_2=r$) yields
$\BE \left[ \left( \int_r^t  \alpha_u \thinspace \eta_{t,u}^* \thinspace {g}_{x \theta}(X_u,\theta_u)D_r X_u \thinspace du \right)^{2p} \right] \le K \frac{r^{2pC_{\bar{g}} C_\alpha-2p}}{t^{2pC_{\bar{g}} C_\alpha}}.$
\end{proof}

Before proving Lemma \ref{L:L-8-deri-1}, we state and prove an auxiliary result that will also be used in later sections. It is important to note that the constants $K$ and $K_{g_{\theta \theta}}$ appearing in this lemma may differ.

\begin{lemma}\label{L:L1}
Let $\eta_{t,r}^*$ be defined in \eqref{E:Integrating-factor-1-der} and $p \in \BN.$ Then, for any $t \ge r$, there exist time-independent positive constants $K$ and $K_{g_{\theta \theta}}$ (associated with the growth of $g_{\theta \theta}$) such that
$$\BE \left[\left( \eta_{t,r}^* \right)^{-p} \right] \le K \left(\frac{t}{r} \right)^{pC_{\alpha} K_{g_{\theta \theta}}}.$$
\end{lemma}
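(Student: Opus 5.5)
Write $(\eta^*_{t,r})^{-p}$ directly from \eqref{E:Integrating-factor-1-der}. The two drift contributions combine, since
$-\alpha_u\bar g_{\theta\theta}+\alpha_u[\bar g_{\theta\theta}-g_{\theta\theta}] = -\alpha_u g_{\theta\theta}(X_u,\theta_u)$. This gives
\begin{equation*}
(\eta^*_{t,r})^{-p} = e^{p\int_r^t \alpha_u g_{\theta\theta}(X_u,\theta_u)\,du}\cdot e^{-p\int_r^t \alpha_u f_{\theta\theta}(X_u,\theta_u)\,dW_u + \frac{p}{2}\int_r^t \alpha_u^2 f_{\theta\theta}(X_u,\theta_u)^2\,du}.
\end{equation*}
The plan is to separate these three factors with H\"older's inequality. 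The first factor will produce the polynomial growth $(t/r)^{pC_\alpha K_{g_{\theta\theta}}}$. The other two factors are of order $\alpha_u^2$, or are stochastic integrals with square-integrable integrand, so they should be bounded uniformly in time exactly as in Lemmas \ref{L:L-3-deri-1} and \ref{L:L-4-der-1}.

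\textbf{Step 1 (drift factor).} The stochastic-integral and quadratic-variation pieces are handled by Cauchy--Schwarz (or a three-way H\"older split). For the quadratic piece, $\BE e^{3p\int_r^t \alpha_u^2 f_{\theta\theta}^2\,du}\le K$ follows by the power-series and moment argument of Lemma \ref{L:L-3-deri-1}. This uses that $f_{\theta\theta}$ grows only polynomially in $x$ (Assumption \ref{A:Growth-f-g}), the uniform moment bounds for $X_t,\theta_t$, and $\int_r^\infty\alpha_u^2du<\infty$. For the stochastic integral, $\BE e^{3p|\int_r^t\alpha_u f_{\theta\theta}\,dW_u|}\le K$ follows by the argument of Lemma \ref{L:L-4-der-1}: expand the exponential, apply BDG, and use moments together with Assumption \ref{A:Learning-rate}.

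\textbf{Step 2 (main term).} Here I would bound $\BE\, e^{3p\int_r^t\alpha_u g_{\theta\theta}(X_u,\theta_u)du}$. I would not use a pathwise bound on $g_{\theta\theta}$, since it is unbounded (polynomial in $x$). Instead I would define $K_{g_{\theta\theta}}$ through moment control, in the spirit of $K_{g_{\theta\theta}}(t)=[\BE|g_{\theta\theta}(X_t,\theta_t)|^{p}]^{1/p}$. Expand the exponential in a power series and use H\"older across the $k$ time points:
\begin{equation*}
\BE\Big[\Big(\int_r^t \alpha_u g_{\theta\theta}\,du\Big)^k\Big]\le \int_{[r,t]^k}\prod_{i=1}^k \alpha_{u_i}\big(\BE|g_{\theta\theta}(X_{u_i},\theta_{u_i})|^k\big)^{1/k}du_1\cdots du_k .
\end{equation*}
Bounding the moments uniformly by a constant and summing the series gives $e^{3p K_{g_{\theta\theta}}\int_r^t\alpha_udu}=(t/r)^{3pC_\alpha K_{g_{\theta\theta}}}$. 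Taking the power $1/3$ from H\"older then yields $(t/r)^{pC_\alpha K_{g_{\theta\theta}}}$.

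\textbf{Main obstacle.} The delicate point is Step 2. The $k$-th moment $(\BE|g_{\theta\theta}|^k)^{1/k}$ grows with $k$, so summing the series only gives a finite constant if that growth is controlled, for example by $k$-independent bounds or by a growth slow enough that the exponential series still converges. I would absorb this into the constant $K_{g_{\theta\theta}}$, which the statement explicitly allows to differ from $K$. If this is too crude, a fallback is to split $g_{\theta\theta}=\bar g_{\theta\theta}-\mathsf G$. The piece $\bar g_{\theta\theta}(\theta_u)$ is handled via the moment bounds on $\theta$. The fluctuation piece $\mathsf G$ is handled via the Poisson decomposition of Lemma \ref{L:L-1-deri-1} together with Lemmas \ref{L:L-2-deri-1}--\ref{L:L-5-der-1}, with the sign reversed, which those lemmas permit since they bound absolute values. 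This leaves only $e^{p\int\alpha_u\bar g_{\theta\theta}du}$ to produce the polynomial factor in $t/r$.
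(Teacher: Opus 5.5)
Your outline is the paper's proof almost line for line. It has the same collapse of the two drift terms into $-\alpha_u g_{\theta\theta}$, the same three-factor H\"older split, and the same appeal to the arguments of Lemmas \ref{L:L-3-deri-1} and \ref{L:L-4-der-1} for the $\alpha_u^2$ and $dW$ factors. It also uses the same power series with H\"older across the $k$ time points, giving $(t/r)^{3pC_\alpha K_{g_{\theta\theta}}}$ before the cube root.

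However, the point you flag in Step 2 is a genuine gap, and the paper does not close it either. It simply takes $K_{g_{\theta\theta}}$ to dominate $\sup_{s\ge r}[\BE|g_{\theta\theta}(X_s,\theta_s)|^k]^{1/k}$ for every $k$ in the series. Since $\sup_k(\BE|Y|^k)^{1/k}=\|Y\|_{L^\infty}$, this is exactly a uniform pathwise bound on $g_{\theta\theta}(X_s,\theta_s)$, which you said you would avoid. So \emph{absorbing the growth into the constant} is legitimate only as an added hypothesis.

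Your alternatives do not remove the gap:
\begin{itemize}
\item Suppose $(\BE|g_{\theta\theta}|^k)^{1/k}$ grows like $k^{\beta}$ with $\beta>0$. Then the series is comparable to $\sum_k (cL)^k k^{\beta k}/k!$ with $L=C_\alpha\log(t/r)$. For $\beta<1$ this is of order $\exp(c'L^{1/(1-\beta)})$, which outgrows every power of $t/r$. For $\beta\ge 1$ it diverges for large $L$.
\item The Poisson fallback rests on Lemmas \ref{L:L-2-deri-1}--\ref{L:L-5-der-1}, which use the same $k$-uniform moment device. So does your Step 1 when $f_{\theta\theta}$ grows in $x$. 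The fallback also leaves the factor $\exp(p\int_r^t\alpha_u\bar g_{\theta\theta}(\theta_u)\,du)$, for which only $\bar g_{\theta\theta}(\theta)\le K(1+\theta^2)$ is available.
\end{itemize}

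Some extra hypothesis is unavoidable, as the following example shows.
\begin{itemize}
\item Take OU data $f^*(x)=-c^*x$ and the model $f(x,\theta)=\theta(1+x^2)$. This satisfies Assumptions \ref{A:f*-growth}, \ref{A:Moment-bounds} and \ref{A:Growth-f-g}.
\item It gives $f_{\theta\theta}=0$ and $g_{\theta\theta}=(1+x^2)^2$, hence $(\eta^*_{t,r})^{-p}=\exp\bigl(p\int_r^t\alpha_u(1+X_u^2)^2\,du\bigr)$.
\item Write $X_u=X_re^{-c^*(u-r)}+Z_u$, with $Z$ independent of $X_r$. The event $\{\sup_{u\in[r,t]}|Z_u|\le 1\}$ has some probability $c_0>0$. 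On it, if $X_r\ge M\triangleq 2e^{c^*(t-r)}$, the path stays above $\frac12X_re^{-c^*(t-r)}$ on $[r,t]$.
\item Hence $\BE[(\eta^*_{t,r})^{-p}]\ge c_0\,\BE\bigl[e^{c_1X_r^4}\ind_{\{X_r\ge M\}}\bigr]$ with $c_1>0$. This is infinite for every $t>r$, because $X_r$ is Gaussian.
\end{itemize}

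So the lemma, and your proof, need a stated assumption. One option is uniform essential boundedness of $g_{\theta\theta}(X_s,\theta_s)$ and $f_{\theta\theta}(X_s,\theta_s)$; another is a suitable exponential-integrability condition along the trajectory. Under such an assumption your argument goes through and coincides with the paper's.
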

\begin{remark}
For clarity, we note that in this lemma, when \( t > t^* \) (\( t^* \) being sufficiently large and specified in Lemma \ref{L:moment-bound} and Remark \ref{R:Rem-t*-choice}), we write \( K_{g_{\theta \theta}}^* \) instead of \( K_{g_{\theta \theta}} \). This distinction is important in the proof of the technical Lemma \ref{L:Second-der-Gamma-f-3}.
\end{remark}
\begin{proof}[Proof of Lemma \ref{L:L1}]
Recalling the definition of $\eta_{t,r}^*$ in \eqref{E:Integrating-factor-1-der}, taking expectations, and applying H\"older's inequality, we obtain
\begin{equation}\label{E:Eq-1}
\begin{aligned}
\BE\left[ \left( \eta_{t,r}^* \right)^{-p} \right] &= \BE \left[  e^{ p \int_r^t \alpha_u {g}_{\theta \theta}(X_u,\theta_u) \thinspace du -p \int_r^t \alpha_u f_{\theta \theta}(X_u, \theta_u)\thinspace dW_u +\frac{p}{2} \int_r^t \alpha_u^2 f_{\theta \theta}(X_u, \theta_u)^2 du} \right] \\
& \le \left[ \BE e^{ 3p \int_r^{t} \alpha_s {g}_{\theta \theta}(X_s,\theta_s) \thinspace ds} \right]^{\frac{1}{3}} \left[ \BE e^{- 3 p \int_r^{t} \alpha_s f_{\theta \theta}(X_s, \theta_s)\thinspace dW_s} \right]^{\frac{1}{3}} \left[ \BE e^{1.5p  \int_r^{t} \alpha_s^2 f_{\theta \theta}(X_s, \theta_s)^2 ds} \right]^{\frac{1}{3}}.
\end{aligned}
\end{equation}
The terms
$\left[ \BE e^{- 3 p \int_r^{t} \alpha_s f_{\theta \theta}(X_s, \theta_s)\thinspace dW_s} \right]^{\frac{1}{3}}$
and
$\left[ \BE e^{1.5p  \int_r^{t} \alpha_s^2 f_{\theta \theta}(X_s, \theta_s)^2 ds} \right]^{\frac{1}{3}}$
can be treated using the same arguments as in Lemmas \ref{L:L-4-der-1} and \ref{L:L-3-deri-1}, yielding
\begin{equation}\label{E:Eq-2}
\left[ \BE e^{- 3 p \int_r^{t} \alpha_s f_{\theta \theta}(X_s, \theta_s)\thinspace dW_s} \right]^{\frac{1}{3}} \left[ \BE e^{1.5p  \int_r^{t} \alpha_s^2 f_{\theta \theta}(X_s, \theta_s)^2 ds} \right]^{\frac{1}{3}} \le K.
\end{equation}
It remains to bound $\left[ \BE e^{ 3p \int_r^{t} \alpha_s {g}_{\theta \theta}(X_s,\theta_s) \thinspace ds} \right]^{\frac{1}{3}}$. Using the growth condition $|g_{\theta \theta}(x, \theta)| \le K(1+ |x|^q + |\theta|^2 )$, $q \in \BN,$ together with a Taylor expansion, H\"older's inequality, and the uniform moment bounds for $X_t$ and $\theta_t$, we obtain
\begin{equation}\label{E:Eq-3}
\begin{aligned}
\BE e^{ 3p \int_r^{t} \alpha_s {g}_{\theta \theta}(X_s,\theta_s) \thinspace ds}
& \le  \sum_{k=0}^\infty \frac{(3p)^k}{k!} \left[ \int_r^t \cdots \int_r^t \prod_{i=1}^k \alpha_{s_i} \prod_{i=1}^k  \left\{ \BE g_{\theta \theta} (X_{s_i}, \theta_{s_i})^k \right\}^{\frac{1}{k}}  ds_1 \cdots ds_k  \right] \\
& \le  \sum_{k=0}^\infty \frac{1}{k!}\left( 3p K_{g_{\theta \theta}} \int_r^t \alpha_s \thinspace  ds \right)^k
=  e^{3p K_{g_{\theta \theta}}\left( \int_r^t \alpha_s \thinspace ds \right)}
=  \left(\frac{t}{r} \right)^{(3p)C_{\alpha} K_{g_{\theta \theta}}},
\end{aligned}
\end{equation}
where $\sup_{s \ge r} \left[\BE |g_{\theta \theta} (X_{s}, \theta_{s})|^p \right]^{\frac{1}{p}} \le  K_{g_{\theta \theta}},$ $p \in \BN.$ Combining \eqref{E:Eq-1}, \eqref{E:Eq-2}, and \eqref{E:Eq-3} yields the claim.
\end{proof}

\begin{proof}[Proof of Lemma \ref{L:L-8-deri-1}]
For $r \le u \le t,$ we write $\eta_{t,u}^* = \eta_{t,r}^* \left({\eta_{u,r}^*}\right)^{-1}$ and then apply H\"older's inequality, Lemma \ref{L:Integrating-Factor-first-der}, and the martingale moment inequality to obtain
\begin{equation}\label{E:Eq-4}
\begin{aligned}
& \BE \left[ \left( \int_r^t \left[\alpha_u \eta_{t,u}^* {f}_{x \theta}(X_u,\theta_u)D_r X_u \right]dW_u  \right)^{2p} \right] \\
& \qquad \qquad \qquad \qquad \qquad    = \BE \left[\left(\eta_{t,r}^*\right)^{2p} \left( \int_r^t \left[\alpha_u \left({\eta_{u,r}^*}\right)^{-1} {f}_{x \theta}(X_u,\theta_u)D_r X_u \right]dW_u  \right)^{2p} \right]\\
& \qquad \qquad \qquad \qquad \qquad    \le \left[ \BE \left(\eta_{t,r}^*\right)^{4p} \right]^{\frac{1}{2}} \left[ \BE \left( \int_r^t \left[\alpha_u \left({\eta_{u,r}^*}\right)^{-1} {f}_{x \theta}(X_u,\theta_u)D_r X_u \right]dW_u\right)^{4p} \right]^{\frac{1}{2}} \\
& \qquad \qquad \qquad \qquad \qquad   \le K \frac{r^{2pC_{\bar{g}} C_\alpha}}{t^{2pC_{\bar{g}} C_\alpha}} \left[ \BE \left( \int_r^t \left[\alpha_u^2 \left({\eta_{u,r}^*}\right)^{-2} {f}_{x \theta}(X_u,\theta_u)^2 (D_r X_u)^2 \right] du \right)^{2p} \right]^{\frac{1}{2}}.
\end{aligned}
\end{equation}
We now estimate $\BE \left( \int_r^t \left[\alpha_u^2 \left({\eta_{u,r}^*}\right)^{-2} {f}_{x \theta}(X_u,\theta_u)^2 (D_r X_u)^2 \right] du \right)^{2p}$. By H\"older's inequality, Assumption \ref{A:Growth-f-g} for the growth of $f_{x \theta}$, the uniform moment bounds for $X_t$ and $\theta_t$, and Lemma \ref{L:Mal-der-X}, we obtain
\begin{equation*}
\begin{aligned}
& \BE \left( \int_r^t \left[\alpha_u^2 {\eta_{u,r}^*}^{-2} {f}_{x \theta}(X_u,\theta_u)^2 (D_r X_u)^2 \right] du \right)^{2p}\\
 & \le K \underbrace{\int_r^t \cdots \int_r^t}_{2p \text{ times}} \left\{ \prod_{i=1}^{2p} \alpha_{u_i}^2 e^{-2 C^*(u_i-r)} \right\} \BE \left[ \prod_{i=1}^{2p} (\eta_{u_i,r}^*)^{-2} \thinspace {f}_{x \theta}(X_{u_i},\theta_{u_i})^{2} \right]du_1 \cdots du_{2p}  \\
& \le K \int_r^t \cdots \int_r^t \left\{ \prod_{i=1}^{2p} \alpha_{u_i}^2 e^{-2 C^*(u_i-r)} \right\} \left[ \prod_{i=1}^{2p} \left( \BE \left[ {\eta_{u_i,r}^*}^{-8p} \right] \right)^{\frac{1}{4p}} \Big( \BE \left[{f}_{x \theta}(X_{u_i},\theta_{u_i})^{8p} \right] \Big)^{\frac{1}{4p}} \right] du_1 \cdots du_{2p} \\
& \le K \int_r^t \cdots \int_r^t \left\{ \prod_{i=1}^{2p} \alpha_{u_i}^2 e^{-2 C^*(u_i-r)} \right\} \left[ \prod_{i=1}^{2p} \left( \BE \left[ {\eta_{u_i,r}^*}^{-8p} \right] \right)^{\frac{1}{4p}}\right] du_1 \cdots du_{2p}.
 \end{aligned}
\end{equation*}
Applying Lemma \ref{L:L1} with $\tilde{K} \triangleq p C_\alpha K_{g_{\theta \theta}}$ and Lemma  \ref{L:Integration-f-g-intermediate} with $\mathsf{D}=\tilde{K}-2$, we obtain
\begin{equation}\label{E:Eq-5}
\begin{aligned}
& \BE \left( \int_r^t \left[\alpha_u^2 {\eta_{u,r}^*}^{-2} {f}_{x \theta}(X_u,\theta_u)^2 (D_r X_u)^2 \right] du \right)^{2p} \\
& \quad     \le K \int_r^t \cdots \int_r^t \left\{ \prod_{i=1}^{2p} \alpha_{u_i}^2 e^{-2 C^*(u_i-r)} \left(\frac{u_i}{r} \right)^{\tilde{K}} \right\}du_1 \cdots du_{2p}   \le K \left( \int_r^t \alpha_u^2 e^{-2C^*(u-r)} \frac{u^{\tilde{K}}}{r^{\tilde{K}}} \thinspace du \right)^{2p} \\
& \qquad \qquad \qquad \qquad \qquad \qquad \qquad  \le \frac{K}{r^{2p{\tilde{K}}}}\left( \int_r^t u^{{\tilde{K}}-2} e^{-2C^*(u-r)} \thinspace du \right)^{2p}  \le K \frac{r^{2p{\tilde{K}}-4p}}{r^{2p{\tilde{K}}}}= \frac{K}{r^{4p}}.
\end{aligned}
\end{equation}
Combining \eqref{E:Eq-4} and \eqref{E:Eq-5} yields the desired estimate.
\end{proof}

\section{Second-order Malliavin Derivatives}\label{S:Second-order-derivative}
In this section, we establish Proposition \ref{P:Second-der-product}, which provides an upper bound for
$$\int_{[t^*,t]^4} \left( \BE |D^2_{u,r} \theta_t|^4\right)^{\frac{1}{4}} \left( \BE |D^2_{u,s} \theta_t|^4\right)^{\frac{1}{4}} \left( \BE |D^2_{w,r} \theta_t|^4\right)^{\frac{1}{4}} \left( \BE |D^2_{w,s} \theta_t|^4\right)^{\frac{1}{4}} \thinspace du \thinspace  ds \thinspace dw \thinspace dr$$
appearing in \eqref{E:Main-Eq-Holder} in the proof of Theorem \ref{T:Main-theorem}. Compared with Proposition \ref{P:First-der-product}, the proof of Proposition \ref{P:Second-der-product} is considerably more involved, requiring a substantially finer analysis of the second-order Malliavin derivative. Therefore, our strategy is to divide the proof into a sequence of auxiliary lemmas, each addressing a particular component of the argument. For a concise overview of the proof, we refer to Figure \ref{F:Fig-Proof-Dec}. The principal ingredients are Lemmas \ref{L:2-der-moments}--\ref{L:I-8-Second-der}. In particular, Lemma \ref{L:2-der-moments} establishes moment estimates for the second-order Malliavin derivative $\BE[( D^2_{r_1, r_2} \theta_t)^{2p}]$, while Lemmas \ref{L:I-1-Second-Int}--\ref{L:I-8-Second-der} provide estimates for the various integral terms. These terms must be treated separately because their analysis depends on the relative ordering of the four different parameters appearing in the integral in Proposition \ref{P:Second-der-product}. To improve the readability of the main argument, the proofs of these auxiliary lemmas are deferred to Section
\ref{S:Appendix}.

We next briefly describe the proof of Lemma \ref{L:2-der-moments}, which forms the technical core of this section. Its proof is distributed over Sections \ref{S:Initial-Condition}--\ref{S:Term-f} and relies on three groups of auxiliary lemmas. The first group, Lemmas \ref{L:Mal-der-X}--\ref{L:Sec-der-initial-condition}, establishes estimates for the first- and second-order Malliavin derivatives of the state process $X$ together with the contribution from the initial condition term $\BE[ (\eta^*_{t, r_1 \vee r_2})^{2p}\gamma(X_{r_1}, X_{r_2}, \theta_{r_1}, \theta_{r_2})^{2p}]$. The second and third groups, namely Lemmas \ref{L:Second-der-Gamma-g}--\ref{L:Second-der-Gamma-g-5-6} and Lemmas \ref{L:Second-der-Gamma-f}--\ref{L:Second-der-Gamma-f-3}, are devoted to estimating the terms $\BE[( \int_{r_1 \vee r_2}^t \alpha_u \eta^*_{t,u} \Gamma^g (X_u, \theta_u) \thinspace du)^{2p}]$ and $\BE[( \int_{r_1 \vee r_2}^t \alpha_u \eta^*_{t,u} \Gamma^f (X_u, \theta_u)\thinspace dW_u )^{2p}]$, which are associated with the functions $g$ and $f$, respectively, where the processes $\Gamma^g$ and $\Gamma^f$ are defined in Equation \eqref{E:second-order-Mal-Der-theta}. A common feature of these lemmas is that the quantities of interest decay at the rate $\frac{(r_1 \vee r_2)^{ 2p C_{\bar{g}} C_\alpha-2p}}{t^{ 2p C_{\bar{g}} C_\alpha}}e^{- 2p  C^*(r_1 \vee r_2 - r_1 \wedge r_2)} + \frac{1}{t^{ 2p  C_{\bar{g}} C_\alpha}}\frac{(r_1 r_2)^{ 2p  C_{\bar{g}} C_\alpha-2p}}{(r_1 \vee r_2 )^{ 2p  C_{\bar{g}} C_\alpha}}$ as $t \to \infty,$ which is crucial for proving Proposition \ref{P:Second-der-product}. Although the overall strategy parallels that of the first-order analysis in Section \ref{S:First-order-derivative}, the second-order setting requires considerably more delicate estimates because of the presence of stochastic integrals whose moments must be controlled more carefully. The above discussion is summarized in the following figure.
\begin{figure}[!htbp]
\begin{equation*}
\begin{tikzpicture}[
  level 1/.style={sibling distance=45mm},
  level 2/.style={sibling distance=37mm},
  every node/.style={draw, rounded corners, align=center}
]
\node {Proposition \ref{P:Second-der-product}}
  child {node {Lemma \ref{L:2-der-moments}}
    child {node {Lemma \ref{L:Sec-der-initial-condition}}
    }
    child {node {Lemma \ref{L:Second-der-Gamma-g}}
    child {node {Lemmas \ref{L:Second-der-Gamma-g-1-2}--\ref{L:Second-der-Gamma-g-5-6}}
    }}
    child {node {Lemma \ref{L:Second-der-Gamma-f}}
    child {node {Lemmas \ref{L:Second-der-Gamma-f-1-2}--\ref{L:Second-der-Gamma-f-3}}
    }}
    child {node {Lemma \ref{L:Second-der-Gamma-ff}}}
  }
  child {node {Lemmas \ref{L:I-1-Second-Int}--\ref{L:I-8-Second-der}}
  };
\end{tikzpicture}
\end{equation*}
\caption{Outline of the proof of Proposition \ref{P:Second-der-product}.}\label{F:Fig-Proof-Dec}
\end{figure}


\medskip

The second-order Malliavin derivatives of the processes $X_t$ and $\theta_t$, which satisfy Equations \eqref{E:Process-X} and \eqref{E:Process-theta}, respectively, are given by
\begin{equation}\label{E:Second-der-Malli-X}
D_{r_1, r_2}^2 X_t =  \int_{r_1 \vee r_2}^t f_x^*(X_s)(D_{r_1,r_2}^2 X_s) \thinspace ds + \int_{r_1 \vee r_2}^t f_{xx}^*(X_s)(D_{r_1}X_s)(D_{r_2}X_s) \thinspace ds, \quad \text{and}
\end{equation}
\begin{multline}\label{E:second-order-Mal-Der-theta}
\begin{aligned}
D^2_{r_1, r_2} \theta_t & = \gamma(X_{r_1}, X_{r_2}, \theta_{r_1}, \theta_{r_2})  - \int_{r_1 \vee r_2}^t \alpha_u  \bar{g}_{\theta \theta}(\theta_u) D^2_{r_1, r_2} \theta_u \thinspace du \\
&  +  \int_{r_1 \vee r_2}^t \alpha_u  \left[ \bar{g}_{\theta \theta}(\theta_u)- g_{\theta \theta}(X_u, \theta_u) \right] D^2_{r_1, r_2} \theta_u  \thinspace du   +  \int_{r_1 \vee r_2}^t \alpha_u f_{\theta \theta}(X_u, \theta_u) D^2_{r_1, r_2} \theta_u  \thinspace dW_u \\
& + \int_{r_1 \vee r_2}^t \alpha_u \Gamma^g (X_u, \theta_u) \thinspace du + \int_{r_1 \vee r_2}^t \alpha_u \Gamma^f (X_u, \theta_u) \thinspace dW_u\\
& - \int_{r_1 \vee r_2}^t \alpha_u^2 f_{\theta \theta}(X_u, \theta_u) \Gamma^f (X_u, \theta_u) \thinspace du,
\end{aligned}
\end{multline}
where,
\begin{multline*}
\begin{aligned}
\gamma(X_{r_1}, X_{r_2}, \theta_{r_1}, \theta_{r_2}) & \triangleq \alpha_{r_2} f_{x\theta}(X_{r_2},\theta_{r_2})D_{r_1}X_{r_2} + \alpha_{r_1} f_{x\theta}(X_{r_1},\theta_{r_1})D_{r_2}X_{r_1}\\
& \qquad \qquad \qquad \qquad \qquad + \alpha_{r_2} f_{\theta \theta}(X_{r_2},\theta_{r_2})D_{r_1}\theta_{r_2} + \alpha_{r_1} f_{\theta \theta}(X_{r_1},\theta_{r_1})D_{r_2}\theta_{r_1}, \\
\Gamma^g (X_u, \theta_u) & \triangleq - \bar{g}_{\theta \theta \theta}(\theta_u) D_{r_1} \theta_u \cdot D_{r_2} \theta_u + \left[ \bar{g}_{\theta \theta \theta}(\theta_u) - g_{\theta \theta \theta}(X_u, \theta_u) \right]D_{r_1} \theta_u \cdot D_{r_2} \theta_u \\
& \qquad \qquad \quad  - g_{\theta \theta x}(X_u, \theta_u) D_{r_1} \theta_u \cdot D_{r_2} X_u  - g_{\theta x \theta}(X_u, \theta_u) D_{r_1} X_u \cdot D_{r_2} \theta_u  \\
& \qquad \qquad \qquad \qquad   - g_{xx \theta}(X_u, \theta_u) D_{r_1} X_u \cdot D_{r_2} X_u  - g_{x \theta}(X_u, \theta_u) \cdot D^2_{r_1,r_2}X_u, \\
\Gamma^f (X_u, \theta_u) & \triangleq f_{x \theta \theta }(X_u, \theta_u) D_{r_1} \theta_u \cdot D_{r_2} X_u + f_{\theta x \theta}(X_u, \theta_u) D_{r_1} X_u \cdot D_{r_2} \theta_u   \\
& \qquad \qquad \qquad  + f_{\theta \theta \theta }(X_u, \theta_u) D_{r_1} \theta_u \cdot D_{r_2} \theta_u  + f_{xx \theta}(X_u, \theta_u) D_{r_1} X_u \cdot D_{r_2} X_u \\
& \qquad \qquad \qquad  + f_{x \theta}(X_u, \theta_u) \cdot D^2_{r_1,r_2}X_u.
\end{aligned}
\end{multline*}
Recalling the definition of the process $\eta^*$ from Equation \eqref{E:Integrating-factor-1-der}, we solve Equation \eqref{E:second-order-Mal-Der-theta} to obtain
\begin{equation}\label{E:Second-order-der-Solution}
\begin{aligned}
D^2_{r_1, r_2} \theta_t & = \eta^*_{t, r_1 \vee r_2} \gamma(X_{r_1}, X_{r_2}, \theta_{r_1}, \theta_{r_2}) + \int_{r_1 \vee r_2}^t \alpha_u \eta^*_{t,u} \Gamma^g (X_u, \theta_u) \thinspace du \\
& \qquad \qquad \quad +  \int_{r_1 \vee r_2}^t \alpha_u \eta^*_{t,u} \Gamma^f (X_u, \theta_u) \thinspace dW_u - \int_{r_1 \vee r_2}^t \alpha_u^2 \eta^*_{t,u} f_{\theta \theta}(X_u,\theta_u) \Gamma^f (X_u, \theta_u) \thinspace du.
\end{aligned}
\end{equation}
Raising both sides of Equation \eqref{E:Second-order-der-Solution} to the power \(2p\) and then taking expectations yields
\begin{multline*}
\BE \left[ \left(D^2_{r_1, r_2} \theta_t \right)^{2p} \right]  \le K \BE \left[ (\eta^*_{t, r_1 \vee r_2})^{2p}\gamma(X_{r_1}, X_{r_2}, \theta_{r_1}, \theta_{r_2})^{2p} \right] + K \BE \left[ \left( \int_{r_1 \vee r_2}^t \alpha_u \eta^*_{t,u} \Gamma^g (X_u, \theta_u) \thinspace du \right)^{2p} \right] \\
+ K \BE \left[ \left( \int_{r_1 \vee r_2}^t \alpha_u \eta^*_{t,u} \Gamma^f (X_u, \theta_u) \thinspace dW_u \right)^{2p} \right] + K \BE \left[ \left( \int_{r_1 \vee r_2}^t \alpha_u^2 \eta^*_{t,u} f_{\theta \theta}(X_u,\theta_u) \Gamma^f (X_u, \theta_u) \thinspace du \right)^{2p} \right].
\end{multline*}

\begin{proposition}\label{P:Second-der-product}
Let $D^2_{r_1, r_2} \theta_t$ be the solution of Equation \eqref{E:second-order-Mal-Der-theta} defined in Equation \eqref{E:Second-order-der-Solution}. We assume that Assumptions \ref{A:f*-growth} through \ref{A:Tech-Cond} are satisfied. Then, for any $t > r_1 \vee r_2 \ge t^*,$ there exists a time-independent positive constant $K$ such that
\begin{multline*}
\int_{[t^*,t]^4} \left( \BE |D^2_{u,r} \theta_t|^4\right)^{\frac{1}{4}} \left( \BE |D^2_{u,s} \theta_t|^4\right)^{\frac{1}{4}} \left( \BE |D^2_{w,r} \theta_t|^4\right)^{\frac{1}{4}} \left( \BE |D^2_{w,s} \theta_t|^4\right)^{\frac{1}{4}} \thinspace du \thinspace  ds \thinspace dw \thinspace dr  \\
\le \begin{cases} \frac{K}{t^3},  & {C_{\bar{g}} C_\alpha > \frac{5}{4}}\\  \frac{K \log t + K}{t^3}, & {\frac{3}{4} \le C_{\bar{g}} C_\alpha \le \frac{5}{4}} \\ \frac{K}{t^{4C_{\bar{g}} C_\alpha}}, & \frac{1}{2} < C_{\bar{g}} C_\alpha < \frac{3}{4},\end{cases}
\end{multline*}
where $t^*$ is a sufficiently large number and its choice is specified in Lemma \ref{L:moment-bound} and Remark \ref{R:Rem-t*-choice}.
\end{proposition}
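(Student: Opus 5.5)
The plan has two layers. First, I would establish a pointwise moment bound for the second-order derivative (the forthcoming Lemma \ref{L:2-der-moments}). Then I would integrate that bound over $[t^*,t]^4$, splitting according to the relative ordering of the four variables.

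\textbf{Pointwise bound.} For $t>r_1\vee r_2\ge t^*$ I aim for
\begin{equation*}
\BE\left[(D^2_{r_1,r_2}\theta_t)^{2p}\right]\le K\left[\frac{(r_1\vee r_2)^{2pC_{\bar g}C_\alpha-2p}}{t^{2pC_{\bar g}C_\alpha}}e^{-2pC^*(r_1\vee r_2-r_1\wedge r_2)}+\frac{1}{t^{2pC_{\bar g}C_\alpha}}\frac{(r_1r_2)^{2pC_{\bar g}C_\alpha-2p}}{(r_1\vee r_2)^{2pC_{\bar g}C_\alpha}}\right].
\end{equation*}
I would obtain it from the four-term expansion of \eqref{E:Second-order-der-Solution} displayed just before the statement, handling the terms as follows.

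The initial-condition term $\eta^*_{t,r_1\vee r_2}\gamma$ is handled by Hölder together with Lemma \ref{L:Integrating-Factor-first-der}. The pieces of $\gamma$ containing $D_{r_1}X_{r_2}$ carry the factor $e^{-C^*|r_1-r_2|}$, which produces the first summand. The pieces containing $D_{r_1}\theta_{r_2}$ are bounded through Lemma \ref{L:1-der-moments}, which gives $r_1^{C_{\bar g}C_\alpha-1}/r_2^{C_{\bar g}C_\alpha}$; multiplied by $\alpha_{r_2}$ and $(r_2/t)^{C_{\bar g}C_\alpha}$, this produces the second summand.

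The $\Gamma^g$ drift integral and the $\alpha_u^2 f_{\theta\theta}\Gamma^f$ integral are treated exactly as in Lemma \ref{L:L-7-deri-1}. I would bound the products $D_{r_1}\theta_u D_{r_2}\theta_u$ by Lemma \ref{L:1-der-moments} and the factors $D X_u$, $D^2X_u$ by exponential decay, using Lemma \ref{L:Mal-der-X}. Time integrals of the form $\int u^{a}e^{-C^*(u-r)}du$ are controlled with Lemma \ref{L:Integration-f-g-intermediate}. The term $\bar g_{\theta\theta\theta}D_{r_1}\theta D_{r_2}\theta$ gives $\int_{r_1\vee r_2}^t \alpha_u (u/t)^{C_{\bar g}C_\alpha}(r_1r_2)^{C_{\bar g}C_\alpha-1}u^{-2C_{\bar g}C_\alpha}du$. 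Because $C_{\bar g}C_\alpha>1/2$, this integral is dominated by its lower endpoint and yields the second summand. The fluctuation part $\bar g_{\theta\theta\theta}-g_{\theta\theta\theta}$ would be handled, if needed, by a Poisson equation as in Lemma \ref{L:L-1-deri-1}.

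\textbf{Main obstacle.} The hardest step is the stochastic integral $\int\alpha_u\eta^*_{t,u}\Gamma^f dW_u$, and in particular its $f_{\theta\theta\theta}D_{r_1}\theta_uD_{r_2}\theta_u$ part ($\Gamma^f_3$). Following Lemma \ref{L:L-8-deri-1}, I would write $\eta^*_{t,u}=\eta^*_{t,r}(\eta^*_{u,r})^{-1}$ with $r=r_1\vee r_2$ and pull out $\BE(\eta^*_{t,r})^{4p}$. I would then apply BDG and bound the negative moments by Lemma \ref{L:L1}, which costs a factor $(u/r)^{C_\alpha K^*_{g_{\theta\theta}}}$. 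The integrand then behaves like $u^{-2}\,u^{2C_\alpha K^*_{g_{\theta\theta}}}\,u^{-4C_{\bar g}C_\alpha}(r_1r_2)^{2C_{\bar g}C_\alpha-2}$ after squaring. This is integrable at infinity and dominated by its lower endpoint precisely when $K^*_{g_{\theta\theta}}<\frac{1}{2C_\alpha}+2C_{\bar g}$, which is exactly where Assumption \ref{A:Tech-Cond} enters, and also why integration starts at $t^*$, so that $K^*$ is available. I expect the bookkeeping in this step to be the most delicate part.

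\textbf{Integration.} With $p=2$, each factor $(\BE|D^2\theta_t|^4)^{1/4}$ is bounded by
\begin{equation*}
t^{-C_{\bar g}C_\alpha}\left[(a\vee b)^{C_{\bar g}C_\alpha-1}e^{-C^*|a-b|}+(ab)^{C_{\bar g}C_\alpha-1}(a\vee b)^{-C_{\bar g}C_\alpha}\right].
\end{equation*}
The product of the four factors therefore has prefactor $t^{-4C_{\bar g}C_\alpha}$. I would expand the product into the resulting terms and, by symmetry, split $[t^*,t]^4$ into the orderings of $u,w,r,s$ (the forthcoming Lemmas \ref{L:I-1-Second-Int}--\ref{L:I-8-Second-der}). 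Each exponential factor effectively collapses one integration variable onto another at $O(1)$ cost. The remaining power integrals have the form $\int_{t^*}^t x^{\beta}dx$, and their size depends on the exponent $\beta$:
\begin{itemize}[label={}]
\item If $\beta>-1$, the integral is of order $t^{\beta+1}$, giving $t^{-3}$ overall.
\item If $\beta=-1$, it is $\log t$.
\item If $\beta<-1$, it is $O(1)$, giving $t^{-4C_{\bar g}C_\alpha}$.
\end{itemize}
The exponents that arise are $4C_{\bar g}C_\alpha-4$ and $4C_{\bar g}C_\alpha-6$ type powers, and these produce the stated thresholds $3/4$ and $5/4$ for $C_{\bar g}C_\alpha$. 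Taking the worst case over all terms gives the three regimes in the statement.
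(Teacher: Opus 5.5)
Your proposal is correct and follows the paper's route essentially step for step. You prove the pointwise bound of Lemma \ref{L:2-der-moments} via the four-term decomposition, with $\Gamma^f_3$ handled by factoring $\eta^*_{t,u}=\eta^*_{t,r_1\vee r_2}(\eta^*_{u,r_1\vee r_2})^{-1}$ and then applying BDG, Lemma \ref{L:L1} and Assumption \ref{A:Tech-Cond}; you then expand the product of four two-term factors into iterated integrals on an ordered region and bound them as in Lemmas \ref{L:I-1-Second-Int}--\ref{L:I-8-Second-der}.

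One small correction to your exponent bookkeeping:
\begin{itemize}
\item Only the $u^{4C_{\bar g}C_\alpha-4}$ family ($\mathscr{I}_1$) gives $t^{-3}$ above its threshold.
\item The $u^{4C_{\bar g}C_\alpha-6}$ family ($\mathscr{I}_4$) gives $t^{-5}$ above its threshold, so the threshold $5/4$ only contributes a $\log t/t^{5}$ term, which $t^{-3}$ dominates.
\item The exponents $4C_{\bar g}C_\alpha-5$ and $4C_{\bar g}C_\alpha-7$ also occur ($\mathscr{I}_2$ and $\mathscr{I}_8$).
\end{itemize}
None of these changes the worst case, so the stated three regimes still follow.
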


Before proving Proposition \ref{P:Second-der-product}, we state the
series of Lemmas \ref{L:2-der-moments} through \ref{L:I-8-Second-der}
below. Lemma \ref{L:2-der-moments} provides an estimate for the term
$\BE [( D^2_{r_1, r_2} \theta_t )^{2p}]$, whereas Lemmas
\ref{L:I-1-Second-Int} through \ref{L:I-8-Second-der} estimate several
integral terms. The resulting bounds involve different threshold values of $C_{\bar{g}} C_\alpha.$ The proofs of these results are given in Section \ref{S:Aux-Results}.

\begin{lemma}\label{L:2-der-moments}
Let $D^2_{r_1, r_2} \theta_t$ be the solution of Equation \eqref{E:second-order-Mal-Der-theta} defined in Equation \eqref{E:Second-order-der-Solution}. We assume that Assumptions \ref{A:f*-growth} through \ref{A:Tech-Cond} are satisfied. Then, for any $t > r_1 \vee r_2 \ge t^*,$ and $p \in \BN,$ there exists a time-independent positive constant $K$ such that
$$\BE \left[ \left( D^2_{r_1, r_2} \theta_t \right)^{2p} \right] \le K \frac{(r_1 \vee r_2)^{ 2p C_{\bar{g}} C_\alpha-2p}}{t^{ 2p C_{\bar{g}} C_\alpha}}e^{- 2p  C^*(r_1 \vee r_2 - r_1 \wedge r_2)} + \frac{K}{t^{ 2p  C_{\bar{g}} C_\alpha}}\frac{(r_1 r_2)^{ 2p  C_{\bar{g}} C_\alpha-2p}}{(r_1 \vee r_2 )^{ 2p  C_{\bar{g}} C_\alpha}},$$
where $t^*$ is a sufficiently large number and its choice is specified in Lemma \ref{L:moment-bound} and Remark \ref{R:Rem-t*-choice}.
\end{lemma}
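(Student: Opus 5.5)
We start from the bound displayed just before Proposition \ref{P:Second-der-product}, which splits $\BE[(D^2_{r_1,r_2}\theta_t)^{2p}]$ into four terms: the initial-condition term, the $\Gamma^g$ drift term, the $\Gamma^f$ stochastic-integral term, and the $\alpha_u^2 f_{\theta\theta}\Gamma^f$ correction. Each term is estimated separately, and the two target rates in the statement come from different sources.

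Write $r\triangleq r_1\vee r_2$ and $\bar r\triangleq r_1\wedge r_2$. For the \emph{initial condition}, we apply H\"older to split off $(\eta^*_{t,r})^{4p}$, which Lemma \ref{L:Integrating-Factor-first-der} bounds by $(r/t)^{4pC_{\bar g}C_\alpha}$. Inside $\gamma$, the factors $D_{\bar r}X_r$ are bounded by $e^{-C^*(r-\bar r)}$, because Assumption \ref{A:f*-growth} gives $D_rX_t=\exp(\int_r^t f^*_x)\le e^{-C^*(t-r)}$. Together with $\alpha_r\le K/r$, these pieces produce the first summand $r^{2pC_{\bar g}C_\alpha-2p}t^{-2pC_{\bar g}C_\alpha}e^{-2pC^*(r-\bar r)}$. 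The pieces $\alpha_r f_{\theta\theta}D_{\bar r}\theta_r$ are handled by Lemma \ref{L:1-der-moments}, which gives $\BE(D_{\bar r}\theta_r)^{4p}\lesssim \bar r^{4pC_{\bar g}C_\alpha-4p}r^{-4pC_{\bar g}C_\alpha}$. Multiplying by $\alpha_r^{2p}(r/t)^{2pC_{\bar g}C_\alpha}$ yields exactly the second summand $t^{-2pC_{\bar g}C_\alpha}(r_1r_2)^{2pC_{\bar g}C_\alpha-2p}r^{-2pC_{\bar g}C_\alpha}$.

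For the \emph{$\Gamma^g$ term}, we expand $\Gamma^g$ into its six products, following the grouping of Lemmas \ref{L:Second-der-Gamma-g-1-2}--\ref{L:Second-der-Gamma-g-5-6}. We bound the $2p$-th moment of $\int_r^t\alpha_u\eta^*_{t,u}(\cdot)\,du$ as in Lemma \ref{L:L-7-deri-1}: expand to a $2p$-fold integral, apply H\"older, use Lemma \ref{L:Integrating-Factor-first-der} for $\eta^*_{t,u}$, and use Lemma \ref{L:1-der-moments} or the exponential bounds on $D X$ and $D^2X$ for the derivative factors.
\begin{itemize}
\item The $D_{r_1}\theta_u D_{r_2}\theta_u$ products give integrands of size $u^{C_{\bar g}C_\alpha-1}\,u^{-2C_{\bar g}C_\alpha}(r_1r_2)^{C_{\bar g}C_\alpha-1}$. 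After the prefactor $t^{-C_{\bar g}C_\alpha}$, integrating in $u$ from $r$ gives the second summand, since $\int_r^\infty u^{-C_{\bar g}C_\alpha-1}du\sim r^{-C_{\bar g}C_\alpha}$.
\item The mixed $D\theta\cdot DX$ products and the $DX\,DX$, $D^2X$ products decay like $e^{-C^*(u-\bar r)}$, and Lemma \ref{L:Integration-f-g-intermediate} turns them into the first summand.
\item The fluctuation piece $[\bar g_{\theta\theta\theta}-g_{\theta\theta\theta}]D\theta D\theta$ needs no Poisson equation: crude moment bounds with the same scaling suffice.
\end{itemize}
The correction term $\int\alpha_u^2\eta^*_{t,u}f_{\theta\theta}\Gamma^f\,du$ is the same kind of estimate, with an extra factor of $\alpha_u$, so it is handled as in Lemma \ref{L:Second-der-Gamma-ff}.

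The \emph{main obstacle} is the stochastic integral $\int_r^t\alpha_u\eta^*_{t,u}\Gamma^f\,dW_u$. The integrand is not adapted because $\eta^*_{t,u}$ depends on the future, so the BDG inequality cannot be applied directly. As in Lemma \ref{L:L-8-deri-1}, we factor $\eta^*_{t,u}=\eta^*_{t,r}(\eta^*_{u,r})^{-1}$ and apply H\"older to pull out $\BE(\eta^*_{t,r})^{4p}\lesssim(r/t)^{4pC_{\bar g}C_\alpha}$. BDG then applies to $\int_r^t\alpha_u(\eta^*_{u,r})^{-1}\Gamma^f\,dW_u$.

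The $DX$-containing pieces of $\Gamma^f$ are harmless, because $e^{-C^*(u-\bar r)}$ absorbs the polynomial growth $(u/r)^{pC_\alpha K_{g_{\theta\theta}}}$ that Lemma \ref{L:L1} attaches to $(\eta^*_{u,r})^{-1}$. The delicate piece is $f_{\theta\theta\theta}D_{r_1}\theta_uD_{r_2}\theta_u$, which is the $\Gamma^f_3$ of Lemma \ref{L:Second-der-Gamma-f-3}. It carries no exponential damping, so the quadratic variation integrand behaves like
\[
u^{-2}\,(u/r)^{2C_\alpha K^*_{g_{\theta\theta}}}\,u^{-4C_{\bar g}C_\alpha}(r_1r_2)^{2C_{\bar g}C_\alpha-2}.
\]
The $u$-integral converges to order $r^{-1-4C_{\bar g}C_\alpha}$ only if $2C_\alpha K^*_{g_{\theta\theta}}<1+4C_{\bar g}C_\alpha$, which is exactly the condition $K^*_{g_{\theta\theta}}<\frac{1}{2C_\alpha}+2C_{\bar g}$ of Assumption \ref{A:Tech-Cond}. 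This is why we restrict to $r\ge t^*$: Lemma \ref{L:L1} then holds with $K^*_{g_{\theta\theta}}$ in place of $K_{g_{\theta\theta}}$.

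Taking square roots, this piece gives the second summand times $r^{-1/2}$ relative factors. We expect to track these exponents carefully, possibly splitting $u$ into the ranges $[r,2r]$ and $[2r,t]$. If the direct factorization loses too much, we would first try a finer H\"older split between $(\eta^*_{u,r})^{-1}$ and the derivative factors. Summing the four contributions then proves Lemma \ref{L:2-der-moments}.
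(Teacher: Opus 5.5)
Your proposal is correct and follows essentially the same route as the paper, which proves this lemma by combining Lemmas~\ref{L:Sec-der-initial-condition}, \ref{L:Second-der-Gamma-g}, \ref{L:Second-der-Gamma-f} and \ref{L:Second-der-Gamma-ff}; those lemmas use exactly your ingredients: H\"older with Lemmas~\ref{L:Integrating-Factor-first-der}, \ref{L:1-der-moments} and \ref{L:Mal-der-X}, then for the $dW$-integral the factorization $\eta^*_{t,u}=\eta^*_{t,r_1\vee r_2}(\eta^*_{u,r_1\vee r_2})^{-1}$ with BDG and Lemma~\ref{L:L1}, and Assumption~\ref{A:Tech-Cond} to make the $\Gamma^f_3$ integral converge. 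One small bookkeeping correction: a mixed $D\theta\cdot DX$ piece gains the extra factor $e^{-C^*(r_1\vee r_2-r_1\wedge r_2)}$ only when its $DX$ factor is $D_{r_1\wedge r_2}X_u$; when it is $D_{r_1\vee r_2}X_u$, the contribution matches the second summand rather than the first (cf.\ Lemma~\ref{L:Second-der-Gamma-g-3-4}), which is harmless because the target bound is the sum of the two.
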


In the following lemmas, we estimate the terms $\mathscr{I}_1(t; C_{\bar{g}} C_\alpha)$, $\mathscr{I}_2(t; C_{\bar{g}} C_\alpha),$ $\mathscr{I}_4(t; C_{\bar{g}} C_\alpha)$ and $\mathscr{I}_8(t; C_{\bar{g}} C_\alpha)$ defined in Lemmas \ref{L:I-1-Second-Int}, \ref{L:I-2-Second-der}, \ref{L:I-4-Second-Derivative} and \ref{L:I-8-Second-der}, respectively. These terms appear in the proof of Proposition \ref{P:Second-der-product}.

\begin{lemma}\label{L:I-1-Second-Int}
For $t^* \le u \le s \le w \le r \le t$ and $C_{\bar{g}}C_\alpha > \frac{1}{2},$ let
\begin{equation*}
\mathscr{I}_1(t; C_{\bar{g}} C_\alpha)  \triangleq \frac{K}{t^{4C_{\bar{g}} C_\alpha}} \int_{u=t^*}^t \int_{s=u}^t \int_{w=s}^t \int_{r=w}^t  r^{2C_{\bar{g}}C_\alpha-2}s^{C_{\bar{g}}C_\alpha-1} w^{C_{\bar{g}}C_\alpha-1}e^{-2C^*r+ 2C^*u} \thinspace dr \thinspace dw \thinspace ds \thinspace du.
\end{equation*}
Then, there exists a time-independent positive constant $K$ such that
$$\mathscr{I}_1(t; C_{\bar{g}} C_\alpha) \le \begin{cases} \frac{K}{t^3},  & {C_{\bar{g}}C_\alpha > \frac{3}{4}}\\  \frac{K \log t }{t^3}, & {C_{\bar{g}}C_\alpha = \frac{3}{4}} \\ \frac{K}{t^{4C_{\bar{g}}C_\alpha}}, & \frac{1}{2} < C_{\bar{g}}C_\alpha < \frac{3}{4}.\end{cases}$$
\end{lemma}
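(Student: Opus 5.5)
Write $a \triangleq C_{\bar{g}}C_\alpha > \tfrac12$. The plan is to integrate the four variables from the inside out, starting with $r$ and ending with $u$. The point is that the factor $e^{-2C^*(r-u)}$ confines $r$, and therefore also $s$ and $w$, to an $O(1)$ neighbourhood of $u$. Once $r,s,w$ are pinned near $u$, the integrand behaves like $u^{4a-4}$, and the rate comes from $t^{-4a}\int_{t^*}^t u^{4a-4}\,du$.

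\textbf{Integrate out $r$.} For $r\ge w$ and a power $\mathsf{D}=2a-2$, I would use the bound behind Lemma \ref{L:Integration-f-g-intermediate}:
\[
\int_w^t r^{2a-2}e^{-2C^*r}\,dr \le K\, w^{2a-2}e^{-2C^*w}.
\]
For $w\ge t^*\ge 1$ this follows by splitting into two cases. If $2a-2\le 0$, bound $r^{2a-2}\le w^{2a-2}$. If $2a-2>0$, write $r^{2a-2}e^{-2C^*r} = (r^{2a-2}e^{-C^*r})e^{-C^*r}$ and note that $r^{2a-2}e^{-C^*(r-w)}\le K w^{2a-2}$ for $r\ge w\ge 1$. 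After this step the integrand is $s^{a-1}w^{3a-3}e^{-2C^*(w-u)}$.

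\textbf{Integrate out $w$ and $s$.} The same estimate over $w\in[s,t]$ gives $K s^{3a-3}e^{-2C^*(s-u)}$. The integrand is now $s^{4a-4}e^{-2C^*(s-u)}$. Integrating over $s\in[u,t]$ gives $K u^{4a-4}$. Thus
\[
\mathscr{I}_1 \le \frac{K}{t^{4a}}\int_{t^*}^t u^{4a-4}\,du .
\]

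\textbf{Case analysis in $u$.} If $4a-4>-1$, i.e. $a>\tfrac34$, the integral is at most $K t^{4a-3}$, so $\mathscr{I}_1\le K/t^3$. If $a=\tfrac34$, the integral equals $\log t-\log t^*\le \log t$, so $\mathscr{I}_1\le K\log t/t^3$. If $\tfrac12<a<\tfrac34$, the exponent $4a-4$ is below $-1$, the integral converges to a constant depending on $t^*$, and $\mathscr{I}_1\le K/t^{4a}$. This gives exactly the three cases in the statement.

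\textbf{Main obstacle.} The delicate step is the one-variable exponential-weighted bound. The constant must be uniform in the lower limit $w$ (or $s$, $u$) and in $t$, and the bound must hold in both the increasing-power and decreasing-power cases. Here $t^*\ge 1$ is essential: it keeps negative powers bounded and makes $(r/w)^{\mathsf{D}}e^{-C^*(r-w)}$ uniformly bounded. Everything else is bookkeeping of exponents, and all constants are independent of $t$.
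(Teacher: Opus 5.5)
Your proposal is correct and follows the paper's proof essentially step for step. You integrate out $r$, $w$, $s$ in turn using the exponential-weighted bound of Lemma \ref{L:Integration-f-g-intermediate}, reduce to $t^{-4C_{\bar{g}}C_\alpha}\int_{t^*}^t u^{4C_{\bar{g}}C_\alpha-4}\,du$, and then split on the sign of $4C_{\bar{g}}C_\alpha-3$; your self-contained justification of the one-variable bound (monotonicity when the exponent $\mathsf{D}\le 0$, boundedness of $(1+x)^{\mathsf{D}}e^{-C^*x}$ when $\mathsf{D}>0$, using $w\ge t^*\ge 1$) plays the same role as the paper's separate treatment of $C_{\bar{g}}C_\alpha\ge 1$ and $\frac12<C_{\bar{g}}C_\alpha<1$, and is cleaner than its integration-by-parts argument.
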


\begin{lemma}\label{L:I-2-Second-der}
For $t^* \le u \le s \le w \le r \le t$ and $C_{\bar{g}}C_\alpha > \frac{1}{2},$ let
\begin{equation*}
\mathscr{I}_2(t; C_{\bar{g}} C_\alpha)  \triangleq  \frac{K}{t^{4C_{\bar{g}}C_\alpha}} \int_{u=t^*}^t \int_{s=u}^t \int_{w=s}^t \int_{r=w}^t \frac{r^{2C_{\bar{g}} C_\alpha-2}s^{2C_{\bar{g}}C_\alpha-2}}{w}e^{-2C^*r+2C^*u-C^*s+C^*w}\thinspace dr \thinspace dw \thinspace ds \thinspace du.
\end{equation*}
Then, there exists a time-independent positive constant $K$ such that
$$\mathscr{I}_2(t; C_{\bar{g}} C_\alpha) \le \begin{cases} \frac{K}{t^4},  & {C_{\bar{g}}C_\alpha > 1}\\  \frac{K \log t }{t^4}, & {C_{\bar{g}}C_\alpha = 1} \\ \frac{K}{t^{4C_{\bar{g}}C_\alpha}}, & \frac{1}{2} < C_{\bar{g}}C_\alpha < 1.\end{cases}$$
\end{lemma}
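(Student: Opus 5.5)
The plan is to integrate out the variables one at a time, from the innermost integral outward. At each step the exponential weight localizes the integration variable near its lower limit, so each of the inner integrals behaves like a power of its lower endpoint. Only the final $u$-integral produces the threshold behaviour in $C_{\bar g}C_\alpha$. Write $a \triangleq C_{\bar g}C_\alpha>\frac12$.

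\textbf{Step 1: sign of the exponent.} On the ordered region $u\le s\le w\le r$ the exponent can be regrouped as
\begin{equation*}
-2C^*r+2C^*u-C^*s+C^*w=-2C^*(r-w)-C^*(w-s)-2C^*(s-u),
\end{equation*}
and each term on the right is nonpositive. The positive-looking term $C^*(w-s)$ is therefore harmless, and the exponential factorizes into three decaying factors, one for each gap.

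\textbf{Step 2: the inner integrals.} The basic tool is the estimate, for $t^*\ge 1$, $c>0$ and any real $\mathsf D$,
\begin{equation*}
\int_{x}^{t} y^{\mathsf D}e^{-c(y-x)}\,dy\le K x^{\mathsf D}\qquad\text{uniformly in } t\ge x\ge t^*.
\end{equation*}
This is the content of Lemma \ref{L:Integration-f-g-intermediate}, used earlier with $r_1=r_2$. If $\mathsf D\le 0$ it is immediate from $y^{\mathsf D}\le x^{\mathsf D}$. If $\mathsf D>0$, write $y^{\mathsf D}\le x^{\mathsf D}(1+(y-x))^{\mathsf D}$, which holds because $x\ge1$, and integrate against the exponential.

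Applying this three times:
\begin{itemize}
\item In $r$ over $[w,t]$ with $\mathsf D=2a-2$ and $c=2C^*$, the integral is at most $Kw^{2a-2}$. Combined with the factor $w^{-1}$ this leaves $w^{2a-3}$.
\item In $w$ over $[s,t]$ with $\mathsf D=2a-3$ and $c=C^*$, the integral is at most $Ks^{2a-3}$. Combined with $s^{2a-2}$ this gives $s^{4a-5}$.
\item In $s$ over $[u,t]$ with $\mathsf D=4a-5$ and $c=2C^*$, the integral is at most $Ku^{4a-5}$.
\end{itemize}

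\textbf{Step 3: the outer integral.} It remains to bound $\mathscr I_2\le K t^{-4a}\int_{t^*}^t u^{4a-5}\,du$, which splits into three cases.
\begin{itemize}
\item If $a>1$, then $4a-5>-1$, the integral is at most $Kt^{4a-4}$, and $\mathscr I_2\le K/t^4$.
\item If $a=1$, the integral equals $\log(t/t^*)\le\log t$, and $\mathscr I_2\le K\log t/t^4$.
\item If $\frac12<a<1$, then $4a-5<-1$, the integral is bounded by a constant depending only on $t^*$, and $\mathscr I_2\le K/t^{4a}$.
\end{itemize}
This gives exactly the three cases in the statement.

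\textbf{Main obstacle.} The only delicate point is that the constant in the Laplace-type bound of Step 2 must not depend on $t$, even when the exponent $\mathsf D$ is positive. This is why the lower limit $t^*\ge1$ is needed and why I rely on Lemma \ref{L:Integration-f-g-intermediate}. I also need the regrouping in Step 1 to ensure that no exponentially growing factor survives.
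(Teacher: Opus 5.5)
Your proposal is correct and takes essentially the paper's route. Like the paper, you integrate out $r$, $w$, $s$ in turn using the Laplace-type bound of Lemma \ref{L:Integration-f-g-intermediate} (or monotonicity when the exponent is nonpositive), reach $K t^{-4C_{\bar{g}}C_\alpha}\int_{t^*}^t u^{4C_{\bar{g}}C_\alpha-5}\,du$, and split into the three cases. The one difference is a streamlining: you prove $\int_x^t y^{\mathsf D}e^{-c(y-x)}\,dy\le Kx^{\mathsf D}$ once for every real $\mathsf D$ and every rate $c>0$, which removes the paper's subcases on the signs of $2C_{\bar{g}}C_\alpha-3$ and $4C_{\bar{g}}C_\alpha-5$ and its separate handling of threshold values.
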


\begin{lemma}\label{L:I-4-Second-Derivative}
For $t^* \le u \le s \le w \le r \le t$ and $C_{\bar{g}}C_\alpha > \frac{1}{2},$ let
\begin{equation*}
\begin{aligned}
\mathscr{I}_4(t; C_{\bar{g}} C_\alpha)  & \triangleq  \frac{K}{t^{4C_{\bar{g}} C_\alpha}} \int_{u=t^*}^t \int_{s=u}^t \int_{w=s}^t \int_{r=w}^t r^{C_{\bar{g}}C_\alpha-2}s^{2C_{\bar{g}}C_\alpha-2} w^{C_{\bar{g}}C_\alpha-2} e^{-C^*r+2C^*u-C^*s}\thinspace dr \thinspace dw \thinspace ds \thinspace du.
\end{aligned}
\end{equation*}
Then, there exists a time-independent positive constant $K$ such that
$$\mathscr{I}_4(t; C_{\bar{g}} C_\alpha) \le \begin{cases} \frac{K}{t^5},  & {C_{\bar{g}}C_\alpha > \frac{5}{4}}\\  \frac{K \log t }{t^5}, & {C_{\bar{g}}C_\alpha = \frac{5}{4}} \\ \frac{K}{t^{4C_{\bar{g}}C_\alpha}}, & \frac{1}{2} < C_{\bar{g}}C_\alpha < \frac{5}{4}.\end{cases}$$
\end{lemma}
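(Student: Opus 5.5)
Throughout, write $c \triangleq C_{\bar{g}} C_\alpha > \frac{1}{2}$. The idea is to integrate the four variables in the nested order $u$, then $r$, then $w$, then $s$. At each step the exponential factor localizes one variable near the next, so each inner integral costs only a constant times a power of the next variable. The $t$-dependence then comes entirely from a single one-dimensional power integral in $s$, whose three regimes produce the three cases of the statement.

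\textbf{Step 1: rewrite the exponent and integrate out $u$.} I will rewrite the exponent as
\[
-C^*r+2C^*u-C^*s = -C^*(r-w) - C^*(w-s) - 2C^*(s-u),
\]
which is valid on the ordered region $t^*\le u\le s\le w\le r\le t$. Only the last factor depends on $u$, and
\[
\int_{t^*}^s e^{-2C^*(s-u)}\,du \le \frac{1}{2C^*}.
\]
So $u$ can be removed at the cost of a constant.

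\textbf{Step 2: integrate out $r$.} Next I bound $\int_w^t r^{c-2} e^{-C^*(r-w)}\,dr \le K w^{c-2}$, uniformly in $w \ge t^*$.
\begin{itemize}
\item If $c\le 2$, then $r^{c-2}\le w^{c-2}$ for $r \ge w$, and the bound is immediate.
\item If $c>2$, substitute $r=w+v$ and use $(w+v)^{c-2}\le K(w^{c-2}+v^{c-2})$. Since $w\ge t^*\ge 1$, the $v^{c-2}$ contribution is a constant, which is at most $K w^{c-2}$.
\end{itemize}
This is the same estimate as in Lemma~\ref{L:Integration-f-g-intermediate}, already used in the proofs of Lemmas~\ref{L:L-7-deri-1} and~\ref{L:L-8-deri-1}.

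\textbf{Step 3: integrate out $w$.} After Step 2 the $w$-integrand is $w^{c-2}\cdot w^{c-2}\, e^{-C^*(w-s)} = w^{2c-4} e^{-C^*(w-s)}$. The same argument as in Step 2 gives
\[
\int_s^t w^{2c-4} e^{-C^*(w-s)}\,dw \le K s^{2c-4}.
\]

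\textbf{Step 4: the remaining $s$-integral and the three cases.} Collecting Steps 1--3,
\[
\mathscr{I}_4(t;c) \le \frac{K}{t^{4c}}\int_{t^*}^t s^{2c-2}\, s^{2c-4}\,ds = \frac{K}{t^{4c}}\int_{t^*}^t s^{4c-6}\,ds .
\]
The threshold is where the exponent $4c-6$ equals $-1$, i.e. $c=\frac{5}{4}$.
\begin{itemize}
\item For $c<\frac{5}{4}$, we have $4c-6<-1$, so the integral is bounded by a constant depending only on $t^*$. This gives $K/t^{4c}$.
\item For $c=\frac{5}{4}$, the integral equals $\log t-\log t^*\le \log t$. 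This gives $K\log t/t^5$.
\item For $c>\frac{5}{4}$, the integral is at most $K t^{4c-5}$. This gives $K/t^{5}$.
\end{itemize}
These are exactly the three cases in the statement.

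\textbf{Main obstacle.} The only delicate point is the uniform bound on the Laplace-type integrals $\int_a^t x^{\mathsf{D}} e^{-C^*(x-a)}\,dx \le K a^{\mathsf{D}}$ when $\mathsf{D}>0$. This needs $a\ge t^*\ge 1$ so that polynomially growing terms are dominated by $a^{\mathsf{D}}$. It is also why the exponent must be split so that each exponential factor is paired with the variable being integrated. Since Lemma~\ref{L:Integration-f-g-intermediate} supplies this bound, the rest is bookkeeping. The constant $K$ may depend on $t^*$, $C^*$ and $c$, but not on $t$, as the statement requires.
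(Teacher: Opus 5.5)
Your proof is correct and takes essentially the paper's route. Both arguments peel off the exponentially localized inner integrals to reduce everything to a single power integral, $\int s^{4C_{\bar{g}}C_\alpha-6}\,ds$ in your version and $\int u^{4C_{\bar{g}}C_\alpha-6}\,du$ in the paper's, and the exponent crossing $-1$ at $C_{\bar{g}}C_\alpha=\frac{5}{4}$ produces the three regimes. The differences are only cosmetic: you remove $u$ first via Tonelli and your splitting of the exponent lets one uniform Laplace-type bound handle every case, whereas the paper integrates out $r$, $w$, $s$ in turn and splits into subcases ($C_{\bar{g}}C_\alpha>2$, $<2$, $=1$, and so on) that your approach does not need.
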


\begin{lemma}\label{L:I-8-Second-der}
For $t^* \le u \le s \le w \le r \le t$ and $C_{\bar{g}}C_\alpha > \frac{1}{2},$ let
\begin{equation*}
\begin{aligned}
\mathscr{I}_8(t; C_{\bar{g}} C_\alpha)  & \triangleq  \frac{K}{t^{4C_{\bar{g}}C_\alpha}} \int_{u=t^*}^t \int_{s=u}^t \int_{w=s}^t \int_{r=w}^t r^{C_{\bar{g}}C_\alpha-2}u^{C_{\bar{g}}C_\alpha-1} w^{C_{\bar{g}}C_\alpha-2}s^{C_{\bar{g}}C_\alpha-2}e^{-C^*r + C^* u} \thinspace dr \thinspace dw \thinspace ds \thinspace du.
\end{aligned}
\end{equation*}
Then, there exists a time-independent positive constant $K$ such that
$$\mathscr{I}_8(t; C_{\bar{g}} C_\alpha)  \le \begin{cases} \frac{K}{t^6},  & {C_{\bar{g}} C_\alpha > \frac{3}{2}}\\  \frac{K \log t }{t^6}, & {C_{\bar{g}} C_\alpha = \frac{3}{2}} \\ \frac{K}{t^{4C_{\bar{g}} C_\alpha}}, & \frac{1}{2} < C_{\bar{g}}C_\alpha < \frac{3}{2}.\end{cases}$$
\end{lemma}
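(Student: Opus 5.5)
The plan is to collapse the four-fold integral defining $\mathscr{I}_8(t; C_{\bar{g}} C_\alpha)$ to a single power integral in $s$. I do this by splitting the exponential weight along the ordering $u \le s \le w \le r$ and then integrating out $r$, $u$ and $w$ one at a time against their exponential factors. Write $a \triangleq C_{\bar{g}}C_\alpha > \frac{1}{2}$ and assume $t^* \ge 1$. Since $r-u = (r-w)+(w-s)+(s-u)$ with each piece nonnegative, we have
\begin{equation*}
e^{-C^*r + C^* u} = e^{-C^*(r-w)}\, e^{-C^*(w-s)}\, e^{-C^*(s-u)}.
\end{equation*}
I will keep all three factors; each one localizes one variable near its neighbour.

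\textbf{Step 1: integrate out $r$.} I claim that for $w \ge 1$,
\begin{equation*}
\int_w^t r^{a-2} e^{-C^*(r-w)}\,dr \le K w^{a-2}.
\end{equation*}
If $a-2\le 0$, this holds because $r^{a-2}\le w^{a-2}$ on $[w,t]$. If $a-2>0$, write $r^{a-2}=(w+(r-w))^{a-2}\le K w^{a-2}(1+(r-w))^{a-2}$ using $w\ge1$; the polynomial factor is integrable against $e^{-C^*(r-w)}$. This is the same mechanism as Lemma \ref{L:Integration-f-g-intermediate}, which I would simply invoke.

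\textbf{Step 2: integrate out $u$.} I need $\int_{t^*}^s u^{a-1} e^{-C^*(s-u)}\,du \le K s^{a-1}$. Split the range at $u=s/2$. On $[s/2,s]$, we have $u^{a-1}\le K s^{a-1}$ whatever the sign of $a-1$, and the exponential integrates to at most $1/C^*$. On $[t^*,s/2]$, the integrand is at most $K(1+s^{|a-1|})e^{-C^*s/2}$ over an interval of length at most $s$, so this piece is $\le K s^{a-1}$ after enlarging $K$. I expect this step to need the most care, since it is the backward-in-time analogue of Lemma \ref{L:Integration-f-g-intermediate}, and a negative exponent $a-1$ makes the lower endpoint $u=t^*$ potentially large relative to $s^{a-1}$. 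The splitting at $s/2$ is how I would handle it.

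\textbf{Step 3: integrate out $w$ and conclude.} After Steps 1 and 2, the $w$-integrand becomes $w^{a-2}\cdot w^{a-2}e^{-C^*(w-s)} = w^{2a-4}e^{-C^*(w-s)}$. The argument of Step 1 gives $\int_s^t w^{2a-4}e^{-C^*(w-s)}dw \le K s^{2a-4}$. Altogether,
\begin{equation*}
\mathscr{I}_8(t; C_{\bar{g}} C_\alpha) \le \frac{K}{t^{4a}} \int_{t^*}^t s^{a-2}\, s^{a-1}\, s^{2a-4}\,ds = \frac{K}{t^{4a}}\int_{t^*}^t s^{4a-7}\,ds.
\end{equation*}
The trichotomy in the statement now comes from the sign of $4a-6$.
\begin{itemize}
\item If $a>\frac32$, the last integral is $\le K t^{4a-6}$, which gives $K/t^6$.
\item If $a=\frac32$, it equals $\log(t/t^*)\le \log t$, which gives $K\log t/t^6$.
\item If $\frac12<a<\frac32$, it is bounded by $\int_{t^*}^\infty s^{4a-7}ds<\infty$, which gives $K/t^{4a}$.
\end{itemize}
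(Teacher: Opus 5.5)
Your proof is correct. It arrives at the same one-dimensional integral as the paper, $\frac{K}{t^{4C_{\bar{g}}C_\alpha}}\int_{t^*}^t x^{4C_{\bar{g}}C_\alpha-7}\,dx$, and you read off the trichotomy from the sign of $4C_{\bar{g}}C_\alpha-6$, exactly as the paper does.

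The route differs in two ways.

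First, the case analysis. The paper integrates strictly from the inside out ($r$, then $w$, then $s$). It splits into subcases by the signs of the intermediate exponents: $C_{\bar{g}}C_\alpha>2$ versus $<2$, the sign of $4C_{\bar{g}}C_\alpha-7$, and separate threshold points. In some subcases it applies Lemma \ref{L:Integration-f-g-intermediate}; in others it uses the monotonicity $\frac{1}{r}\le\frac{1}{w}\le\frac{1}{s}\le\frac{1}{u}$. You instead factor $e^{-C^*(r-u)}$ explicitly along the chain $u\le s\le w\le r$. You then use the localization bound $\int_w^t r^{D}e^{-C^*(r-w)}\,dr\le K w^{D}$, which holds for every real $D$ once $w\ge 1$. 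This removes all of the paper's subcases and makes it transparent that only the final exponent matters.

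Second, the order of integration. Your Step 2 integrates out the outermost variable $u$ through a backward estimate that the paper does not have. That estimate needs its own argument; your split at $s/2$ handles it correctly. The detour is avoidable, though. After integrating $r$ and $w$, the remaining integral $\int_u^t s^{3C_{\bar{g}}C_\alpha-6}e^{-C^*(s-u)}\,ds\le K u^{3C_{\bar{g}}C_\alpha-6}$ is of the same forward type as your Step 1. That is precisely the paper's ordering, ending in $\int_{t^*}^t u^{4C_{\bar{g}}C_\alpha-7}\,du$.

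In short, the paper's version reuses an already-stated lemma, while yours gives a uniform, case-free argument that is easier to check.
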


We are now in a position to prove Proposition \ref{P:Second-der-product}.

\begin{remark}\label{R:Second-der-Int}
It is important to note that we provide the proof of Proposition \ref{P:Second-der-product} only for the case $\mathsf{A}_1 \triangleq \{(u,s,w,r) \in [t^*,t]^4: u \le s \le w \le r \}$ among the total $4!(=24)$ cases. This choice is made because, after performing algebraic manipulation for all the cases, all terms will correspond to the rates specified in Lemmas \ref{L:I-1-Second-Int}, \ref{L:I-2-Second-der}, \ref{L:I-4-Second-Derivative} and \ref{L:I-8-Second-der} above.
\end{remark}

\begin{proof}[Proof of Proposition \ref{P:Second-der-product}]
From Lemma \ref{L:2-der-moments}, for $p=2$, we have
\begin{equation}\label{E:4th-moment-sec-der}
\BE \left[ \left( D^2_{r_1, r_2} \theta_t \right)^{4} \right] \le K \frac{(r_1 \vee r_2)^{4C_{\bar{g}} C_\alpha-4}}{t^{4C_{\bar{g}}C_\alpha}}e^{-4 C^*(r_1 \vee r_2 - r_1 \wedge r_2)} + \frac{K}{t^{4 C_{\bar{g}}C_\alpha}}\frac{(r_1 r_2)^{4 C_{\bar{g}} C_\alpha-4}}{(r_1 \vee r_2 )^{4 C_{\bar{g}} C_\alpha}}.
\end{equation}
Therefore, for the set $\mathsf{A}_1$ defined in Remark \ref{R:Second-der-Int} above and using Equation \eqref{E:4th-moment-sec-der}, we obtain
\begin{align*}
\int_{\mathsf{A}_1} & \left( \BE |D^2_{u,r} \theta_t|^4\right)^{\frac{1}{4}} \left( \BE |D^2_{u,s} \theta_t|^4\right)^{\frac{1}{4}} \left( \BE |D^2_{w,r} \theta_t|^4\right)^{\frac{1}{4}} \left( \BE |D^2_{w,s} \theta_t|^4\right)^{\frac{1}{4}} \thinspace dr \thinspace  dw \thinspace ds \thinspace du \\
& \le  \frac{K}{t^{4C_{\bar{g}}C_\alpha}} \int_{u=t^*}^t \int_{s=u}^t \int_{w=s}^t \int_{r=w}^t \left\{ (u \vee r)^{C_{\bar{g}} C_\alpha-1}e^{- C^*(u \vee r - u \wedge r)} +  \frac{(ur)^{ C_{\bar{g}} C_\alpha-1}}{(u \vee r )^{ C_{\bar{g}} C_\alpha}} \right\} \\
& \qquad \qquad \qquad \qquad \qquad \qquad  \times  \left\{ (u \vee r)^{C_{\bar{g}} C_\alpha-1}e^{- C^*(u \vee r - u \wedge r)} +  \frac{(ur)^{ C_{\bar{g}} C_\alpha-1}}{(u \vee r )^{ C_{\bar{g}} C_\alpha}} \right\} \\
 & \qquad \qquad \qquad \qquad \qquad \qquad  \times  \left\{ (u \vee r)^{C_{\bar{g}} C_\alpha-1}e^{- C^*(u \vee r - u \wedge r)} +  \frac{(ur)^{ C_{\bar{g}} C_\alpha-1}}{(u \vee r )^{ C_{\bar{g}} C_\alpha}} \right\} \\
& \qquad \qquad \qquad \qquad \qquad \qquad  \times  \left\{ (u \vee r)^{C_{\bar{g}} C_\alpha-1}e^{- C^*(u \vee r - u \wedge r)} +  \frac{(ur)^{ C_{\bar{g}} C_\alpha-1}}{(u \vee r )^{ C_{\bar{g}} C_\alpha}} \right\} dr \thinspace dw \thinspace ds \thinspace du.
\end{align*}
Using the ordering of variables in the set $\mathsf{A}_1$, the above expression becomes
\begin{align*}
\int_{\mathsf{A}_1}  \left( \BE |D^2_{u,r} \theta_t|^4\right)^{\frac{1}{4}} & \left( \BE |D^2_{u,s} \theta_t|^4\right)^{\frac{1}{4}} \left( \BE |D^2_{w,r} \theta_t|^4\right)^{\frac{1}{4}} \left( \BE |D^2_{w,s} \theta_t|^4\right)^{\frac{1}{4}} \thinspace dr \thinspace  dw \thinspace ds \thinspace du \\
& =  \frac{K}{t^{4C_{\bar{g}} C_\alpha}} \int_{u=t^*}^t \int_{s=u}^t \int_{w=s}^t \int_{r=w}^t \left\{ r^{C_{\bar{g}} C_\alpha-1}e^{- C^*(r - u)} +  \frac{u^{C_{\bar{g}} C_\alpha-1}}{r} \right\} \\
& \qquad \qquad \qquad \qquad \qquad \qquad  \times  \left\{ s^{C_{\bar{g}} C_\alpha-1}e^{- C^*(s - u)} +  \frac{u^{C_{\bar{g}} C_\alpha-1}}{s} \right\} \\
 & \qquad \qquad \qquad \qquad \qquad \qquad  \times  \left\{ r^{C_{\bar{g}} C_\alpha-1}e^{- C^*(r - w)} +  \frac{w^{C_{\bar{g}}C_\alpha-1}}{r} \right\}\\
& \qquad \qquad \qquad \qquad \qquad \qquad  \times  \left\{ w^{C_{\bar{g}}C_\alpha-1}e^{- C^*(w - s)} +  \frac{s^{C_{\bar{g}}C_\alpha-1}}{w} \right\} \thinspace dr \thinspace dw \thinspace ds \thinspace du.
\end{align*}
Consequently,
\begin{align*}
\int_{\mathsf{A}_1} \left( \BE |D^2_{u,r} \theta_t|^4 \right)^{\frac{1}{4}}
&\left( \BE |D^2_{u,s} \theta_t|^4 \right)^{\frac{1}{4}}
\left( \BE |D^2_{w,r} \theta_t|^4 \right)^{\frac{1}{4}}
\left( \BE |D^2_{w,s} \theta_t|^4 \right)^{\frac{1}{4}}
\thinspace dr \thinspace dw \thinspace ds \thinspace du \\
&= \frac{K}{t^{4C_{\bar{g}} C_\alpha}}
\int_{u=t^*}^t \int_{s=u}^t \int_{w=s}^t \int_{r=w}^t
\left[
r^{2C_{\bar{g}} C_\alpha-2}s^{C_{\bar{g}} C_\alpha-1} w^{C_{\bar{g}}C_\alpha-1}
e^{-2C^*r+ 2C^*u}
\right. \\
&\qquad
+ \frac{r^{2C_{\bar{g}}C_\alpha-2}s^{2C_{\bar{g}}C_\alpha-2}}{w}
e^{-2C^*r+2C^*u-C^*s+C^*w} \\
&\qquad
+ w^{2C_{\bar{g}} C_\alpha-2}s^{C_{\bar{g}}C_\alpha-1}r^{C_{\bar{g}}C_\alpha-2}
e^{-C^* r +2C^* u -C^*w} \\
&\qquad
+ w^{C_{\bar{g}}C_\alpha-2}s^{2C_{\bar{g}}C_\alpha-2}r^{C_{\bar{g}}C_\alpha-2}
e^{-C^* r +2C^* u -C^*s} \\
&\qquad
+ r^{2C_{\bar{g}} C_\alpha-2} u^{C_{\bar{g}} C_\alpha-1}
\frac{w^{C_{\bar{g}} C_\alpha-1}}{s} e^{-2C^*r + C^*s + C^*u} \\
&\qquad
+ r^{2C_{\bar{g}}C_\alpha-2} u^{C_{\bar{g}}C_\alpha-1}
\frac{s^{C_{\bar{g}}C_\alpha-2}}{w} e^{-2C^*r + C^*u + C^*w} \\
&\qquad
+ r^{C_{\bar{g}} C_\alpha-2} w^{2C_{\bar{g}} C_\alpha-2}
\frac{u^{C_{\bar{g}}C_\alpha-1}}{s} e^{-C^*r + C^*u - C^*w + C^*s} \\
&\qquad
+ r^{C_{\bar{g}} C_\alpha-2}u^{C_{\bar{g}} C_\alpha-1}
w^{C_{\bar{g}} C_\alpha-2}s^{C_{\bar{g}}C_\alpha-2}e^{-C^*r + C^* u } \\
&\qquad
+ r^{C_{\bar{g}} C_\alpha-2}u^{C_{\bar{g}} C_\alpha-1}
w^{C_{\bar{g}} C_\alpha-1}s^{C_{\bar{g}}C_\alpha-1}e^{-C^*r + C^* u } \\
\displaybreak[3]
&\qquad
+ u^{C_{\bar{g}} C_\alpha-1} s^{2 C_{\bar{g}} C_\alpha -2 }
\frac{r^{C_{\bar{g}}C_\alpha-2}}{w}e^{-C^*r + C^* w -C^* s + C^* u} \\
&\qquad
+ u^{C_{\bar{g}} C_\alpha-1} s^{ C_{\bar{g}} C_\alpha -1 }
\frac{w^{2C_{\bar{g}}C_\alpha-2}}{r^2}e^{-C^*w + C^* u} \\
&\qquad
+ u^{C_{\bar{g}} C_\alpha-1} s^{2 C_{\bar{g}} C_\alpha -2 }
\frac{w^{C_{\bar{g}} C_\alpha-2}}{r^2}e^{-C^*s + C^* u} \\
&\qquad
+ u^{2C_{\bar{g}} C_\alpha-2} r^{ C_{\bar{g}} C_\alpha -2 }
\frac{w^{C_{\bar{g}} C_\alpha-1}}{s}e^{-C^*r + C^*s} \\
&\qquad
+ u^{2C_{\bar{g}} C_\alpha-2} r^{ C_{\bar{g}} C_\alpha -2 }
\frac{s^{C_{\bar{g}} C_\alpha-2}}{w}e^{-C^*r + C^*w} \\
&\qquad
+ u^{2C_{\bar{g}} C_\alpha-2}
\frac{w^{2C_{\bar{g}} C_\alpha-2}}{r^2 s}e^{-C^* w + C^*s} \\
&\qquad
\left. + u^{2C_{\bar{g}} C_\alpha-2} w^{ C_{\bar{g}} C_\alpha -2 }
\frac{s^{C_{\bar{g}} C_\alpha-2}}{r^2} \right]
dr \thinspace dw \thinspace ds \thinspace du \\
&\triangleq \sum_{i=1}^{16} \mathscr{I}_i(t;C_{\bar{g}}C_\alpha).
\end{align*}
After straightforward algebra, we observe that the bounds for the terms $\mathscr{I}_i(t;C_{\bar{g}}C_\alpha)$, where $i \in \{2,3,5\} \cup\{9, \dots, 16\}$, are identical. Similarly, for $i \in \{4,6,7\}$, the terms $\mathscr{I}_i(t;C_{\bar{g}}C_\alpha)$ share the same bounds. Therefore, by combining the estimates from Lemmas \ref{L:I-1-Second-Int}, \ref{L:I-2-Second-der}, \ref{L:I-4-Second-Derivative} and \ref{L:I-8-Second-der} to control the terms $\mathscr{I}_1(t;C_{\bar{g}}C_\alpha)$, $\mathscr{I}_2(t;C_{\bar{g}}C_\alpha)$, $\mathscr{I}_4(t;C_{\bar{g}}C_\alpha)$ and $\mathscr{I}_8(t;C_{\bar{g}}C_\alpha)$, respectively, we obtain the desired estimate.
\end{proof}

\subsection{Bound associated with the initial condition term: $\BE \left[ (\eta^*_{t, r_1 \vee r_2})^{2p}\gamma(X_{r_1}, X_{r_2}, \theta_{r_1}, \theta_{r_2})^{2p} \right]$}\label{S:Initial-Condition}
In this subsection, we estimate the term $\BE \left[ (\eta^*_{t, r_1 \vee r_2})^{2p}\gamma(X_{r_1}, X_{r_2}, \theta_{r_1}, \theta_{r_2})^{2p} \right]$ (see Lemma \ref{L:Sec-der-initial-condition}), where $\gamma$ is defined in \eqref{E:second-order-Mal-Der-theta}. The bound obtained here is used in the proof of Lemma \ref{L:2-der-moments}. Before stating Lemma \ref{L:Sec-der-initial-condition}, we first provide estimates for the Malliavin derivatives of the process $X$ in Lemma \ref{L:Mal-der-X}. Lemma \ref{L:Mal-der-X} will be used repeatedly throughout this manuscript.

\begin{lemma}\label{L:Mal-der-X}
Let $D_r X_t$ and $D^2_{r_1, r_2}X_t$ be the solutions of Equations \eqref{E:first-order-Mal-Der-X} and \eqref{E:Second-der-Malli-X}, respectively. Then, there exists a positive constant $K$ such that
\begin{equation*}
D_r X_t \le e^{-C^* (t-r)},\quad \text{and} \quad \BE \left[|D^2_{r_1, r_2}X_t | \right] \le K e^{-C^*(t-r_1 \wedge r_2)}.
\end{equation*}
\end{lemma}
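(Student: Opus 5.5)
The idea is to treat both equations as linear ODEs along each path and bound them by comparison, using Assumption \ref{A:f*-growth}, i.e.\ $f_x^*\le -C^*$.

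\textbf{First-order derivative.} For fixed $r$, Equation \eqref{E:first-order-Mal-Der-X} is a linear random ODE in $t$ with initial value $D_rX_r=1$. It therefore has the explicit solution
\begin{equation*}
D_rX_t=\exp\Big(\int_r^t f_x^*(X_v)\,dv\Big).
\end{equation*}
This is positive, and since $f_x^*\le -C^*$ the exponent is at most $-C^*(t-r)$. Hence $0<D_rX_t\le e^{-C^*(t-r)}$ pathwise, which is stronger than what is claimed.

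\textbf{Second-order derivative.} Write $\rho\triangleq r_1\vee r_2$. Equation \eqref{E:Second-der-Malli-X} is again linear in $D^2_{r_1,r_2}X_t$, with zero initial value at time $\rho$ and a forcing term. Variation of constants gives
\begin{equation*}
D^2_{r_1,r_2}X_t=\int_{\rho}^t \exp\Big(\int_s^t f_x^*(X_v)\,dv\Big)\, f_{xx}^*(X_s)\,D_{r_1}X_s\,D_{r_2}X_s\,ds.
\end{equation*}
Bound the exponential factor by $e^{-C^*(t-s)}$, and bound $D_{r_i}X_s$ by $e^{-C^*(s-r_i)}$ using the first part. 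Take absolute values and expectations; since the $D$-factors are deterministically bounded, this leaves
\begin{equation*}
\BE\big[|D^2_{r_1,r_2}X_t|\big]\le \int_\rho^t e^{-C^*(t-s)}e^{-C^*(2s-r_1-r_2)}\,\BE|f_{xx}^*(X_s)|\,ds .
\end{equation*}
The exponent simplifies as $-C^*(t-s)-C^*(2s-r_1-r_2)=-C^*(t-r_1\wedge r_2)-C^*(s-\rho)$. After pulling out the $s$-independent factor, the remaining integral is $\int_\rho^t e^{-C^*(s-\rho)}\BE|f^*_{xx}(X_s)|\,ds$. If $\sup_s \BE|f^*_{xx}(X_s)|\le K$, this integral is at most $K/C^*$, which yields $\BE|D^2_{r_1,r_2}X_t|\le Ke^{-C^*(t-r_1\wedge r_2)}$.

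\textbf{Main obstacle.} The only delicate point is the uniform bound $\sup_s\BE|f_{xx}^*(X_s)|<\infty$. By Assumption \ref{A:Well-Posedness}, $f^*\in\mathscr{C}^{2+\gamma}$, so $f^*_{xx}$ is H\"older continuous. It therefore grows at most like $1+|x|^{\gamma}$, and the bound follows from the uniform-in-time moments $\sup_t\BE|X_t|^{\mathsf m}<\infty$ of \cite{pardoux2001poisson}.
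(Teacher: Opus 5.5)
Your proof is essentially the paper's: solve the first-order equation as $\exp(\int_r^t f_x^*(X_v)\,dv)$, write $D^2_{r_1,r_2}X_t$ by variation of constants, bound the exponential and the two first-order factors pathwise using $f_x^*\le -C^*$, then use a uniform moment bound on $f^*_{xx}(X_s)$ together with the exponent identity $-C^*(t-s)-C^*(2s-r_1-r_2)=-C^*(t-r_1\wedge r_2)-C^*(s-r_1\vee r_2)$. Beyond this computation, the only ingredient is the growth of $f^*_{xx}$, which the paper also invokes without proof; your derivation of it from $\mathscr{C}^{2+\gamma}$ is valid if the H\"older seminorm of $f^*_{xx}$ is global, and under a merely local reading you would instead have to assume polynomial growth of $f^*_{xx}$ directly, as the paper implicitly does.
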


\begin{proof}[Proof of Lemma \ref{L:Mal-der-X}]
Solving Equation \eqref{E:first-order-Mal-Der-X} and using Assumption \ref{A:f*-growth} gives $D_rX_t = e^{\int_r^t f_x^*(X_u)\thinspace du} \le e^{-C^* (t-r)}.$
For the second bound, we use H\"older's inequality, Equation \eqref{E:Second-der-Malli-X}, Assumption \ref{A:f*-growth}, the bound for the term $D_r X_t$, the growth of the function $f_{xx}^*$, and moment bounds for the process $X$ to obtain
\begin{equation*}
\begin{aligned}
\BE \left|D_{r_1, r_2}^2 X_t \right| & = \BE \left| \int_{r_1 \vee r_2}^t  e^{\int_s^t f_x^*(X_u)\thinspace du} f_{xx}^*(X_s)\cdot D_{r_1} X_s \cdot D_{r_2} X_s \thinspace ds \right| \\
& \le K \int_{r_1 \vee r_2}^t e^{-C^*(t-s)} e^{-C^*(s-r_1)} e^{-C^*(s-r_2)}ds \le K e^{-C^*(t- r_1 \wedge r_2)}.
\end{aligned}
\end{equation*}
This completes the proof.
\end{proof}

\begin{lemma}\label{L:Sec-der-initial-condition}
Let the process $\eta^*$ and $\gamma$ be defined in Equations \eqref{E:Integrating-factor-1-der} and \eqref{E:second-order-Mal-Der-theta}, respectively. Then, for any $p \in \BN$, $t > r_1 \vee r_2,$ and $C_{\bar{g}} C_\alpha> \frac{1}{2},$ there exists a positive constant $K$ such that
\begin{multline*}
\BE \left[ (\eta^*_{t, r_1 \vee r_2})^{2p}\gamma(X_{r_1}, X_{r_2}, \theta_{r_1}, \theta_{r_2})^{2p} \right] \le  K \frac{(r_1 \vee r_2)^{ 2p C_{\bar{g}} C_\alpha-2p}}{t^{ 2p C_{\bar{g}} C_\alpha}}e^{- 2p  C^*(r_1 \vee r_2 - r_1 \wedge r_2)} \\
 + \frac{K}{t^{ 2p  C_{\bar{g}} C_\alpha}}\frac{(r_1 r_2)^{ 2p  C_{\bar{g}} C_\alpha-2p}}{(r_1 \vee r_2 )^{ 2p  C_{\bar{g}} C_\alpha}}.
\end{multline*}
\end{lemma}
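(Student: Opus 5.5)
We will expand $\gamma^{2p}$ with the elementary inequality $(a_1+\cdots+a_4)^{2p}\le K(a_1^{2p}+\cdots+a_4^{2p})$. This splits the quantity into four terms, each of the form $\BE[(\eta^*_{t,r_1\vee r_2})^{2p}(\cdots)^{2p}]$. By symmetry in $(r_1,r_2)$ it suffices to take $r_1\le r_2$, so that $r_1\vee r_2=r_2$ and $r_1\wedge r_2=r_1$. The four summands of $\gamma$ then fall into two groups: the ``$X$-derivative'' terms $\alpha_{r_i}f_{x\theta}(X_{r_i},\theta_{r_i})D_{r_j}X_{r_i}$ and the ``$\theta$-derivative'' terms $\alpha_{r_i}f_{\theta\theta}(X_{r_i},\theta_{r_i})D_{r_j}\theta_{r_i}$. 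The Malliavin derivatives vanish unless the differentiation time precedes the evaluation time. Hence for $r_1<r_2$ only $\alpha_{r_2}f_{x\theta}(X_{r_2},\theta_{r_2})D_{r_1}X_{r_2}$ and $\alpha_{r_2}f_{\theta\theta}(X_{r_2},\theta_{r_2})D_{r_1}\theta_{r_2}$ survive.

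For the $X$-derivative term, Lemma \ref{L:Mal-der-X} gives the deterministic bound $D_{r_1}X_{r_2}\le e^{-C^*(r_2-r_1)}$, which we pull out of the expectation. We then apply Cauchy--Schwarz to $\BE[(\eta^*_{t,r_2})^{2p}f_{x\theta}(X_{r_2},\theta_{r_2})^{2p}]$. The growth Assumption \ref{A:Growth-f-g} and the uniform moment bounds for $X$ and $\theta$ control the $f_{x\theta}$ factor, and Lemma \ref{L:Integrating-Factor-first-der} gives $\BE[(\eta^*_{t,r_2})^{4p}]^{1/2}\le K(r_2/t)^{2pC_{\bar g}C_\alpha}$. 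Multiplying by $\alpha_{r_2}^{2p}=C_\alpha^{2p}r_2^{-2p}$ yields exactly the first term of the claimed bound, $K\,r_2^{2pC_{\bar g}C_\alpha-2p}t^{-2pC_{\bar g}C_\alpha}e^{-2pC^*(r_2-r_1)}$.

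For the $\theta$-derivative term we cannot factor out $D_{r_1}\theta_{r_2}$, since it is random and correlated with $\eta^*_{t,r_2}$. We will use H\"older with three factors:
$$\alpha_{r_2}^{2p}\,\BE[(\eta^*_{t,r_2})^{6p}]^{1/3}\,\BE[f_{\theta\theta}^{6p}]^{1/3}\,\BE[(D_{r_1}\theta_{r_2})^{6p}]^{1/3}.$$
Lemma \ref{L:Integrating-Factor-first-der} bounds the first expectation factor by $K(r_2/t)^{2pC_{\bar g}C_\alpha}$. Lemma \ref{L:1-der-moments} (with $3p$ in place of $p$, evaluated at times $r_2>r_1$) bounds the last by $K\,r_1^{2pC_{\bar g}C_\alpha-2p}/r_2^{2pC_{\bar g}C_\alpha}$. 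The middle factor is bounded by a constant. Combining these with $\alpha_{r_2}^{2p}\sim r_2^{-2p}$ gives
$$K\,\frac{r_2^{2pC_{\bar g}C_\alpha-2p}}{t^{2pC_{\bar g}C_\alpha}}\cdot\frac{r_1^{2pC_{\bar g}C_\alpha-2p}}{r_2^{2pC_{\bar g}C_\alpha}}=\frac{K}{t^{2pC_{\bar g}C_\alpha}}\frac{(r_1r_2)^{2pC_{\bar g}C_\alpha-2p}}{(r_1\vee r_2)^{2pC_{\bar g}C_\alpha}},$$
which is the second term.

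The main point requiring care is this second term. The naive route of bounding $D_{r_1}\theta_{r_2}$ through the integrating factor from $r_1$ to $r_2$ and merging it with $\eta^*_{t,r_2}$ (via a cocycle identity $\eta^*_{t,r_2}\eta^*_{r_2,r_1}=\eta^*_{t,r_1}$) is unnecessary here, because H\"older already produces the product structure with the right exponents. We must check only two things. First, the uniform-in-$r$ moment bounds used in Lemmas \ref{L:Integrating-Factor-first-der} and \ref{L:1-der-moments} hold for arbitrary $p$, which they do as stated. Second, $C_{\bar g}C_\alpha>\tfrac12$ is what those lemmas require. The diagonal case $r_1=r_2$ is measure zero and is covered by either bound.
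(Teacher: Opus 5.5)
Your proposal is correct and follows essentially the same route as the paper. Both arguments use H\"older to separate $\eta^*_{t,r_1\vee r_2}$, the $f$-derivative factors and the Malliavin derivatives, discard the terms of $\gamma$ whose Malliavin derivative vanishes (the paper does this with indicators), and then apply Lemmas \ref{L:Integrating-Factor-first-der}, \ref{L:1-der-moments} and \ref{L:Mal-der-X}. The only differences are cosmetic: the paper first applies Cauchy--Schwarz to $\eta^*$ versus $\gamma$ and then applies a second Cauchy--Schwarz to each summand (exponents $4p$ and $8p$), whereas you split $\gamma$ first, use a three-factor H\"older with exponent $6p$, and pull out the pathwise bound $D_{r_1}X_{r_2}\le e^{-C^*(r_2-r_1)}$ directly.
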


\begin{proof}[Proof of Lemma \ref{L:Sec-der-initial-condition}]
Recalling the definitions of $\eta$ and $\gamma$ from Equations \eqref{E:Integrating-factor-1-der} and \eqref{E:second-order-Mal-Der-theta}, raising their powers to $2p$ and applying H\"older's inequality, we obtain
\begin{equation*}
\begin{aligned}
\BE \left[ (\eta^*_{t, r_1 \vee r_2})^{2p}\gamma(X_{r_1}, X_{r_2}, \theta_{r_1}, \theta_{r_2})^{2p} \right] & \le \sqrt{\BE \left[ \left( \eta^*_{t, r_1 \vee r_2} \right)^{4p}\right]} \sqrt{\BE \left[\gamma(X_{r_1}, X_{r_2}, \theta_{r_1}, \theta_{r_2})^{4p} \right]}.
\end{aligned}
\end{equation*}
We apply Lemma \ref{L:Integrating-Factor-first-der} to bound the term $\BE \left[ \left( \eta^*_{t, r_1 \vee r_2} \right)^{4p}\right]$, and then use H\"older's inequality once more to obtain
\begin{equation*}
\begin{aligned}
\BE \left[ (\eta^*_{t, r_1 \vee r_2})^{2p}\gamma(X_{r_1}, X_{r_2}, \theta_{r_1}, \theta_{r_2})^{2p} \right] & \le K \left( \frac{r_1 \vee r_2}{t}\right)^{ 2p C_{\bar{g}}C_\alpha} \alpha_{r_2}^{2p} \left[ \BE f_{\theta \theta}(X_{r_2}, \theta_{r_2})^{8p} \right]^{\frac{1}{4}} \left[ \BE D_{r_1}\theta_{r_2}^{8p} \right]^{\frac{1}{4}} \ind_{(r_2 \ge r_1)}\\
& +  K \left( \frac{r_1 \vee r_2}{t}\right)^{ 2p C_{\bar{g}}C_\alpha} \alpha_{r_2}^{2p} \left[ \BE f_{x \theta}(X_{r_2}, \theta_{r_2})^{8p} \right]^{\frac{1}{4}} \left[ \BE D_{r_1} X_{r_2}^{8p} \right]^{\frac{1}{4}} \ind_{(r_2 \ge r_1)}\\
& + K \left( \frac{r_1 \vee r_2}{t}\right)^{ 2p C_{\bar{g}}C_\alpha} \alpha_{r_1}^{2p} \left[ \BE f_{x \theta}(X_{r_1}, \theta_{r_1})^{8p} \right]^{\frac{1}{4}} \left[ \BE D_{r_2}X_{r_1}^{8p} \right]^{\frac{1}{4}} \ind_{(r_1 \ge r_2)}\\
& + K \left( \frac{r_1 \vee r_2}{t}\right)^{ 2p C_{\bar{g}}C_\alpha} \alpha_{r_1}^{2p} \left[ \BE f_{\theta \theta}(X_{r_1}, \theta_{r_1})^{8p} \right]^{\frac{1}{4}} \left[ \BE D_{r_2}\theta_{r_1}^{8p} \right]^{\frac{1}{4}} \ind_{(r_1 \ge r_2)}.
\end{aligned}
\end{equation*}
Next, by applying Assumption \ref{A:Growth-f-g} and using uniform moment bounds for the processes $X_t$ and $\theta_t$ \cite{siri_spilio_2020,pardoux2001poisson}, we obtain a constant $K$ such that
$$\left[ \BE f_{\theta \theta}(X_{r_2}, \theta_{r_2})^{8p} \right]^{\frac{1}{4}}+ \left[ \BE f_{x \theta}(X_{r_2}, \theta_{r_2})^{8p} \right]^{\frac{1}{4}} + \left[ \BE f_{x \theta}(X_{r_1}, \theta_{r_1})^{8p} \right]^{\frac{1}{4}} + \left[ \BE f_{\theta \theta}(X_{r_1}, \theta_{r_1})^{8p} \right]^{\frac{1}{4}} \le K. $$
Combining this with Lemmas \ref{L:1-der-moments} and \ref{L:Mal-der-X}, which control the Malliavin derivatives of the processes $\theta_t$ and $X_t$, respectively, yields
\begin{equation*}
\begin{aligned}
\BE \left[ (\eta^*_{t, r_1 \vee r_2})^{2p}\gamma(X_{r_1}, X_{r_2}, \theta_{r_1}, \theta_{r_2})^{2p} \right] & \le K \left( \frac{r_1 \vee r_2}{t}\right)^{ 2p C_{\bar{g}}C_\alpha} \alpha_{r_2}^{2p}  \frac{r_1^{ 2p C_{\bar{g}}C_\alpha-2p}}{r_2^{ 2p C_{\bar{g}}C_\alpha}}\ind_{(r_2 \ge r_1)}\\
& + K \left( \frac{r_1 \vee r_2}{t}\right)^{ 2p C_{\bar{g}}C_\alpha} \alpha_{r_1}^{2p}  \frac{r_2^{ 2p C_{\bar{g}}C_\alpha-2p}}{r_1^{ 2p C_{\bar{g}}C_\alpha}}\ind_{(r_1 \ge r_2)}\\
& + K \left( \frac{r_1 \vee r_2}{t}\right)^{ 2p C_{\bar{g}}C_\alpha} \alpha_{r_2}^{2p} e^{- 2p C^*(r_2-r_1)}\ind_{(r_2 \ge r_1)}\\
& + K \left( \frac{r_1 \vee r_2}{t}\right)^{ 2p C_{\bar{g}}C_\alpha} \alpha_{r_1}^{2p} e^{- 2p C^*(r_1-r_2)}\ind_{(r_1 \ge r_2)}.
\end{aligned}
\end{equation*}
Finally, the definition of the learning rate and straightforward algebra yield the desired bound.
\end{proof}

\subsection{Bound associated with the function $g(x,\theta)$ term: $\BE \left[ \left( \int_{r_1 \vee r_2}^t \alpha_u \eta^*_{t,u} \Gamma^g (X_u, \theta_u) \thinspace du \right)^{2p} \right]$}\label{S:Term-g}
This subsection focuses on estimating the term $\BE [( \int_{r_1 \vee r_2}^t \alpha_u \eta^*_{t,u} \Gamma^g (X_u, \theta_u) \thinspace du )^{2p}]$, as stated in Lemma \ref{L:Second-der-Gamma-g} below. This lemma plays a crucial role in the proof of Lemma \ref{L:2-der-moments}. The proof of the main result of this subsection, namely Lemma \ref{L:Second-der-Gamma-g}, relies on several auxiliary lemmas---Lemmas \ref{L:Second-der-Gamma-g-1-2} through \ref{L:Second-der-Gamma-g-5-6}. Their proofs are based on careful applications of H\"older's inequality, uniform moment bounds for the processes $X$ and $\theta$, and a key use of Lemma \ref{L:Integration-f-g-intermediate}.

\begin{lemma}\label{L:Second-der-Gamma-g}
Let the processes $\eta^*$ and $\Gamma^g$ be defined in Equations \eqref{E:Integrating-factor-1-der} and \eqref{E:second-order-Mal-Der-theta}, respectively.
Then, for any $p \in \BN$ and $t \ge r_1 \vee r_2,$ there exists a positive constant $K$ such that
\begin{multline*}
\BE \left[ \left( \int_{r_1 \vee r_2}^t \alpha_u \eta^*_{t,u} \Gamma^g (X_u, \theta_u) \thinspace du \right)^{2p} \right] \le \frac{K}{t^{ 2p C_{\bar{g}} C_\alpha}}\frac{(r_1 r_2)^{ 2p C_{\bar{g}} C_\alpha-2p}}{(r_1 \vee r_2)^{ 2p C_{\bar{g}} C_\alpha}} \\
 + K \frac{(r_1 \vee r_2)^{ 2p C_{\bar{g}}C_\alpha-2p}}{t^{ 2p C_{\bar{g}} C_\alpha}}e^{- 2p  C^*(r_1 \vee r_2 - r_1 \wedge r_2)}.
\end{multline*}
\end{lemma}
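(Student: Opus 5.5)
We split $\Gamma^g$ into its six summands and bound each contribution separately, using $(a_1+\cdots+a_6)^{2p}\le K\sum_i a_i^{2p}$. The six pieces are:
\begin{itemize}
\item[(i)] the two $D_{r_1}\theta_u\, D_{r_2}\theta_u$ terms with coefficients $-\bar g_{\theta\theta\theta}$ and $\bar g_{\theta\theta\theta}-g_{\theta\theta\theta}$;
\item[(ii)] the two mixed terms $g_{\theta\theta x}D_{r_1}\theta_u D_{r_2}X_u$ and $g_{\theta x\theta}D_{r_1}X_uD_{r_2}\theta_u$;
\item[(iii)] the term $g_{xx\theta}D_{r_1}X_uD_{r_2}X_u$;
\item[(iv)] the term $g_{x\theta}D^2_{r_1,r_2}X_u$.
\end{itemize}
This matches the pairing suggested by the auxiliary lemmas \ref{L:Second-der-Gamma-g-1-2} through \ref{L:Second-der-Gamma-g-5-6}.

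For each piece we expand the $2p$-th power of the time integral as a $2p$-fold integral over $[r_1\vee r_2,t]^{2p}$. The expectation of the product is then split by the generalized Hölder inequality into $\BE[(\eta^*_{t,u_i})^{m}]^{1/m}$, moments of the polynomially growing coefficient, and moments of the Malliavin derivatives. This is exactly as in the proof of Lemma \ref{L:L-7-deri-1}. The ingredients are as follows.
\begin{itemize}
\item Lemma \ref{L:Integrating-Factor-first-der} gives $(u_i/t)^{C_{\bar g}C_\alpha}$ for the integrating factor.
\item The coefficients are bounded using Assumption \ref{A:Growth-f-g} together with the uniform moment bounds for $X_t$ and $\theta_t$.
\item Lemma \ref{L:1-der-moments} gives $\|D_{r_j}\theta_u\|_{L^m}\le K r_j^{C_{\bar g}C_\alpha-1}u^{-C_{\bar g}C_\alpha}$.
\item Lemma \ref{L:Mal-der-X} gives $D_{r_j}X_u\le e^{-C^*(u-r_j)}$, and the analogous $L^m$ bound $Ke^{-C^*(u-r_1\wedge r_2)}$ for $D^2_{r_1,r_2}X_u$. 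If the lemma is only stated in $L^1$, we extend it to $L^m$ by the same explicit-solution argument.
\end{itemize}
After this step every piece reduces to $\big(\int_{r_1\vee r_2}^t \alpha_u\,\varphi(u)\,du\big)^{2p}$ for a deterministic $\varphi$.

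Write $R=r_1\vee r_2$, $\rho=r_1\wedge r_2$ and $c=C_{\bar g}C_\alpha$. The four cases then evaluate as follows.
\begin{itemize}
\item \textbf{Piece (i).} The integrand is $t^{-c}u^{c-1}\,(r_1r_2)^{c-1}u^{-2c}$. Since $c>\tfrac12$, the integral $\int_R^t u^{-c-1}\,du\le K R^{-c}$, which gives the term $t^{-2pc}(r_1r_2)^{2pc-2p}R^{-2pc}$.
\item \textbf{Piece (ii).} Take $r_1\le r_2$ for definiteness. The integrand has the shape $t^{-c}u^{c-1}r_1^{c-1}u^{-c}e^{-C^*(u-r_2)}$, and the integral is bounded by $K r_1^{c-1}R^{-1}$. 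This is dominated by the same first term, because $R^{-1}\le R^{c-1}R^{-c}\cdot r_2^{c-1}\cdot r_2^{1-c}$; one checks the exponents directly using $r_2=R$. The symmetric ordering is handled identically.
\item \textbf{Pieces (iii) and (iv).} The integrand is $t^{-c}u^{c-1}e^{-C^*(2u-r_1-r_2)}$, respectively $e^{-C^*(u-\rho)}$. Lemma \ref{L:Integration-f-g-intermediate} with $\mathsf D=c-1$ gives $K t^{-c}R^{c-1}e^{-C^*(R-\rho)}$, which produces the second term in the statement.
\end{itemize}

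The main obstacle is the fluctuation coefficient $\bar g_{\theta\theta\theta}-g_{\theta\theta\theta}$ and, more generally, the fact that $\eta^*_{t,u}$ is not adapted with respect to the inner time. Naively, Hölder's inequality requires a higher moment of $\eta^*_{t,u}$, but Lemma \ref{L:Integrating-Factor-first-der} holds for all $p$, so this is harmless. No Poisson equation should be needed here, since we only need absolute bounds rather than cancellation: the coefficient is polynomially bounded and the decay comes from $D\theta$.

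The remaining delicate point is bookkeeping for the exponents when $c<1$, where $u^{c-1}$ is decreasing. There we must check that Lemma \ref{L:Integration-f-g-intermediate} and the elementary integral $\int_R^t u^{-c-1}du$ give the stated rates uniformly in $t$. I would verify each piece separately in the auxiliary lemmas and then sum the bounds.
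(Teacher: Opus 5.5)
Your proposal follows the paper's proof step for step. It uses the same six-way split of $\Gamma^g$, grouped as in Lemmas \ref{L:Second-der-Gamma-g-1-2}--\ref{L:Second-der-Gamma-g-5-6}, the same H\"older expansion of the $2p$-fold integral via Lemmas \ref{L:Integrating-Factor-first-der}, \ref{L:1-der-moments}, \ref{L:Mal-der-X} and \ref{L:Integration-f-g-intermediate}, and the same absorption of the resulting deterministic bounds into the two terms. Your remark that the $L^1$ bound on $D^2_{r_1,r_2}X_u$ must be upgraded to $L^m$ is correct, and the paper passes over it.

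One case in piece (ii) is not handled ``identically'', however. In your notation ($R=r_1\vee r_2$, $\rho=r_1\wedge r_2$, $c=C_{\bar{g}}C_\alpha$), consider the cross configuration: $\Gamma^g_4$ with $r_1\le r_2$, or equivalently $\Gamma^g_3$ with $r_2<r_1$. Here the $D\theta$ factor carries the larger index $R$. The crude bound on the integral is then $Kt^{-c}R^{c-2}$, giving $t^{-2pc}R^{2pc-4p}$, and for $c>1$ this is dominated by neither term of the statement. The fix is to keep the factor $e^{-C^*(u-\rho)}$ coming from $D_\rho X_u$. This yields $t^{-2pc}R^{2pc-4p}e^{-2pC^*(R-\rho)}$, which is absorbed into the second term since $R\ge 1$. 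This is exactly what the factor $e^{2pC^*r_2}e^{-2pC^*(r_1\vee r_2)}$ records in Lemma \ref{L:Second-der-Gamma-g-3-4}.
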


\begin{proof}[Proof of Lemma \ref{L:Second-der-Gamma-g}]
Recalling the definition of the process $\Gamma^g$ from Equation \eqref{E:second-order-Mal-Der-theta} and applying the triangle inequality, we have
\begin{equation*}
\BE \left[ \left( \int_{r_1 \vee r_2}^t \alpha_u \eta^*_{t,u} \Gamma^g (X_u, \theta_u) \thinspace du \right)^{2p} \right] \le K \sum_{i=1}^6 \BE \left[ \left( \int_{r_1 \vee r_2}^t \alpha_u \eta^*_{t,u} \Gamma^g_i (X_u, \theta_u) \thinspace du \right)^{2p} \right],
\end{equation*}
where the processes $\Gamma^g_i,~ i=1,\dots,6$ are defined in Lemmas \ref{L:Second-der-Gamma-g-1-2}, \ref{L:Second-der-Gamma-g-3-4} and \ref{L:Second-der-Gamma-g-5-6} below. We now apply Lemmas \ref{L:Second-der-Gamma-g-1-2}, \ref{L:Second-der-Gamma-g-3-4} and \ref{L:Second-der-Gamma-g-5-6} to control the right-hand side of the above inequality and obtain
\begin{equation*}
\begin{aligned}
\BE \left[ \left( \int_{r_1 \vee r_2}^t \alpha_u \eta^*_{t,u} \Gamma^g (X_u, \theta_u) \thinspace du \right)^{2p} \right] & \le \frac{K}{t^{ 2p C_{\bar{g}}C_\alpha}}\frac{(r_1 r_2)^{ 2p C_{\bar{g}} C_\alpha-2p}}{(r_1 \vee r_2)^{ 2p C_{\bar{g}}C_\alpha}} \\
& + \frac{K}{t^{ 2p C_{\bar{g}}C_\alpha}}\frac{r_1^{ 2p C_{\bar{g}}C_\alpha-2p}}{(r_1 \vee r_2)^{2p}}e^{ 2p  C^*r_2}e^{- 2p  C^* (r_1 \vee r_2)} \\
& + \frac{K}{t^{ 2p C_{\bar{g}} C_\alpha}}\frac{r_2^{ 2p C_{\bar{g}}C_\alpha-2p}}{(r_1 \vee r_2)^{2p}}e^{ 2p  C^*r_1}e^{- 2p  C^* (r_1 \vee r_2)} \\
& + K \frac{(r_1 \vee r_2)^{ 2p C_{\bar{g}}C_\alpha-2p}}{t^{ 2p C_{\bar{g}}C_\alpha}}e^{- 2p  C^*(r_1 \vee r_2 - r_1 \wedge r_2)}.
\end{aligned}
\end{equation*}
A straightforward algebraic simplification yields the required bound.
\end{proof}

We now provide the auxiliary results (Lemmas \ref{L:Second-der-Gamma-g-1-2} through \ref{L:Second-der-Gamma-g-5-6}) used in the proof of Lemma \ref{L:Second-der-Gamma-g}.

\begin{lemma}\label{L:Second-der-Gamma-g-1-2}
Let $D_r \theta_t$ be the solution of Equation \eqref{E:first-order-Mal-Der-theta} defined in Equation \eqref{E:First-der-Solution} and the processes $\Gamma^g_1 (X_u, \theta_u), $ and $ \Gamma^g_2 (X_u, \theta_u)$ are defined as follows:
\begin{equation*}
\begin{aligned}
\Gamma^g_1 (X_u, \theta_u) & \triangleq - \bar{g}_{\theta \theta \theta}(\theta_u) D_{r_1} \theta_u \cdot D_{r_2} \theta_u, \quad
\Gamma^g_2 (X_u, \theta_u)  \triangleq \left[ \bar{g}_{\theta \theta \theta}(\theta_u) - g_{\theta \theta \theta}(X_u, \theta_u) \right]D_{r_1} \theta_u \cdot D_{r_2} \theta_u.
\end{aligned}
\end{equation*}
Then, for any $t > r_1 \vee r_2 \ge 1,$ and $p \in \BN,$ there exists a time-independent constant $K>0$ such that
\begin{equation*}
\BE \left[ \left( \int_{r_1 \vee r_2}^t \alpha_u \eta^*_{t,u} \Gamma^g_1 (X_u, \theta_u) \thinspace du \right)^{2p} \right] + \BE \left[ \left( \int_{r_1 \vee r_2}^t \alpha_u \eta^*_{t,u} \Gamma^g_2 (X_u, \theta_u) \thinspace du \right)^{2p} \right] \le \frac{K}{t^{ 2p C_{\bar{g}}C_\alpha}}\frac{(r_1 r_2)^{ 2p C_{\bar{g}}C_\alpha-2p}}{(r_1 \vee r_2)^{ 2p C_{\bar{g}}C_\alpha}},
\end{equation*}
where the $\eta^*$ process is defined in Equation \eqref{E:Integrating-factor-1-der}.
\end{lemma}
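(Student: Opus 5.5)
We bound the two terms separately, with the same approach as in Lemma \ref{L:L-7-deri-1}: expand the $2p$-th power as a $2p$-fold time integral and apply H\"older's inequality inside. Write $\bar r \triangleq r_1 \vee r_2$. For $\Gamma^g_1$ we have
\begin{equation*}
\BE \left[ \left( \int_{\bar r}^t \alpha_u \eta^*_{t,u} \Gamma^g_1 (X_u, \theta_u) \thinspace du \right)^{2p} \right] \le \int_{\bar r}^t \cdots \int_{\bar r}^t \prod_{i=1}^{2p} \alpha_{u_i} \, \BE\left[ \prod_{i=1}^{2p} \eta^*_{t,u_i} \bar{g}_{\theta\theta\theta}(\theta_{u_i}) D_{r_1}\theta_{u_i} D_{r_2}\theta_{u_i} \right] du_1 \cdots du_{2p}.
\end{equation*}
A generalized H\"older inequality with $8p$ factors splits the expectation into pieces of the form $(\BE[(\eta^*_{t,u_i})^{8p}])^{1/8p}$, $(\BE[\bar g_{\theta\theta\theta}(\theta_{u_i})^{8p}])^{1/8p}$, $(\BE[(D_{r_1}\theta_{u_i})^{8p}])^{1/8p}$ and $(\BE[(D_{r_2}\theta_{u_i})^{8p}])^{1/8p}$.

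Each piece is controlled by an earlier result.
\begin{itemize}
\item Lemma \ref{L:Integrating-Factor-first-der} gives the factor $(u_i/t)^{C_{\bar g}C_\alpha}$.
\item The growth bound on $\bar g_{\theta\theta\theta}$ in Assumption \ref{A:Growth-f-g}, together with the uniform moment bounds for $\theta$, gives a constant.
\item Lemma \ref{L:1-der-moments} gives $r_j^{C_{\bar g}C_\alpha-1}/u_i^{C_{\bar g}C_\alpha}$ for $j=1,2$. This applies since $u_i \ge \bar r \ge r_j$.
\end{itemize}
The product then factorizes and we get
\begin{equation*}
\frac{K (r_1 r_2)^{2p(C_{\bar g}C_\alpha-1)}}{t^{2pC_{\bar g}C_\alpha}} \left( \int_{\bar r}^t u^{-1}\, u^{C_{\bar g}C_\alpha} u^{-2C_{\bar g}C_\alpha} \, du \right)^{2p} = \frac{K (r_1 r_2)^{2p(C_{\bar g}C_\alpha-1)}}{t^{2pC_{\bar g}C_\alpha}} \left( \int_{\bar r}^t u^{-C_{\bar g}C_\alpha-1} \, du \right)^{2p}.
\end{equation*}
Since $C_{\bar g}C_\alpha>0$, the remaining integral is at most $K\bar r^{-C_{\bar g}C_\alpha}$. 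Raising this to the power $2p$ gives exactly the claimed bound $(r_1r_2)^{2pC_{\bar g}C_\alpha-2p}/(\bar r^{2pC_{\bar g}C_\alpha} t^{2pC_{\bar g}C_\alpha})$.

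For $\Gamma^g_2$ the same computation works after replacing $\bar g_{\theta\theta\theta}$ by $\bar g_{\theta\theta\theta}-g_{\theta\theta\theta}$. We do not need the fluctuation (Poisson equation) structure here. The crude absolute bound is enough, because the needed decay comes entirely from $\eta^*$ and the two first-order derivatives. Moments of $g_{\theta\theta\theta}(X_u,\theta_u)$ are uniformly bounded: $g_{\theta\theta\theta}$ is built from $f, f_\theta, f_{\theta\theta}, f_{\theta\theta\theta}$ and $f^*$, all of polynomial growth by Assumption \ref{A:Growth-f-g}, and the uniform moment bounds for $X$ and $\theta$ from \cite{pardoux2001poisson,siri_spilio_2020} apply.

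The main point to check is the exponent bookkeeping when the H\"older powers are raised: Lemmas \ref{L:Integrating-Factor-first-der} and \ref{L:1-der-moments} are stated for arbitrary $p\in\BN$, so using them with $4p$ in place of $p$ is legitimate. We also need $u_i\ge 1$ so that $\alpha_u = C_\alpha/u$; this holds by the hypothesis $\bar r\ge 1$. No condition beyond $C_{\bar g}C_\alpha>0$ is needed for the final integral, so the estimate holds without case distinctions.
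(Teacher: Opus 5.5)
Your argument is correct and is essentially the paper's proof: the paper also expands the $2p$-th power into a $2p$-fold integral and applies H\"older in two stages (four factors with exponent $4$, then exponent $8p$), which amounts to your single $8p$-factor split; it then invokes Lemmas \ref{L:Integrating-Factor-first-der} and \ref{L:1-der-moments} together with the growth of $\bar g_{\theta\theta\theta}$ and the uniform moment bounds, and finishes with $\int_{r_1\vee r_2}^t u^{-C_{\bar g}C_\alpha-1}\,du \le K (r_1\vee r_2)^{-C_{\bar g}C_\alpha}$. Two small corrections: the lemmas you invoke are stated under $C_{\bar g}C_\alpha>\frac12$, so your remark that nothing beyond $C_{\bar g}C_\alpha>0$ is needed applies only to the final integral; and the polynomial growth of $g_{\theta\theta\theta}$ used for $\Gamma^g_2$ tacitly requires polynomial growth of $f^*$, which Assumption \ref{A:Growth-f-g} does not state (the paper makes the same tacit assumption).
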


\begin{proof}
We first derive the bound for the term $\BE \left[ \left( \int_{r_1
      \vee r_2}^t \alpha_u \eta^*_{t,u} \Gamma^g_1 (X_u, \theta_u)
    \thinspace du \right)^{2p} \right]$. The corresponding bound for
$\BE \left[ \left( \int_{r_1 \vee r_2}^t \alpha_u \eta^*_{t,u}
    \Gamma^g_2 (X_u, \theta_u) \thinspace du \right)^{2p} \right]$
follows in the same way. Recalling the definition of the $\eta^*$ process from Equation \eqref{E:Integrating-factor-1-der} and applying H\"older's inequality repeatedly, we have
\begin{equation*}
\begin{aligned}
& \BE \left[ \left( \int_{r_1 \vee r_2}^t \alpha_u \eta^*_{t,u} \Gamma^g_1 (X_u, \theta_u) \thinspace du \right)^{2p} \right] \\
& \qquad \qquad \quad \le \underbrace{\int_{r_1 \vee r_2}^t \cdots \int_{r_1 \vee r_2}^t}_{2p-\text{times}} \left\{ \prod_{i=1}^{2p} \alpha_{u_i} \right\}    \BE \left[ \prod_{i=1}^{2p} \eta_{t,u_i}^* \{- \bar{g}_{\theta \theta \theta}(\theta_{u_i})\} D_{r_1} \theta_{u_i} \cdot D_{r_2} \theta_{u_i}\right]du_1 \cdots du_{2p}  \\
& \qquad \qquad \quad \le {\int_{r_1 \vee r_2}^t \cdots \int_{r_1 \vee r_2}^t} \left\{ \prod_{i=1}^{2p} \alpha_{u_i} \right\} \left\{ \BE \left[ \left( \prod_{i=1}^{2p} \eta_{t,u_i}^* \right)^4 \right]\right\}^{\frac{1}{4}} \left\{ \BE \left( \prod_{i=1}^{2p} \{- \bar{g}_{\theta \theta \theta}(\theta_{u_i})\}  \right)^4\right\}^{\frac{1}{4}}  \\
& \qquad \qquad  \qquad \qquad  \qquad \qquad \qquad \qquad  \times  \left\{ \BE \left( \prod_{i=1}^{2p} D_{r_1} \theta_{u_i} \right)^4 \right\}^{\frac{1}{4}}   \left\{ \BE \left( \prod_{i=1}^{2p} D_{r_2} \theta_{u_i} \right)^4 \right\}^{\frac{1}{4}} du_1 \cdots du_{2p}\\
& \qquad \qquad \quad \le {\int_{r_1 \vee r_2}^t \cdots \int_{r_1 \vee r_2}^t} \left\{ \prod_{i=1}^{2p} \alpha_{u_i} \right\} \prod_{i=1}^{2p} \left[ \left\{\BE (\eta_{t,u_i}^*)^{8p} \right\}^{\frac{1}{8p}} \left\{\BE \bar{g}_{\theta \theta \theta}(\theta_{u_i})^{8p}  \right\}^{\frac{1}{8p}}  \times  \right. \\
& \qquad \qquad  \qquad \qquad  \qquad \qquad \qquad \qquad \qquad \qquad \quad   \left.    \left\{\BE (D_{r_1} \theta_{u_i})^{8p} \right\}^{\frac{1}{8p}} \left\{\BE (D_{r_2} \theta_{u_i})^{8p} \right\}^{\frac{1}{8p}}
 \right] du_1 \cdots du_{2p}.
\end{aligned}
\end{equation*}
We now apply Lemmas \ref{L:Integrating-Factor-first-der} and \ref{L:1-der-moments} to bound the terms involving $\eta^*$ and the Malliavin derivatives of the process $\theta_t$, respectively. We then use Assumption \ref{A:Growth-f-g} to handle the term $\BE \bar{g}_{\theta \theta \theta}(\theta_{u_i})^{8p}$, followed by the uniform moment bounds for the process $\theta_t$ \cite{siri_spilio_2020}, to obtain
\begin{equation*}
\begin{aligned}
& \BE \left[ \left( \int_{r_1 \vee r_2}^t \alpha_u \eta^*_{t,u} \Gamma^g_1 (X_u, \theta_u) \thinspace du \right)^{2p} \right]  \\
& \qquad \qquad \quad  \le {\int_{r_1 \vee r_2}^t \cdots \int_{r_1 \vee r_2}^t} \left\{ \prod_{i=1}^{2p} \alpha_{u_i} \left( \frac{u_i}{t} \right)^{C_{\bar{g}}C_\alpha} \right\}  \left\{ \prod_{i=1}^{2p} \frac{r_1^{C_{\bar{g}}C_\alpha-1}}{u_i^{C_{\bar{g}}C_\alpha}} \right\} \left\{ \prod_{i=1}^{2p} \frac{r_2^{C_{\bar{g}}C_\alpha-1}}{u_i^{C_{\bar{g}}C_\alpha}} \right\} du_1 \cdots du_{2p} \\
& \qquad \qquad \quad \le K \frac{(r_1 r_2)^{ 2p C_{\bar{g}}C_\alpha-2p}}{t^{ 2p C_{\bar{g}}C_\alpha}} {\int_{r_1 \vee r_2}^t \cdots \int_{r_1 \vee r_2}^t} \left\{ \prod_{i=1}^{2p} u_i^{C_{\bar{g}} C_\alpha-1} \right\}  \left\{ \prod_{i=1}^{2p} u_i^{-2 C_{\bar{g}} C_\alpha} \right\} du_1 \cdots du_{2p}\\
& \qquad \qquad \quad = K \frac{(r_1 r_2)^{ 2p C_{\bar{g}}C_\alpha-2p}}{t^{ 2p C_{\bar{g}}C_\alpha}} \left( \int_{r_1 \vee r_2}^t \frac{1}{u^{C_{\bar{g}} C_\alpha+1 }} \thinspace du \right)^{2p}  \le \frac{C}{t^{ 2p C_{\bar{g}} C_\alpha}} \frac{(r_1 r_2)^{ 2p C_{\bar{g}}C_\alpha-2p}}{(r_1 \vee r_2)^{ 2p C_{\bar{g}}C_\alpha}}.
\end{aligned}
\end{equation*}
This completes the proof.
\end{proof}

\begin{lemma}\label{L:Second-der-Gamma-g-3-4}
Let $D_r \theta_t$ and $D_r X_t$ be the solutions of Equations \eqref{E:first-order-Mal-Der-theta} and \eqref{E:first-order-Mal-Der-X}, respectively and the processes $\Gamma^g_3 (X_u, \theta_u),$ and $\Gamma^g_4 (X_u, \theta_u)$ are defined as follows:
\begin{equation*}
\begin{aligned}
\Gamma^g_3 (X_u, \theta_u) & \triangleq - g_{\theta \theta x}(X_u, \theta_u) D_{r_1} \theta_u \cdot D_{r_2} X_u, \quad
\Gamma^g_4 (X_u, \theta_u)  \triangleq - g_{\theta x \theta}(X_u, \theta_u) D_{r_1} X_u \cdot D_{r_2} \theta_u.
\end{aligned}
\end{equation*}
Then, for any $t > r_1 \vee r_2 \ge 1,$ and $p \in \BN,$ there exists a time-independent positive constant $K$ such that
\begin{equation*}
\begin{aligned}
& \BE \left[ \left( \int_{r_1 \vee r_2}^t \alpha_u \eta^*_{t,u} \Gamma^g_3 (X_u, \theta_u) \thinspace du \right)^{2p} \right] + \BE \left[ \left( \int_{r_1 \vee r_2}^t \alpha_u \eta^*_{t,u} \Gamma^g_4 (X_u, \theta_u) \thinspace du \right)^{2p} \right] \\
& \qquad \quad \quad   \le \frac{K}{t^{ 2p C_{\bar{g}} C_\alpha}}\frac{r_1^{ 2p C_{\bar{g}}C_\alpha-2p}}{(r_1 \vee r_2)^{2p}}e^{ 2p  C^*r_2}e^{-2p C^* (r_1 \vee r_2)} + \frac{K}{t^{ 2p C_{\bar{g}} C_\alpha}}\frac{r_2^{ 2p C_{\bar{g}} C_\alpha-2p}}{(r_1 \vee r_2)^{2p}}e^{ 2p  C^*r_1}e^{- 2p  C^* (r_1 \vee r_2)}.
\end{aligned}
\end{equation*}
where the $\eta^*$ process is defined in Equation \eqref{E:Integrating-factor-1-der}.
\end{lemma}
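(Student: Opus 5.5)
We follow the template of the proof of Lemma \ref{L:Second-der-Gamma-g-1-2}, treating $\Gamma^g_3$ in detail; $\Gamma^g_4$ is identical with $r_1$ and $r_2$ interchanged, which produces the second term of the bound. Expand the $2p$-th power as a $2p$-fold integral over $[r_1\vee r_2,t]^{2p}$. By Lemma \ref{L:Mal-der-X} the factor $D_{r_2}X_{u_i}$ is deterministically bounded by $e^{-C^*(u_i-r_2)}$, so it can be pulled out of the expectation. Then apply H\"older's inequality to the product
\[
\prod_{i=1}^{2p}\eta^*_{t,u_i}\, g_{\theta\theta x}(X_{u_i},\theta_{u_i})\, D_{r_1}\theta_{u_i}
\]
with three factors. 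This gives, for each $i$, the product of $\{\BE(\eta^*_{t,u_i})^{6p}\}^{1/6p}$, $\{\BE g_{\theta\theta x}^{6p}\}^{1/6p}$ and $\{\BE (D_{r_1}\theta_{u_i})^{6p}\}^{1/6p}$.

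Next bound each factor. Lemma \ref{L:Integrating-Factor-first-der} gives $(u_i/t)^{C_{\bar g}C_\alpha}$. The middle factor is bounded by a constant, using the polynomial growth from Assumption \ref{A:Growth-f-g} (extended to the mixed derivative of $g=\frac12(f-f^*)^2$) together with the uniform moment bounds for $X$ and $\theta$. Lemma \ref{L:1-der-moments} gives $r_1^{C_{\bar g}C_\alpha-1}u_i^{-C_{\bar g}C_\alpha}$. Since $\alpha_{u}=C_\alpha/u$, the integrand factorizes and we obtain
\[
\frac{K\,r_1^{2pC_{\bar g}C_\alpha-2p}}{t^{2pC_{\bar g}C_\alpha}}\left(\int_{r_1\vee r_2}^t u^{-1}e^{-C^*(u-r_2)}\,du\right)^{2p}.
\]

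The remaining step is the one-dimensional integral. Write it as $e^{C^*r_2}\int_{r_1\vee r_2}^t u^{-1}e^{-C^*u}\,du$. Since $u\ge r_1\vee r_2$, we have $u^{-1}\le (r_1\vee r_2)^{-1}$, and $\int_{r_1\vee r_2}^\infty e^{-C^*u}\,du\le K e^{-C^*(r_1\vee r_2)}$; this is also Lemma \ref{L:Integration-f-g-intermediate} with $\mathsf D=-1$. Hence the integral is at most $K (r_1\vee r_2)^{-1}e^{C^*r_2}e^{-C^*(r_1\vee r_2)}$. Raising it to the power $2p$ gives exactly the first term of the claimed bound.

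The only point needing care is the H\"older bookkeeping, i.e. choosing exponents so that the powers of $u_i$ cancel to $u_i^{-1}$. These are the powers coming from $\eta^*$, from $D_{r_1}\theta$, and from $\alpha$. I expect no genuine obstacle here, provided the constant growth bound on $g_{\theta\theta x}$ is justified from the regularity assumptions on $f$ and $f^*$.
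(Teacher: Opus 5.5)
Your proposal is correct and follows the paper's proof essentially step for step: the same $2p$-fold expansion, removal of $D_{r_2}X_{u_i}\le e^{-C^*(u_i-r_2)}$ via Lemma \ref{L:Mal-der-X}, three-factor H\"older with exponents $6p$, Lemmas \ref{L:Integrating-Factor-first-der} and \ref{L:1-der-moments}, and the final estimate using $1/u\le 1/(r_1\vee r_2)$. The only slip is cosmetic: the last integral is not literally Lemma \ref{L:Integration-f-g-intermediate} with $\mathsf D=-1$, since that lemma carries the two factors $e^{-C^*(u-r_1)}e^{-C^*(u-r_2)}$; your direct bound $\int_{r_1\vee r_2}^\infty e^{-C^*u}\,du\le K e^{-C^*(r_1\vee r_2)}$ already suffices, exactly as in the paper.
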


\begin{proof}
We first establish the bound for the term $\BE \left[ \left( \int_{r_1
      \vee r_2}^t \alpha_u \eta^*_{t,u} \Gamma^g_3 (X_u, \theta_u)
    \thinspace du \right)^{2p} \right]$. The bound for $\BE \left[
  \left( \int_{r_1 \vee r_2}^t \alpha_u \eta^*_{t,u} \Gamma^g_4 (X_u,
    \theta_u) \thinspace du \right)^{2p} \right]$ follows
similarly. Recalling the definition of the $\eta^*$ process from Equation \eqref{E:Integrating-factor-1-der}, applying H\"older's inequality repeatedly, and using Lemma \ref{L:Mal-der-X} to handle the term $D_rX_t$, we have
\begin{equation*}
\begin{aligned}
& \BE \left[ \left( \int_{r_1 \vee r_2}^t \alpha_u \eta^*_{t,u} \Gamma^g_3 (X_u, \theta_u) \thinspace du \right)^{2p} \right] \\
& \qquad \qquad  \le \underbrace{\int_{r_1 \vee r_2}^t \cdots \int_{r_1 \vee r_2}^t}_{2p-\text{times}} \left\{ \prod_{i=1}^{2p} \alpha_{u_i} e^{-C^*(u_i-r_2)} \right\}    \BE \left[ \prod_{i=1}^{2p} \eta_{t,u_i}^* \{- {g}_{\theta \theta x}(X_{u_i},\theta_{u_i})\} D_{r_1} \theta_{u_i} \right]du_1 \cdots du_{2p}  \\
& \qquad \qquad  \le {\int_{r_1 \vee r_2}^t \cdots \int_{r_1 \vee r_2}^t} \left\{ \prod_{i=1}^{2p} \alpha_{u_i} e^{-C^*(u_i-r_2)} \right\} \prod_{i=1}^{2p} \left[ \left\{\BE (\eta_{t,u_i}^*)^{6p} \right\}^{\frac{1}{6p}} \left\{\BE {g}_{\theta \theta x}(X_{u_i},\theta_{u_i})^{6p}  \right\}^{\frac{1}{6p}}  \times  \right. \\
& \qquad \qquad  \qquad \qquad  \qquad \qquad \qquad \qquad \qquad \qquad \quad \qquad \qquad \qquad \qquad   \left.    \left\{\BE (D_{r_1} \theta_{u_i})^{6p} \right\}^{\frac{1}{6p}}
 \right] du_1 \cdots du_{2p}.
\end{aligned}
\end{equation*}
We next apply Lemmas \ref{L:Integrating-Factor-first-der} and \ref{L:1-der-moments} to deal with the terms $\eta^*$ and the Malliavin derivative of the process $\theta_t,$ respectively, and then use Assumption \ref{A:Growth-f-g} to handle the term $\BE {g}_{\theta \theta x}(X_{u_i},\theta_{u_i})^{6p}$ followed by the uniform moment bounds for the processes $X_t$ and  $\theta_t$ \cite{pardoux2001poisson, siri_spilio_2020} to get
\begin{equation*}
\begin{aligned}
& \BE \left[ \left( \int_{r_1 \vee r_2}^t \alpha_u \eta^*_{t,u} \Gamma^g_3 (X_u, \theta_u) \thinspace du \right)^{2p} \right]  \\
& \qquad \quad   \le K {\int_{r_1 \vee r_2}^t \cdots \int_{r_1 \vee r_2}^t} \left\{ \prod_{i=1}^{2p} \alpha_{u_i} \left( \frac{u_i}{t} \right)^{C_{\bar{g}} C_\alpha} e^{-C^*(u_i-r_2)} \right\}  \left\{ \prod_{i=1}^{2p} \frac{r_1^{C_{\bar{g}} C_\alpha-1}}{u_i^{C_{\bar{g}}C_\alpha}} \right\} du_1 \cdots du_{2p}  \\
& \qquad \quad  \le K \frac{r_1^{ 2p C_{\bar{g}}C_\alpha-2p}}{t^{ 2p C_{\bar{g}}C_\alpha}} {\int_{r_1 \vee r_2}^t \cdots \int_{r_1 \vee r_2}^t} \left\{ \prod_{i=1}^{2p} u_i^{C_{\bar{g}}C_\alpha-1} e^{-C^*(u_i-r_2)} \right\}  \left\{ \prod_{i=1}^{2p} u_i^{- C_{\bar{g}}C_\alpha} \right\} du_1 \cdots du_{2p}\\
& \qquad \quad  = K \frac{r_1^{ 2p C_{\bar{g}}C_\alpha-2p}}{t^{ 2p C_{\bar{g}}C_\alpha}} \left( \int_{r_1 \vee r_2}^t \frac{e^{-C^*(u-r_2)}}{u} \thinspace du \right)^{2p}  \le \frac{K}{t^{ 2p C_{\bar{g}} C_\alpha}} \frac{r_1^{ 2p C_{\bar{g}}C_\alpha-2p}}{(r_1 \vee r_2)^{2p}}e^{ 2p C^*r_2}e^{- 2p C^*(r_1 \vee r_2)},
\end{aligned}
\end{equation*}
where, in the last inequality, we use $\frac{1}{u} \le \frac{1}{r_1 \vee r_2}.$ This completes the proof.
\end{proof}

\begin{lemma}\label{L:Integration-f-g-intermediate}
For any $t \ge r_1 \vee r_2 \ge 1,$ and $\mathsf{D} \in \BR,$ there exists a time-independent, positive constant $K$ such that
$$\int_{r_1 \vee r_2}^t u^{\mathsf{D}} e^{-C^*(u-r_1)} e^{-C^*(u-r_2)}  \thinspace du \le K {(r_1 \vee r_2)^{\mathsf{D}}}e^{- C^*(r_1 \vee r_2 - r_1 \wedge r_2)}.$$
\end{lemma}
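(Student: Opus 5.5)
The plan is to reduce the integral to the exponential factor and then run a one-dimensional comparison argument for the power. Write $m \triangleq r_1 \vee r_2$ and $\ell \triangleq r_1 \wedge r_2$. Since $u \ge m \ge \ell$, we have
\begin{equation*}
e^{-C^*(u-r_1)}e^{-C^*(u-r_2)} = e^{-2C^*u + C^*(r_1+r_2)} = e^{-C^*(m-\ell)}\, e^{-2C^*(u-m)}.
\end{equation*}
Pulling the constant factor $e^{-C^*(m-\ell)}$ out of the integral, it suffices to show
\begin{equation*}
\int_m^t u^{\mathsf{D}} e^{-2C^*(u-m)} \, du \le K m^{\mathsf{D}}
\end{equation*}
uniformly in $t \ge m \ge 1$, with $K$ depending only on $\mathsf{D}$ and $C^*$.

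Substitute $u = m + v$ with $v \in [0, t-m]$, so that $u^{\mathsf{D}} = m^{\mathsf{D}}(1+v/m)^{\mathsf{D}}$. The integral becomes $m^{\mathsf{D}} \int_0^{t-m} (1+v/m)^{\mathsf{D}} e^{-2C^*v}\, dv$, and we split according to the sign of $\mathsf{D}$.

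If $\mathsf{D} \le 0$, then $(1+v/m)^{\mathsf{D}} \le 1$, so the remaining integral is at most $\int_0^\infty e^{-2C^*v}\,dv = 1/(2C^*)$.

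If $\mathsf{D} > 0$, use $m \ge 1$ to get $(1+v/m)^{\mathsf{D}} \le (1+v)^{\mathsf{D}}$. The remaining integral is then at most $\int_0^\infty (1+v)^{\mathsf{D}} e^{-2C^*v}\,dv$, which is finite and independent of $t$, $r_1$, $r_2$ because the exponential dominates any polynomial.

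Combining the two cases with the extracted factor gives the claim with $K = \max\{1/(2C^*), \int_0^\infty (1+v)^{\mathsf{D}}e^{-2C^*v}\,dv\}$.

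The only step needing care is the case $\mathsf{D} > 0$: the bound must be uniform in $m$. This is exactly where the hypothesis $r_1 \vee r_2 \ge 1$ enters, through $v/m \le v$. The argument is otherwise elementary.
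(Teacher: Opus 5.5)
Your proof is correct, but it takes a different route from the paper. The paper uses the same factorisation of the exponential (implicitly) and so reduces to showing
\[
\int_m^t u^{\mathsf{D}}e^{-2C^*u}\,du \le K\,m^{\mathsf{D}}e^{-2C^*m}, \qquad m = r_1 \vee r_2,
\]
but it proves this by repeated integration by parts. For integer $\mathsf{D}>0$ this gives $K\sum_{i=0}^{\mathsf{D}} m^i e^{-2C^*m}$, and $m\ge 1$ lets the top power dominate the lower ones. For non-integer $\mathsf{D}$, the paper integrates by parts until the exponent $\beta$ becomes negative, then uses $u^{\beta}\le m^{\beta}\le m^{\mathsf{D}}$; this part is only sketched, and the displayed formula has a sign slip in the exponential.

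Your substitution $u=m+v$ with the domination $(1+v/m)^{\mathsf{D}}\le(1+v)^{\mathsf{D}}$ treats every real $\mathsf{D}>0$ in one step. It avoids the integer/non-integer split and gives the explicit constant $K=\int_0^\infty(1+v)^{\mathsf{D}}e^{-2C^*v}\,dv$, depending only on $\mathsf{D}$ and $C^*$. The paper's route shows the boundary-term structure; yours is shorter and fully rigorous. In both arguments the hypothesis $r_1\vee r_2\ge 1$ is used at exactly one point.
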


\begin{proof}
It suffices to prove the lemma only for the case where $\mathsf{D}>0,$ as the case for $\mathsf{D} \le 0$ is immediate. We first prove the lemma for integer values of $\mathsf{D}.$ Using integration by parts repeatedly, we have
\begin{equation*}
\begin{aligned}
\int_{r_1 \vee r_2}^t u^{\mathsf{D}} e^{-2C^* u} \thinspace du  & \le K (r_1 \vee r_2)^{\mathsf{D}} e^{-2C^* (r_1 \vee r_2)} + K \int_{r_1 \vee r_2}^t u^{\mathsf{D}-1} e^{-2C^* u} \thinspace du \\
& \le K \sum_{i=0}^{\mathsf{D}} (r_1 \vee r_2)^i e^{-2C^* (r_1 \vee r_2)} \le K (r_1 \vee r_2)^{\mathsf{D}} e^{-2C^* (r_1 \vee r_2)}.
\end{aligned}
\end{equation*}
Now, if $\mathsf{D}$ is not an integer, then there exists a $\beta<0$ (for example, if $\mathsf{D}=\frac{1}{2}$, then $\beta = -\frac{1}{2}$) such that (again using integration by parts)
$\int_{r_1 \vee r_2}^t u^{\mathsf{D}} e^{-2C^* u} \thinspace du \le  K (r_1 \vee r_2)^{\mathsf{D}} e^{-2C^* (r_1 \vee r_2)} + \cdots + K \int_{r_1 \vee r_2}^t u^{\beta} e^{2C^* u}\thinspace du.$
In this inequality, noting $\frac{1}{u} \le \frac{1}{r_1 \vee r_2} \le 1$, one can obtain
$\int_{r_1 \vee r_2}^t u^{\mathsf{D}} e^{-2C^* u} \thinspace du  \le K (r_1 \vee r_2)^{\mathsf{D}} e^{-2C^* (r_1 \vee r_2)},$
which completes the proof of the lemma.
\end{proof}

\begin{lemma}\label{L:Second-der-Gamma-g-5-6}
Let the processes $D_r X_t$ and $D^2_{r_1,r_2}X_t$ be the solutions of Equations \eqref{E:first-order-Mal-Der-theta} and \eqref{E:Second-der-Malli-X}, respectively and the processes $\Gamma^g_5 (X_u, \theta_u), \Gamma^g_6 (X_u, \theta_u)$ are defined as follows:
\begin{equation*}
\begin{aligned}
\Gamma^g_5 (X_u, \theta_u) & \triangleq  - g_{xx \theta}(X_u, \theta_u) D_{r_1} X_u \cdot D_{r_2} X_u, \quad
\Gamma^g_6 (X_u, \theta_u)  \triangleq - g_{x \theta}(X_u, \theta_u) \cdot D^2_{r_1,r_2}X_u.
\end{aligned}
\end{equation*}
Then, for any $t > r_1 \vee r_2,$ and $p \in \BN,$ there exists a time-independent positive constant $K$ such that
\begin{multline*}
\BE \left[ \left( \int_{r_1 \vee r_2}^t \alpha_u \eta^*_{t,u} \Gamma^g_5 (X_u, \theta_u) \thinspace du \right)^{2p} \right] + \BE \left[ \left( \int_{r_1 \vee r_2}^t \alpha_u \eta^*_{t,u} \Gamma^g_6 (X_u, \theta_u) \thinspace du \right)^{2p} \right] \\ \le K \frac{(r_1 \vee r_2)^{ 2p C_{\bar{g}} C_\alpha-2p}}{t^{ 2p C_{\bar{g}} C_\alpha}}e^{- 2p  C^*(r_1 \vee r_2 - r_1 \wedge r_2)},
\end{multline*}
where the $\eta^*$ process is defined in Equation \eqref{E:Integrating-factor-1-der}.
\end{lemma}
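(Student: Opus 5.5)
We follow the template of the proofs of Lemmas \ref{L:Second-der-Gamma-g-1-2} and \ref{L:Second-der-Gamma-g-3-4}: write the $2p$-th power of the time integral as a $2p$-fold integral over $[r_1\vee r_2,t]^{2p}$, bound the Malliavin derivatives of $X$ (deterministically or in moments), and use H\"older's inequality to split the expectation of the product into one-point moments. Each one-point moment is then controlled by Lemma \ref{L:Integrating-Factor-first-der} for $\eta^*_{t,u_i}$, by Assumption \ref{A:Growth-f-g} together with the uniform moment bounds for $X_t,\theta_t$ for the coefficient functions, and by Lemma \ref{L:Mal-der-X} for $D X$ and $D^2X$.

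For $\Gamma^g_5$, Lemma \ref{L:Mal-der-X} gives the pathwise bound $0\le D_{r_1}X_u\,D_{r_2}X_u\le e^{-C^*(u-r_1)}e^{-C^*(u-r_2)}$, which can be pulled out of the expectation. H\"older's inequality then bounds each factor $\eta^*_{t,u_i}\,g_{xx\theta}(X_{u_i},\theta_{u_i})$ in $L^{4p}$, and we obtain
\[
\frac{K}{t^{2pC_{\bar g}C_\alpha}}\left(\int_{r_1\vee r_2}^t u^{C_{\bar g}C_\alpha-1}e^{-C^*(u-r_1)}e^{-C^*(u-r_2)}\,du\right)^{2p}.
\]
Lemma \ref{L:Integration-f-g-intermediate} with $\mathsf D=C_{\bar g}C_\alpha-1$ bounds the inner integral by $K(r_1\vee r_2)^{C_{\bar g}C_\alpha-1}e^{-C^*(r_1\vee r_2-r_1\wedge r_2)}$. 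Raising to the power $2p$ gives exactly the claimed rate.

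For $\Gamma^g_6$, $D^2X$ is not bounded pathwise, so the argument needs an extra step. Solving \eqref{E:Second-der-Malli-X} by variation of constants gives
\[
D^2_{r_1,r_2}X_u=\int_{r_1\vee r_2}^u e^{\int_s^u f^*_x(X_v)\,dv}\,f^*_{xx}(X_s)\,D_{r_1}X_s\,D_{r_2}X_s\,ds .
\]
Using $f_x^*\le -C^*$ and Lemma \ref{L:Mal-der-X}, this yields
\[
|D^2_{r_1,r_2}X_u|\le e^{-C^*(u-r_1)}e^{-C^*(u-r_2)}\int_{r_1\vee r_2}^u e^{C^*(u-s)}\,e^{-C^*(u-s)}\,|f^*_{xx}(X_s)|\,ds .
\]
The exponentials inside this integral need to be recombined carefully. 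The exact exponent is $e^{-C^*(u-s)}e^{-C^*(s-r_1)}e^{-C^*(s-r_2)}=e^{-C^*(u-r_1)}e^{-C^*(u-r_2)}e^{C^*(u-s)}$, so the integrand carries the factor $e^{C^*(u-s)}$ and the integral reduces to $\int e^{-C^*(s-r_1\wedge r_2)}\cdots$. Redoing the bookkeeping, one gets the cleaner form
\[
|D^2_{r_1,r_2}X_u|\le e^{-C^*(u-r_1\wedge r_2)}\int_{r_1\vee r_2}^u e^{-C^*(s-r_1\vee r_2)}\,|f^*_{xx}(X_s)|\,ds ,
\]
and the $L^{m}$ norm of the integral is bounded uniformly in time, since $f^*_{xx}$ has polynomial growth and $X$ has uniform moments. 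This gives
\[
\big(\BE|D^2_{r_1,r_2}X_u|^m\big)^{1/m}\le K\,e^{-C^*(u-r_1\wedge r_2)} .
\]
This is weaker than the $\Gamma^g_5$ bound, which decays in both $u-r_1$ and $u-r_2$. The resulting term
\[
\frac{K}{t^{2pC_{\bar g}C_\alpha}}\left(\int_{r_1\vee r_2}^t u^{C_{\bar g}C_\alpha-1}e^{-C^*(u-r_1\wedge r_2)}\,du\right)^{2p}
\]
is bounded by Lemma \ref{L:Integration-f-g-intermediate}, applied with $C^*$ replaced by $C^*/2$ (same integration-by-parts proof), by $K(r_1\vee r_2)^{C_{\bar g}C_\alpha-1}e^{-C^*(r_1\vee r_2-r_1\wedge r_2)}$. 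This again matches the claim.

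\textbf{Main obstacle.} The $\Gamma^g_6$ term is the hard one: we need an $L^m$ bound on $D^2X$ that decays exponentially from $r_1\wedge r_2$, not merely the $L^1$ bound stated in Lemma \ref{L:Mal-der-X}. We will obtain it from the explicit variation-of-constants formula, keeping the deterministic exponential weights outside the expectation and applying Minkowski's integral inequality to the remaining $\int e^{-C^*(s-\cdot)}|f^*_{xx}(X_s)|\,ds$. The other ingredients are routine repetitions of Lemmas \ref{L:Second-der-Gamma-g-1-2}--\ref{L:Second-der-Gamma-g-3-4}.
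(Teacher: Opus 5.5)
Your proposal is correct and follows the paper's proof essentially step for step: the pathwise bound on $D_{r_1}X_u\,D_{r_2}X_u$, H\"older splitting with Lemma \ref{L:Integrating-Factor-first-der}, and Lemma \ref{L:Integration-f-g-intermediate} with $\mathsf{D}=C_{\bar g}C_\alpha-1$, with the $\Gamma^g_6$ term reduced to the decay $e^{-C^*(u-r_1\wedge r_2)}$ of $D^2_{r_1,r_2}X_u$. The one place you go beyond the paper is that you derive this decay in $L^m$ (via variation of constants and Minkowski's inequality), which is what the H\"older step actually needs, whereas the paper only cites the $L^1$ bound of Lemma \ref{L:Mal-der-X}; note that your first display for $|D^2_{r_1,r_2}X_u|$ has the exponent bookkeeping wrong, though the corrected form you state next is right.
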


\begin{proof}
We first derive the bound for the term $\BE \left[ \left( \int_{r_1 \vee r_2}^t \alpha_u \eta^*_{t,u} \Gamma^g_5 (X_u, \theta_u) \thinspace du \right)^{2p} \right]$; the bound for $\BE \left[ \left( \int_{r_1 \vee r_2}^t \alpha_u \eta^*_{t,u} \Gamma^g_6 (X_u, \theta_u) \thinspace du \right)^{2p} \right]$ follows similarly. Recalling the definition of $\eta^*$ process from Equation \eqref{E:Integrating-factor-1-der}, applying H\"older's inequality repeatedly, and using Lemma \ref{L:Mal-der-X} to handle the term $D_rX_t$, we have
\begin{equation*}
\begin{aligned}
& \BE \left[ \left( \int_{r_1 \vee r_2}^t \alpha_u \eta^*_{t,u} \Gamma^g_5 (X_u, \theta_u) \thinspace du \right)^{2p} \right] \\
& \qquad \quad  \le \underbrace{\int_{r_1 \vee r_2}^t \cdots \int_{r_1 \vee r_2}^t}_{2p-\text{times}} \left\{ \prod_{i=1}^{2p} \alpha_{u_i} e^{-C^*(u_i-r_1)}  e^{-C^*(u_i-r_2)} \right\}    \BE \left[ \prod_{i=1}^{2p} \eta_{t,u_i}^* \{- {g}_{x x \theta}(X_{u_i},\theta_{u_i})\}  \right]du_1 \cdots du_{2p}  \\
& \qquad \quad  \le {\int_{r_1 \vee r_2}^t \cdots \int_{r_1 \vee r_2}^t} \left\{ \prod_{i=1}^{2p} \alpha_{u_i} e^{-C^*(u_i-r_1)} e^{-C^*(u_i-r_2)} \right\} \prod_{i=1}^{2p} \left[ \left\{\BE (\eta_{t,u_i}^*)^{4p} \right\}^{\frac{1}{4p}} \times \right.\\
& \qquad \qquad \qquad \qquad \qquad \qquad \qquad \qquad \qquad \qquad \qquad \qquad \qquad \qquad  \left. \left\{\BE {g}_{xx\theta}(X_{u_i},\theta_{u_i})^{4p}  \right\}^{\frac{1}{4p}}  \right] du_1 \cdots du_{2p}.
\end{aligned}
\end{equation*}
We next apply Lemma \ref{L:Integrating-Factor-first-der} to handle the term $\eta^*$, and then use Assumption \ref{A:Growth-f-g} to control $\BE {g}_{x x \theta}(X_{u_i},\theta_{u_i})^{4p}$, followed by the uniform moment bounds for the processes $X_t$ and  $\theta_t$ \cite{pardoux2001poisson, siri_spilio_2020}, to obtain
\begin{equation*}
\begin{aligned}
& \BE \left[ \left( \int_{r_1 \vee r_2}^t \alpha_u \eta^*_{t,u} \Gamma^g_5 (X_u, \theta_u) \thinspace du \right)^{2p} \right]  \\
& \qquad \qquad   \le K {\int_{r_1 \vee r_2}^t \cdots \int_{r_1 \vee r_2}^t} \left\{ \prod_{i=1}^{2p} \alpha_{u_i} \left( \frac{u_i}{t} \right)^{C_{\bar{g}}C_\alpha} e^{-C^*(u_i-r_1)} e^{-C^*(u_i-r_2)} \right\} du_1 \cdots du_{2p}  \\
& \qquad \qquad  =  \frac{K}{t^{ 2p C_{\bar{g}}C_\alpha}} \left( \int_{r_1 \vee r_2}^t u^{C_{\bar{g}}C_\alpha-1} e^{-C^*(u-r_1)} e^{-C^*(u-r_2)}  \thinspace du \right)^{2p} \\
& \qquad \qquad  \le K \frac{(r_1 \vee r_2)^{ 2p C_{\bar{g}}C_\alpha-2p}}{t^{ 2p C_{\bar{g}}C_\alpha}}e^{-2p C^*(r_1 \vee r_2 - r_1 \wedge r_2)},
\end{aligned}
\end{equation*}
where the last inequality follows from Lemma \ref{L:Integration-f-g-intermediate} with $\mathsf{D}=C_{\bar{g}}C_\alpha-1$. Next, using Lemma \ref{L:Mal-der-X} to handle the term $D^2_{r_1, r_2}X_u$, we obtain the same bound for the term $\BE \left[ \left( \int_{r_1 \vee r_2}^t \alpha_u \eta^*_{t,u} \Gamma^g_6 (X_u, \theta_u) \thinspace du \right)^{2p} \right]$, which completes the proof of the lemma.
\end{proof}

\subsection{Bound associated with the function $f(x, \theta)$ terms: $\BE [( \int_{r_1 \vee r_2}^t \alpha_u \eta^*_{t,u} \Gamma^f (X_u, \theta_u)\thinspace dW_u)^{2p}]$ and $\BE [( \int_{r_1 \vee r_2}^t \alpha_u^2 \eta^*_{t,u} f_{\theta \theta}(X_u,\theta_u) \Gamma^f (X_u, \theta_u) \thinspace du)^{2p}]$}\label{S:Term-f}
This section focuses on estimating the following two terms: \\$\BE [( \int_{r_1 \vee r_2}^t \alpha_u \eta^*_{t,u} \Gamma^f (X_u, \theta_u)\thinspace dW_u)^{2p}]$ and $\BE [( \int_{r_1 \vee r_2}^t \alpha_u^2 \eta^*_{t,u} f_{\theta \theta}(X_u,\theta_u) \Gamma^f (X_u, \theta_u) \thinspace du)^{2p}]$, as stated in Lemmas \ref{L:Second-der-Gamma-f} and \ref{L:Second-der-Gamma-ff}, respectively. These results play a crucial role in the proof of Lemma \ref{L:2-der-moments}. The proofs of the main results of this section, Lemmas \ref{L:Second-der-Gamma-f} and \ref{L:Second-der-Gamma-ff}, rely on several auxiliary lemmas---Lemmas \ref{L:Second-der-Gamma-f-1-2} through \ref{L:Second-der-Gamma-f-3}. The proofs of these results are based on careful applications of H\"older's inequality, uniform moment bounds for the processes $X$ and $\theta$, and a key use of Lemmas \ref{L:Integration-f-g-intermediate} and \ref{L:L1}.

\begin{lemma}\label{L:Second-der-Gamma-f}
Let the processes $\eta^*$ and $\Gamma^f$ be defined in Equations \eqref{E:Integrating-factor-1-der} and \eqref{E:second-order-Mal-Der-theta}, respectively. We assume that Assumptions \ref{A:f*-growth} through \ref{A:Tech-Cond} hold. Then, for any $p \in \BN$, and $t \ge r_1 \vee r_2 \ge t^*,$ there exists a positive constant $K$ such that
\begin{multline*}
\BE \left[ \left( \int_{r_1 \vee r_2}^t \alpha_u \eta^*_{t,u} \Gamma^f (X_u, \theta_u) \thinspace dW_u \right)^{2p} \right] \le \frac{K}{t^{ 2p C_{\bar{g}}C_\alpha}}\frac{(r_1 r_2)^{ 2p C_{\bar{g}} C_\alpha-2p}}{(r_1 \vee r_2)^{ 2p C_{\bar{g}}C_\alpha}} \\
 + K \frac{(r_1 \vee r_2)^{ 2p C_{\bar{g}}C_\alpha-2p}}{t^{ 2p C_{\bar{g}}C_\alpha}}e^{- 2p  C^*(r_1 \vee r_2 - r_1 \wedge r_2)},
 \end{multline*}
where $t^*$ is a sufficiently large number and its choice is specified in Lemma \ref{L:moment-bound} and Remark \ref{R:Rem-t*-choice}.
\end{lemma}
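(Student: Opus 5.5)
The plan is to split $\Gamma^f$ into its five summands $\Gamma^f_1,\dots,\Gamma^f_5$, in the order they appear in \eqref{E:second-order-Mal-Der-theta}. The triangle inequality then bounds the left-hand side by $K\sum_{i=1}^5 \BE[(\int_{r_1\vee r_2}^t \alpha_u\eta^*_{t,u}\Gamma^f_i\,dW_u)^{2p}]$, and each term will be bounded separately, mirroring Lemma \ref{L:Second-der-Gamma-g}.

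The main difficulty is that $\eta^*_{t,u}$ depends on the path after time $u$, so the integrand is not adapted and the martingale moment inequality cannot be used directly. As in the proof of Lemma \ref{L:L-8-deri-1}, set $\bar r\triangleq r_1\vee r_2$ and factor $\eta^*_{t,u}=\eta^*_{t,\bar r}(\eta^*_{u,\bar r})^{-1}$. The factor $\eta^*_{t,\bar r}$ can be pulled out of the stochastic integral. Cauchy--Schwarz and Lemma \ref{L:Integrating-Factor-first-der} then give the prefactor $K(\bar r/t)^{2pC_{\bar g}C_\alpha}$. The remaining integral $\int_{\bar r}^t\alpha_u(\eta^*_{u,\bar r})^{-1}\Gamma^f_i\,dW_u$ is adapted, so the martingale moment inequality \cite[Proposition 3.26]{KS91} reduces it to $\BE(\int_{\bar r}^t\alpha_u^2(\eta^*_{u,\bar r})^{-2}(\Gamma^f_i)^2du)^{2p}$. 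I then expand this as a $2p$-fold integral and apply H\"older's inequality factor by factor. The inverse integrating factor is controlled by Lemma \ref{L:L1}, which gives $(u/\bar r)^{\tilde K}$ with $\tilde K$ proportional to $C_\alpha K^*_{g_{\theta\theta}}$ since $\bar r\ge t^*$. The coefficient functions are controlled through Assumption \ref{A:Growth-f-g} and the uniform moment bounds.

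For the four terms containing at least one factor $D_{r_j}X_u$ or $D^2_{r_1,r_2}X_u$ (that is, $\Gamma^f_1,\Gamma^f_2,\Gamma^f_4,\Gamma^f_5$), Lemma \ref{L:Mal-der-X} supplies exponential decay $e^{-C^*(u-r_j)}$. This decay absorbs the polynomial growth $(u/\bar r)^{\tilde K}$, so Lemma \ref{L:Integration-f-g-intermediate} applies exactly as in \eqref{E:Eq-5}. Combined with Lemma \ref{L:1-der-moments} for the factor $D_{r_i}\theta_u$, this should yield the two rate types in the statement. Terms with $D_{r_1}X\,D_{r_2}X$ or $D^2X$ produce $\bar r^{2pC_{\bar g}C_\alpha-2p}t^{-2pC_{\bar g}C_\alpha}e^{-2pC^*(\bar r-r_1\wedge r_2)}$. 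The mixed terms give expressions like those in Lemma \ref{L:Second-der-Gamma-g-3-4}, which simplify into the stated sum.

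The hard step will be the term $\Gamma^f_3=f_{\theta\theta\theta}D_{r_1}\theta_u D_{r_2}\theta_u$, which has no exponential decay. There I would use Lemma \ref{L:1-der-moments} to obtain $\BE[(D_{r_1}\theta_uD_{r_2}\theta_u)^{k}]^{1/k}\lesssim (r_1r_2)^{2C_{\bar g}C_\alpha-2}u^{-4C_{\bar g}C_\alpha}$ at the level of the squared integrand. Together with $\alpha_u^2\sim u^{-2}$ and the inverse-eta factor $(u/\bar r)^{2C_\alpha K^*_{g_{\theta\theta}}}$, the $u$-integrand is $u^{2C_\alpha K^*_{g_{\theta\theta}}-4C_{\bar g}C_\alpha-2}$. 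The second condition of Assumption \ref{A:Tech-Cond}, $K^*_{g_{\theta\theta}}<\frac{1}{2C_\alpha}+2C_{\bar g}$, is exactly what makes this exponent smaller than $-1$. The integral is then dominated by its lower endpoint $\bar r$, giving $\bar r^{-4C_{\bar g}C_\alpha-1}$. Raising to the power $p$ (because of the square root) and multiplying by $(\bar r/t)^{2pC_{\bar g}C_\alpha}$, I expect $t^{-2pC_{\bar g}C_\alpha}(r_1r_2)^{2pC_{\bar g}C_\alpha-2p}\bar r^{-2pC_{\bar g}C_\alpha}$, up to a harmless $\bar r^{-p}$ factor. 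The delicate part is tracking the H\"older exponents so that the inverse-moment exponent from Lemma \ref{L:L1} stays at $2C_\alpha K^*_{g_{\theta\theta}}$ per factor rather than inflating with $p$. If naive H\"older inflates it, I would first apply Lemma \ref{L:L1} inside a generalized H\"older with exponents close to $1$ on the $\eta^{-1}$ factor, accepting arbitrarily small losses compensated by the strict inequality in Assumption \ref{A:Tech-Cond}.
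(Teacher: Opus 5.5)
Your proposal is correct and follows essentially the same route as the paper. The paper proves this lemma by splitting $\Gamma^f$ into $\Gamma^f_1,\dots,\Gamma^f_5$ and combining Lemmas~\ref{L:Second-der-Gamma-f-1-2}, \ref{L:Second-der-Gamma-f-3} and \ref{L:Second-der-Gamma-f-4-5}, each of which uses your tools: the factorization $\eta^*_{t,u}=\eta^*_{t,r_1\vee r_2}(\eta^*_{u,r_1\vee r_2})^{-1}$, Cauchy--Schwarz with Lemma~\ref{L:Integrating-Factor-first-der}, the martingale moment inequality, H\"older with Lemma~\ref{L:L1}, and Lemma~\ref{L:Integration-f-g-intermediate} for the exponentially decaying terms. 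Your handling of $\Gamma^f_3$ also matches: since Lemma~\ref{L:L1} scales linearly in the moment order, the exponent stays at $2C_\alpha K^*_{g_{\theta\theta}}$ per factor and no fallback is needed. Assumption~\ref{A:Tech-Cond} then makes the $u$-exponent $2C_\alpha K^*_{g_{\theta\theta}}-4C_{\bar g}C_\alpha-2<-1$, and the extra factor $(r_1\vee r_2)^{-p}$ is simply discarded.
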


\begin{proof}[Proof of Lemma \ref{L:Second-der-Gamma-f}]
The proof is analogous to the proof of Lemma \ref{L:Second-der-Gamma-g} and follows by combining Lemmas \ref{L:Second-der-Gamma-f-1-2}, \ref{L:Second-der-Gamma-f-3} and \ref{L:Second-der-Gamma-f-4-5} stated below. For brevity, we omit the full details.
\end{proof}

\begin{lemma}\label{L:Second-der-Gamma-ff}
Let the processes $\eta^*$ and $\Gamma^f$ be defined in Equations \eqref{E:Integrating-factor-1-der} and \eqref{E:second-order-Mal-Der-theta}, respectively.
Then, for any $p \in \BN$ and $t \ge r_1 \vee r_2,$ there exists a positive constant $K$ such that
\begin{multline*}
\BE \left[ \left( \int_{r_1 \vee r_2}^t \alpha_u^2 \eta^*_{t,u} f_{\theta \theta}(X_u,\theta_u) \Gamma^f (X_u, \theta_u) \thinspace du \right)^{2p} \right] \le \frac{K}{t^{ 2p C_{\bar{g}}C_\alpha}}\frac{(r_1 r_2)^{ 2p C_{\bar{g}} C_\alpha-2p}}{(r_1 \vee r_2)^{ 2p C_{\bar{g}}C_\alpha}} \\
 + K \frac{(r_1 \vee r_2)^{ 2p C_{\bar{g}}C_\alpha-2p}}{t^{ 2p C_{\bar{g}}C_\alpha}}e^{- 2p  C^*(r_1 \vee r_2 - r_1 \wedge r_2)}.
 \end{multline*}
\end{lemma}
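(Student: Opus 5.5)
We bound the Lebesgue integral of the drift-type term $\int_{r_1\vee r_2}^t \alpha_u^2 \eta^*_{t,u} f_{\theta\theta}(X_u,\theta_u)\Gamma^f(X_u,\theta_u)\,du$ exactly as in Lemmas \ref{L:Second-der-Gamma-g-1-2}--\ref{L:Second-der-Gamma-g-5-6}, since no stochastic integral is involved and the extra factor $\alpha_u f_{\theta\theta}$ only helps.

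\textbf{Splitting.} Write $\Gamma^f=\sum_{i=1}^5\Gamma^f_i$ according to its five summands:
\begin{itemize}
\item $\Gamma^f_1 = f_{x\theta\theta}\,D_{r_1}\theta_u\, D_{r_2}X_u$ and $\Gamma^f_2 = f_{\theta x\theta}\, D_{r_1}X_u\, D_{r_2}\theta_u$;
\item $\Gamma^f_3 = f_{\theta\theta\theta}\, D_{r_1}\theta_u\, D_{r_2}\theta_u$;
\item $\Gamma^f_4 = f_{xx\theta}\, D_{r_1}X_u\, D_{r_2}X_u$ and $\Gamma^f_5 = f_{x\theta}\, D^2_{r_1,r_2}X_u$.
\end{itemize}
By the triangle inequality it suffices to bound each $\BE[(\int \alpha_u^2\eta^*_{t,u}f_{\theta\theta}\Gamma^f_i\,du)^{2p}]$.

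\textbf{Common step.} For each $i$, expand the $2p$-th power as a $2p$-fold integral over $[r_1\vee r_2,t]^{2p}$. Then apply the generalized H\"older inequality to the product over $i=1,\dots,2p$ of the factors
\[
\eta^*_{t,u_i},\quad f_{\theta\theta}(X_{u_i},\theta_{u_i}),\quad \text{the coefficient function},\quad \text{the Malliavin derivatives}.
\]
This produces factors $(\BE|\cdot|^{Np})^{1/(Np)}$ for a suitable $N$.
\begin{itemize}
\item The $\eta^*$ factors are bounded by Lemma \ref{L:Integrating-Factor-first-der}, giving $(u_i/t)^{C_{\bar g}C_\alpha}$.
\item $f_{\theta\theta}$ and the third-order derivatives of $f$ have polynomial growth (Assumption \ref{A:Growth-f-g}), so the uniform moment bounds for $X$ and $\theta$ make them $O(1)$.
\item $D_{r_j}\theta_{u_i}$ is bounded via Lemma \ref{L:1-der-moments} by $r_j^{C_{\bar g}C_\alpha-1}u_i^{-C_{\bar g}C_\alpha}$.
\item $D_{r_j}X_{u_i}\le e^{-C^*(u_i-r_j)}$ deterministically, by Lemma \ref{L:Mal-der-X}.
\item For $D^2_{r_1,r_2}X_{u}$, one needs higher moments rather than the first moment stated in Lemma \ref{L:Mal-der-X}. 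Its proof gives them verbatim: the representation is an integral of $f^*_{xx}(X_s)$ times deterministically bounded factors. So $\BE|D^2X_u|^{q}\le Ke^{-qC^*(u-r_1\wedge r_2)}$.
\end{itemize}
After these bounds, every integrand factorizes and the $2p$-fold integral becomes the $2p$-th power of a single one-dimensional integral. Using $\alpha_u^2\le \alpha_u\cdot C_\alpha/(r_1\vee r_2)$, each such integral is dominated by $(r_1\vee r_2)^{-1}$ times the corresponding integral in the $g$-lemmas, which is at least as good.

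\textbf{Case by case.}
\begin{itemize}
\item For $\Gamma^f_3$, the integral is $\int u^{C_{\bar g}C_\alpha-2-2C_{\bar g}C_\alpha}\,du\le K(r_1\vee r_2)^{-C_{\bar g}C_\alpha-1}$. This yields the first term $t^{-2pC_{\bar g}C_\alpha}(r_1r_2)^{2pC_{\bar g}C_\alpha-2p}(r_1\vee r_2)^{-2pC_{\bar g}C_\alpha}$, as in Lemma \ref{L:Second-der-Gamma-g-1-2}.
\item For $\Gamma^f_1,\Gamma^f_2$, we get $\int u^{-2}e^{-C^*(u-r_2)}du$, handled as in Lemma \ref{L:Second-der-Gamma-g-3-4}. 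The result is bounded by the mixed terms appearing there, which simplify to the two stated terms.
\item For $\Gamma^f_4,\Gamma^f_5$, Lemma \ref{L:Integration-f-g-intermediate} with $\mathsf D=C_{\bar g}C_\alpha-2$ gives $(r_1\vee r_2)^{2pC_{\bar g}C_\alpha-4p}t^{-2pC_{\bar g}C_\alpha}e^{-2pC^*(r_1\vee r_2-r_1\wedge r_2)}$. This is dominated by the second term, since $r_1\vee r_2\ge 1$.
\end{itemize}

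\textbf{Obstacles.} The only mildly delicate points are two. First, checking that the mixed exponential/power terms from $\Gamma^f_{1,2}$ reduce to the two stated terms; this is the same algebra as in Lemma \ref{L:Second-der-Gamma-g}. Second, justifying the higher moment bound on $D^2X$. Notably, unlike Lemma \ref{L:Second-der-Gamma-f}, no inverse moments of $\eta^*$ (Lemma \ref{L:L1}) and hence no Assumption \ref{A:Tech-Cond} are needed. The reason is that we never factor $\eta^*_{t,u}=\eta^*_{t,r}(\eta^*_{u,r})^{-1}$; that factoring is required only to pull $\eta^*$ out of a stochastic integral.
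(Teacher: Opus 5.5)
Your proposal is correct and follows the paper's proof. As there, you split $\Gamma^f$ into $\Gamma^f_1,\dots,\Gamma^f_5$ by the triangle inequality, control $f_{\theta\theta}$ by its polynomial growth, and repeat the H\"older computations of Lemmas~\ref{L:Second-der-Gamma-g-1-2}--\ref{L:Second-der-Gamma-g-5-6}, absorbing the extra $\alpha_u$ (the paper uses $\alpha_u^2\le\alpha_u$, and your $\alpha_u^2\le \alpha_u C_\alpha/(r_1\vee r_2)$ is a slightly sharper form of the same step); your added observations---that $\BE|D^2_{r_1,r_2}X_u|^q\le Ke^{-qC^*(u-r_1\wedge r_2)}$ follows from the same representation, where the paper only records the first moment, and that no inverse moments of $\eta^*$ or Assumption~\ref{A:Tech-Cond} are needed since no stochastic integral appears---are accurate and fill in details the paper leaves implicit.
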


\begin{proof}[Proof of Lemma \ref{L:Second-der-Gamma-ff}]
The definition of the process $\Gamma^f$ from Equation \eqref{E:second-order-Mal-Der-theta} and the triangle inequality yield
\begin{equation*}
\BE \left[ \left( \int_{r_1 \vee r_2}^t \alpha_u^2 \eta^*_{t,u} f_{\theta \theta}(X_u,\theta_u) \Gamma^f (X_u, \theta_u) \thinspace du \right)^{2p} \right] \le K \sum_{i=1}^5 \BE \left[ \left( \int_{r_1 \vee r_2}^t \alpha_u^2 \eta^*_{t,u} f_{\theta \theta}(X_u,\theta_u) \Gamma^f_i (X_u, \theta_u) \thinspace du \right)^{2p} \right],
\end{equation*}
where the processes $\Gamma^f_i,~ i=1,\dots,5$ are defined in Lemmas \ref{L:Second-der-Gamma-f-1-2}, \ref{L:Second-der-Gamma-f-4-5}, and \ref{L:Second-der-Gamma-f-3} below. A direct application of Assumption \ref{A:Growth-f-g} controls the growth of the term $f_{\theta \theta}(X_t,\theta_t)$. Proceeding with calculations similar to those in Lemmas \ref{L:Second-der-Gamma-g-1-2}, \ref{L:Second-der-Gamma-g-3-4}, and \ref{L:Second-der-Gamma-g-5-6}, and using the fact that $\alpha_t^2 \le \alpha_t$, yields the desired estimate.
\end{proof}

We now provide the auxiliary results (Lemmas \ref{L:Second-der-Gamma-f-1-2} through \ref{L:Second-der-Gamma-f-3}) used to prove Lemmas \ref{L:Second-der-Gamma-f} and \ref{L:Second-der-Gamma-ff}.

\begin{lemma}\label{L:Second-der-Gamma-f-1-2}
Let $D_r \theta_t$ and $D_r X_t$ be the solutions of Equations \eqref{E:first-order-Mal-Der-theta} and \eqref{E:first-order-Mal-Der-X}, respectively and the processes $\Gamma^f_1 (X_u, \theta_u), \Gamma^f_2 (X_u, \theta_u)$ are defined as follows:
\begin{equation*}
\begin{aligned}
\Gamma^f_1 (X_u, \theta_u) & \triangleq f_{x \theta \theta }(X_u, \theta_u) D_{r_1} \theta_u \cdot D_{r_2} X_u, \quad
\Gamma^f_2 (X_u, \theta_u)  \triangleq f_{\theta x \theta}(X_u, \theta_u) D_{r_1} X_u \cdot D_{r_2} \theta_u.
\end{aligned}
\end{equation*}
Then, for any $t > r_1 \vee r_2,$ and $p \in \BN,$ there exists a time-independent constant $K>0$ such that
\begin{equation*}
\begin{aligned}
& \BE \left[ \left( \int_{r_1 \vee r_2}^t \alpha_u \eta^*_{t,u} \Gamma^f_1 (X_u, \theta_u) \thinspace dW_u \right)^{2p} \right] + \BE \left[ \left( \int_{r_1 \vee r_2}^t \alpha_u \eta^*_{t,u} \Gamma^f_2 (X_u, \theta_u) \thinspace dW_u \right)^{2p} \right] \\
& \quad    \le \frac{K}{t^{ 2p C_{\bar{g}} C_\alpha}}\frac{r_1^{ 2p C_{\bar{g}}C_\alpha-2p}}{(r_1 \vee r_2)^{2p}}e^{ 2p  C^*r_2}e^{- 2p  C^* (r_1 \vee r_2)} + \frac{K}{t^{ 2p C_{\bar{g}}C_\alpha}}\frac{r_2^{ 2p C_{\bar{g}}C_\alpha-2p}}{(r_1 \vee r_2)^{2p}}e^{ 2p  C^*r_1}e^{- 2p  C^* (r_1 \vee r_2)},
\end{aligned}
\end{equation*}
where the $\eta^*$ process is defined in Equation \eqref{E:Integrating-factor-1-der}.
\end{lemma}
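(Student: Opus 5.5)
We treat the term with $\Gamma^f_1$; the bound for $\Gamma^f_2$ follows by exchanging the roles of $r_1$ and $r_2$. The first difficulty is that $\eta^*_{t,u}$ is not adapted in $u$, so the Itô integral cannot be handled directly. As in the proof of Lemma \ref{L:L-8-deri-1}, we factor it as $\eta^*_{t,u} = \eta^*_{t,r_1\vee r_2}\,(\eta^*_{u,r_1\vee r_2})^{-1}$ for $u \ge r_1 \vee r_2$. The factor $\eta^*_{t,r_1\vee r_2}$ can then be pulled outside the stochastic integral, and the remaining integrand is adapted.

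Next we apply the Cauchy--Schwarz inequality. Lemma \ref{L:Integrating-Factor-first-der} gives $[\BE(\eta^*_{t,r_1\vee r_2})^{4p}]^{1/2} \le K\,((r_1\vee r_2)/t)^{2pC_{\bar g}C_\alpha}$. The remaining factor is
\[
\Big[\BE\Big(\int_{r_1\vee r_2}^t \alpha_u (\eta^*_{u,r_1\vee r_2})^{-1} f_{x\theta\theta}(X_u,\theta_u)\, D_{r_1}\theta_u\, D_{r_2}X_u \, dW_u\Big)^{4p}\Big]^{1/2}.
\]
We bound it with the martingale moment inequality \cite[Proposition 3.26]{KS91}, which reduces it to the $2p$-th moment of $\int \alpha_u^2 (\eta^*_{u,r_1\vee r_2})^{-2} f_{x\theta\theta}^2 (D_{r_1}\theta_u)^2 (D_{r_2}X_u)^2\,du$.

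We then control the integrand factor by factor:
\begin{itemize}
\item Lemma \ref{L:Mal-der-X} gives $D_{r_2}X_u \le e^{-C^*(u-r_2)}$ deterministically.
\item We expand the $2p$-th power as a $2p$-fold integral and apply the generalized Hölder inequality in each $u_i$.
\item Lemma \ref{L:L1} gives $(u_i/(r_1\vee r_2))^{\tilde K}$ for the negative moments of $\eta^*$, with $\tilde K$ proportional to $C_\alpha K_{g_{\theta\theta}}$.
\item Lemma \ref{L:1-der-moments} gives $r_1^{C_{\bar g}C_\alpha-1}/u_i^{C_{\bar g}C_\alpha}$ for $D_{r_1}\theta_{u_i}$.
\item Assumption \ref{A:Growth-f-g} together with the uniform moment bounds \cite{siri_spilio_2020,pardoux2001poisson} controls $f_{x\theta\theta}$.
\end{itemize}
This reduces everything to the $2p$-th power of the deterministic integral
\[
\frac{r_1^{2C_{\bar g}C_\alpha-2}}{(r_1\vee r_2)^{\tilde K}} \int_{r_1\vee r_2}^t u^{\tilde K - 2 - 2C_{\bar g}C_\alpha} e^{-2C^*(u-r_2)}\,du .
\]

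The main obstacle is this deterministic integral, where the exponential decay in $u$ has to absorb the polynomial growth $u^{\tilde K}$ produced by the negative moments of $\eta^*$. We handle it with Lemma \ref{L:Integration-f-g-intermediate}, taking $\mathsf{D}=\tilde K-2-2C_{\bar g}C_\alpha$ and $r_1 = r_2$ there, with lower limit $r_1\vee r_2$. This bounds the integral by $K(r_1\vee r_2)^{\mathsf{D}} e^{-2C^*(r_1\vee r_2 - r_2)}$. The powers $(r_1\vee r_2)^{\tilde K}$ then cancel, leaving a factor
\[
r_1^{2C_{\bar g}C_\alpha-2}\,(r_1\vee r_2)^{-2-2C_{\bar g}C_\alpha}\, e^{-2C^*(r_1\vee r_2)+2C^*r_2}.
\]
We raise this to the power $2p$, take the square root, and multiply by the $\eta^*_{t,r_1\vee r_2}$ factor $((r_1\vee r_2)/t)^{2pC_{\bar g}C_\alpha}$. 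This yields
\[
\frac{K}{t^{2pC_{\bar g}C_\alpha}}\,\frac{r_1^{2pC_{\bar g}C_\alpha-2p}}{(r_1\vee r_2)^{2p}}\, e^{2pC^*r_2} e^{-2pC^*(r_1\vee r_2)},
\]
which is the claimed bound. The bookkeeping of exponents needs care. If the powers of $r_1\vee r_2$ do not cancel exactly, we use $u \ge r_1\vee r_2$ to discard any leftover negative powers, as in Lemma \ref{L:Second-der-Gamma-g-3-4}.
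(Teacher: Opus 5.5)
Your proposal is correct and follows the paper's proof essentially step for step. The shared steps are: the factorization $\eta^*_{t,u}=\eta^*_{t,r_1\vee r_2}(\eta^*_{u,r_1\vee r_2})^{-1}$; Cauchy--Schwarz with Lemma \ref{L:Integrating-Factor-first-der}; the martingale moment inequality; the $2p$-fold H\"older expansion controlled by Lemmas \ref{L:Mal-der-X}, \ref{L:L1} and \ref{L:1-der-moments}; and finally Lemma \ref{L:Integration-f-g-intermediate} with $\mathsf{D}=2C_\alpha K_{g_{\theta\theta}}-2C_{\bar g}C_\alpha-2$, i.e.\ your $\tilde K=2C_\alpha K_{g_{\theta\theta}}$. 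Like the paper, your last step really uses $\int_a^t u^{\mathsf D}e^{-2C^*u}\,du\le K a^{\mathsf D}e^{-2C^*a}$ with $a=r_1\vee r_2$, which is what that lemma's proof establishes, and the powers of $r_1\vee r_2$ then cancel exactly as you computed, so no leftover-power fix is needed.
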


\begin{proof}[Proof of Lemma \ref{L:Second-der-Gamma-f-1-2}]
For $r_1 \vee r_2 \le u \le t,$ we write the process $\eta_{t,u}^*$ as $\eta_{t,{r_1 \vee r_2}}^* \left({\eta_{u,{r_1 \vee r_2}}^*}\right)^{-1}$. Applying H\"older's inequality, Lemma \ref{L:Integrating-Factor-first-der}, and the martingale moment inequality \cite[Proposition 3.26]{KS91}, we obtain
\begin{equation}\label{E:Eq-6}
\begin{aligned}
& \BE \left[ \left( \int_{r_1 \vee r_2}^t \alpha_u \eta^*_{t,u} \Gamma^f_1 (X_u, \theta_u) \thinspace dW_u \right)^{2p} \right] =  \BE \left[ \left( \int_{r_1 \vee r_2}^t \left[\alpha_u \eta_{t,u}^* {f}_{x \theta \theta}(X_u,\theta_u)D_{r_1} \theta_u \cdot D_{r_2} X_u \right]dW_u  \right)^{2p} \right] \\
& \qquad    = \BE \left[\left(\eta_{t,{r_1 \vee r_2}}^*\right)^{2p} \left( \int_{r_1 \vee r_2}^t \left[\alpha_u \left({\eta_{u,{r_1 \vee r_2}}^*}\right)^{-1} {f}_{x \theta \theta}(X_u,\theta_u)D_{r_1} \theta_u \cdot D_{r_2} X_u  \right]dW_u  \right)^{2p} \right]\\
& \qquad  \le \left[ \BE \left(\eta_{t,{r_1 \vee r_2}}^*\right)^{4p} \right]^{\frac{1}{2}} \left[ \BE \left( \int_{{r_1 \vee r_2}}^t \left[\alpha_u \left({\eta_{u,{r_1 \vee r_2}}^*}\right)^{-1} {f}_{x \theta \theta}(X_u,\theta_u)D_{r_1} \theta_u \cdot D_{r_2} X_u \right]dW_u\right)^{4p} \right]^{\frac{1}{2}} \\
& \qquad  \le K \frac{(r_1 \vee r_2)^{ 2p C_{\bar{g}} C_\alpha}}{t^{ 2p C_{\bar{g}} C_\alpha}} \left[ \BE \left( \int_{r_1 \vee r_2}^t \left[\alpha_u^2 \left({\eta_{u,r_1 \vee r_2}^*}\right)^{-2} {f}_{x \theta \theta}(X_u,\theta_u)^2 (D_{r_1} \theta_u)^2 \cdot (D_{r_2} X_u)^2 \right] du \right)^{2p} \right]^{\frac{1}{2}}.
\end{aligned}
\end{equation}
We now estimate the term $\BE \left( \int_{r_1 \vee r_2}^t \left[\alpha_u^2 \left({\eta_{u,r_1 \vee r_2}^*}\right)^{-2} {f}_{x \theta \theta}(X_u,\theta_u)^2 (D_{r_1} \theta_u)^2 (D_{r_2} X_u)^2 \right] du \right)^{2p}$ appearing in the right-hand side of \eqref{E:Eq-6}. Using H\"older's inequality, Assumption \ref{A:Growth-f-g} to control the growth of $f_{x \theta \theta}$, the uniform moment bounds for the processes $X_t$ and $\theta_t$, and Lemmas \ref{L:Mal-der-X} and \ref{L:1-der-moments} (to control the Malliavin derivatives $D_r X_t$ and $D_r \theta_t$, respectively), we obtain
\begin{equation*}
\begin{aligned}
& \BE \left[ \left( \int_{r_1 \vee r_2}^t \alpha_u^2 {\eta_{u,{r_1 \vee r_2}}^*}^{-2} {f}_{x \theta \theta}(X_u,\theta_u)^2 (D_{r_1} \theta_u)^2 (D_{r_2} X_u)^2 \thinspace du \right)^{2p} \right]\\
 & \le K \underbrace{\int_{r_1 \vee r_2}^t \cdots \int_{r_1 \vee r_2}^t}_{2p-\text{times}} \left\{ \prod_{i=1}^{2p} \alpha_{u_i}^2 e^{-2 C^*(u_i-r_2)} \right\} \BE \left[ \prod_{i=1}^{2p} (\eta_{u_i,r_1 \vee r_2}^*)^{-2} \thinspace {f}_{x \theta \theta}(X_{u_i},\theta_{u_i})^{2} (D_{r_1} \theta_{u_i})^2 \right]du_1 \cdots du_{2p}  \\
& \le K \int_{r_1 \vee r_2}^t \cdots \int_{r_1 \vee r_2}^t \left\{ \prod_{i=1}^{2p} \alpha_{u_i}^2 e^{-2 C^*(u_i-r_2)} \right\} \left[ \prod_{i=1}^{2p} \left( \BE \left[ {\eta_{u_i,r_1 \vee r_2}^*}^{-12p} \right] \right)^{\frac{1}{6p}} \Big( \BE \left[(D_{r_1} \theta_{u_i})^{12p} \right] \Big)^{\frac{1}{6p}} \right] du_1 \cdots du_{2p}.
 \end{aligned}
\end{equation*}
Next, applying Lemmas \ref{L:L1} and \ref{L:1-der-moments} to handle the terms $\BE \left[ \left({\eta_{u_i,r_1 \vee r_2}^*}\right)^{-12p} \right]$ and $\BE \left[(D_{r_1} \theta_{u_i})^{12p} \right]$, respectively, and then using Lemma \ref{L:Integration-f-g-intermediate} with $\mathsf{D}=2 C_\alpha K_{g_{\theta \theta}} -2C_{\bar{g}}C_\alpha- 2, $ we obtain
\begin{equation}\label{E:Eq-7}
\begin{aligned}
&  \BE \left( \int_{r_1 \vee r_2}^t \left[\alpha_u^2 ({\eta_{u,{r_1 \vee r_2}}^*})^{-2} {f}_{x \theta \theta}(X_u,\theta_u)^2 (D_{r_1} \theta_u)^2 (D_{r_2} X_u)^2 \right] du \right)^{2p}\\
& \qquad \qquad \le K \int_{r_1 \vee r_2}^t \cdots \int_{r_1 \vee r_2}^t \left\{ \prod_{i=1}^{2p} \alpha_{u_i}^2 e^{-2 C^*(u_i-r_2)} \left( \frac{u_i}{r_1 \vee r_2} \right)^{2 C_\alpha K_{g_{\theta \theta}}} \frac{r_1^{2C_{\bar{g}}C_\alpha-2}}{{u_i}^{2C_{\bar{g}}C_\alpha}} \right\} du_1 \cdots du_{2p}\\
& \qquad \qquad = K \frac{r_1^{(4p)C_{\bar{g}}C_\alpha-4p}}{(r_1 \vee r_2)^{(4p)C_\alpha K_{g_{\theta \theta}}}} \left( \int_{r_1 \vee r_2}^t u^{2 C_\alpha K_{g_{\theta \theta}} -2C_{\bar{g}}C_\alpha- 2} e^{-2 C^*(u-r_2)} \thinspace du\right)^{2p} \\
& \qquad \qquad \le K \frac{r_1^{(4p)C_{\bar{g}}C_\alpha-4p}}{(r_1 \vee r_2)^{(4p)C_\alpha K_{g_{\theta \theta}}}} e^{(4p)C^* r_2} e^{-4pC^*(r_1 \vee r_2)} (r_1 \vee r_2)^{2p(2 C_\alpha K_{g_{\theta \theta}} -2C_{\bar{g}}C_\alpha- 2)} \\
& \qquad \qquad = K \frac{r_1^{(4p)C_{\bar{g}}C_\alpha-4p}}{(r_1 \vee r_2)^{(4p)C_{\bar{g}}C_\alpha+4p}}e^{(4p)C^* r_2} e^{-4pC^*(r_1 \vee r_2)}.
\end{aligned}
\end{equation}
Finally, combining \eqref{E:Eq-6} and \eqref{E:Eq-7}, we obtain
\begin{equation*}
\begin{aligned}
\BE \left[ \left( \int_{r_1 \vee r_2}^t \left[\alpha_u \eta_{t,u}^* {f}_{x \theta \theta}(X_u,\theta_u)D_{r_1} \theta_u \cdot D_{r_2} X_u \right]dW_u  \right)^{2p} \right] \le \frac{K}{t^{ 2p C_{\bar{g}} C_\alpha}}\frac{r_1^{ 2p C_{\bar{g}}C_\alpha-2p}}{(r_1 \vee r_2)^{2p}}e^{2p C^*r_2}e^{-2p C^* (r_1 \vee r_2)}.
\end{aligned}
\end{equation*}
The bound for the term $\BE \left[ \left( \int_{r_1 \vee r_2}^t \alpha_u \eta^*_{t,u} \Gamma^f_2 (X_u, \theta_u) \thinspace dW_u \right)^{2p} \right]$ can be obtained similarly. This completes the proof.
\end{proof}
\color{black}

\begin{lemma}\label{L:Second-der-Gamma-f-4-5}
Let the processes $D_r X_t$ and $D^2_{r_1,r_2}X_t$ be the solutions of Equations \eqref{E:first-order-Mal-Der-theta} and \eqref{E:Second-der-Malli-X}, respectively and the processes $\Gamma^f_4 (X_u, \theta_u), \Gamma^f_5 (X_u, \theta_u)$ are defined as follows:
\begin{equation*}
\begin{aligned}
\Gamma^f_4 (X_u, \theta_u) & \triangleq f_{xx \theta}(X_u, \theta_u) D_{r_1} X_u \cdot D_{r_2} X_u, \quad
\Gamma^f_5 (X_u, \theta_u)  \triangleq f_{x \theta}(X_u, \theta_u) \cdot D^2_{r_1,r_2}X_u.
\end{aligned}
\end{equation*}
Then, for any $t > r_1 \vee r_2,$ and $p \in \BN,$ there exists a time-independent constant $K>0$ such that
\begin{equation*}
\begin{aligned}
& \BE \left[ \left( \int_{r_1 \vee r_2}^t \alpha_u \eta^*_{t,u} \Gamma^f_4 (X_u, \theta_u) \thinspace dW_u \right)^{2p} \right] + \BE \left[ \left( \int_{r_1 \vee r_2}^t \alpha_u \eta^*_{t,u} \Gamma^f_5 (X_u, \theta_u) \thinspace dW_u \right)^{2p} \right] \\
& \qquad \qquad \qquad \qquad \qquad \qquad \qquad \qquad \qquad \qquad \qquad \quad  \le  K \frac{(r_1 \vee r_2)^{ 2p C_{\bar{g}} C_\alpha-2p}}{t^{ 2p C_{\bar{g}} C_\alpha}}e^{- 2p  C^*(r_1 \vee r_2 - r_1 \wedge r_2)},
 \end{aligned}
\end{equation*}
where the $\eta^*$ process is defined in Equation \eqref{E:Integrating-factor-1-der}.
\end{lemma}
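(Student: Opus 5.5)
We follow the template of the proof of Lemma \ref{L:Second-der-Gamma-f-1-2}, with the simplification that $\Gamma^f_4$ and $\Gamma^f_5$ involve no Malliavin derivative of $\theta$. We treat $\Gamma^f_4$ first; the bound for $\Gamma^f_5$ then follows from the same chain of inequalities once $D_{r_1}X_u\cdot D_{r_2}X_u$ is replaced by $D^2_{r_1,r_2}X_u$.

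Write $\bar r \triangleq r_1\vee r_2$ and factor $\eta^*_{t,u}=\eta^*_{t,\bar r}(\eta^*_{u,\bar r})^{-1}$ for $u\ge \bar r$. Then $\eta^*_{t,\bar r}$ can be pulled out of the stochastic integral.
\begin{enumerate}
\item We apply Cauchy--Schwarz, bounding $\BE(\eta^*_{t,\bar r})^{4p}$ by Lemma \ref{L:Integrating-Factor-first-der}. This gives the prefactor $K(\bar r/t)^{2pC_{\bar g}C_\alpha}$.
\item We apply the martingale moment inequality \cite[Proposition 3.26]{KS91} to reduce the remaining factor to
\[
\Big[\BE\Big(\int_{\bar r}^t \alpha_u^2(\eta^*_{u,\bar r})^{-2} f_{xx\theta}(X_u,\theta_u)^2 (D_{r_1}X_u)^2(D_{r_2}X_u)^2\,du\Big)^{2p}\Big]^{1/2}.
\]
\item We use the deterministic bound of Lemma \ref{L:Mal-der-X}, $D_{r_i}X_u\le e^{-C^*(u-r_i)}$, which extracts the factor $e^{-2C^*(u-r_1)}e^{-2C^*(u-r_2)}$.
\item We expand the $2p$-th power as a $2p$-fold integral. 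Using H\"older, the polynomial growth of $f_{xx\theta}$ from Assumption \ref{A:Growth-f-g}, the uniform moment bounds, and Lemma \ref{L:L1} for $\BE(\eta^*_{u,\bar r})^{-q}$, we reach
\[
K\Big(\int_{\bar r}^t u^{-2}\Big(\frac{u}{\bar r}\Big)^{2C_\alpha K_{g_{\theta\theta}}}e^{-2C^*(u-r_1)-2C^*(u-r_2)}du\Big)^{2p}.
\]
\item We apply Lemma \ref{L:Integration-f-g-intermediate}, with $C^*$ replaced by $2C^*$ and $\mathsf D=2C_\alpha K_{g_{\theta\theta}}-2$. The polynomial growth $u^{2C_\alpha K_{g_{\theta\theta}}}$ is absorbed by the exponential decay and evaluated at $u=\bar r$, where it cancels the $\bar r^{-2C_\alpha K_{g_{\theta\theta}}}$. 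This step yields $K\bar r^{-4p}e^{-4pC^*(\bar r-r_1\wedge r_2)}$.
\item Taking the square root and multiplying by the prefactor from step 1 gives
\[
K\frac{\bar r^{2pC_{\bar g}C_\alpha-2p}}{t^{2pC_{\bar g}C_\alpha}}e^{-2pC^*(\bar r-r_1\wedge r_2)},
\]
which is the claim.
\end{enumerate}

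For $\Gamma^f_5$, the deterministic bound in step 3 is unavailable, since Lemma \ref{L:Mal-der-X} only controls $\BE|D^2_{r_1,r_2}X_u|$, i.e.\ a first moment. This is the main obstacle. We would first upgrade that lemma to
\[
\big(\BE|D^2_{r_1,r_2}X_u|^{q}\big)^{1/q}\le K e^{-C^*(u-r_1\wedge r_2)}
\]
for every $q$. The upgrade follows from the same explicit representation
\[
D^2_{r_1,r_2}X_u=\int_{\bar r}^u e^{\int_s^u f^*_x}f^*_{xx}(X_s)\,D_{r_1}X_s\,D_{r_2}X_s\,ds,
\]
the pathwise bounds on $D_{r_i}X_s$, the polynomial growth of $f^*_{xx}$, and the uniform moments of $X$, via Minkowski's integral inequality. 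In the H\"older step we then give $D^2_{r_1,r_2}X_u$ its own moment factor.

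The decay obtained this way is $e^{-2C^*(u-r_1\wedge r_2)}$ inside the $du$ integral. This is weaker than the product $e^{-2C^*(u-r_1)-2C^*(u-r_2)}$ available for $\Gamma^f_4$, so the exponential decay in $u$ is slower. It is nevertheless still strong enough for the argument. The weights satisfy $u^{-2}(u/\bar r)^{2C_\alpha K_{g_{\theta\theta}}}\le \bar r^{-2}e^{\epsilon(u-\bar r)}$ up to constants for large $\bar r\ge t^*$, and the exponential $e^{-2C^*u}$ dominates. Integrating therefore produces $K\bar r^{-2}e^{-2C^*(\bar r-r_1\wedge r_2)}$. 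After raising to the power $2p$, taking the square root, and multiplying by the prefactor, this gives the same final bound as for $\Gamma^f_4$.
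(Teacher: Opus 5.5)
Your proof is correct. For $\Gamma^f_4$ it follows the paper's argument step for step: the factorization $\eta^*_{t,u}=\eta^*_{t,r_1\vee r_2}(\eta^*_{u,r_1\vee r_2})^{-1}$, Cauchy--Schwarz with Lemma~\ref{L:Integrating-Factor-first-der}, the martingale moment inequality, Lemma~\ref{L:L1}, and Lemma~\ref{L:Integration-f-g-intermediate} with $\mathsf{D}=2C_\alpha K_{g_{\theta\theta}}-2$. For $\Gamma^f_5$ the paper only cites Lemma~\ref{L:Mal-der-X}, which as stated controls just $\BE|D^2_{r_1,r_2}X_u|$, so your Minkowski upgrade to $(\BE|D^2_{r_1,r_2}X_u|^q)^{1/q}\le Ke^{-C^*(u-r_1\wedge r_2)}$ supplies what that H\"older step tacitly needs; this weight equals the $\Gamma^f_4$ weight at $u=r_1\vee r_2$, so the argument of Lemma~\ref{L:Integration-f-g-intermediate} applies directly and your $e^{\epsilon(u-r_1\vee r_2)}$ detour, with its extra largeness requirement on $r_1\vee r_2$, is unnecessary.
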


\begin{proof}[Proof of Lemma \ref{L:Second-der-Gamma-f-4-5}]
For $r_1 \vee r_2 \le u \le t,$ we write the process $\eta_{t,u}^*$ as $\eta_{t,{r_1 \vee r_2}}^* \left({\eta_{u,{r_1 \vee r_2}}^*}\right)^{-1}$. Applying H\"older's inequality, Lemma \ref{L:Integrating-Factor-first-der}, and the martingale moment inequality \cite[Proposition 3.26]{KS91}, we obtain
\begin{equation}\label{E:Eq-8}
\begin{aligned}
& \BE \left[ \left( \int_{r_1 \vee r_2}^t \alpha_u \eta^*_{t,u} \Gamma^f_4 (X_u, \theta_u) \thinspace dW_u \right)^{2p} \right] = \BE \left[ \left( \int_{r_1 \vee r_2}^t \left[\alpha_u \eta_{t,u}^* {f}_{x x \theta}(X_u,\theta_u)D_{r_1} X_u \cdot D_{r_2} X_u \right]dW_u  \right)^{2p} \right] \\
&  \quad    = \BE \left[\left(\eta_{t,{r_1 \vee r_2}}^*\right)^{2p} \left( \int_{r_1 \vee r_2}^t \left[\alpha_u \left({\eta_{u,{r_1 \vee r_2}}^*}\right)^{-1} {f}_{x x \theta}(X_u,\theta_u)D_{r_1} X_u \cdot D_{r_2} X_u  \right]dW_u  \right)^{2p} \right]\\
& \quad  \le \left[ \BE \left(\eta_{t,{r_1 \vee r_2}}^*\right)^{4p} \right]^{\frac{1}{2}} \left[ \BE \left( \int_{{r_1 \vee r_2}}^t \left[\alpha_u \left({\eta_{u,{r_1 \vee r_2}}^*}\right)^{-1} {f}_{x x \theta}(X_u,\theta_u)D_{r_1} X_u \cdot D_{r_2} X_u \right]dW_u\right)^{4p} \right]^{\frac{1}{2}} \\
& \quad \le K \frac{(r_1 \vee r_2)^{ 2p C_{\bar{g}} C_\alpha}}{t^{ 2p C_{\bar{g}} C_\alpha}} \left[ \BE \left( \int_{r_1 \vee r_2}^t \left[\alpha_u^2 \left({\eta_{u,r_1 \vee r_2}^*}\right)^{-2} {f}_{xx \theta}(X_u,\theta_u)^2 (D_{r_1} X_u)^2 (D_{r_2} X_u)^2 \right] du \right)^{2p} \right]^{\frac{1}{2}}.
\end{aligned}
\end{equation}
We now estimate the term $\BE \left[ \left( \int_{r_1 \vee r_2}^t \alpha_u^2 \left({\eta_{u,r_1 \vee r_2}^*}\right)^{-2} {f}_{x x \theta}(X_u,\theta_u)^2 (D_{r_1} X_u)^2 (D_{r_2} X_u)^2 \thinspace du \right)^{2p} \right]$ appearing in the right-hand side of \eqref{E:Eq-8}. Using H\"older's inequality, Assumption \ref{A:Growth-f-g} to control the growth of $f_{x x \theta}$, the uniform moment bounds for the processes $X_t$ and $\theta_t$, and Lemma \ref{L:Mal-der-X} (to control the Malliavin derivative $D_r X_t$), we obtain
\begin{align*}
& \BE \left( \int_{r_1 \vee r_2}^t \left[\alpha_u^2 \left({\eta_{u,{r_1 \vee r_2}}^*}\right)^{-2} {f}_{x x \theta}(X_u,\theta_u)^2 (D_{r_1} X_u)^2 (D_{r_2} X_u)^2 \right] du \right)^{2p}\\
 & \qquad \qquad \qquad \le K \underbrace{\int_{r_1 \vee r_2}^t \cdots \int_{r_1 \vee r_2}^t}_{2p-\text{times}} \left\{ \prod_{i=1}^{2p} \alpha_{u_i}^2 e^{-2 C^*(u_i-r_1)} e^{-2 C^*(u_i-r_2)} \right\} \times \\
 &  \qquad \qquad  \qquad \qquad  \qquad \qquad \qquad \qquad \qquad \qquad \quad \BE \left[ \prod_{i=1}^{2p} (\eta_{u_i,r_1 \vee r_2}^*)^{-2} \thinspace {f}_{x x \theta}(X_{u_i},\theta_{u_i})^{2} \right]du_1 \cdots du_{2p}  \\
& \qquad \qquad \qquad  \le K \int_{r_1 \vee r_2}^t \cdots \int_{r_1 \vee r_2}^t \left\{ \prod_{i=1}^{2p} \alpha_{u_i}^2 e^{-2 C^*(u_i-r_1)} e^{-2 C^*(u_i-r_2)} \right\} \times\\
& \qquad \qquad  \qquad \qquad  \qquad \qquad \quad   \left[ \prod_{i=1}^{2p} \left( \BE \left[ {\eta_{u_i,r_1 \vee r_2}^*}^{-8p} \right] \right)^{\frac{1}{4p}} \Big( \BE \left[{f}_{x x \theta}(X_{u_i},\theta_{u_i})^{8p} \right] \Big)^{\frac{1}{4p}} \right] du_1 \cdots du_{2p}.
\end{align*}
Next, applying Lemma \ref{L:L1} to handle the term $\BE \left[ \left({\eta_{u_i,r_1 \vee r_2}^*}\right)^{-8p} \right]$ and then using Lemma \ref{L:Integration-f-g-intermediate} with $\mathsf{D}=2 C_\alpha K_{g_{\theta \theta}}- 2, $ we obtain
\begin{equation}\label{E:Eq-9}
\begin{aligned}
&  \BE \left( \int_{r_1 \vee r_2}^t \left[\alpha_u^2 ({\eta_{u,{r_1 \vee r_2}}^*})^{-2} {f}_{x x \theta}(X_u,\theta_u)^2 (D_{r_1} X_u)^2 (D_{r_2} X_u)^2 \right] du \right)^{2p}\\
& \qquad  \le K \int_{r_1 \vee r_2}^t \cdots \int_{r_1 \vee r_2}^t \left\{ \prod_{i=1}^{2p} \alpha_{u_i}^2 e^{-2 C^*(u_i-r_1)} e^{-2 C^*(u_i-r_2)} \left( \frac{u_i}{r_1 \vee r_2} \right)^{2 C_\alpha K_{g_{\theta \theta}}} \right\} du_1 \cdots du_{2p}\\
& \qquad  =  \frac{K}{(r_1 \vee r_2)^{(4p)C_\alpha K_{g_{\theta \theta}}}} \left( \int_{r_1 \vee r_2}^t u^{2 C_\alpha K_{g_{\theta \theta}} - 2} e^{-2 C^*(u-r_1)} e^{-2 C^*(u-r_2)} \thinspace du\right)^{2p} \\
& \qquad  \le  \frac{K}{(r_1 \vee r_2)^{(4p)C_\alpha K_{g_{\theta \theta}}}} e^{(4p)C^* r_1} e^{(4p)C^* r_2} e^{-8pC^*(r_1 \vee r_2)} (r_1 \vee r_2)^{2p(2 C_\alpha K_{g_{\theta \theta}} - 2)} \\
& \qquad  =  \frac{K}{(r_1 \vee r_2)^{4p}} e^{-4pC^*(r_1 \vee r_2 - r_1 \wedge r_2)}.
\end{aligned}
\end{equation}
Finally, combining \eqref{E:Eq-8} and \eqref{E:Eq-9}, we obtain
\begin{equation*}
\begin{aligned}
\BE \left[ \left( \int_{r_1 \vee r_2}^t \left[\alpha_u \eta_{t,u}^* {f}_{x x \theta}(X_u,\theta_u)D_{r_1} X_u D_{r_2} X_u \right]dW_u  \right)^{2p} \right] \le K \frac{(r_1 \vee r_2)^{ 2p C_{\bar{g}} C_\alpha-2p}}{t^{ 2p C_{\bar{g}} C_\alpha}}e^{-2p C^*(r_1 \vee r_2 - r_1 \wedge r_2)}.
\end{aligned}
\end{equation*}
Next, using Lemma \ref{L:Mal-der-X} to handle the term $D^2_{r_1, r_2}X_u$, we obtain the same bound for the term $\BE \left[ \left( \int_{r_1 \vee r_2}^t \alpha_u \eta^*_{t,u} \Gamma^f_5 (X_u, \theta_u) \thinspace dW_u \right)^{2p} \right]$, which completes the proof.
\end{proof}
\color{black}

We now present Lemma \ref{L:Second-der-Gamma-f-3}, whose proof is technically delicate and requires careful intermediate estimates. The main difficulty is that the process $\Gamma_3^f(X_t, \theta_t)$ does not exhibit the same Malliavin-derivative structure that, in the analogous terms $\Gamma_i^f(X_t, \theta_t)$ for $i \in \{1,2,4,5\}$ (see Lemmas \ref{L:Second-der-Gamma-f-1-2} and \ref{L:Second-der-Gamma-f-4-5}), yields an additional exponential decay. More precisely, the process $\Gamma_3^f(X_t, \theta_t)$ contains the Malliavin derivative $D \theta_t$, whose magnitude is, in a suitable sense, of order $\mathscr{O}\left(\frac{r^{C_{\bar{g}}C_\alpha-1}}{t^{C_{\bar{g}}C_\alpha}}\right)$, whereas the terms $\Gamma_i^f(X_t, \theta_t)$, $i \in \{1,2,4,5\}$, involve the derivatives $D X_t$ or $D^2 X_t$, which enjoy exponential decay by Lemma \ref{L:Mal-der-X}. Before stating the lemma, we recall that $|g_{\theta \theta}(x, \theta)| \le K(1+ |x|^q + |\theta|^2 )$, and that the choice of a sufficiently large $t^*$ is specified in Lemma \ref{L:moment-bound} and Remark \ref{R:Rem-t*-choice}. More importantly, owing to this choice of $t^*$, the constant $K_{g_{\theta \theta}}^*$ is an upper bound for the quantity $\sup_{t \ge t^*}K_{g_{\theta \theta}}(t) \triangleq \sup_{t \ge t^*} \left[\BE|g_{\theta \theta}(X_t, \theta_t)|^p\right]^{\frac{1}{p}}$, $p \in \BN$. Here, for all $t \ge t^*$, the function $K_{g_{\theta \theta}}(t)$ is uniformly bounded by a constant independent of $C_\alpha$, and therefore $K_{g_{\theta \theta}}^* \triangleq \sup_{t \ge t^*}K_{g_{\theta \theta}}(t)$ is independent of $C_\alpha.$

\begin{lemma}\label{L:Second-der-Gamma-f-3}
Let $D_r \theta_t$ be the solution of Equation \eqref{E:first-order-Mal-Der-theta} and the process $\Gamma^f_3 (X_u, \theta_u)$ be defined as follows:
$\Gamma^f_3 (X_u, \theta_u)  \triangleq f_{\theta \theta \theta }(X_u, \theta_u) D_{r_1} \theta_u \cdot D_{r_2} \theta_u.$
We assume that the learning rate magnitude $C_\alpha$ is small enough so that $K_{g_{\theta \theta}}^* < \frac{1}{2C_\alpha} + 2 C_{\bar{g}}$ holds (i.e., Assumption \ref{A:Tech-Cond}). Then, for any $t > r_1 \vee r_2 \ge t^*,$ and $p \in \BN,$ there exists a time-independent positive constant $K$ such that
\begin{equation*}
\BE \left[ \left( \int_{r_1 \vee r_2}^t \alpha_u \eta^*_{t,u} \Gamma^f_3 (X_u, \theta_u) \thinspace dW_u \right)^{2p} \right]  \le \frac{K}{t^{ 2p C_{\bar{g}} C_\alpha}}\frac{(r_1 r_2)^{ 2p C_{\bar{g}} C_\alpha-2p}}{(r_1 \vee r_2)^{ 2p C_{\bar{g}}C_\alpha}},
\end{equation*}
where the $\eta^*$ process is defined in Equation \eqref{E:Integrating-factor-1-der}.
\end{lemma}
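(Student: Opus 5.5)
I would follow the same template as Lemma \ref{L:Second-der-Gamma-f-1-2}. Set $\rho \triangleq r_1 \vee r_2$ and factor $\eta^*_{t,u} = \eta^*_{t,\rho}(\eta^*_{u,\rho})^{-1}$. By Cauchy--Schwarz, Lemma \ref{L:Integrating-Factor-first-der} and the martingale moment inequality \cite[Proposition 3.26]{KS91},
\begin{equation*}
\BE\left[\left(\int_{\rho}^t \alpha_u \eta^*_{t,u}\Gamma^f_3\, dW_u\right)^{2p}\right] \le K\left(\frac{\rho}{t}\right)^{2pC_{\bar g}C_\alpha}\left[\BE\left(\int_{\rho}^t \alpha_u^2 (\eta^*_{u,\rho})^{-2} f_{\theta\theta\theta}^2 (D_{r_1}\theta_u)^2 (D_{r_2}\theta_u)^2\, du\right)^{2p}\right]^{\frac12}.
\end{equation*}
The target is $\rho^{-2pC_{\bar g}C_\alpha}(r_1r_2)^{2pC_{\bar g}C_\alpha-2p}$. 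So the inner expectation must be bounded by $K(r_1r_2)^{4pC_{\bar g}C_\alpha-4p}\rho^{-8pC_{\bar g}C_\alpha}$.

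Next I expand the $2p$-th power as a $2p$-fold integral. Hölder's inequality separates each factor. Lemma \ref{L:L1} controls the negative moments of $\eta^*$: since $\rho \ge t^*$, it gives $(u/\rho)^{2C_\alpha K^*_{g_{\theta\theta}}}$ per factor, with a harmless Hölder multiplier absorbed. Growth of $f_{\theta\theta\theta}$ together with the uniform moments gives a bounded factor. Lemma \ref{L:1-der-moments} gives $r_j^{2C_{\bar g}C_\alpha-2}u^{-2C_{\bar g}C_\alpha}$ for each $(D_{r_j}\theta_u)^2$. Each $u$-integral then becomes
\begin{equation*}
\frac{(r_1r_2)^{2C_{\bar g}C_\alpha-2}}{\rho^{2C_\alpha K^*}}\int_\rho^t u^{2C_\alpha K^* - 4C_{\bar g}C_\alpha-2}\,du .
\end{equation*}

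The crucial point, and the main obstacle, is that here there is no exponential factor $e^{-C^*(u-r_j)}$. Lemma \ref{L:Integration-f-g-intermediate} is therefore unavailable, and the integral is polynomial. It is bounded by $K\rho^{2C_\alpha K^*-4C_{\bar g}C_\alpha-1}$ only if the exponent is below $-1$, that is, $K^*_{g_{\theta\theta}} < 2C_{\bar g}$ with Hölder exponents set to one. The assumed condition $K^*<\frac{1}{2C_\alpha}+2C_{\bar g}$ allows exactly one extra unit. Recovering it requires extracting one more power of $u^{-1}$ from the $\alpha_u^2$ factor without losing it to the normalization.

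Concretely, I would not apply Burkholder--Davis--Gundy (BDG) after fully separating $\eta^*_{t,\rho}$. Instead I would keep the combination $\eta^*_{t,\rho}(\eta^*_{u,\rho})^{-1}$ as a single factor and bound its moments directly. Via the Poisson decomposition of Lemma \ref{L:L-1-deri-1}, this factor behaves like $(u/t)^{C_{\bar g}C_\alpha}$, so the required integral is $\int_\rho^t \alpha_u^2 (u/t)^{2C_{\bar g}C_\alpha}u^{-4C_{\bar g}C_\alpha}\,du$. That integral yields the rate for any $C_{\bar g}C_\alpha>\tfrac12$.

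Where the factorization cannot be avoided, because the stochastic integrand must be adapted, I would use the splitting above. I would choose Hölder exponents $1+\epsilon$ arbitrarily close to one, with $t^*$ large enough that $K_{g_{\theta\theta}}(t)\le K^*$. I would then use $\alpha_u^2 = C_\alpha^2u^{-2}$ to gain the extra $u^{-1}$, which by the condition makes the exponent $2C_\alpha K^*-4C_{\bar g}C_\alpha-2<-1$. This gives an inner bound $\rho^{-1}$ times the target, which is more than sufficient. Multiplying the square root of this inner bound by $(\rho/t)^{2pC_{\bar g}C_\alpha}$ produces the stated estimate.
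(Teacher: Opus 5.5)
Your final argument is the paper's proof, and it is correct. With $\rho=r_1\vee r_2$, it consists of the factorization $\eta^*_{t,u}=\eta^*_{t,\rho}(\eta^*_{u,\rho})^{-1}$, Cauchy--Schwarz with Lemma \ref{L:Integrating-Factor-first-der}, BDG, the H\"older split with Lemmas \ref{L:L1} and \ref{L:1-der-moments}, and finally integrability of $u^{2C_\alpha K^*_{g_{\theta\theta}}-4C_{\bar g}C_\alpha-2}$ on $[\rho,\infty)$.

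The ``main obstacle'' you describe, however, comes from an arithmetic slip and does not exist. The factor $u^{-2}$ from $\alpha_u^2$ is already in that exponent. The exponent being $<-1$ is equivalent to $2C_\alpha(K^*_{g_{\theta\theta}}-2C_{\bar g})<1$, which is precisely the assumed condition $K^*_{g_{\theta\theta}}<\frac{1}{2C_\alpha}+2C_{\bar g}$, not $K^*_{g_{\theta\theta}}<2C_{\bar g}$. So there is no extra unit to recover. No $\epsilon$-tuning of H\"older exponents is needed either, because Lemma \ref{L:L1} scales linearly in the moment order: $\left(\BE[(\eta^*_{u,\rho})^{-q}]\right)^{2/q}\le K(u/\rho)^{2C_\alpha K^*_{g_{\theta\theta}}}$ for every $q\in\BN$.

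You should also drop the detour of keeping $\eta^*_{t,\rho}(\eta^*_{u,\rho})^{-1}=\eta^*_{t,u}$ together. That factor depends on $W$ on $[u,t]$, so the integrand is anticipating and BDG does not apply. The factorization is therefore always forced, not just ``where it cannot be avoided.'' Consequently your claim that the rate holds for every $C_{\bar g}C_\alpha>\frac12$ without the technical condition is unsupported.

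A minor correction: after the $2p$-th power, the inner bound is $\rho^{-2p}$ times the target, not $\rho^{-1}$. This only helps, since $\rho\ge t^*\ge 1$.
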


\begin{proof}[Proof of Lemma \ref{L:Second-der-Gamma-f-3}]
For notational convenience, we assume $Z_{t, r_1 \vee r_2} \triangleq \int_{r_1 \vee r_2}^t \alpha_u \eta^*_{t,u} \Gamma^f_3 (X_u, \theta_u)\thinspace dW_u$. We begin by applying H\"older's inequality, Lemma \ref{L:Integrating-Factor-first-der}, and the martingale moment inequality \cite[Proposition 3.26]{KS91} to obtain
\begin{equation}\label{E:E11}
\begin{aligned}
 \BE \left[ \left(Z_{t, r_1 \vee r_2}\right)^{2p} \right] & =  \BE \left[  \left(\eta_{t,{r_1 \vee r_2}}^*\right)^{{2p}} \left( \int_{r_1 \vee r_2}^t \left[\alpha_u ({\eta_{u,{r_1 \vee r_2}}^*})^{-1} \Gamma^f_3 (X_u, \theta_u) \right]dW_u\right)^{{2p}}  \right] \\
& \le   \left( \BE \left[ \left(\eta_{t,{r_1 \vee r_2}}^*\right)^{4p} \right] \right)^{\frac{1}{2}} \left( \BE \left[ \left( \int_{r_1 \vee r_2}^t \left[\alpha_u ({\eta_{u,{r_1 \vee r_2}}^*})^{-1} \Gamma^f_3 (X_u, \theta_u) \right]dW_u \right)^{4p} \right] \right)^{\frac{1}{2}} \\
& \le K  \frac{(r_1 \vee r_2)^{ 2p  C_{\bar{g}} C_\alpha}}{t^{ 2p  C_{\bar{g}} C_\alpha}} \left( \BE \left[ \left( \int_{r_1 \vee r_2}^t \left[\alpha_u ({\eta_{u,{r_1 \vee r_2}}^*})^{-1} \Gamma^f_3 (X_u, \theta_u) \right]dW_u \right)^{4p} \right] \right)^{\frac{1}{2}} \\
& =  K \frac{ (r_1 \vee r_2)^{ 2p  C_{\bar{g}}C_\alpha}}{t^{ 2p  C_{\bar{g}}  C_\alpha}}  \left( \BE \left[ \left( \int_{r_1 \vee r_2}^t \left[\alpha_u ({\eta_{u,{r_1 \vee r_2}}^*})^{-1} \Gamma^f_3 (X_u, \theta_u) \right]dW_u \right)^{4p} \right] \right)^{\frac{1}{2}}\\
& \le  K \frac{ (r_1 \vee r_2)^{ 2p  C_{\bar{g}}C_\alpha}}{t^{ 2p  C_{\bar{g}}  C_\alpha}}  \left( \BE \left[ \left( \int_{r_1 \vee r_2}^t \left[\alpha_u^2 ({\eta_{u,{r_1 \vee r_2}}^*})^{-2} \Gamma^f_3 (X_u, \theta_u)^2 \right]du \right)^{2p} \right] \right)^{\frac{1}{2}}.
\end{aligned}
\end{equation}
We now focus on the term $\BE \left[ \left( \int_{r_1 \vee r_2}^t \left[\alpha_u^2 ({\eta_{u,{r_1 \vee r_2}}^*})^{-2} \Gamma^f_3 (X_u, \theta_u)^2 \right]du \right)^{2p} \right]$ in the above equation. Using H\"older's inequality once more, we have
\begin{equation*}
\begin{aligned}
& \BE \left[ \left( \int_{r_1 \vee r_2}^t \left[\alpha_u^2 ({\eta_{u,{r_1 \vee r_2}}^*})^{-2} \Gamma^f_3 (X_u, \theta_u)^2 \right]du \right)^{2p} \right] \\
 & \qquad \qquad \quad \le K \underbrace{{\int_{r_1 \vee r_2}^t \cdots \int_{r_1 \vee r_2}^t}}_{2p-\text{times}} \left\{ \prod_{i=1}^{2p} \alpha_{u_i}^2 \right\} \prod_{i=1}^{2p} \left[ \left\{\BE (\eta_{u_i, r_1 \vee r_2}^*)^{-16p} \right\}^{\frac{1}{8p}} \left\{\BE {f}_{\theta \theta \theta}(X_{u_i}\theta_{u_i})^{16p}  \right\}^{\frac{1}{8p}}  \times  \right. \\
& \qquad \qquad  \qquad \qquad  \qquad \qquad \qquad \qquad \qquad \qquad \quad   \left.    \left\{\BE (D_{r_1} \theta_{u_i})^{16p} \right\}^{\frac{1}{8p}} \left\{\BE (D_{r_2} \theta_{u_i})^{16p} \right\}^{\frac{1}{8p}}
 \right] du_1 \cdots du_{2p}.
\end{aligned}
\end{equation*}
We next apply Lemma \ref{L:L1} and Assumption \ref{A:Growth-f-g} to obtain
\begin{equation}\label{E:E12}
\begin{aligned}
& \BE \left[ \left( \int_{r_1 \vee r_2}^t \left[\alpha_u^2 ({\eta_{u,{r_1 \vee r_2}}^*})^{-2} \Gamma^f_3 (X_u, \theta_u)^2 \right]du \right)^{2p} \right]  \\
&  \le K {\int_{r_1 \vee r_2}^t \cdots \int_{r_1 \vee r_2}^t} \left\{ \prod_{i=1}^{2p} \alpha_{u_i}^2 \left( \frac{u_i}{r_1 \vee r_2} \right)^{2 C_\alpha K_{g_{\theta \theta}}^*} \right\}  \left\{ \prod_{i=1}^{2p} \frac{r_1^{2C_{\bar{g}}C_\alpha-2}}{u_i^{2C_{\bar{g}}C_\alpha}} \right\} \left\{ \prod_{i=1}^{2p} \frac{r_2^{2C_{\bar{g}}C_\alpha-2}}{u_i^{2C_{\bar{g}}C_\alpha}} \right\} du_1 \cdots du_{2p} \\
& \qquad \qquad    \le K \left( \int_{r_1 \vee r_2}^t  \alpha_{u}^2 \left( \frac{u}{r_1 \vee r_2} \right)^{2 C_\alpha K_{g_{\theta \theta}}^*}   \left\{  \frac{r_1^{2C_{\bar{g}}C_\alpha-2}}{u^{2C_{\bar{g}}C_\alpha}} \right\} \left\{  \frac{r_2^{2C_{\bar{g}}C_\alpha-2}}{u^{2C_{\bar{g}}C_\alpha}} \right\} du \right)^{2p} \\
& \qquad \qquad    = K \frac{(r_1 r_2)^{(4p)C_{\bar{g}}C_\alpha-4p}}{(r_1 \vee r_2)^{(4p) C_\alpha K_{g_{\theta \theta}}^*}}  \left( \int_{r_1 \vee r_2}^t u^{2 C_\alpha K_{g_{\theta \theta}}^* - 2 - 4C_{\bar{g}}C_\alpha} \thinspace du\right)^{2p}\\
& \qquad \qquad   = K \frac{(r_1 r_2)^{(4p)C_{\bar{g}}C_\alpha-4p}}{(r_1 \vee r_2)^{(4p) C_\alpha K_{g_{\theta \theta}}^*}}  \left[ \frac{1}{2 C_\alpha K_{g_{\theta \theta}}^* - 1 - 4C_{\bar{g}}C_\alpha} u^{2 C_\alpha K_{g_{\theta \theta}}^* - 1 - 4C_{\bar{g}}C_\alpha} |_{u=r_1 \vee r_2}^t  \right]^{2p}.
\end{aligned}
\end{equation}
Next, if $2 C_\alpha K_{g_{\theta \theta}}^* - 1 - 4C_{\bar{g}}C_\alpha <0,$ i.e., $K_{g_{\theta \theta}}^* < \frac{1}{2C_\alpha} + 2 C_{\bar{g}},$ then from the above Equation \eqref{E:E12}, we have
\begin{equation*}
\BE \left[ \left( \int_{r_1 \vee r_2}^t \left[\alpha_u^2 ({\eta_{u,{r_1 \vee r_2}}^*})^{-2} \Gamma^f_3 (X_u, \theta_u)^2 \right]du \right)^{2p} \right] \le K \frac{(r_1 r_2)^{(4p)C_{\bar{g}}C_\alpha-4p}}{(r_1 \vee r_2)^{(4p) C_\alpha K_{g_{\theta \theta}}^*}} (r_1 \vee r_2)^{(4p) C_\alpha K_{g_{\theta \theta}}^* - 2p - (8p) C_{\bar{g}}C_\alpha},
\end{equation*}
which, together with \eqref{E:E11}, yields
\begin{equation*}
\begin{aligned}
 \BE \left[ \left(Z_{t, r_1 \vee r_2}\right)^{2p} \right] & \le K \frac{ (r_1 \vee r_2)^{ 2p  C_{\bar{g}}C_\alpha}}{t^{ 2p  C_{\bar{g}}  C_\alpha}} \frac{(r_1 r_2)^{ 2p C_{\bar{g}}C_\alpha-2p}}{(r_1 \vee r_2)^{ 2p  C_\alpha K_{g_{\theta \theta}}^*}} (r_1 \vee r_2)^{ 2p  C_\alpha K_{g_{\theta \theta}}^* - p - (4p) C_{\bar{g}}C_\alpha} \\
& \le  \frac{K}{t^{ 2p C_{\bar{g}} C_\alpha}}\frac{(r_1 r_2)^{ 2p C_{\bar{g}} C_\alpha-2p}}{(r_1 \vee r_2)^{ 2p C_{\bar{g}}C_\alpha}},
 \end{aligned}
\end{equation*}
which completes the proof.
\end{proof}

\section{Rates corresponding to the cases $K_{g_{\theta \theta}}^* = \frac{1}{2C_\alpha} + 2 C_{\bar{g}}$, and $K_{g_{\theta \theta}}^* > \frac{1}{2C_\alpha} + 2 C_{\bar{g}}$}\label{S:Slower-rates}
In this section, we discuss the rates for the term \( d_W \left( \mathsf{F}_t, N \right) \) in Theorem \ref{T:Main-theorem}, corresponding to the cases \( K_{g_{\theta \theta}}^* = \frac{1}{2C_\alpha} + 2 C_{\bar{g}} \) and \( K_{g_{\theta \theta}}^* > \frac{1}{2C_\alpha} + 2 C_{\bar{g}} \). We note that these two cases arise from Equation \eqref{E:E12} in the proof of Lemma \ref{L:Second-der-Gamma-f-3}. In both instances, the rate of convergence is slower than in the previous case. For these two regimes, we first compute the rates for the terms $\BE \left[ \left( D^2_{r_1, r_2} \theta_t \right)^{2p} \right]$ and
$\int_{[t^*,t]^4} \left( \BE |D^2_{u,r} \theta_t|^4\right)^{\frac{1}{4}} \left( \BE |D^2_{u,s} \theta_t|^4\right)^{\frac{1}{4}} \left( \BE |D^2_{w,r} \theta_t|^4\right)^{\frac{1}{4}} \left( \BE |D^2_{w,s} \theta_t|^4\right)^{\frac{1}{4}} \thinspace du \thinspace  ds \thinspace dw \thinspace dr $.

It is important to note that, in Case II below (see Equation \eqref{E:Eq71}), for small learning rate magnitude \( C_\alpha \), the condition $K_{g_{\theta \theta}}^* < \frac{1}{2C_\alpha} + 2 C_{\bar{g}}$ can be weakened to $K_{g_{\theta \theta}}^* < \frac{1}{2C_\alpha} + 3 C_{\bar{g}}$. The difference between these two conditions comes from the appearance of \( 2 C_{\bar{g}} \) versus \( 3 C_{\bar{g}} \). In particular, under the stronger condition $K_{g_{\theta \theta}}^* < \frac{1}{2C_\alpha} + 2 C_{\bar{g}}$, Cases I and II do not arise.

\textbf{Case I: $K_{g_{\theta \theta}}^* = \frac{1}{2C_\alpha} + 2 C_{\bar{g}}.$}
In Equation \eqref{E:E12}, if $2 C_\alpha K_{g_{\theta \theta}}^* - 1 - 4C_{\bar{g}}C_\alpha =0,$ then
\begin{equation*}
\begin{aligned}
\BE \left[ \left( \int_{r_1 \vee r_2}^t \left[\alpha_u^2 ({\eta_{u,{r_1 \vee r_2}}^*})^{-2} \Gamma^f_3 (X_u, \theta_u)^2 \right]du \right)^{2p} \right] & \le K \frac{(r_1 r_2)^{(4p)C_{\bar{g}}C_\alpha-4p}}{(r_1 \vee r_2)^{(4p) C_\alpha K_{g_{\theta \theta}}^*}}  \left( \int_{r_1 \vee r_2}^t u^{2 C_\alpha K_{g_{\theta \theta}}^* - 2 - 4C_{\bar{g}}C_\alpha} \thinspace du\right)^{2p}\\
& \le K \frac{(r_1 r_2)^{(4p)C_{\bar{g}}C_\alpha-4p}}{(r_1 \vee r_2)^{(4p) C_\alpha K_{g_{\theta \theta}}^*}} (\log t)^{2p},
\end{aligned}
\end{equation*}
which, together with \eqref{E:E11}, yields
$ \BE \left[ \left(Z_{t, r_1 \vee r_2}\right)^{2p} \right] \le \frac{K}{t^{ 2p C_{\bar{g}} C_\alpha}}\frac{(r_1 r_2)^{ 2p C_{\bar{g}} C_\alpha-2p}}{(r_1 \vee r_2)^{ 2p C_{\bar{g}}C_\alpha}} (\log t)^p .$ In line with this, a careful modification of Lemma \ref{L:2-der-moments} and Proposition \ref{P:Second-der-product} yields the following estimates:
\begin{multline*}
\begin{aligned}
\BE \left[ \left( D^2_{r_1, r_2} \theta_t \right)^{2p} \right]
& \le K \frac{(r_1 \vee r_2)^{ 2p C_{\bar{g}} C_\alpha-2p}}{t^{ 2p C_{\bar{g}} C_\alpha}}e^{-2p C^*(r_1 \vee r_2 - r_1 \wedge r_2)} + \frac{K (\log t)^p }{t^{ 2p  C_{\bar{g}} C_\alpha}}\frac{(r_1 r_2)^{ 2p  C_{\bar{g}} C_\alpha-2p}}{(r_1 \vee r_2 )^{ 2p  C_{\bar{g}} C_\alpha}}\\
& \le K \frac{(r_1 \vee r_2)^{ 2p C_{\bar{g}} C_\alpha-2p}}{t^{ 2p C_{\bar{g}} C_\alpha}}e^{-2p C^*(r_1 \vee r_2 - r_1 \wedge r_2)} (\log t)^p + \frac{K (\log t)^p }{t^{ 2p  C_{\bar{g}} C_\alpha}}\frac{(r_1 r_2)^{ 2p  C_{\bar{g}} C_\alpha-2p}}{(r_1 \vee r_2 )^{ 2p  C_{\bar{g}} C_\alpha}},
\end{aligned}
\end{multline*}
and
\begin{multline*}
\int_{[t^*,t]^4} \left( \BE |D^2_{u,r} \theta_t|^4\right)^{\frac{1}{4}} \left( \BE |D^2_{u,s} \theta_t|^4\right)^{\frac{1}{4}} \left( \BE |D^2_{w,r} \theta_t|^4\right)^{\frac{1}{4}} \left( \BE |D^2_{w,s} \theta_t|^4\right)^{\frac{1}{4}} \thinspace du \thinspace  ds \thinspace dw \thinspace dr  \\
\le \begin{cases} \frac{K (\log t)^2}{t^3},  & {C_{\bar{g}} C_\alpha > \frac{5}{4}}\\  \frac{K (\log t)^3 }{t^3}, & {\frac{3}{4} \le C_{\bar{g}} C_\alpha \le \frac{5}{4}} \\ \frac{K (\log t)^2}{t^{4C_{\bar{g}} C_\alpha}}, & \frac{1}{2} < C_{\bar{g}} C_\alpha < \frac{3}{4}.\end{cases}
\end{multline*}
We now follow the same steps as in Section \ref{S:theorem-proof} and combine the above rates with those for the first derivative to obtain
$$d_W \left( \mathsf{F}_t,N \right) \leq \begin{cases} \frac{K \log t}{{t}^{\frac{1}{4}}},  & {C_{\bar{g}} C_\alpha \ge \frac{3}{4}}\\  \frac{K (\log t)^{\frac{3}{4}} }{t^{C_{\bar{g}} C_\alpha- \frac{1}{2}}}, & \frac{1}{2} < C_{\bar{g}}C_\alpha < \frac{3}{4}.\end{cases}$$

\textbf{Case II: $K_{g_{\theta \theta}}^* > \frac{1}{2C_\alpha} + 2 C_{\bar{g}}.$}
In Equation \eqref{E:E12}, if $2 C_\alpha K_{g_{\theta \theta}}^* - 1 - 4C_{\bar{g}}C_\alpha >0,$ then
$$\BE \left[ \left( \int_{r_1 \vee r_2}^t \left[\alpha_u^2 ({\eta_{u,{r_1 \vee r_2}}^*})^{-2} \Gamma^f_3 (X_u, \theta_u)^2 \right]du \right)^{2p} \right]   \le K \frac{(r_1 r_2)^{(4p)C_{\bar{g}}C_\alpha-4p}}{(r_1 \vee r_2)^{(4p) C_\alpha K_{g_{\theta \theta}}^*}} t^{4p C_\alpha K_{g_{\theta \theta}}^* - 2p - 8pC_{\bar{g}}C_\alpha},$$
which, together with Equation \eqref{E:E11}, yields
$\BE \left[ \left(Z_{t, r_1 \vee r_2}\right)^{2p} \right] \le \frac{K}{t^{ 2p C_{\bar{g}} C_\alpha}}\frac{(r_1 r_2)^{ 2p C_{\bar{g}} C_\alpha-2p}}{(r_1 \vee r_2)^{ 2p C_{\bar{g}}C_\alpha}} t^{p \delta^*},$
where $\delta^* \triangleq 2 C_\alpha K_{g_{\theta \theta}}^* - 1 - 4C_{\bar{g}}C_\alpha > 0$.
In line with this, a careful modification of Lemma \ref{L:2-der-moments} and Proposition \ref{P:Second-der-product} yields the following estimates:
\begin{equation*}
 \BE \left[ \left( D^2_{r_1, r_2} \theta_t \right)^{2p} \right]
\le K \frac{(r_1 \vee r_2)^{ 2p C_{\bar{g}} C_\alpha-2p}}{t^{ 2p C_{\bar{g}} C_\alpha}}e^{-2p C^*(r_1 \vee r_2 - r_1 \wedge r_2)} t^{p \delta^*} + \frac{K t^{p \delta^*} }{t^{ 2p  C_{\bar{g}} C_\alpha}}\frac{(r_1 r_2)^{ 2p  C_{\bar{g}} C_\alpha-2p}}{(r_1 \vee r_2 )^{ 2p  C_{\bar{g}} C_\alpha}},
\end{equation*}
and
\begin{multline}\label{E:Eq71}
\int_{[t^*,t]^4} \left( \BE |D^2_{u,r} \theta_t|^4\right)^{\frac{1}{4}} \left( \BE |D^2_{u,s} \theta_t|^4\right)^{\frac{1}{4}} \left( \BE |D^2_{w,r} \theta_t|^4\right)^{\frac{1}{4}} \left( \BE |D^2_{w,s} \theta_t|^4\right)^{\frac{1}{4}} \thinspace du \thinspace  ds \thinspace dw \thinspace dr  \\
\le \begin{cases} \frac{K t^{2 \delta^*}}{t^3},  & {C_{\bar{g}} C_\alpha > \frac{5}{4}}\\  \frac{K \log t }{t^3}t^{2 \delta^*}, & {\frac{3}{4} \le C_{\bar{g}} C_\alpha \le \frac{5}{4}} \\ \frac{K t^{2 \delta^*}}{t^{4C_{\bar{g}} C_\alpha}}, & \frac{1}{2} < C_{\bar{g}} C_\alpha < \frac{3}{4},\end{cases}
\end{multline}
whenever $4 C_\alpha K_{g_{\theta \theta}}^* - 2 - 12C_{\bar{g}}C_\alpha <0,$ i.e., $K_{g_{\theta \theta}}^* < \frac{1}{2C_\alpha} + 3 C_{\bar{g}}.$ We now follow the same steps as in Section \ref{S:theorem-proof} and combine the above rates with those for the first derivative to obtain
$$d_W \left( \mathsf{F}_t,N \right) \leq \begin{cases} \frac{K t^{\frac{\delta^*}{2}} \log t}{{t}^{\frac{1}{4}}},  & {C_{\bar{g}} C_\alpha \ge \frac{3}{4}}\\  \frac{K t^{\frac{\delta^*}{2}}}{t^{C_{\bar{g}} C_\alpha- \frac{1}{2}}}, & \frac{1}{2} < C_{\bar{g}}C_\alpha < \frac{3}{4}\end{cases}$$
where $\delta^* \triangleq 2 C_\alpha K_{g_{\theta \theta}}^* - 1 - 4C_{\bar{g}}C_\alpha > 0.$ Note that, in the latter expression,
$\frac{K t^{\frac{\delta^*}{2}}}{t^{C_{\bar{g}} C_\alpha- \frac{1}{2}}} = K t^{C_\alpha (K_{g_{\theta \theta}}^* - 3C_{\bar{g}})}$,
which converges to zero provided that $K_{g_{\theta \theta}}^* - 3C_{\bar{g}}<0$, i.e., the objective function must be more convex.

\section{Bounds associated with Pre-limit expectation and variance}\label{S:Pre-limit-Exp-Var}
In this section, we analyze the terms \(\mathbb{E} \left( |{\mathsf{F}_t}| \right)\left|{1 - \sqrt{\frac{\bar{\Sigma}}{\operatorname{Var}(\mathsf{F}_t)}}}\right|\) and \(\sqrt{\frac{\bar{\Sigma}}{\operatorname{Var}(\mathsf{F}_t)}} \thinspace |{\mathbb{E}(\mathsf{F}_t)}|\) appearing in Propositions \ref{P:Pre-limit-Expectation-P-1} and \ref{P:Prelimit-Sec-2-P1}, respectively. These results are essential for proving Theorem \ref{T:Main-theorem}. The key idea is to construct an appropriate Poisson equation to control the fluctuation terms, and then to use uniform-in-time moment bounds for the processes \(X_t\) and \(\theta_t\).

\subsection{Bound for the term $\sqrt{\frac{\bar{\Sigma}}{\operatorname{Var}(\mathsf{F}_t)}} \thinspace |{\mathbb{E}(\mathsf{F}_t)}|$}
In this section, we obtain a bound for the term \\ $\sqrt{\frac{\bar{\Sigma}}{\operatorname{Var}(\mathsf{F}_t)}} \thinspace |{\mathbb{E}(\mathsf{F}_t)}|$ in Proposition \ref{P:Pre-limit-Expectation-P-1}, where $\mathsf{F}_t \triangleq \sqrt{t}(\theta_t-\theta^*)$ and the limiting variance $\bar{\Sigma}$ is defined in Equation \eqref{E:Limiting-Variance}. This bound is used in Equation \eqref{E:Main-result-Proof-Eq-1}. We begin by using the second-order Taylor expansion $\bar{g}_{\theta}(\theta_t) = \bar{g}_{\theta}(\theta^*) + \bar{g}_{\theta \theta}(\theta^*) (\theta_t-\theta^*) + \frac{1}{2}\bar{g}_{\theta \theta \theta}(\theta_t^1) (\theta_t-\theta^*)^2 = \bar{g}_{\theta \theta}(\theta^*) (\theta_t-\theta^*) + \frac{1}{2}\bar{g}_{\theta \theta \theta}(\theta_t^1) (\theta_t-\theta^*)^2$ in Equation \eqref{E:Process-theta} to obtain
\begin{multline}\label{E:Pre-limit-Sec-2-Eq2}
d(\theta_t - \theta^*) = -\alpha_t  \bar{g}_{\theta \theta}(\theta^*) (\theta_t-\theta^*) \thinspace dt - \frac{1}{2} \alpha_t  \bar{g}_{\theta \theta \theta}(\theta_t^1) (\theta_t-\theta^*)^2 \thinspace dt \\
 + \alpha_t \Big(
  \bar{g}_{\theta}(\theta_t) - g_\theta(X_t,\theta_t)  \Big) \thinspace dt + \alpha_t f_\theta(X_t,\theta_t)  \thinspace dW_t.
\end{multline}
Defining $\Xi_{t,1}^* \triangleq e^{-\int_1^t \alpha_u \bar{g}_{\theta \theta}(\theta^*) \thinspace du}$, we may first solve \eqref{E:Pre-limit-Sec-2-Eq2} and then take expectations to obtain
\begin{equation*}
\begin{aligned}
\theta_t - \theta^*  & = \Xi_{t,1}^* (\theta_1 - \theta^*)  -  \frac{1}{2} \int_1^t \alpha_u \Xi_{t,u}^* \bar{g}_{\theta \theta \theta}(\theta_u^1) (\theta_u-\theta^*)^2 \thinspace du  \\
& \qquad \qquad \quad \qquad \qquad + \int_1^t \alpha_u \Xi_{t,u}^* \Big(
  \bar{g}_{\theta}(\theta_u) - g_\theta(X_u,\theta_u)  \Big) \thinspace du + \int_1^t \alpha_u \Xi_{t,u}^* f_\theta(X_u,\theta_u)\thinspace dW_u,
  \end{aligned}
  \end{equation*}
  \begin{equation}\label{E:Pre-limit-Exp-2}
\begin{aligned}
  \BE [\theta_t - \theta^*] &= \BE [\Xi_{t,1}^* (\theta_1 - \theta^*)] + \BE \left[ \int_1^t \alpha_u \Xi_{t,u}^* \Big(
  \bar{g}_{\theta}(\theta_u) - g_\theta(X_u,\theta_u)  \Big) \thinspace du\right] \\
  & \qquad \qquad  \qquad - \frac{1}{2} \BE \left[  \int_1^t \alpha_u \Xi_{t,u}^* \bar{g}_{\theta \theta \theta}(\theta_u^1) (\theta_u-\theta^*)^2 \thinspace du \right]  \triangleq \mathscr{J}_1(t)+ \mathscr{J}_2(t) + \mathscr{J}_3(t).
  \end{aligned}
  \end{equation}
For $\mathscr{J}_1(t)$, applying the strong convexity of $\bar{g}$ from Assumption \ref{A:Growth-f-g} yields
\begin{equation}\label{E:Prelimit-exp-J1}
|\mathscr{J}_1(t)| \le |\BE [\Xi_{t,1}^* (\theta_1 - \theta^*)]| \le K  |\BE \Xi_{t,1}^*| \le \frac{K}{t^{C_{\bar{g}}C_\alpha}}.
\end{equation}
To control the term $\mathscr{J}_2(t)$, we introduce the Poisson equation
\begin{equation}\label{E:Poisson-equation-prelimit}
\mathscr{L}_x \Psi (x, \theta)= \mathtt{G}(x,\theta), \quad {\text{where}} \quad \mathtt{G}(x,\theta) \triangleq \bar{g}_{\theta}(\theta)-{g}_{ \theta}(x,\theta),
\end{equation}
where $\mathscr{L}_x$ is the infinitesimal generator of the $X$ process. We apply It\^o's formula to the function $\psi(s,\Psi) = \alpha_s \Xi_{t,s}^* \Psi(X_s, \theta_s)$ (where $\Psi$ solves \eqref{E:Poisson-equation-prelimit}) to obtain
\begin{equation*}
\begin{aligned}
\alpha_t \Xi_{t,t}^* \Psi(X_t, \theta_t) & = \alpha_1 \Xi_{t,1}^* \Psi(X_1, \theta_1) + \int_1^t  \frac{d \alpha_u}{du} \Xi_{t,u}^* \Psi(X_u, \theta_u) \thinspace du + \int_1^t \alpha_u \frac{\partial \Xi_{t,u}^*}{\partial u}  \Psi(X_u, \theta_u) \thinspace du \\
& \qquad + \int_1^t \alpha_u \Xi_{t,u}^* \mathscr{L}_x \Psi (X_u, \theta_u)\thinspace du - \int_1^t \alpha_u^2 \Xi_{t,u}^* g_\theta(X_u, \theta_u) \Psi_\theta (X_u, \theta_u) \thinspace du \\
& \qquad + \frac{1}{2}\int_1^t \alpha_u^3 \Xi_{t,u}^* [f_\theta (X_u, \theta_u)]^2 \Psi_{\theta \theta}(X_u, \theta_u) \thinspace du  + \frac{1}{2}\int_1^t \alpha_u^2 \Xi_{t,u}^* f_\theta (X_u, \theta_u) \Psi_{x \theta}(X_u, \theta_u) \thinspace du \\
& \qquad + \int_1^t \alpha_u \Xi_{t,u}^* \Psi_{x}(X_u, \theta_u) \thinspace dW_u +  \int_1^t \alpha_u^2 \Xi_{t,u}^* \Psi_{\theta}(X_u, \theta_u)f_\theta (X_u, \theta_u) \thinspace dW_u,
\end{aligned}
\end{equation*}
which, using Equation \eqref{E:Poisson-equation-prelimit}, implies
\begin{equation}\label{E:Poisson-equation-prelimit-J}
\begin{aligned}
\mathscr{J}_2(t)& = \BE \left[ \int_1^t \alpha_u \Xi_{t,u}^* \mathtt{G}(X_u, \theta_u) \thinspace du\right] = \BE [\alpha_t \Xi_{t,t}^* \Psi(X_t, \theta_t)] - \BE[\alpha_1 \Xi_{t,1}^* \Psi(X_1, \theta_1) ] \\
& \qquad \quad -\BE \int_1^t  \frac{d \alpha_u}{du}  \Xi_{t,u}^* \Psi(X_u, \theta_u) \thinspace du  - \BE \int_1^t \alpha_u \frac{\partial \Xi_{t,u}^*}{\partial u}  \Psi(X_u, \theta_u) \thinspace du \\
& \qquad \quad + \BE \int_1^t \alpha_u^2 \Xi_{t,u}^* g_\theta(X_u, \theta_u) \Psi_\theta (X_u, \theta_u) \thinspace du  - \frac{1}{2} \BE \int_1^t \alpha_u^3 \Xi_{t,u}^* [f_\theta (X_u, \theta_u)]^2 \Psi_{\theta \theta}(X_u, \theta_u) \thinspace du \\
& \qquad \quad - \frac{1}{2} \BE \int_1^t \alpha_u^2 \Xi_{t,u}^* f_\theta (X_u, \theta_u) \Psi_{x \theta}(X_u, \theta_u) \thinspace du  \triangleq \sum_{i=1}^7 \mathscr{J}_2^i(t).
\end{aligned}
\end{equation}
The bounds for the terms above are established in the following lemmas.

\begin{lemma}\label{L:Prelimit-exp-J-12}
Let $\Psi(x, \theta)$ be the solution of Poisson Equation \eqref{E:Poisson-equation-prelimit} and let the processes $\mathscr{J}_2^1(t)$ and $\mathscr{J}_2^2(t)$ be defined in Equation \eqref{E:Poisson-equation-prelimit-J}:
\begin{equation*}
\mathscr{J}_2^1(t)= \BE [\alpha_t \Xi_{t,t}^* \Psi(X_t, \theta_t)],   \qquad \mathscr{J}_2^2(t)= - \BE[\alpha_1 \Xi_{t,1}^* \Psi(X_1, \theta_1)].
\end{equation*}
Then, for any $t \ge 1,$ there exists a time-independent positive constant $K$ such that
\begin{equation*}
|\mathscr{J}_2^1(t)| \le \frac{K}{t}, \qquad \text{and} \qquad
|\mathscr{J}_2^2(t)| \le \frac{K}{t^{C_{\bar{g}}C_\alpha}}.
\end{equation*}
\end{lemma}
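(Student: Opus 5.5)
The plan is to treat both bounds directly. We use three facts: the integrating factor $\Xi^*_{t,u}=e^{-\int_u^t\alpha_s\bar g_{\theta\theta}(\theta^*)\,ds}$ is deterministic; $\Xi^*_{t,t}=1$; and the solution $\Psi$ of the Poisson equation \eqref{E:Poisson-equation-prelimit} grows at most polynomially in $(x,\theta)$ (see the Poisson-equation results recalled in Section \ref{S:Poisson-Equation}). Together with the uniform-in-time moment bounds $\sup_{t\ge0}\BE|X_t|^{\mathsf m}<\infty$ from \cite{pardoux2001poisson} and $\sup_{t\ge0}\BE|\theta_t|^{\mathsf n}<\infty$ from \cite{siri_spilio_2020}, this gives a constant $K$, independent of $t$, with $\sup_{u\ge0}\BE|\Psi(X_u,\theta_u)|\le K$.

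For $\mathscr J_2^1(t)$, since $\Xi^*_{t,t}=1$, we have
$$|\mathscr J_2^1(t)|=\alpha_t\,|\BE\Psi(X_t,\theta_t)|\le \alpha_t K.$$
With $\alpha_t=C_\alpha/(C_0+t)\le C_\alpha/t$, this is at most $K/t$.

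For $\mathscr J_2^2(t)$, the factor $\Xi^*_{t,1}$ is deterministic, so it comes out of the expectation:
$$|\mathscr J_2^2(t)|\le \alpha_1\,\Xi^*_{t,1}\,\BE|\Psi(X_1,\theta_1)|.$$
Strong convexity (Assumption \ref{A:Growth-f-g}) gives $\bar g_{\theta\theta}(\theta^*)\ge C_{\bar g}$. Using $\alpha_s=C_\alpha/s$ for $s\ge1$, as in the paper's convention, we get
$$\Xi^*_{t,1}\le e^{-C_{\bar g}C_\alpha\int_1^t s^{-1}ds}=t^{-C_{\bar g}C_\alpha}.$$
If one keeps $C_0>0$, the ratio $\big((C_0+1)/(C_0+t)\big)^{C_{\bar g}C_\alpha}$ is still bounded by $K t^{-C_{\bar g}C_\alpha}$. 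Since $\alpha_1$ and $\BE|\Psi(X_1,\theta_1)|$ are fixed finite constants, it follows that $|\mathscr J_2^2(t)|\le K t^{-C_{\bar g}C_\alpha}$.

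The only step that needs care is the uniform bound $\sup_u\BE|\Psi(X_u,\theta_u)|<\infty$. For this we must check that the right-hand side $\mathtt G=\bar g_\theta-g_\theta$ is centered with respect to $\mu$, which holds by the definition of $\bar g$. We must also check that it has polynomial growth, which follows from Assumption \ref{A:Growth-f-g}. Under the ergodicity of Assumption \ref{A:f*-growth}, the Pardoux--Veretennikov theory then yields a solution $\Psi$ with polynomial growth. Once this is in place, both estimates are immediate.
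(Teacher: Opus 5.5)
Your proposal is correct and follows the paper's proof essentially verbatim. For $\mathscr{J}_2^1(t)$ you use $\Xi^*_{t,t}=1$, $\alpha_t\le C_\alpha/t$, polynomial growth of $\Psi$ and the uniform moment bounds; for $\mathscr{J}_2^2(t)$ you use strong convexity to get the deterministic bound $\Xi^*_{t,1}\le t^{-C_{\bar g}C_\alpha}$. Your extra remarks (pulling the deterministic factor out of the expectation, the $C_0>0$ case, and checking centering and polynomial growth of $\mathtt{G}$ for the Poisson equation) simply make explicit what the paper leaves implicit.
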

\begin{proof}
The bound for $\mathscr{J}_2^1(t)$ follows directly from the definition of the learning rate, the polynomial growth of $\Psi$, and the uniform moment bounds for the processes $X$ and $\theta$ established in \cite{pardoux2001poisson, siri_spilio_2020}. For $\mathscr{J}_2^2(t)$, we first use the strong convexity of $\bar{g}$ from Assumption \ref{A:Growth-f-g}, which yields $\Xi_{t,1}^* \le \frac{1}{t^{C_{\bar{g}} C_\alpha}}.$ We then apply the same uniform moment bounds for $X$ and $\theta$ to complete the proof.
\end{proof}

\begin{lemma}\label{L:Poisson-equation-prelimit-J-34}
Let $\Psi(x, \theta)$ be the solution of Poisson Equation \eqref{E:Poisson-equation-prelimit} and let the processes
$\mathscr{J}_2^3(t)$ and $\mathscr{J}_2^4(t)$ be defined in Equation \eqref{E:Poisson-equation-prelimit-J}:
\begin{equation*}
\mathscr{J}_2^3(t)= - \BE \int_1^t   \frac{d \alpha_s}{ds}  \Xi_{t,s}^* \Psi(X_s, \theta_s) \thinspace ds,   \qquad \mathscr{J}_2^4(t)= - \BE \int_1^t \alpha_s \frac{\partial \Xi_{t,s}^*}{\partial s}  \Psi(X_s, \theta_s) \thinspace ds.
\end{equation*}
Then, for any $t\ge 1,$ there exists a time-independent constant $K>0$ such that
\begin{equation*}
|\mathscr{J}_2^3(t)| + |\mathscr{J}_2^4(t)| \le \begin{cases} \frac{K}{t},  & {C_{\bar{g}} C_\alpha > 1}\\  \frac{K \log t }{t}, & {C_{\bar{g}} C_\alpha = 1} \\ \frac{K}{t^{C_{\bar{g}} C_\alpha}}, & \frac{1}{2} < C_{\bar{g}} C_\alpha < 1.\end{cases}
\end{equation*}
\end{lemma}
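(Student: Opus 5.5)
The plan is to make both quantities deterministic-weight integrals whose size is controlled by explicit power-law kernels. Two facts about $\Xi^*_{t,s}$ drive everything. First, $\Xi^*_{t,s}=e^{-\bar{g}_{\theta\theta}(\theta^*)\int_s^t\alpha_u\,du}$ is deterministic. Second, with $\alpha_u=C_\alpha/u$ for $u\ge 1$ and $\bar{g}_{\theta\theta}(\theta^*)\ge C_{\bar{g}}$ from Assumption \ref{A:Growth-f-g}, it satisfies
\begin{equation*}
\Xi^*_{t,s}=\left(\frac{s}{t}\right)^{C_\alpha\bar{g}_{\theta\theta}(\theta^*)}\le\left(\frac{s}{t}\right)^{C_{\bar{g}}C_\alpha}.
\end{equation*}
Because $\Xi^*_{t,s}$ is deterministic, it comes out of the expectation, so
\begin{equation*}
|\mathscr{J}_2^3(t)|\le\int_1^t\left|\frac{d\alpha_s}{ds}\right|\Xi^*_{t,s}\,\BE|\Psi(X_s,\theta_s)|\,ds .
\end{equation*}
The factor $\BE|\Psi(X_s,\theta_s)|$ will be bounded by a constant $K$ uniformly in $s$. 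This uses the polynomial growth of the Poisson solution $\Psi$ in $(x,\theta)$ (Section \ref{S:Poisson-Equation}, as used in Lemma \ref{L:Prelimit-exp-J-12}) together with the uniform moment bounds $\sup_s\BE|X_s|^m<\infty$ \cite{pardoux2001poisson} and $\sup_s\BE|\theta_s|^n<\infty$ \cite{siri_spilio_2020}.

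For $\mathscr{J}_2^4$, differentiate explicitly: $\frac{\partial}{\partial s}\Xi^*_{t,s}=\alpha_s\bar{g}_{\theta\theta}(\theta^*)\Xi^*_{t,s}$. Hence $\alpha_s\,\partial_s\Xi^*_{t,s}=\bar{g}_{\theta\theta}(\theta^*)\alpha_s^2\Xi^*_{t,s}$, which is again deterministic. Also $|d\alpha_s/ds|=C_\alpha/s^2$. Both terms therefore reduce to a single elementary integral:
\begin{equation*}
|\mathscr{J}_2^3(t)|+|\mathscr{J}_2^4(t)|\le K\int_1^t\frac{1}{s^2}\left(\frac{s}{t}\right)^{C_{\bar{g}}C_\alpha}ds=\frac{K}{t^{C_{\bar{g}}C_\alpha}}\int_1^t s^{C_{\bar{g}}C_\alpha-2}\,ds .
\end{equation*}
Here $\bar{g}_{\theta\theta}(\theta^*)$ and $C_\alpha$ are absorbed into $K$.

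The case split comes from evaluating $\int_1^t s^{C_{\bar{g}}C_\alpha-2}\,ds$:
\begin{itemize}
\item If $C_{\bar{g}}C_\alpha>1$, the integral is at most $t^{C_{\bar{g}}C_\alpha-1}/(C_{\bar{g}}C_\alpha-1)$, which gives $K/t$.
\item If $C_{\bar{g}}C_\alpha=1$, the integral equals $\log t$, which gives $K\log t/t$.
\item If $\tfrac12<C_{\bar{g}}C_\alpha<1$, the exponent is below $-1$, so the integral is bounded by $1/(1-C_{\bar{g}}C_\alpha)$, which gives $K/t^{C_{\bar{g}}C_\alpha}$.
\end{itemize}

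The only point needing care is the first reduction: the uniform-in-time bound on $\BE|\Psi(X_s,\theta_s)|$. It requires the polynomial growth of $\Psi$ in both variables, which follows from the growth of $\mathtt{G}=\bar{g}_\theta-g_\theta$ (Assumption \ref{A:Growth-f-g}) together with the Poisson-equation estimates of \cite{pardoux2003poisson}. We will also check that the lower limit $s=1$ keeps $\alpha_s=C_\alpha/s$ valid, taking $C_0=0$ as in the footnote. Everything else is a deterministic computation, so no Hölder-type decoupling is needed here, in contrast to the Malliavin estimates.
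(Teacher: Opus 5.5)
Your proposal is correct and follows the paper's proof essentially step for step. Both arguments bound $\Xi^*_{t,s}\le (s/t)^{C_{\bar{g}}C_\alpha}$ via strong convexity, bound $\BE|\Psi(X_s,\theta_s)|$ uniformly using the polynomial growth of $\Psi$ and the uniform moment bounds, reduce both terms to $t^{-C_{\bar{g}}C_\alpha}\int_1^t s^{C_{\bar{g}}C_\alpha-2}\,ds$, and finish with the three-case integration. Your explicit use of $\partial_s\Xi^*_{t,s}=\alpha_s\bar{g}_{\theta\theta}(\theta^*)\Xi^*_{t,s}$, which relies on $\Xi^*$ being deterministic, is in fact cleaner for $\mathscr{J}_2^4$ than the paper's version, which writes the exponent with a random intermediate point $\theta^1_u$.
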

\begin{proof}
To bound $\mathscr{J}_2^3(t)$, we use the definition of $\Xi^*$ (recall $\Xi_{t,1}^* \triangleq e^{-\int_1^t \alpha_u \bar{g}_{\theta \theta}(\theta^*) \thinspace du}$), the polynomial growth of $\Psi$, and the uniform moment bounds for the processes $X_t$ and $\theta_t$ to obtain
\begin{equation}\label{E:Poisson-equation-prelimit-J2}
\begin{aligned}
|\mathscr{J}_2^3(t)|  \le   \BE \int_1^t \frac{C_\alpha}{s^2} \Xi_{t,s}^* \left|\Psi(X_s, \theta_s)\right| \thinspace ds & \le \int_1^t \frac{K}{s^2} \left(\frac{s}{t}\right)^{C_{\bar{g}} C_\alpha} \BE \left|\Psi(X_s, \theta_s)\right| \thinspace ds  \le K \int_1^t \frac{s^{C_{\bar{g}} C_\alpha-2}}{t^{C_{\bar{g}} C_\alpha}} \thinspace ds.
\end{aligned}
\end{equation}
If $C_{\bar{g}} C_\alpha>1$, integrating \eqref{E:Poisson-equation-prelimit-J2} yields $|\mathscr{J}_2^3(t)| \le \frac{K}{t^{C_{\bar{g}} C_\alpha}}\left[ \frac{1}{C_{\bar{g}} C_\alpha-1}s^{C_{\bar{g}}C_\alpha-1} \right]_{s=1}^t \le \frac{K}{t}.$ If $C_{\bar{g}}C_\alpha=1$, we obtain $|\mathscr{J}_2^3(t)| \le \frac{K}{t}\int_1^t \frac{1}{s} \thinspace ds \le \frac{K \log t}{t}.$ Finally, when $\frac{1}{2}<C_{\bar{g}} C_\alpha < 1$, Equation \eqref{E:Poisson-equation-prelimit-J2} gives $|\mathscr{J}_2^3(t)| \le \frac{K}{t^{C_{\bar{g}} C_\alpha}}.$

We now bound $\mathscr{J}_2^4(t)$. Using straightforward algebra, the strong convexity of $\bar{g}$, Assumption \ref{A:Growth-f-g} to control the growth of $\bar{g}_{\theta \theta}$, and the uniform moment bounds for $X_t$ and $\theta_t$, we obtain
\begin{equation*}
\begin{aligned}
|\mathscr{J}_2^4(t)| & =\BE \left| \int_1^t \alpha_s \frac{\partial \Xi_{t,s}^*}{\partial s}  \Psi(X_s, \theta_s) \right| ds \le  \BE \int_1^t \alpha_s e^{-\int_s^t \alpha_u \bar{g}_{\theta \theta}(\theta_u^1)   \thinspace du} \alpha_s | \bar{g}_{\theta \theta}(\theta_s^1) \Psi(X_s, \theta_s)| \thinspace  ds\\
& \qquad \qquad \qquad \qquad \qquad \qquad \quad  \le K \int_1^t \frac{s^{C_{\bar{g}} C_\alpha-2}}{t^{C_{\bar{g}} C_\alpha}} \BE\left|\bar{g}_{\theta \theta}(\theta_s^1) \Psi(X_s, \theta_s)  \right| ds \le K \int_1^t \frac{s^{C_{\bar{g}} C_\alpha-2}}{t^{C_{\bar{g}} C_\alpha}} \thinspace ds.
\end{aligned}
\end{equation*}
The remaining steps follow exactly as in the analysis of $\mathscr{J}_2^3(t)$ and yield the same bound. This completes the proof.
\end{proof}

\begin{lemma}\label{L:Prelimit-exp-J-567}
Let $\Psi(x, \theta)$ be the solution of Poisson Equation \eqref{E:Poisson-equation-prelimit} and let the processes
$\mathscr{J}_2^5(t)$, $\mathscr{J}_2^6(t)$ and $\mathscr{J}_2^7(t)$ be defined in Equation \eqref{E:Poisson-equation-prelimit-J}:
\begin{equation*}
\begin{aligned}
\mathscr{J}_2^5(t) & = \BE \int_1^t \alpha_u^2 \Xi_{t,u}^* g_\theta(X_u, \theta_u) \Psi_\theta (X_u, \theta_u) \thinspace du,   \quad \mathscr{J}_2^6(t)=  \frac{-1}{2} \BE \int_1^t \alpha_u^3 \Xi_{t,u}^* [f_\theta (X_u, \theta_u)]^2 \Psi_{\theta \theta}(X_u, \theta_u) \thinspace du, \\
\mathscr{J}_2^7(t) & = - \frac{1}{2} \BE \int_1^t \alpha_u^2 \Xi_{t,u}^* f_\theta (X_u, \theta_u) \Psi_{x \theta}(X_u, \theta_u) \thinspace du.
\end{aligned}
\end{equation*}
Then, for any $t \ge 1,$ there exists a time-independent constant $K>0$ such that
\begin{equation*}
|\mathscr{J}_2^5(t)|+ |\mathscr{J}_2^6(t)| + |\mathscr{J}_2^7(t)| \le \begin{cases} \frac{K}{t},  & {C_{\bar{g}} C_\alpha > 1}\\  \frac{K \log t }{t}, & {C_{\bar{g}} C_\alpha = 1} \\ \frac{K}{t^{C_{\bar{g}} C_\alpha}}, & \frac{1}{2} < C_{\bar{g}} C_\alpha < 1.\end{cases}
\end{equation*}
\end{lemma}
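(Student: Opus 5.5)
The plan is to treat the three terms one at a time, exactly as in Lemma \ref{L:Poisson-equation-prelimit-J-34}. In each case we bound the integrand pathwise by its absolute value. The deterministic factor $\Xi^*_{t,u}$ is controlled via strong convexity, and the random coefficient via polynomial growth plus uniform moment bounds.

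First, since $\bar{g}_{\theta\theta}(\theta^*) \ge C_{\bar{g}}$ and $\alpha_u = C_\alpha/u$, we have the deterministic bound
\[
\Xi^*_{t,u} = e^{-\bar{g}_{\theta\theta}(\theta^*)\int_u^t \alpha_s\,ds} \le \left(\frac{u}{t}\right)^{C_{\bar{g}}C_\alpha}.
\]
Second, the coefficient functions $g_\theta\Psi_\theta$, $f_\theta^2\Psi_{\theta\theta}$ and $f_\theta\Psi_{x\theta}$ all grow at most polynomially in $(x,\theta)$. For $f_\theta$ and $g_\theta = \sigma^{-2}(f-f^*)f_\theta$ this is Assumption \ref{A:Growth-f-g}. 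For $\Psi$ and its derivatives it follows from the Poisson-equation regularity results recalled in Section \ref{S:Poisson-Equation}, applied with the polynomially growing, centered right-hand side $\mathtt{G}$. The uniform moment bounds $\sup_t \BE|X_t|^m<\infty$ \cite{pardoux2001poisson} and $\sup_t\BE|\theta_t|^n<\infty$ \cite{siri_spilio_2020} then give
\[
\sup_{u\ge 1}\BE\left|g_\theta\Psi_\theta\right|(X_u,\theta_u) + \sup_{u\ge 1}\BE\left|f_\theta^2\Psi_{\theta\theta}\right|(X_u,\theta_u) + \sup_{u\ge 1}\BE\left|f_\theta\Psi_{x\theta}\right|(X_u,\theta_u) \le K.
\]

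Applying Fubini, we obtain
\[
|\mathscr{J}_2^5(t)| + |\mathscr{J}_2^7(t)| \le K\int_1^t \frac{u^{C_{\bar{g}}C_\alpha-2}}{t^{C_{\bar{g}}C_\alpha}}\,du.
\]
This is exactly the integral appearing in \eqref{E:Poisson-equation-prelimit-J2}, so the same three-case computation gives the claimed rates. For $C_{\bar{g}}C_\alpha>1$ the integral is at most $K t^{C_{\bar{g}}C_\alpha-1}/t^{C_{\bar{g}}C_\alpha} = K/t$. For $C_{\bar{g}}C_\alpha=1$ it equals $K\log t/t$. For $\frac12<C_{\bar{g}}C_\alpha<1$ the integral $\int_1^\infty u^{C_{\bar{g}}C_\alpha-2}\,du$ converges, leaving $K/t^{C_{\bar{g}}C_\alpha}$.

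For $\mathscr{J}_2^6$ the weight is $\alpha_u^3$, which gives the integrand $u^{C_{\bar{g}}C_\alpha-3}/t^{C_{\bar{g}}C_\alpha}$. This is dominated by the previous case because $u^{-3}\le u^{-2}$ for $u\ge1$, so the same bound holds, up to possibly $K/t^2$ or $K/t^{C_{\bar{g}}C_\alpha}$, both within the stated rates.

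The only step needing care is the polynomial growth of $\Psi_\theta$, $\Psi_{\theta\theta}$ and $\Psi_{x\theta}$. This requires differentiating the Poisson equation in $\theta$ and checking the centering condition $\int \mathtt{G}_\theta(x,\theta)\,\mu(dx)=0$, which holds since $\bar g_{\theta\theta}=\int g_{\theta\theta}\,d\mu$. I will rely on the results of \cite{pardoux2001poisson,pardoux2003poisson} as summarized in Section \ref{S:Poisson-Equation}, using the same regularity already invoked for $\Phi$ in Lemma \ref{L:L-1-deri-1}. Everything else is routine.
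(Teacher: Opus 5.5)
Your proposal is correct and follows the same route as the paper, which simply refers back to the argument of Lemma \ref{L:Poisson-equation-prelimit-J-34}. You bound the deterministic factor by $\Xi^*_{t,u}\le (u/t)^{C_{\bar{g}}C_\alpha}$, control the coefficients by polynomial growth and uniform moments, reduce to the three-case integral of $u^{C_{\bar{g}}C_\alpha-2}/t^{C_{\bar{g}}C_\alpha}$, and dominate the $\alpha_u^3$ term by the $\alpha_u^2$ one. The extra step you flag, differentiating the Poisson equation in $\theta$, is unnecessary: the growth bounds for $\Psi_\theta$, $\Psi_{x\theta}$ and $\Psi_{\theta\theta}$ are already part of the Poisson-equation result recalled in Section \ref{S:Poisson-Equation}.
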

\begin{proof}
The proof follows the same arguments as in Lemma \ref{L:Poisson-equation-prelimit-J-34}. For brevity, we omit the details.
\end{proof}

Finally, we bound the term $\mathscr{J}_3(t)$ in Equation \eqref{E:Pre-limit-Exp-2}.

\begin{lemma}\label{L:Prelimit-exp-J3}
Let $\mathscr{J}_3(t)$ be defined in Equation \eqref{E:Pre-limit-Exp-2}:
\begin{equation*}
\mathscr{J}_3(t)= - \frac{1}{2} \BE \left[  \int_1^t \alpha_u \Xi_{t,u}^* \bar{g}_{\theta \theta \theta}(\theta_u^1) (\theta_u-\theta^*)^2 du \right].
\end{equation*}
Then, for any $t \ge 1,$ there exists a time-independent constant $K$ such that
\begin{equation*}
|\mathscr{J}_3(t)| \le \begin{cases} \frac{K}{t},  & {C_{\bar{g}} C_\alpha > 1}\\  \frac{K \log t }{t}, & {C_{\bar{g}} C_\alpha = 1} \\ \frac{K}{t^{C_{\bar{g}} C_\alpha}}, & \frac{1}{2} < C_{\bar{g}} C_\alpha < 1.\end{cases}
\end{equation*}
\end{lemma}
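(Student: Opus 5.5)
The plan is to bound $|\mathscr{J}_3(t)|$ by moving the absolute value inside the expectation and exploiting that $\Xi^*_{t,u}$ is deterministic. Since $\bar g_{\theta\theta}(\theta^*)\ge C_{\bar g}$ by Assumption \ref{A:Growth-f-g} and $\alpha_u = C_\alpha/u$ for $u\ge 1$ (taking $C_0=0$ as in the earlier footnote), we have
\[
0\le \Xi^*_{t,u}=e^{-\int_u^t \alpha_v \bar g_{\theta\theta}(\theta^*)\,dv}\le \left(\frac{u}{t}\right)^{C_{\bar g}C_\alpha}.
\]
Fubini then gives
\[
|\mathscr{J}_3(t)|\le \frac{C_\alpha}{2}\int_1^t \frac{1}{u}\left(\frac{u}{t}\right)^{C_{\bar g}C_\alpha}\BE\left[|\bar g_{\theta\theta\theta}(\theta^1_u)|\,(\theta_u-\theta^*)^2\right]du .
\]

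The key step is to show that the expectation inside the integral is $O(1/u)$. We use Cauchy--Schwarz to separate the two factors. For the first factor, Assumption \ref{A:Growth-f-g} (iii) gives $|\bar g_{\theta\theta\theta}(\theta)|\le K(1+|\theta|)$. Since $\theta^1_u$ lies between $\theta_u$ and $\theta^*$, we have $|\theta^1_u|\le |\theta_u|+|\theta^*|$. The uniform-in-time moment bounds for $\theta$ from \cite{siri_spilio_2020} then yield $\sup_u \BE|\bar g_{\theta\theta\theta}(\theta^1_u)|^2\le K$. For the second factor, we need $\BE|\theta_u-\theta^*|^4\le K/u^2$. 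This is the $L^p$ convergence rate of \cite[Theorem 1]{siri_spilio_2020}, namely $\BE|\theta_t-\theta^*|^p\le K t^{-p/2}$, which the paper already quotes for $p=1$ in the proof of Theorem \ref{T:Main-theorem}. Combining the two gives $\BE[|\bar g_{\theta\theta\theta}(\theta^1_u)|(\theta_u-\theta^*)^2]\le K/u$.

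Inserting this bound, we obtain
\[
|\mathscr{J}_3(t)|\le \frac{K}{t^{C_{\bar g}C_\alpha}}\int_1^t u^{C_{\bar g}C_\alpha-2}\,du.
\]
This is exactly the integral already handled for $\mathscr{J}_2^3$ in Lemma \ref{L:Poisson-equation-prelimit-J-34}, and the same three cases follow. If $C_{\bar g}C_\alpha>1$, the integral is at most $K t^{C_{\bar g}C_\alpha-1}$, giving $K/t$. If $C_{\bar g}C_\alpha=1$, it equals $\log t$, giving $K\log t/t$. If $\tfrac12<C_{\bar g}C_\alpha<1$, it is bounded uniformly in $t$, giving $K/t^{C_{\bar g}C_\alpha}$.

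The main obstacle is justifying the fourth-moment rate $\BE|\theta_u-\theta^*|^4\le K u^{-2}$, uniformly for $u\ge 1$ and under the regime $C_{\bar g}C_\alpha>\tfrac12$. I would cite the general $L^p$ statement of \cite{siri_spilio_2020}, which holds under Assumptions \ref{A:f*-growth}--\ref{A:Learning-rate}. For small $u$ the bound reduces to the uniform moment bounds, since $u^{-2}$ is then bounded below, so only the large-$u$ regime relies on the cited rate. Everything else is routine.
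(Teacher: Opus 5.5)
Your proposal is correct and matches the paper's proof step for step. Both bound $\Xi^*_{t,u}\le (u/t)^{C_{\bar g}C_\alpha}$ via strong convexity, and both split $\BE[\bar g_{\theta\theta\theta}(\theta_u^1)(\theta_u-\theta^*)^2]$ by Cauchy--Schwarz using the growth assumption, the uniform moment bounds, and the $L^4$ rate $\BE|\theta_u-\theta^*|^4\le K u^{-2}$ from \cite{siri_spilio_2020}, then reuse the integral $\int_1^t u^{C_{\bar g}C_\alpha-2}\,du$ from Lemma \ref{L:Poisson-equation-prelimit-J-34}; the paper invokes that cited $L^p$ rate exactly as you do.
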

\begin{proof}
Using the strong convexity of $\bar{g}$ in the definition of $\Xi_{t,u}^*$ and applying H\"older's inequality to handle the expectation of $\bar{g}_{\theta \theta \theta}(\theta_u^1) (\theta_u-\theta^*)^2$, we obtain
\begin{equation*}
|\mathscr{J}_3(t)| \le K \int_1^t \alpha_u \left(\frac{u}{t}\right)^{C_{\bar{g}} C_\alpha} \left[ \BE \bar{g}_{\theta \theta \theta}(\theta_u^1)^2 \right]^{\frac{1}{2}} \left[ \BE (\theta_u-\theta^*)^4 \right]^{\frac{1}{2}} \thinspace du \le K \int_1^t \frac{u^{C_{\bar{g}} C_\alpha-2}}{t^{C_{\bar{g}} C_\alpha}} \thinspace du,
\end{equation*}
where the final inequality follows from Assumption \ref{A:Growth-f-g} and the bounds from \cite{siri_spilio_2020}, namely $\BE \left[ |\theta_t - \theta^*|^p \right]\le \frac{K}{t^{\frac{p}{2}}}$ and $\sup_{t \ge 0 }\BE \left[ |\theta_t|^p \right] \le K$. The conclusion follows by the same integration argument as in Lemma \ref{L:Poisson-equation-prelimit-J-34}.
\end{proof}

\begin{proposition}\label{P:Pre-limit-Expectation-P-1}
Suppose that $\mathsf{F}_t \triangleq \sqrt{t}(\theta_t-\theta^*).$ Let $\theta_t$ be the solution of Equation \eqref{E:Process-theta}, and let the limiting variance $\bar{\Sigma}$ is defined in Equation \eqref{E:Limiting-Variance}. Then, for any $t \ge 1$ and $C_{\bar{g}}C_\alpha> \frac{1}{2},$ there exists a time-independent, positive constant $K$ such that
\begin{equation*}
\sqrt{\frac{\bar{\Sigma}}{\operatorname{Var}(\mathsf{F}_t)}} \thinspace |{\mathbb{E}(\mathsf{F}_t)}| \le \frac{1}{\sqrt{\operatorname{Var}(\theta_t)}}\times \begin{cases} \frac{K}{t},  & {C_{\bar{g}}C_\alpha > 1}\\ \frac{K}{t} +  \frac{K \log t }{t}, & {C_{\bar{g}}C_\alpha = 1} \\ \frac{K}{t^{C_{\bar{g}} C_\alpha}}, & \frac{1}{2} < C_{\bar{g}} C_\alpha < 1.\end{cases}
\end{equation*}
\end{proposition}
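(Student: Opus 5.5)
The plan is to reduce the statement to the decomposition \eqref{E:Pre-limit-Exp-2} and the lemmas just proved. Since $\mathsf{F}_t = \sqrt{t}(\theta_t-\theta^*)$, we have $\operatorname{Var}(\mathsf{F}_t) = t\operatorname{Var}(\theta_t)$ and $\BE(\mathsf{F}_t) = \sqrt{t}\,\BE[\theta_t-\theta^*]$. Hence
\begin{equation*}
\sqrt{\frac{\bar{\Sigma}}{\operatorname{Var}(\mathsf{F}_t)}}\, |\BE(\mathsf{F}_t)| = \frac{\sqrt{\bar{\Sigma}}}{\sqrt{\operatorname{Var}(\theta_t)}}\, |\BE[\theta_t-\theta^*]|.
\end{equation*}
The factor $\sqrt{t}$ cancels exactly, and $\sqrt{\bar{\Sigma}}$ is absorbed into $K$. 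It therefore suffices to bound $|\BE[\theta_t-\theta^*]|$ by the three-case rate in the statement.

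To do this, write $\BE[\theta_t-\theta^*] = \mathscr{J}_1(t)+\mathscr{J}_2(t)+\mathscr{J}_3(t)$ as in \eqref{E:Pre-limit-Exp-2}, and split $\mathscr{J}_2(t)=\sum_{i=1}^7\mathscr{J}_2^i(t)$ through the Poisson equation \eqref{E:Poisson-equation-prelimit} as in \eqref{E:Poisson-equation-prelimit-J}. The individual terms are bounded as follows:
\begin{itemize}
\item \eqref{E:Prelimit-exp-J1} gives $|\mathscr{J}_1(t)|\le K t^{-C_{\bar g}C_\alpha}$.
\item Lemma \ref{L:Prelimit-exp-J-12} gives $|\mathscr{J}_2^1(t)|\le K/t$ and $|\mathscr{J}_2^2(t)|\le K t^{-C_{\bar g}C_\alpha}$.
\item Lemmas \ref{L:Poisson-equation-prelimit-J-34} and \ref{L:Prelimit-exp-J-567} bound $\mathscr{J}_2^3,\dots,\mathscr{J}_2^7$ by the three-case rate.
\item Lemma \ref{L:Prelimit-exp-J3} bounds $\mathscr{J}_3(t)$ by the same three-case rate.
\end{itemize}

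It remains to sum these bounds, case by case. When $C_{\bar g}C_\alpha>1$, the terms of order $t^{-C_{\bar g}C_\alpha}$ are at most $K/t$ for $t\ge1$, so the total is $K/t$. When $C_{\bar g}C_\alpha=1$, the total is $K/t+K\log t/t$. When $\frac12<C_{\bar g}C_\alpha<1$, we have $1/t\le t^{-C_{\bar g}C_\alpha}$, so everything is dominated by $K t^{-C_{\bar g}C_\alpha}$.

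The only genuine subtlety lies in the earlier lemmas, which are already given: the Itô/Poisson-equation representation of the fluctuation term $\mathscr{J}_2$, and the use of strong convexity to get $\Xi^*_{t,u}\le (u/t)^{C_{\bar g}C_\alpha}$. At this stage the only point needing care is bookkeeping: the deterministic integrating factor $\Xi^*$ makes the expectations factor cleanly, and the dominant term must be identified correctly in each regime of $C_{\bar g}C_\alpha$.
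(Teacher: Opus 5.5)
Your proposal is correct and follows essentially the same route as the paper. You cancel the factor $\sqrt{t}$ to reduce the problem to bounding $|\BE[\theta_t-\theta^*]|$, then combine the decomposition \eqref{E:Pre-limit-Exp-2} with \eqref{E:Prelimit-exp-J1} and Lemmas \ref{L:Prelimit-exp-J-12} through \ref{L:Prelimit-exp-J3}; your explicit regime-by-regime summation simply spells out the step the paper leaves implicit.
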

\begin{proof}
Using the definition of $\mathsf{F}_t$, we have
$\sqrt{\frac{\bar{\Sigma}}{\operatorname{Var}(\mathsf{F}_t)}} \thinspace |{\mathbb{E}(\mathsf{F}_t)}| = \sqrt{\frac{\bar{\Sigma}}{t \operatorname{Var}(\theta_t)}} \thinspace |{\mathbb{E}\sqrt{t}(\theta_t-\theta^*)}| = \sqrt{\frac{\bar{\Sigma}}{\operatorname{Var}(\theta_t)}} \thinspace |{\mathbb{E}(\theta_t-\theta^*)}|.$
Combining Lemmas \ref{L:Prelimit-exp-J-12} through \ref{L:Prelimit-exp-J-567} (which control the components of $\mathscr{J}_2(t)$ in \eqref{E:Poisson-equation-prelimit-J}), Lemma \ref{L:Prelimit-exp-J3} (which controls $\mathscr{J}_3(t)$), Equation \eqref{E:Prelimit-exp-J1} (which controls $\mathscr{J}_1(t)$), and the decomposition \eqref{E:Pre-limit-Exp-2}, yields the desired bound.
\end{proof}

\subsection{Bound for the term $\mathbb{E} \left(|{\mathsf{F}_t}| \right)\left|{1-\sqrt{\frac{\bar{\Sigma}}{\operatorname{Var}(\mathsf{F}_t)}}}\right|$}
This section is devoted to deriving a bound for the term $\mathbb{E} \left(|{\mathsf{F}_t}| \right)\left|{1-\sqrt{\frac{\bar{\Sigma}}{\operatorname{Var}(\mathsf{F}_t)}}}\right|$, which appears in Equation \eqref{E:Main-result-Proof-Eq-1}. The corresponding estimate is stated precisely in Proposition \ref{P:Prelimit-Sec-2-P1}. Using the definition $\mathsf{F}_t \triangleq \sqrt{t}(\theta_t-\theta^*)$, we have
\begin{equation}\label{E:Prelimit-Sec2-eq1}
\begin{aligned}
\mathbb{E} \left( |{\mathsf{F}_t}| \right)\left|{1-\sqrt{\frac{\bar{\Sigma}}{\operatorname{Var}(\mathsf{F}_t)}}}\right| & = \frac{\mathbb{E} \left( |{\mathsf{F}_t}| \right)}{\sqrt{{\operatorname{Var}(\mathsf{F}_t)}}}\left| \sqrt{{\operatorname{Var}(\mathsf{F}_t)}} - \sqrt{\bar{\Sigma}} \right|  = \frac{\mathbb{E} \left( |{\theta_t-\theta^*}| \right)}{\sqrt{{\operatorname{Var}(\theta_t-\theta^*)}}}\left| \sqrt{{\operatorname{Var}(\mathsf{F}_t)}} - \sqrt{\bar{\Sigma}} \right|.
\end{aligned}
\end{equation}
In view of \eqref{E:Prelimit-Sec2-eq1}, it is therefore sufficient to obtain a bound for the term $|{{\operatorname{Var}(\mathsf{F}_t)}} - {\bar{\Sigma}}|.$ By the definition of variance,
\begin{equation}\label{E:Var}
{{\operatorname{Var}(\mathsf{F}_t)}} - {\bar{\Sigma}} = t {{\operatorname{Var}(\theta_t-\theta^*)}} - \bar{\Sigma} = t \BE [|\theta_t-\theta^*|^2] - t |\BE (\theta_t-\theta^*)|^2 - \bar{\Sigma}.
\end{equation}
Recalling $\Xi_{t,1}^* \triangleq e^{-\int_1^t \alpha_u \bar{g}_{\theta \theta}(\theta^*) \thinspace du}$, we solve Equation \eqref{E:Pre-limit-Sec-2-Eq2} to obtain
\begin{equation*}
\theta_t -\theta^* = \Xi_{t,1}^* (\theta_1 - \theta^*) + \int_1^t \alpha_u \Xi_{t,u}^* \left[ f_\theta(X_u, \theta_u) - \Psi_x(X_u, \theta_u) \right] dW_u + \mathsf{R}(t;\Psi),
\end{equation*}
where $\Psi(x,\theta)$ is the solution of the Poisson Equation \eqref{E:Poisson-equation-prelimit}, and the remainder $\mathsf{R}(t; \Psi)$ is defined in Equation \eqref{E:Pre-limit-Sec-2-remainder} below.
\begin{equation}\label{E:Pre-limit-Sec-2-remainder}
 \begin{aligned}
 \mathsf{R}(t; \Psi) & \triangleq \alpha_t \Xi_{t,t}^* \Psi(X_t, \theta_t) - \alpha_1 \Xi_{t,1}^* \Psi(X_1, \theta_1) -  \int_1^t  \frac{d \alpha_u}{du}  \Xi_{t,u}^* \Psi(X_u, \theta_u) \thinspace du \\
 & \qquad \quad - \int_1^t \alpha_u \frac{\partial \Xi_{t,u}^*}{\partial u}  \Psi(X_u, \theta_u) \thinspace du - \frac{1}{2} \int_1^t \alpha_u \Xi_{t,u}^* \bar{g}_{\theta \theta \theta}(\theta_u^1) (\theta_u-\theta^*)^2 \thinspace du \\
& \qquad \quad + \int_1^t \alpha_u^2 \Xi_{t,u}^* g_\theta(X_u, \theta_u) \Psi_\theta (X_u, \theta_u) \thinspace du  - \frac{1}{2}\int_1^t \alpha_u^3 \Xi_{t,u}^* [f_\theta (X_u, \theta_u)]^2 \Psi_{\theta \theta}(X_u, \theta_u) \thinspace du \\
& \qquad \quad - \frac{1}{2}\int_1^t \alpha_u^2 \Xi_{t,u}^* f_\theta (X_u, \theta_u) \Psi_{x \theta}(X_u, \theta_u) \thinspace du   -  \int_1^t \alpha_u^2 \Xi_{t,u}^* \Psi_{\theta}(X_u, \theta_u)f_\theta (X_u, \theta_u) \thinspace dW_u.
 \end{aligned}
 \end{equation}
Next, by the triangle inequality and a martingale moment inequality, we obtain
\begin{equation}\label{E:PrelimitExp-Sec-2-eq3}
\begin{aligned}
 \BE \left[ |\theta_t - \theta^*|^2 \right] & \le K \BE \left[ |\Xi_{t,1}^* (\theta_1 - \theta^*)|^2 \right]  + K \BE \left[\left|\mathsf{R}(t;\Psi)\right|^2 \right] \\
& \qquad \qquad \qquad \qquad \qquad \quad + K \BE \left[\int_1^t \alpha_u^2 (\Xi_{t,u}^*)^2 \left[ f_\theta(X_u, \theta_u) - \Psi_x(X_u, \theta_u) \right]^2 du \right].
\end{aligned}
\end{equation}

\begin{lemma}\label{L:Prelimit-Remainder-Sec-2}
Let $\Psi$ be the solution of the Poisson Equation \eqref{E:Poisson-equation-prelimit} and let the remainder $\mathsf{R}(t; \Psi)$ be defined in Equation \eqref{E:Pre-limit-Sec-2-remainder}. Then, for any $t \ge 1,$ and $C_{\bar{g}} C_\alpha > \frac{1}{2},$ there exists a time-independent positive constant $K$ such that
\begin{equation*}
  \BE \left[ |\Xi_{t,1}^* (\theta_1 - \theta^*)|^2 \right]  +  \BE \left[\left|\mathsf{R}(t;\Psi)\right|^2 \right] \le \begin{cases} \frac{K}{t^2},  & {C_{\bar{g}} C_\alpha > 1}\\ \frac{K}{t^2} +  \frac{K( \log t)^2 }{t^2}, & {C_{\bar{g}} C_\alpha = 1} \\ \frac{K}{t^{2C_{\bar{g}} C_\alpha}}, & \frac{1}{2} < C_{\bar{g}} C_\alpha < 1.\end{cases}
 \end{equation*}
\end{lemma}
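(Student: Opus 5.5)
The plan is to treat the initial-condition term and each of the nine summands of $\mathsf{R}(t;\Psi)$ in \eqref{E:Pre-limit-Sec-2-remainder} separately. Since $(a_1+\cdots+a_{10})^2 \le K\sum a_i^2$, it suffices to show that each summand has second moment bounded by the three-case rate in the statement. The basic tool is the deterministic bound $\Xi_{t,u}^* \le (u/t)^{C_{\bar{g}}C_\alpha}$. This follows from the strong convexity of $\bar{g}$ (Assumption \ref{A:Growth-f-g}) and $\alpha_u = C_\alpha/u$, as already used in \eqref{E:Prelimit-exp-J1} and Lemma \ref{L:Prelimit-exp-J-12}. We combine it with the polynomial growth of $\Psi$ and its derivatives (Section \ref{S:Poisson-Equation}) and the uniform moment bounds $\sup_t \BE|X_t|^m + \sup_t\BE|\theta_t|^n<\infty$.

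\textbf{Boundary terms.} For the initial-condition term, $\BE|\Xi_{t,1}^*(\theta_1-\theta^*)|^2 \le K t^{-2C_{\bar{g}}C_\alpha}$, which is at most the claimed rate in all three regimes. The boundary terms $\alpha_t\Psi(X_t,\theta_t)$ and $\alpha_1\Xi_{t,1}^*\Psi(X_1,\theta_1)$ give $K/t^2$ and $K t^{-2C_{\bar{g}}C_\alpha}$ respectively.

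\textbf{Deterministic-time integrals.} Every remaining Lebesgue integral has the form $\int_1^t \beta_u \Xi_{t,u}^* H_u\,du$, where $H_u$ is a polynomially growing function of $(X_u,\theta_u)$ and $\beta_u$ is one of $|\dot\alpha_u|$, $\alpha_u^2$, $\alpha_u^3$, or $\alpha_u\cdot\alpha_u|\bar g_{\theta\theta}|$. In each case $\beta_u\le K u^{-2}$. For the Taylor remainder term we instead use $\alpha_u$ together with $(\BE|\theta_u-\theta^*|^{8})^{1/4}\le K u^{-2}$ from \cite{siri_spilio_2020}. For the $\partial_u\Xi^*_{t,u}$ term, note that the derivative is $\alpha_u\bar g_{\theta\theta}(\theta^*)\Xi^*_{t,u}$, a deterministic factor. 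The squared integral is handled by Minkowski's inequality in $L^2(\Omega)$:
\[
\Big\| \int_1^t \beta_u\Xi^*_{t,u}H_u\,du\Big\|_{L^2} \le \int_1^t \beta_u \Big(\frac{u}{t}\Big)^{C_{\bar g}C_\alpha}\|H_u\|_{L^2}\,du \le \frac{K}{t^{C_{\bar g}C_\alpha}}\int_1^t u^{C_{\bar g}C_\alpha-2}\,du .
\]
This is exactly the integral evaluated in Lemma \ref{L:Poisson-equation-prelimit-J-34}. It equals $K/t$, $K\log t/t$, or $K t^{-C_{\bar g}C_\alpha}$ in the three regimes, and squaring gives the claimed bound.

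\textbf{Stochastic integral.} The last term is $\int_1^t \alpha_u^2\Xi^*_{t,u}\Psi_\theta f_\theta\,dW_u$. Since $\Xi^*_{t,u}$ is deterministic, Itô's isometry gives second moment at most
\[
K t^{-2C_{\bar g}C_\alpha}\int_1^t u^{2C_{\bar g}C_\alpha-4}\,du .
\]
This is $O(t^{-3})$ if $C_{\bar g}C_\alpha>3/2$, $O(\log t/t^3)$ at equality, and $O(t^{-2C_{\bar g}C_\alpha})$ otherwise. All of these are dominated by the stated rate.

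The only delicate point is the Taylor-remainder term. There $\bar g_{\theta\theta\theta}(\theta^1_u)$ is evaluated at an intermediate point, and we need the fourth moment of $(\theta_u-\theta^*)^2$ at rate $u^{-2}$. I would handle this with Cauchy–Schwarz, using $|\theta^1_u|\le|\theta_u|+|\theta^*|$ with Assumption \ref{A:Growth-f-g}(3), and the $L^p$ rate $\BE|\theta_u-\theta^*|^p\le K u^{-p/2}$ from \cite{siri_spilio_2020}. Summing all contributions completes the proof.
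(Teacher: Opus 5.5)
Your proposal is correct and takes the same approach as the paper. The paper's proof just invokes the triangle inequality and the term-by-term estimates of Lemmas \ref{L:Prelimit-exp-J-12}--\ref{L:Prelimit-exp-J3}; your Minkowski bound in $L^2(\Omega)$ with the deterministic estimate $\Xi^*_{t,u}\le K(u/t)^{C_{\bar g}C_\alpha}$, plus It\^o's isometry for the stochastic-integral term (which those expectation lemmas do not cover), supplies exactly the details the paper omits.

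One minor slip: $(\BE|\theta_u-\theta^*|^8)^{1/4}\le Ku^{-1}$, not $Ku^{-2}$. Multiplied by $\alpha_u=C_\alpha/u$, this still gives the $Ku^{-2}$ weight you need for the Taylor-remainder term.
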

\begin{proof}
The proof follows from the triangle inequality and the same arguments used in Lemmas \ref{L:Prelimit-exp-J-12}, \ref{L:Poisson-equation-prelimit-J-34}, \ref{L:Prelimit-exp-J-567}, and \ref{L:Prelimit-exp-J3} to bound the individual terms appearing in the definition of $\mathsf{R}(t;\Psi)$. For brevity, we omit the details.
\end{proof}

Combining Equations \eqref{E:Var} and \eqref{E:PrelimitExp-Sec-2-eq3} with Lemma \ref{L:Prelimit-Remainder-Sec-2}, we obtain
\begin{multline}\label{E:PrelimitExp-Sec-2-eq4}
{{\operatorname{Var}(\mathsf{F}_t)}} - {\bar{\Sigma}} \le K t \BE \left[\int_1^t \alpha_u^2 (\Xi_{t,u}^*)^2 \left[ f_\theta(X_u, \theta_u) - \Psi_x(X_u, \theta_u) \right]^2 du \right] - {\bar{\Sigma}} \\
+ \begin{cases} \frac{K}{t},  & {C_{\bar{g}} C_\alpha > 1}\\ \frac{K}{t} +  \frac{K( \log t)^2 }{t}, & {C_{\bar{g}}C_\alpha = 1} \\ \frac{K}{t^{2C_{\bar{g}} C_\alpha-1}}, & \frac{1}{2} < C_{\bar{g}} C_\alpha < 1.\end{cases}
\end{multline}
We now focus on estimating the term\footnote{In this term, we do not multiply by $t$, since this factor is already incorporated in the definitions of the processes $\bar{\Sigma}_t$, $\Sigma_t$, and $\bar{V}_t$ in Equation \eqref{E:Prelimit-Exp-Notations}.}
\( K \BE \left[\int_1^t \alpha_u^2 (\Xi_{t,u}^*)^2 \left[ f_\theta(X_u, \theta_u) - \Psi_x(X_u, \theta_u) \right]^2 du \right] - \bar{\Sigma} \)
in Equation \eqref{E:PrelimitExp-Sec-2-eq4}. To this end, we introduce the following processes, which will play a central role in the calculations below. Since (see Equation \eqref{E:Limiting-Variance}) $\bar{\Sigma} \triangleq C_\alpha^2 \int_0^\infty e^{-2s \left(C_\alpha \bar{g}_{\theta\theta}(\theta^*)- \frac{1}{2} \right)}\bar{h}(\theta^*) \thinspace ds,$ we define
\begin{equation}\label{E:Prelimit-Exp-Notations}
\begin{aligned}
h(x,\theta) & \triangleq \left[ f_\theta(x, \theta) - \Psi_x(x, \theta) \right]^2, \qquad \quad
\bar{\Sigma}_t  \triangleq t \int_1^t \alpha^2_u (\Xi_{t,u}^*)^2 \bar{h}(\theta^*) \thinspace du, \\
\Sigma_t & \triangleq t \int_1^t \alpha^2_u (\Xi_{t,u}^*)^2 h(X_u, \theta_u) \thinspace du, \qquad \quad   \bar{V}_t  \triangleq t \int_1^t \alpha^2_u (\Xi_{t,u}^*)^2 \bar{h}(\theta_u) \thinspace du.
\end{aligned}
\end{equation}
With these notations, adding and subtracting $\BE \bar{\Sigma}_t$ in Equation \eqref{E:PrelimitExp-Sec-2-eq4} yields
\begin{equation}\label{E:PrelimitExp-Sec-2-eq5}
{{\operatorname{Var}(\mathsf{F}_t)}} - {\bar{\Sigma}} \le K [\BE \Sigma_t - \BE \bar{\Sigma}_t ] + [K \thinspace \BE \bar{\Sigma}_t - \bar{\Sigma} ]
+ \begin{cases} \frac{K}{t},  & {C_{\bar{g}} C_\alpha > 1}\\ \frac{K}{t} +  \frac{K( \log t)^2 }{t}, & {C_{\bar{g}}C_\alpha = 1} \\ \frac{K}{t^{2C_{\bar{g}} C_\alpha-1}}, & \frac{1}{2} < C_{\bar{g}} C_\alpha < 1.\end{cases}
\end{equation}

\begin{lemma}\label{L:Prelimit-Sec-2-lem1}
Let the processes $\Sigma_t$ and $\bar{\Sigma}_t$ be defined in Equation \eqref{E:Prelimit-Exp-Notations}. Then, for any $t \ge 1,$ and $C_{\bar{g}} C_\alpha > \frac{1}{2},$ there exists a time-independent positive constant $K$ such that
$$\left|\BE \Sigma_t - \BE \bar{\Sigma}_t \right| \le \begin{cases} \frac{K+K (\log t)^2}{\sqrt{t}},  & {C_{\bar{g}} C_\alpha \ge \frac{3}{4}}\\  \frac{K}{t^{2C_{\bar{g}} C_\alpha-1}}, & \frac{1}{2} < C_{\bar{g}} C_\alpha < \frac{3}{4}.\end{cases}$$
\end{lemma}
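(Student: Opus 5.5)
We split $\BE \Sigma_t - \BE \bar{\Sigma}_t = \BE[\Sigma_t - \bar{V}_t] + \BE[\bar{V}_t - \bar{\Sigma}_t]$ and bound the two differences separately. The first is a fluctuation term in $X$ and is handled by a Poisson equation, as for $\mathscr{J}_2$. The second measures the distance of $\theta_u$ from $\theta^*$ and uses the $L^p$ convergence rate of \cite{siri_spilio_2020}. Throughout, the weight $\alpha_u^2(\Xi^*_{t,u})^2 \le K u^{2C_{\bar g}C_\alpha-2}t^{-2C_{\bar g}C_\alpha}$ (strong convexity) does the bookkeeping.

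\emph{Step 1: the fluctuation difference.} Introduce $\chi$ solving $\mathscr{L}_x\chi(x,\theta) = h(x,\theta)-\bar h(\theta)$. By the results of Section \ref{S:Poisson-Equation}, $\chi$ and its derivatives grow polynomially. Apply It\^o's formula to $\alpha_s^2(\Xi^*_{t,s})^2\chi(X_s,\theta_s)$ on $[1,t]$, exactly as in the derivation of \eqref{E:Poisson-equation-prelimit-J}, and take expectations so that the stochastic integrals vanish. The time-derivative term contributes $2\alpha_s\dot\alpha_s(\Xi^*)^2 + 2\alpha_s^2\Xi^*\partial_s\Xi^*$, both of order $s^{-3}\cdot s^{2C_{\bar g}C_\alpha}t^{-2C_{\bar g}C_\alpha}$. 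The terms coming from the $\theta$-dynamics carry extra factors $\alpha_s$ or $\alpha_s^2$, which gives the same order. The boundary terms are $O(t^{-2})$ and $O(t^{-2C_{\bar g}C_\alpha})$. Integrating $s^{2C_{\bar g}C_\alpha-3}$ against $t^{-2C_{\bar g}C_\alpha}$ and multiplying by $t$ yields $K/t$ when $C_{\bar g}C_\alpha>1$, $K\log t/t$ at equality, and $K t^{1-2C_{\bar g}C_\alpha}$ otherwise. All of these are dominated by the claimed rates.

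\emph{Step 2: the $\theta$-difference.} Write $\bar h(\theta_u)-\bar h(\theta^*) = \bar h_\theta(\tilde\theta_u)(\theta_u-\theta^*)$ by the mean value theorem, with $\bar h_\theta$ of polynomial growth. By Cauchy--Schwarz, the uniform moment bounds and $\BE|\theta_u-\theta^*|^2\le K/u$ \cite[Theorem 1]{siri_spilio_2020}, we get
\[
|\BE \bar V_t-\BE\bar\Sigma_t|\le K t^{1-2C_{\bar g}C_\alpha}\int_1^t u^{2C_{\bar g}C_\alpha-5/2}\,du.
\]
If $2C_{\bar g}C_\alpha-3/2>-1$, i.e.\ $C_{\bar g}C_\alpha>3/4$, this is $K/\sqrt t$. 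At $C_{\bar g}C_\alpha=3/4$ it is $K\log t/\sqrt t$. For $1/2<C_{\bar g}C_\alpha<3/4$ the integral is bounded, so the term is $K t^{1-2C_{\bar g}C_\alpha}$. This matches the threshold $3/4$ in the statement, and $(\log t)^2$ is a safe over-bound.

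\emph{Main obstacle.} The delicate step is Step 1. One must verify that $h-\bar h$ is centered and regular enough (H\"older in $x$, smooth in $\theta$) for $\chi$ to exist with polynomial growth. One must also keep the It\^o correction terms involving $\chi_{x\theta}$, $\chi_{\theta\theta}$ and the random $\partial_s\Xi^*$ (if $\Xi^*$ is taken along $\theta_s^1$, as in the proof of Lemma \ref{L:Poisson-equation-prelimit-J-34}) within the same $s^{2C_{\bar g}C_\alpha-3}$ envelope. If only $s^{2C_{\bar g}C_\alpha-2}$ could be obtained for some term, the resulting bound would be $O(\log t)$ and useless. I would therefore first check that every such term has at least an $\alpha_s^3$ prefactor, as in Lemmas \ref{L:Prelimit-exp-J-567}.
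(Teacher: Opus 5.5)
Your proposal is correct and follows essentially the same proof as the paper. It uses the same splitting through $\bar V_t$, the same Poisson equation for $h-\bar h$ with It\^o's formula applied to $\alpha_u^2(\Xi^*_{t,u})^2\Upsilon(X_u,\theta_u)$, and the same mean-value and Cauchy--Schwarz estimate with $\BE|\theta_u-\theta^*|^2\le K/u$, which yields the threshold $C_{\bar g}C_\alpha=3/4$; the extra $\alpha_u^3$ prefactor you wanted to check does hold for every remainder term, since $\Xi^*$ is built from the deterministic $\bar g_{\theta\theta}(\theta^*)$ (and, as a minor aside, a term with only an $s^{2C_{\bar g}C_\alpha-2}$ envelope would contribute $O(1)$ rather than $O(\log t)$).
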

\begin{proof}
We begin by adding and subtracting $\BE \bar{V}_t$ in $[\BE \Sigma_t - \BE \bar{\Sigma}_t ]$ to obtain
\begin{equation}\label{E:Prelimit-Sec-2-eq6}
[\BE \Sigma_t - \BE \bar{\Sigma}_t ] = \BE [\Sigma_t - \bar{V}_t] + \BE [\bar{V}_t - \bar{\Sigma}_t] \triangleq  L_1(t) + L_2(t).
\end{equation}

We first consider the term $L_1(t)$ in \eqref{E:Prelimit-Sec-2-eq6}. To control it, we introduce the Poisson equation (where $h(x,\theta)$ is defined in \eqref{E:Prelimit-Exp-Notations})
\begin{equation}\label{E:Prelimit-exp-Sec2-Eq7-Poisson}
\mathscr{L}_x \Upsilon(x, \theta) = \mathtt{H}(x, \theta) \triangleq h(x,\theta)- \bar{h}(\theta).
\end{equation}
Applying It\^o's formula to the function $\varphi(u, \Upsilon) \triangleq \alpha_u^2 (\Xi_{t,u}^*)^2 \Upsilon(X_u, \theta_u)$ (where $\Upsilon$ solves \eqref{E:Prelimit-exp-Sec2-Eq7-Poisson}), we obtain
\begin{equation*}
\begin{aligned}
\alpha_t^2 (\Xi_{t,t}^*)^2 \Upsilon(X_t, \theta_t) & = \alpha_1^2 (\Xi_{t,1}^*)^2 \Upsilon(X_1, \theta_1) + \int_1^t  \frac{d \alpha_u^2}{du} (\Xi_{t,u}^*)^2 \Upsilon(X_u, \theta_u) \thinspace du \\
& + \int_1^t  \alpha_u^2 \frac{\partial}{\partial u} (\Xi_{t,u}^*)^2 \Upsilon(X_u, \theta_u) \thinspace du + \int_1^t  \alpha_u^2 (\Xi_{t,u}^*)^2 \mathscr{L}_x \Upsilon(X_u, \theta_u) \thinspace du \\
& - \int_1^t  \alpha_u^3 (\Xi_{t,u}^*)^2 g_\theta(X_u, \theta_u) \Upsilon_\theta (X_u, \theta_u) \thinspace du  + \frac{1}{2} \int_1^t  \alpha_u^4 (\Xi_{t,u}^*)^2 [f_\theta(X_u, \theta_u)]^2 \Upsilon_{\theta \theta} (X_u, \theta_u) \thinspace du \\
& + \frac{1}{2} \int_1^t  \alpha_u^3 (\Xi_{t,u}^*)^2 f_\theta(X_u, \theta_u) \Upsilon_{x \theta} (X_u, \theta_u) \thinspace du +  \int_1^t  \alpha_u^2 (\Xi_{t,u}^*)^2  \Upsilon_{x} (X_u, \theta_u) \thinspace dW_u \\
& +  \int_1^t  \alpha_u^3 (\Xi_{t,u}^*)^2 f_\theta(X_u, \theta_u) \Upsilon_{\theta} (X_u, \theta_u) \thinspace dW_u.
\end{aligned}
\end{equation*}
Combining this identity with \eqref{E:Prelimit-exp-Sec2-Eq7-Poisson} yields
\begin{equation*}
\begin{aligned}
L_1(t) & =  \BE [\Sigma_t - \bar{V}_t] = t \BE \int_1^t \alpha_u^2 (\Xi_{t,u}^*)^2 \left[h(X_u, \theta_u) - \bar{h}(\theta_u)\right] du = t \BE \int_1^t \alpha_u^2 (\Xi_{t,u}^*)^2 \mathscr{L}_x \Upsilon(X_u, \theta_u) \thinspace du \\
  & = t \alpha_t^2 \BE[ (\Xi_{t,t}^*)^2 \Upsilon(X_t, \theta_t) ] - t \alpha_1^2 \BE[ (\Xi_{t,1}^*)^2 \Upsilon(X_1, \theta_1) ] - t \BE \int_1^t \frac{d \alpha_u^2}{du}  (\Xi_{t,u}^*)^2 \Upsilon(X_u, \theta_u) \thinspace du \\
& \qquad - t \BE \int_1^t  \alpha_u^2 \frac{\partial}{\partial u} (\Xi_{t,u}^*)^2 \Upsilon(X_u, \theta_u) \thinspace du + t \BE \int_1^t  \alpha_u^3 (\Xi_{t,u}^*)^2 g_\theta(X_u, \theta_u) \Upsilon_\theta (X_u, \theta_u) \thinspace du \\
& \qquad   - \frac{t}{2} \BE \int_1^t  \alpha_u^4 (\Xi_{t,u}^*)^2 [f_\theta(X_u, \theta_u)]^2 \Upsilon_{\theta \theta} (X_u, \theta_u) \thinspace du  - \frac{t}{2} \BE  \int_1^t  \alpha_u^3 (\Xi_{t,u}^*)^2 f_\theta(X_u, \theta_u) \Upsilon_{x \theta} (X_u, \theta_u) \thinspace du.
\end{aligned}
\end{equation*}
Using the polynomial growth of $\Upsilon$ and the uniform moment bounds for $\theta_t$ and $X_t$ \cite{pardoux2001poisson, siri_spilio_2020}, and by repeating the same algebra as in Lemma \ref{L:Prelimit-Remainder-Sec-2}, we conclude that
\begin{equation}\label{E:Prelimit-exp-Sec2-Eq8}
 |\BE [\Sigma_t - \bar{V}_t]| \le \begin{cases} \frac{K}{t},  & {C_{\bar{g}} C_\alpha > 1}\\ \frac{K}{t} +  \frac{K( \log t)^2 }{t}, & {C_{\bar{g}}C_\alpha = 1} \\ \frac{K}{t^{2C_{\bar{g}} C_\alpha-1}}, & \frac{1}{2} < C_{\bar{g}}C_\alpha < 1.\end{cases}
 \end{equation}

We now turn to the term $L_2(t)$ in Equation \eqref{E:Prelimit-Sec-2-eq6}. By Taylor's theorem,
$L_2(t) = \BE [\bar{V}_t - \bar{\Sigma}_t] = t \BE \int_1^t \alpha^2_u (\Xi_{t,u}^*)^2 \left[\bar{h}(\theta_u) - \bar{h}(\theta^*)  \right] du = t \BE \int_1^t \alpha^2_u (\Xi_{t,u}^*)^2 \bar{h}_\theta (\theta_u^1) (\theta_u - \theta^*) \thinspace  du. $
Applying H{\"o}lder's inequality and the bound $\BE |\theta_t - \theta^*|^2 \le \frac{K}{t}$ \cite{siri_spilio_2020}, we obtain
\begin{equation}\label{E:Prelimit-exp-Sec2-Eq9}
\begin{aligned}
 |\BE [\bar{V}_t - \bar{\Sigma}_t] |  &  \le t \int_1^t \alpha^2_u \left( \frac{u}{t} \right)^{2C_{\bar{g}}C_\alpha} \left\{ \BE |\bar{h}_\theta (\theta_u^1)|^2 \right\}^{\frac{1}{2}} \left\{ \BE (\theta_u -\theta^*)^2 \right\}^{\frac{1}{2}} du \\
& \qquad \qquad \qquad \quad \le \frac{K}{t^{2C_{\bar{g}} C_\alpha-1}} \int_1^t u^{2C_{\bar{g}} C_\alpha-\frac{5}{2}} du
 \le \begin{cases} \frac{K}{\sqrt{t}},  & {C_{\bar{g}}C_\alpha > \frac{3}{4}}\\   \frac{K \log t }{\sqrt{t}}, & {C_{\bar{g}} C_\alpha = \frac{3}{4}} \\ \frac{K}{t^{2C_{\bar{g}}C_\alpha-1}} & \frac{1}{2} < C_{\bar{g}}C_\alpha < \frac{3}{4}.\end{cases}
\end{aligned}
\end{equation}
Combining \eqref{E:Prelimit-Sec-2-eq6}, \eqref{E:Prelimit-exp-Sec2-Eq8}, and \eqref{E:Prelimit-exp-Sec2-Eq9} yields the desired bound.
\end{proof}

\begin{lemma}\label{L:Prelimit-Sec-2-lem2}
Let the process $\bar{\Sigma}_t$ be defined in Equation \eqref{E:Prelimit-Exp-Notations} and the limiting variance $\bar{\Sigma}$ is defined in Equation \eqref{E:Limiting-Variance}. Then, for any $t \ge 1,$ and $C_{\bar{g}} C_\alpha > \frac{1}{2},$ there exists a time-independent positive constant $K$ such that
$$\left|\BE \bar{\Sigma}_t - \bar{\Sigma} \right| \le \frac{K}{t^{2C_{\bar{g}} C_\alpha -1}}.$$
\end{lemma}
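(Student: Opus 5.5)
This is a deterministic computation once we note that $\bar{\Sigma}_t$ is not random: $\Xi_{t,u}^* = e^{-\int_u^t \alpha_v \bar{g}_{\theta\theta}(\theta^*)\,dv}$ involves only the constant $\bar{g}_{\theta\theta}(\theta^*)$, so $\BE \bar{\Sigma}_t = \bar{\Sigma}_t$. Write $\lambda \triangleq C_\alpha \bar{g}_{\theta\theta}(\theta^*)$, which satisfies $\lambda \ge C_{\bar{g}}C_\alpha > \frac12$, and use $\alpha_u = C_\alpha/u$ (with $C_0$ absorbed as elsewhere in this section). Then $\Xi_{t,u}^* = (u/t)^{\lambda}$, and
\[
\bar{\Sigma}_t = t\, C_\alpha^2 \bar{h}(\theta^*) \int_1^t u^{-2} \Big(\frac{u}{t}\Big)^{2\lambda} du = \frac{C_\alpha^2 \bar{h}(\theta^*)}{2\lambda-1}\Big(1 - t^{1-2\lambda}\Big),
\]
using $\int_1^t u^{2\lambda-2}\,du = (t^{2\lambda-1}-1)/(2\lambda-1)$, which is valid since $2\lambda-1>0$.

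Next we evaluate the limiting variance \eqref{E:Limiting-Variance} explicitly (with $\sigma=1$): $\bar{\Sigma} = C_\alpha^2 \bar{h}(\theta^*) \int_0^\infty e^{-2s(\lambda - \frac12)}\,ds = \frac{C_\alpha^2 \bar{h}(\theta^*)}{2\lambda -1}$. Subtracting gives the exact identity
\[
\bar{\Sigma}_t - \bar{\Sigma} = -\frac{C_\alpha^2\bar{h}(\theta^*)}{2\lambda-1}\, t^{-(2\lambda-1)}.
\]
Since $\lambda \ge C_{\bar{g}}C_\alpha$ and $t\ge1$, we have $t^{-(2\lambda-1)} \le t^{-(2C_{\bar{g}}C_\alpha-1)}$. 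The prefactor is a finite time-independent constant: $\bar{h}(\theta^*)$ is finite because $h$ has polynomial growth, the Poisson solution $\Psi_x$ grows polynomially, and $\mu$ has all moments. This yields the claim.

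The only point that needs care is the normalization of $\alpha_u$ and the factor $t$ in $\bar{\Sigma}_t$. If one keeps $\alpha_u = C_\alpha/(C_0+u)$ instead of $C_\alpha/u$, the integral becomes $t\int_1^t C_\alpha^2 (C_0+u)^{-2}\big((C_0+u)/(C_0+t)\big)^{2\lambda}du$. The substitution $v=C_0+u$ gives $\frac{t}{C_0+t}\cdot\frac{C_\alpha^2\bar h(\theta^*)}{2\lambda-1}\big(1-((C_0+1)/(C_0+t))^{2\lambda-1}\big)$. This introduces an extra error of order $C_0/t$, which is not of the claimed form when $2C_{\bar{g}}C_\alpha-1>1$. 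I would therefore follow the paper's convention $C_0=0$ on $[1,t]$, stated in the footnote to the proof of Lemma \ref{L:Integrating-Factor-first-der}, under which the identity above is exact. This normalization issue is the main (minor) obstacle; otherwise the proof is a direct computation.
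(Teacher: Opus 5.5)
Your proof is correct and follows the same route as the paper: compute $\bar{\Sigma}_t$ by direct integration with $\alpha_u = C_\alpha/u$, and evaluate $\bar{\Sigma}$ in closed form. Your version is actually more careful than the paper's write-up, which replaces $\bar{g}_{\theta\theta}(\theta^*)$ by $C_{\bar{g}}$ separately in each term (an upper bound for $\bar{\Sigma}_t$, and an equality for $\bar{\Sigma}$ that holds only if $\bar{g}_{\theta\theta}(\theta^*) = C_{\bar{g}}$); you instead keep $\lambda = C_\alpha \bar{g}_{\theta\theta}(\theta^*)$ exact, derive the identity $\bar{\Sigma}_t - \bar{\Sigma} = -\frac{C_\alpha^2\bar{h}(\theta^*)}{2\lambda-1}\,t^{-(2\lambda-1)}$, and only then use $\lambda \ge C_{\bar{g}}C_\alpha$, and your remark that the stated rate requires the convention $C_0 = 0$ is also accurate.
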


\begin{proof}
Recalling the definitions of $\bar{\Sigma}_t$ and $\bar{\Sigma}$, we have
\begin{equation}\label{E:Prelimit-exp-Sec2-Eq10}
\BE \bar{\Sigma}_t - \bar{\Sigma} = t \BE \int_1^t \alpha^2_u (\Xi_{t,u}^*)^2 \bar{h}(\theta^*) \thinspace du - C_\alpha^2 \int_0^\infty e^{-2u \left(C_\alpha \bar{g}_{\theta\theta}(\theta^*)- \frac{1}{2} \right)}\bar{h}(\theta^*) \thinspace du.
\end{equation}
We first examine the term \( t \BE \int_1^t \alpha^2_u (\Xi_{t,u}^*)^2 \bar{h}(\theta^*) \thinspace du \). Using the definition of the learning rate $\alpha_t$ and the process $\Xi_{t,u}^*$, followed by a straightforward integration, we obtain
\begin{equation}\label{E:Prelimit-exp-Sec2-Eq11}
\begin{aligned}
t \BE \int_1^t \alpha^2_u (\Xi_{t,u}^*)^2 \bar{h}(\theta^*) \thinspace du & \le \frac{C_\alpha^2}{t^{2C_{\bar{g}} C_\alpha-1}} \int_1^t u^{2C_{\bar{g}} C_\alpha-2} \bar{h}(\theta^*) \thinspace du  = \frac{C_\alpha^2 \bar{h}(\theta^*)}{2C_{\bar{g}} C_\alpha-1}  \left[1 - \frac{1}{t^{2C_{\bar{g}}C_\alpha -1 }} \right].
\end{aligned}
\end{equation}
Next, for the second term $C_\alpha^2 \int_0^\infty e^{-2u \left(C_\alpha \bar{g}_{\theta\theta}(\theta^*)- \frac{1}{2} \right)}\bar{h}(\theta^*) \thinspace du$, strong convexity of $\bar{g}$ from Assumption \ref{A:Growth-f-g} implies
\begin{equation}\label{E:Prelimit-exp-Sec2-Eq12}
\begin{aligned}
C_\alpha^2 \int_0^\infty e^{-2u \left(C_\alpha \bar{g}_{\theta\theta}(\theta^*)- \frac{1}{2} \right)}\bar{h}(\theta^*) \thinspace du & = C_\alpha^2 \bar{h}(\theta^*) \int_0^\infty e^{-2u \left(C_\alpha \bar{g}_{\theta\theta}(\theta^*)- \frac{1}{2} \right)} du = \frac{C_\alpha^2 \bar{h}(\theta^*)}{2C_{\bar{g}} C_\alpha -1}.
\end{aligned}
\end{equation}
Combining \eqref{E:Prelimit-exp-Sec2-Eq10}, \eqref{E:Prelimit-exp-Sec2-Eq11}, and \eqref{E:Prelimit-exp-Sec2-Eq12} yields the desired result.
\end{proof}

\begin{proposition}\label{P:Prelimit-Sec-2-P1}
Let the process $\mathsf{F}_t \triangleq \sqrt{t}(\theta_t-\theta^*)$ and the limiting variance $\bar{\Sigma}$ is defined in Equation \eqref{E:Limiting-Variance}. Then, for any $t \ge 1,$ and $C_{\bar{g}} C_\alpha > \frac{1}{2},$ there exists a time-independent positive constant $K$ such that
\begin{equation*}
\mathbb{E} \left( |{\mathsf{F}_t}| \right)\left|{1-\sqrt{\frac{\bar{\Sigma}}{\operatorname{Var}(\mathsf{F}_t)}}}\right| \le  \frac{\mathbb{E} \left( |{\theta_t-\theta^*}| \right)}{\sqrt{{\operatorname{Var}(\theta_t-\theta^*)}}} \times  \begin{cases} \frac{K+K \log t}{{t}^{\frac{1}{4}}},  & {C_{\bar{g}} C_\alpha \ge \frac{3}{4}}\\  \frac{K}{t^{C_{\bar{g}} C_\alpha- \frac{1}{2}}}, & \frac{1}{2} < C_{\bar{g}}C_\alpha < \frac{3}{4}.\end{cases}
\end{equation*}
\end{proposition}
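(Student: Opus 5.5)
The plan starts from the identity \eqref{E:Prelimit-Sec2-eq1}. It reduces the claim to bounding $\left|\sqrt{\operatorname{Var}(\mathsf{F}_t)}-\sqrt{\bar{\Sigma}}\right|$ by the displayed rate, since the prefactor $\mathbb{E}|\theta_t-\theta^*|/\sqrt{\operatorname{Var}(\theta_t-\theta^*)}$ is carried along unchanged. We use the elementary inequality
\[
\left|\sqrt{a}-\sqrt{b}\right| \le \frac{|a-b|}{\sqrt{b}} \qquad \text{and also} \qquad \left|\sqrt{a}-\sqrt{b}\right|\le \sqrt{|a-b|}
\]
for $a,b\ge0$, with $b=\bar{\Sigma}>0$ a fixed constant. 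This reduces the problem to controlling $|\operatorname{Var}(\mathsf{F}_t)-\bar{\Sigma}|$. The target rate $t^{-1/4}\log t$ suggests using the square-root form $\sqrt{|a-b|}$ together with a bound $|\operatorname{Var}(\mathsf{F}_t)-\bar\Sigma|\lesssim (\log t)^2 t^{-1/2}$. In the regime $\tfrac12<C_{\bar g}C_\alpha<\tfrac34$, the same bound gives $\sqrt{t^{-(2C_{\bar g}C_\alpha-1)}}=t^{-(C_{\bar g}C_\alpha-\frac12)}$.

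To bound $\operatorname{Var}(\mathsf{F}_t)-\bar\Sigma$ we combine \eqref{E:PrelimitExp-Sec-2-eq5} with Lemmas \ref{L:Prelimit-Sec-2-lem1} and \ref{L:Prelimit-Sec-2-lem2}. Lemma \ref{L:Prelimit-Sec-2-lem1} gives $|\BE\Sigma_t-\BE\bar\Sigma_t|\lesssim (1+(\log t)^2)t^{-1/2}$ for $C_{\bar g}C_\alpha\ge\frac34$, and $t^{-(2C_{\bar g}C_\alpha-1)}$ otherwise. Lemma \ref{L:Prelimit-Sec-2-lem2} gives $|\BE\bar\Sigma_t-\bar\Sigma|\lesssim t^{-(2C_{\bar g}C_\alpha-1)}$. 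The remainder term in \eqref{E:PrelimitExp-Sec-2-eq5} is at most $K(1+(\log t)^2)/t$ or $t^{-(2C_{\bar g}C_\alpha-1)}$. We compare exponents case by case. When $C_{\bar g}C_\alpha\ge\frac34$ we have $2C_{\bar g}C_\alpha-1\ge\frac12$, so every contribution is $O((1+(\log t)^2)t^{-1/2})$. When $\frac12<C_{\bar g}C_\alpha<\frac34$ the dominant term is $t^{-(2C_{\bar g}C_\alpha-1)}$. Taking square roots and using $\sqrt{K+K(\log t)^2}\le K+K\log t$ then yields exactly the two displayed cases.

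The main obstacle is that \eqref{E:PrelimitExp-Sec-2-eq5} as written is only a one-sided upper bound with generic constants $K$ in front of $\BE\Sigma_t$ and $\BE\bar\Sigma_t$, while we need a two-sided estimate on $|\operatorname{Var}(\mathsf{F}_t)-\bar\Sigma|$ with the constant exactly $1$ in front of $\bar\Sigma$. My first attempt would be to redo the expansion exactly rather than by the triangle inequality. From the decomposition $\theta_t-\theta^*=\Xi^*_{t,1}(\theta_1-\theta^*)+M_t+\mathsf R(t;\Psi)$, with $M_t$ the stochastic integral, the Itô isometry gives $t\BE M_t^2=\BE\Sigma_t$ exactly. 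The cross terms $t\BE[M_t\mathsf R]$ and $t\BE[M_t\,\Xi^*_{t,1}(\theta_1-\theta^*)]$ are then handled by Cauchy--Schwarz using $t\BE M_t^2\le K$ and Lemma \ref{L:Prelimit-Remainder-Sec-2}. This gives a cross-term error of order $\sqrt{t}\cdot t^{-1}=t^{-1/2}$, or $t^{1/2-C_{\bar g}C_\alpha}$ in the slow regime. The squared-mean term $t|\BE(\theta_t-\theta^*)|^2$ is controlled by Proposition \ref{P:Pre-limit-Expectation-P-1}'s ingredients, namely $|\BE(\theta_t-\theta^*)|\lesssim t^{-1}$ or $t^{-C_{\bar g}C_\alpha}$. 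After this, $\operatorname{Var}(\mathsf{F}_t)-\bar\Sigma=(\BE\Sigma_t-\BE\bar\Sigma_t)+(\BE\bar\Sigma_t-\bar\Sigma)+\text{errors}$ holds as a genuine identity up to errors of the same order as above. The only rate that changes is the slow-regime cross term $t^{1/2-C_{\bar g}C_\alpha}$. Its square root is worse than $t^{-(C_{\bar g}C_\alpha-\frac12)}$, so for that regime I would instead use the form $|a-b|/\sqrt b$. That form turns the cross term $t^{-(C_{\bar g}C_\alpha-1/2)}$ directly into the claimed rate, and the $t^{-(2C_{\bar g}C_\alpha-1)}$ terms are even smaller.
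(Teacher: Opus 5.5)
Your proposal is correct. It shares the paper's skeleton: the identity \eqref{E:Prelimit-Sec2-eq1}, the reduction to $|\operatorname{Var}(\mathsf{F}_t)-\bar{\Sigma}|$, and Lemmas \ref{L:Prelimit-Sec-2-lem1} and \ref{L:Prelimit-Sec-2-lem2}. The paper then simply quotes \eqref{E:PrelimitExp-Sec-2-eq5} and takes a square root, which is where $(\log t)^2t^{-1/2}$ becomes $t^{-1/4}\log t$. That route is short.

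Where you genuinely differ is in how the variance estimate is obtained. You are right that \eqref{E:PrelimitExp-Sec-2-eq5} is only one-sided. Because of the triangle inequality in \eqref{E:PrelimitExp-Sec-2-eq3}, it also carries a generic $K$ in front of $\BE\Sigma_t$ and $\BE\bar{\Sigma}_t$. So on its own it does not deliver a decaying bound on $|\operatorname{Var}(\mathsf{F}_t)-\bar{\Sigma}|$.

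Your exact expansion repairs this. With $A=\Xi^*_{t,1}(\theta_1-\theta^*)$ you write $t\BE(\theta_t-\theta^*)^2=\BE\Sigma_t+t\BE(A+\mathsf{R})^2+2t\BE[M_t(A+\mathsf{R})]$. The main term comes from the It\^o isometry, and the rest from Cauchy--Schwarz with $\BE\Sigma_t\le K$ and Lemma \ref{L:Prelimit-Remainder-Sec-2}. This gives a genuine two-sided estimate with coefficient exactly one in front of $\bar{\Sigma}$.

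The price is a cross term of order $t^{1/2-C_{\bar{g}}C_\alpha}$ when $C_{\bar{g}}C_\alpha<1$, which is invisible in the paper's argument. It forces the linear inequality $|\sqrt{a}-\sqrt{\bar{\Sigma}}|\le |a-\bar{\Sigma}|/\sqrt{\bar{\Sigma}}$. You must use that form on the whole range $\frac12<C_{\bar{g}}C_\alpha<1$, not only below $\frac34$. At $C_{\bar{g}}C_\alpha=\frac34$ the square-root form would turn the cross term $t^{-1/4}$ into $t^{-1/8}$, which exceeds the claimed $(K+K\log t)t^{-1/4}$.

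The cleanest choice is the linear form in every regime. It reproduces the stated bound everywhere and is sharper when $C_{\bar{g}}C_\alpha>\frac34$, so the $t^{-1/4}$ in this estimate is only an artifact of the square-root step.

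Two small simplifications are available:
\begin{itemize}
\item $\BE[M_t(\theta_1-\theta^*)]=0$ exactly, since $M_t$ is an It\^o integral over $[1,t]$ and $\theta_1$ is determined at time $1$.
\item $t(\BE(\theta_t-\theta^*))^2=t(\BE(A+\mathsf{R}))^2\le t\BE(A+\mathsf{R})^2$ because $\BE M_t=0$, so you do not need the ingredients of Proposition \ref{P:Pre-limit-Expectation-P-1}.
\end{itemize}
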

\begin{proof}
From the definition of ${\mathsf{F}_t}$, we obtain
$$\mathbb{E} \left( |{\mathsf{F}_t}| \right)\left|{1-\sqrt{\frac{\bar{\Sigma}}{\operatorname{Var}(\mathsf{F}_t)}}}\right|  = \frac{\mathbb{E} \left( |{\theta_t-\theta^*}| \right)}{\sqrt{{\operatorname{Var}(\theta_t-\theta^*)}}}\left| \sqrt{{\operatorname{Var}(\mathsf{F}_t)}} - \sqrt{\bar{\Sigma}} \right| .$$
Combining Equation \eqref{E:PrelimitExp-Sec-2-eq5} with Lemmas \ref{L:Prelimit-Sec-2-lem1} and \ref{L:Prelimit-Sec-2-lem2}, and performing straightforward algebra, yields the stated rate and completes the proof.
\end{proof}

\section{Proofs of auxiliary results and Preliminaries}\label{S:Appendix}
\subsection{Proofs of auxiliary results: Lemmas \ref{L:2-der-moments} through \ref{L:I-8-Second-der}} \label{S:Aux-Results}
\begin{proof}[Proof of Lemma \ref{L:2-der-moments}]
The proof follows by combining Lemmas \ref{L:Sec-der-initial-condition}, \ref{L:Second-der-Gamma-g}, \ref{L:Second-der-Gamma-f}, and \ref{L:Second-der-Gamma-ff} to control the terms $\BE \left[ (\eta^*_{t, r_1 \vee r_2})^{2p}\gamma(X_{r_1}, X_{r_2}, \theta_{r_1}, \theta_{r_2})^{2p} \right]$, $\BE \left[ \left( \int_{r_1 \vee r_2}^t \alpha_u \eta^*_{t,u} \Gamma^g (X_u, \theta_u) \thinspace du \right)^{2p} \right]$,\\ $\BE \left[ \left( \int_{r_1 \vee r_2}^t \alpha_u \eta^*_{t,u} \Gamma^f (X_u, \theta_u) \thinspace dW_u \right)^{2p} \right]$, and $\BE \left[ \left( \int_{r_1 \vee r_2}^t \alpha_u^2 \eta^*_{t,u} f_{\theta \theta}(X_u,\theta_u) \Gamma^f (X_u, \theta_u) \thinspace du \right)^{2p} \right]$, respectively.
\end{proof}

\begin{proof}[Proof of Lemma \ref{L:I-1-Second-Int}]
For $C_{\bar{g}}C_\alpha \ge 1,$ using integration by parts (Lemma \ref{L:Integration-f-g-intermediate} with $\mathsf{D}= 2C_{\bar{g}} C_\alpha-2, 3C_{\bar{g}} C_\alpha-3$ and $4C_{\bar{g}} C_\alpha-4$), together with straightforward algebra, we obtain
\begin{equation*}
\begin{aligned}
\mathscr{I}_1(t; C_{\bar{g}} C_\alpha)
& =  \frac{K}{t^{4C_{\bar{g}}C_\alpha}} \int_{u=t^*}^t \int_{s=u}^t \int_{w=s}^t s^{C_{\bar{g}} C_\alpha-1}w^{C_{\bar{g}}C_\alpha-1}e^{2C^*u} \left( \int_{r=w}^t r^{2C_{\bar{g}} C_\alpha-2}e^{-2C^*r}\thinspace dr \right)\thinspace dw \thinspace ds \thinspace du \\
& \le  \frac{K}{t^{4C_{\bar{g}}C_\alpha}} \int_{u=t^*}^t \int_{s=u}^t \int_{w=s}^t s^{C_{\bar{g}} C_\alpha-1}w^{C_{\bar{g}} C_\alpha-1}e^{2C^*u} w^{2C_{\bar{g}} C_\alpha-2}e^{-2C^*w} \thinspace dw \thinspace ds \thinspace du\\
& = \frac{K}{t^{4C_{\bar{g}} C_\alpha}} \int_{u=t^*}^t \int_{s=u}^t \int_{w=s}^t s^{C_{\bar{g}} C_\alpha-1}w^{3C_{\bar{g}} C_\alpha-3}e^{2C^*u} e^{-2C^*w} \thinspace dw \thinspace ds \thinspace du\\
& \le \frac{K}{t^{4C_{\bar{g}} C_\alpha}} \int_{u=t^*}^t \int_{s=u}^t  s^{4C_{\bar{g}} C_\alpha-4}e^{2C^*u} e^{-2C^*s} \thinspace ds \thinspace du  \le \frac{K}{t^{4C_{\bar{g}} C_\alpha}} \int_{u=t^*}^t u^{4C_{\bar{g}}C_\alpha-4} \thinspace du \\
& \le  \frac{K}{t^{4C_{\bar{g}} C_\alpha}} \left[ t^{4C_{\bar{g}}C_\alpha-3} - t^{* 4C_{\bar{g}}C_\alpha-3} \right] \le \frac{K}{t^3}.
\end{aligned}
\end{equation*}
Now, for $ \frac{1}{2} < C_{\bar{g}} C_\alpha < 1$, using the relation $\frac{1}{r} \le \frac{1}{w} \le \frac{1}{s} \le \frac{1}{u},$ we obtain
\begin{equation}\label{E:I-11-Second-int}
\begin{aligned}
\mathscr{I}_1(t; C_{\bar{g}} C_\alpha) & =  \frac{K}{t^{4C_{\bar{g}} C_\alpha}} \int_{u=t^*}^t \int_{s=u}^t \int_{w=s}^t \int_{r=w}^t  r^{2C_{\bar{g}}C_\alpha-2}s^{C_{\bar{g}}C_\alpha-1} w^{C_{\bar{g}}C_\alpha-1}e^{-2C^*r+ 2C^*u} \thinspace dr \thinspace dw \thinspace ds \thinspace du\\
& \le  \frac{K}{t^{4C_{\bar{g}}C_\alpha}} \int_{u=t^*}^t \int_{s=u}^t \int_{w=s}^t s^{C_{\bar{g}}C_\alpha-1}w^{C_{\bar{g}} C_\alpha-1}e^{2C^*u} w^{2C_{\bar{g}} C_\alpha-2}e^{-2C^*w} \thinspace dw \thinspace ds \thinspace du\\
& = \frac{K}{t^{4C_{\bar{g}} C_\alpha}} \int_{u=t^*}^t \int_{s=u}^t \int_{w=s}^t s^{C_{\bar{g}} C_\alpha-1}w^{3C_{\bar{g}} C_\alpha-3}e^{2C^*u} e^{-2C^*w} \thinspace dw \thinspace ds \thinspace du\\
& \le \frac{K}{t^{4C_{\bar{g}} C_\alpha}} \int_{u=t^*}^t \int_{s=u}^t  s^{4C_{\bar{g}} C_\alpha-4}e^{2C^*u} e^{-2C^*s} \thinspace ds \thinspace du  \le \frac{K}{t^{4C_{\bar{g}}C_\alpha}} \int_{u=t^*}^t u^{4C_{\bar{g}} C_\alpha-4} \thinspace du.
\end{aligned}
\end{equation}
Integrating yields
\begin{equation}\label{E:I-12-Second-int}
\mathscr{I}_1(t; C_{\bar{g}} C_\alpha) \le \frac{K}{t^{4C_{\bar{g}} C_\alpha}}\frac{1}{4C_{\bar{g}} C_\alpha-3} \left[t^{4C_{\bar{g}} C_\alpha-3}- t^{*4C_{\bar{g}} C_\alpha-3} \right].
\end{equation}
In Equation \eqref{E:I-12-Second-int}, if $4C_{\bar{g}}C_\alpha -3 >0$, then $\mathscr{I}_1(t; C_{\bar{g}} C_\alpha) \le \frac{K}{t^{4C_{\bar{g}} C_\alpha}} t^{4C_{\bar{g}}C_\alpha-3} = \frac{K}{t^3}$; whereas if $4C_{\bar{g}} C_\alpha -3 <0$, then $\mathscr{I}_1(t; C_{\bar{g}} C_\alpha) \le \frac{K}{t^{4C_{\bar{g}} C_\alpha}}.$ Finally, for $C_{\bar{g}}C_\alpha = \frac{3}{4}$, Equation \eqref{E:I-11-Second-int} gives
$\mathscr{I}_1(t; C_{\bar{g}} C_\alpha) \le  \frac{K}{t^3}\int_{u=t^*}^t \frac{1}{u}\thinspace du \le  K \frac{\log t}{t^3}.$
Combining these bounds yields
$$\mathscr{I}_1(t; C_{\bar{g}} C_\alpha) \le \begin{cases} \frac{K}{t^3},  & {C_{\bar{g}} C_\alpha > \frac{3}{4}}\\  \frac{K \log t }{t^3}, & {C_{\bar{g}}C_\alpha = \frac{3}{4}} \\ \frac{K}{t^{4C_{\bar{g}} C_\alpha}}, & \frac{1}{2} < C_{\bar{g}}C_\alpha < \frac{3}{4}.\end{cases}$$
\end{proof}

\begin{proof}[Proof of Lemma \ref{L:I-2-Second-der}]
Depending on the value of $C_{\bar{g}}C_\alpha$, we split the proof into three cases: $C_{\bar{g}}C_\alpha >1$, $C_{\bar{g}}C_\alpha = 1$, and $\frac{1}{2}< C_{\bar{g}}C_\alpha < 1.$  \\
\textbf{Case I: $C_{\bar{g}} C_\alpha >1.$} Using integration by parts (Lemma \ref{L:Integration-f-g-intermediate} with $\mathsf{D}= 2C_{\bar{g}} C_\alpha-2$) and straightforward algebra, we obtain
\begin{equation}\label{E:I-21-Second-der}
\begin{aligned}
\mathscr{I}_2(t; C_{\bar{g}} C_\alpha) & \le  \frac{K}{t^{4C_{\bar{g}}C_\alpha}} \int_{u=t^*}^t \int_{s=u}^t \int_{w=s}^t s^{2C_{\bar{g}}C_\alpha-2} w^{2C_{\bar{g}}C_\alpha-3} e^{-C^*w +2C^*u-C^*s} \thinspace dw \thinspace ds \thinspace du.
\end{aligned}
\end{equation}
If $2C_{\bar{g}} C_\alpha > 3$, then Equation \eqref{E:I-21-Second-der} and another application of integration by parts (Lemma \ref{L:Integration-f-g-intermediate} with $\mathsf{D}= 2C_{\bar{g}} C_\alpha-3, 4C_{\bar{g}} C_\alpha-5$) yield
\begin{equation*}
\begin{aligned}
\mathscr{I}_2(t; C_{\bar{g}} C_\alpha) & \le \frac{K}{t^{4C_{\bar{g}}C_\alpha}} \int_{u=t^*}^t \int_{s=u}^t s^{4C_{\bar{g}} C_\alpha-5} e^{2C^*u-2C^*s} \thinspace ds \thinspace du \\
& \le \frac{K}{t^{4C_{\bar{g}} C_\alpha}} \int_{u=t^*}^t  u^{4C_{\bar{g}} C_\alpha-5} \thinspace du = \frac{K}{t^{4C_{\bar{g}} C_\alpha}} \left[ \frac{1}{4C_{\bar{g}} C_\alpha-4} u^{4C_{\bar{g}} C_\alpha-4} \right]_{u=t^*}^t \le \frac{K}{t^4}.
\end{aligned}
\end{equation*}
Next, if $2C_{\bar{g}}C_\alpha <3$, then using $\frac{1}{w} \le \frac{1}{s}$ in \eqref{E:I-21-Second-der}, we obtain
\begin{equation}\label{E:I-22-Second-der}
\begin{aligned}
\mathscr{I}_2(t; C_{\bar{g}} C_\alpha) & \le \frac{K}{t^{4C_{\bar{g}} C_\alpha}} \int_{u=t^*}^t \int_{s=u}^t \frac{1}{s^{3-2C_{\bar{g}}C_\alpha}}s^{2C_{\bar{g}}C_\alpha-2} e^{2C^*u-2C^*s} \thinspace ds \thinspace du \\
& =  \frac{K}{t^{4C_{\bar{g}}C_\alpha}} \int_{u=t^*}^t \int_{s=u}^t s^{4C_{\bar{g}}C_\alpha-5} e^{2C^*u-2C^*s} \thinspace ds \thinspace du.
\end{aligned}
\end{equation}
In Equation \eqref{E:I-22-Second-der}, if $4C_{\bar{g}} C_\alpha > 5$, then integration by parts (Lemma \ref{L:Integration-f-g-intermediate} with $\mathsf{D}= 4C_{\bar{g}} C_\alpha-5$), followed by direct integration, yields
$\mathscr{I}_2(t; C_{\bar{g}} C_\alpha) \le \frac{K}{t^{4C_{\bar{g}}C_\alpha}} \int_{u=t^*}^t u^{4C_{\bar{g}} C_\alpha-5} \thinspace du \le \frac{K}{t^4}.$
If $4C_{\bar{g}}C_\alpha < 5$, then using $\frac{1}{s} \le \frac{1}{u}$ in \eqref{E:I-22-Second-der}, we obtain
\begin{equation*}
\mathscr{I}_2(t; C_{\bar{g}} C_\alpha) \le \frac{K}{t^{4C_{\bar{g}} C_\alpha}} \int_{u=t^*}^t u^{4C_{\bar{g}} C_\alpha-5} \thinspace du = \frac{K}{t^{4C_{\bar{g}}C_\alpha}} \left[ \frac{1}{4C_{\bar{g}}C_\alpha-4}u^{4C_{\bar{g}}C_\alpha-4} \right]_{u=t^*}^t \le \frac{K}{t^4}.
\end{equation*}
Finally, at the threshold points $C_{\bar{g}}C_\alpha =\frac{3}{2}, \frac{5}{4}$, it follows directly from the definition of $\mathscr{I}_2(t; C_{\bar{g}} C_\alpha)$ that $\mathscr{I}_2(t; C_{\bar{g}} C_\alpha) \le \frac{K}{t^4}.$
\\
\textbf{Case II: $C_{\bar{g}} C_\alpha = 1.$} Recalling the definition of $\mathscr{I}_2(t; C_{\bar{g}} C_\alpha)$, using the inequalities $\frac{1}{w} \le \frac{1}{s} \le \frac{1}{u}$ and basic integration estimates, we obtain
\begin{equation*}
\begin{aligned}
\mathscr{I}_2(t; C_{\bar{g}} C_\alpha) & = \frac{K}{t^{4}} \int_{u=t^*}^t \int_{s=u}^t \int_{w=s}^t \int_{r=w}^t \frac{1}{w} e^{-2C^*r+2C^*u-C^*s+C^*w} \thinspace dr \thinspace dw \thinspace ds \thinspace du\\
& \le \frac{K}{t^{4}} \int_{u=t^*}^t \int_{s=u}^t \int_{w=s}^t \frac{1}{w} e^{-C^*w +2C^*u-C^*s} \thinspace dw \thinspace ds \thinspace du \\
& \le \frac{K}{t^{4}} \int_{u=t^*}^t \int_{s=u}^t  \frac{1}{s} e^{2C^*u-2C^*s} \thinspace ds \thinspace du  \le \frac{K}{t^4} \int_{u=t^*}^t \frac{1}{u} \thinspace du \le  K \frac{\log t}{t^4}. \\
\end{aligned}
\end{equation*}
\textbf{Case III: $\frac{1}{2} < C_{\bar{g}}C_\alpha <1.$} Starting from the definition of $\mathscr{I}_2(t; C_{\bar{g}} C_\alpha)$, using $\frac{1}{r} \le \frac{1}{w} \le \frac{1}{s} \le \frac{1}{u}$ and basic integration estimates, we obtain
\begin{equation*}
\begin{aligned}
\mathscr{I}_2(t; C_{\bar{g}} C_\alpha) & \le \frac{K}{t^{4C_{\bar{g}} C_\alpha}} \int_{u=t^*}^t \int_{s=u}^t \int_{w=s}^t s^{2C_{\bar{g}}C_\alpha-2} w^{2C_{\bar{g}}C_\alpha-3} e^{-C^*w +2C^*u-C^*s} \thinspace dw \thinspace ds \thinspace du \\
& \le \frac{K}{t^{4C_{\bar{g}} C_\alpha}} \int_{u=t^*}^t \int_{s=u}^t s^{4C_{\bar{g}} C_\alpha-5} e^{2C^*u-2C^*s} \thinspace ds \thinspace du \le \frac{K}{t^{4C_{\bar{g}} C_\alpha}} \int_{u=t^*}^t  u^{4C_{\bar{g}} C_\alpha-5} \thinspace du \\
& \le \frac{K}{t^{4C_{\bar{g}} C_\alpha}}\left[ \frac{1}{4C_{\bar{g}}C_\alpha-4} u^{4C_{\bar{g}} C_\alpha-4} \right]_{u=t^*}^t \le  \frac{K}{t^{4C_{\bar{g}}C_\alpha}}.
\end{aligned}
\end{equation*}
Combining the three cases yields the desired bound and completes the proof.
\end{proof}

\begin{proof}[Proof of Lemma \ref{L:I-4-Second-Derivative}]
Depending on the value of $C_{\bar{g}}C_\alpha$, we split the proof into three cases: $C_{\bar{g}} C_\alpha >\frac{5}{4}$, $C_{\bar{g}} C_\alpha = \frac{5}{4}$, and $\frac{1}{2}< C_{\bar{g}} C_\alpha < \frac{5}{4}.$  \\
\textbf{Case I: $C_{\bar{g}} C_\alpha > \frac{5}{4}.$}
We first consider the subcase $C_{\bar{g}} C_\alpha>2$. Integration by parts (Lemma \ref{L:Integration-f-g-intermediate} with $\mathsf{D}= C_{\bar{g}} C_\alpha-2$, $2C_{\bar{g}} C_\alpha-4$, and $4C_{\bar{g}} C_\alpha-6$) yields
\begin{equation*}
\begin{aligned}
\mathscr{I}_4(t; C_{\bar{g}} C_\alpha) & \le \frac{K}{t^{4C_{\bar{g}}C_\alpha}} \int_{u=t^*}^t \int_{s=u}^t \int_{w=s}^t s^{2C_{\bar{g}} C_\alpha-2} w^{2C_{\bar{g}} C_\alpha-4} e^{-C^*w+2C^*u-C^*s} \thinspace  dw \thinspace ds \thinspace du \\
& \le \frac{K}{t^{4C_{\bar{g}} C_\alpha}} \int_{u=t^*}^t \int_{s=u}^t s^{4C_{\bar{g}} C_\alpha-6} e^{2C^*u-2C^*s} \thinspace ds \thinspace du \le \frac{K}{t^{4C_{\bar{g}} C_\alpha}} \int_{u=t^*}^t u^{4C_{\bar{g}}C_\alpha-6} \thinspace du \le \frac{K}{t^5}.
\end{aligned}
\end{equation*}
Next, for $C_{\bar{g}} C_\alpha < 2,$ using the definition of $\mathscr{I}_4(t; C_{\bar{g}} C_\alpha)$ and the inequality $\frac{1}{r} \le \frac{1}{w} \le \frac{1}{s} \le \frac{1}{u},$ we obtain
\begin{equation}\label{E:I-41-Second-der}
\mathscr{I}_4(t; C_{\bar{g}} C_\alpha) \le \frac{K}{t^{4C_{\bar{g}} C_\alpha}} \int_{u=t^*}^t \int_{s=u}^t s^{4C_{\bar{g}} C_\alpha-6} e^{2C^*u-2C^*s}\thinspace ds \thinspace du.
\end{equation}
In Equation \eqref{E:I-41-Second-der}, if $4C_{\bar{g}}C_\alpha -6 >0$, then applying Lemma \ref{L:Integration-f-g-intermediate} with $\mathsf{D}= 4C_{\bar{g}} C_\alpha-6$ gives $\mathscr{I}_4(t; C_{\bar{g}} C_\alpha) \le \frac{K}{t^{4C_{\bar{g}} C_\alpha}} \int_{u=t^*}^t u^{4C_{\bar{g}} C_\alpha-6} \thinspace du \le \frac{K}{t^5}$. If $4C_{\bar{g}} C_\alpha - 6<0$, then using $\frac{1}{s} \le \frac{1}{u}$ yields
\begin{equation}\label{E:I-42-Second-der}
\mathscr{I}_4(t; C_{\bar{g}} C_\alpha) \le \frac{K}{t^{4C_{\bar{g}}C_\alpha}} \int_{u=t^*}^t u^{4C_{\bar{g}}C_\alpha-6} \thinspace du.
\end{equation}
Hence, for $-1 \neq 4C_{\bar{g}} C_\alpha - 6<0 $,
\begin{equation}\label{E:I-43-Second-der}
\mathscr{I}_4(t; C_{\bar{g}} C_\alpha) \le \frac{K}{t^{4C_{\bar{g}}C_\alpha}}\left[ \frac{1}{4C_{\bar{g}}C_\alpha -5}u^{4C_{\bar{g}}C_\alpha-5} \right]_{u=t^*}^t,
\end{equation}
which in turn implies that if $4C_{\bar{g}}C_\alpha -5 >0$, then
$\mathscr{I}_4(t; C_{\bar{g}} C_\alpha) \le \frac{K}{t^{4C_{\bar{g}}C_\alpha}}t^{4C_{\bar{g}} C_\alpha-5} = \frac{K}{t^5}.$ Finally, at the threshold points $C_{\bar{g}}C_\alpha =\frac{3}{2}, 2$, it follows directly from the definition of $\mathscr{I}_4(t; C_{\bar{g}} C_\alpha)$ that $\mathscr{I}_4(t; C_{\bar{g}} C_\alpha) \le \frac{K}{t^5}.$ \\
\noindent
\textbf{Case II: $C_{\bar{g}} C_\alpha = \frac{5}{4}.$}
In Equation \eqref{E:I-42-Second-der}, if $4C_{\bar{g}} C_\alpha-6=-1$, then
$\mathscr{I}_4(t; C_{\bar{g}} C_\alpha) \le \frac{K}{t^{5}} \int_{u=t^*}^t u^{-1} \thinspace du \le K \frac{\log t}{t^5}.$ \\
\noindent
\textbf{Case III: $\frac{1}{2} < C_{\bar{g}}C_\alpha <\frac{5}{4}.$}
If $1 < C_{\bar{g}} C_\alpha < \frac{5}{4}$, then Equation \eqref{E:I-43-Second-der} implies that $\mathscr{I}_4(t; C_{\bar{g}} C_\alpha) \le \frac{K}{t^{4C_{\bar{g}}C_\alpha}}.$
If $C_{\bar{g}} C_\alpha =1,$ then using $\frac{1}{r} \le \frac{1}{w}$ yields
\begin{equation*}
\begin{aligned}
\mathscr{I}_4(t; C_{\bar{g}} C_\alpha) & = \frac{K}{t^4}  \int_{u=t^*}^t \int_{s=u}^t \int_{w=s}^t \int_{r=w}^t \frac{1}{rw} e^{-C^*r+2C^*u-C^*s} \thinspace dr \thinspace dw \thinspace ds \thinspace du \\
& \le \frac{K}{t^4}  \int_{u=t^*}^t \int_{s=u}^t   \frac{1}{s^2} e^{2C^*u-2C^*s} \thinspace ds \thinspace du  \le \frac{K}{t^4}.
\end{aligned}
\end{equation*}
Finally, if $\frac{1}{2} < C_{\bar{g}}C_\alpha <1,$ then by the definition of $\mathscr{I}_4(t; C_{\bar{g}} C_\alpha)$, the inequality $\frac{1}{r} \le \frac{1}{w} \le \frac{1}{s} \le \frac{1}{u}$, and direct integration estimates, we obtain
\begin{equation*}
\begin{aligned}
\mathscr{I}_4(t; C_{\bar{g}} C_\alpha) & \le \frac{K}{t^{4C_{\bar{g}} C_\alpha}} \int_{u=t^*}^t \int_{s=u}^t \int_{w=s}^t s^{2C_{\bar{g}}C_\alpha-2} w^{2C_{\bar{g}}C_\alpha-4} e^{-C^*w +2C^*u-C^*s} \thinspace dw \thinspace ds \thinspace du \\
& \le \frac{K}{t^{4C_{\bar{g}}C_\alpha}} \int_{u=t^*}^t \int_{s=u}^t s^{4C_{\bar{g}}C_\alpha-6} e^{2C^*u-2C^*s} \thinspace ds \thinspace du \le \frac{K}{t^{4C_{\bar{g}}C_\alpha}} \int_{u=t^*}^t  u^{4C_{\bar{g}}C_\alpha-6} \thinspace du \\
& \le \frac{K}{t^{4C_{\bar{g}}C_\alpha}}\left[ \frac{1}{4C_{\bar{g}}C_\alpha-5} u^{4C_{\bar{g}}C_\alpha-5} \right]_{u=t^*}^t \le  \frac{K}{t^{4C_{\bar{g}}C_\alpha}}.
\end{aligned}
\end{equation*}
Combining the three cases yields
$$\mathscr{I}_4(t; C_{\bar{g}} C_\alpha) \le \begin{cases} \frac{K}{t^5},  & {C_{\bar{g}}C_\alpha > \frac{5}{4}}\\  \frac{K \log t }{t^5}, & {C_{\bar{g}}C_\alpha = \frac{5}{4}} \\ \frac{K}{t^{4C_{\bar{g}}C_\alpha}} & \frac{1}{2} < C_{\bar{g}}C_\alpha < \frac{5}{4}.\end{cases}$$
\end{proof}

\begin{proof}[Proof of Lemma \ref{L:I-8-Second-der}]
Depending on the value of $C_{\bar{g}}C_\alpha$, we split the proof into three cases: $C_{\bar{g}} C_\alpha >\frac{3}{2}$, $C_{\bar{g}} C_\alpha = \frac{3}{2}$, and $\frac{1}{2}< C_{\bar{g}}C_\alpha < \frac{3}{2}.$  \\
\textbf{Case I: $C_{\bar{g}} C_\alpha > \frac{3}{2}.$}
We first consider the subcase $C_{\bar{g}} C_\alpha>2$. Integration by parts (Lemma \ref{L:Integration-f-g-intermediate} with $\mathsf{D}= C_{\bar{g}} C_\alpha-2$, $2C_{\bar{g}} C_\alpha-4$, and $3C_{\bar{g}} C_\alpha-6$) yields
\begin{equation*}
\begin{aligned}
\mathscr{I}_8(t; C_{\bar{g}} C_\alpha) & \le \frac{K}{t^{4C_{\bar{g}}C_\alpha}} \int_{u=t^*}^t \int_{s=u}^t \int_{w=s}^t s^{C_{\bar{g}}C_\alpha-2}w^{2C_{\bar{g}} C_\alpha-4} u^{C_{\bar{g}}C_\alpha-1} e^{-C^*w+C^*u} \thinspace  dw \thinspace ds \thinspace du \\
& \le \frac{K}{t^{4C_{\bar{g}}C_\alpha}} \int_{u=t^*}^t \int_{s=u}^t s^{3C_{\bar{g}}C_\alpha-6} u^{C_{\bar{g}}C\alpha-1} e^{C^*u-C^*s} \thinspace ds \thinspace du \le \frac{K}{t^{4C_{\bar{g}}C_\alpha}} \int_{u=t^*}^t u^{4C_{\bar{g}}C_\alpha-7} \thinspace du \le \frac{K}{t^6}.
\end{aligned}
\end{equation*}
Next, for $C_{\bar{g}}C_\alpha < 2,$ using the definition of $\mathscr{I}_8(t; C_{\bar{g}} C_\alpha)$ and the inequality $\frac{1}{r} \le \frac{1}{w} \le \frac{1}{s} \le \frac{1}{u},$ we obtain
\begin{equation}\label{E:I-81-Second-der}
\mathscr{I}_8(t; C_{\bar{g}} C_\alpha) \le \frac{K}{t^{4C_{\bar{g}}C_\alpha}} \int_{u=t^*}^t \int_{s=u}^t s^{3C_{\bar{g}} C_\alpha-6} u^{C_{\bar{g}}C_\alpha-1} e^{2C^*u-2C^*s} \thinspace ds \thinspace du \le \frac{K}{t^{4C_{\bar{g}} C_\alpha}} \int_{u=t^*}^t u^{4C_{\bar{g}}C_\alpha-7} \thinspace du.
\end{equation}
In \eqref{E:I-81-Second-der}, if $4C_{\bar{g}} C_\alpha -7>0$, then
$\mathscr{I}_8(t; C_{\bar{g}} C_\alpha) \le \frac{K}{t^6}.$
Next, if $-1 \neq 4C_{\bar{g}} C_\alpha - 7 < 0$, then \eqref{E:I-81-Second-der} yields
\begin{equation}\label{E:I-82-Second-der}
\mathscr{I}_8(t; C_{\bar{g}} C_\alpha) \le  \frac{K}{t^{4C_{\bar{g}}C_\alpha}}\left[ \frac{1}{4C_{\bar{g}} C_\alpha -6}u^{4C_{\bar{g}} C_\alpha-6} \right]_{u=t^*}^t,
\end{equation}
which implies that if $4C_{\bar{g}}C_\alpha -6 >0,$ then
$\mathscr{I}_8(t; C_{\bar{g}} C_\alpha) \le \frac{K}{t^{4C_{\bar{g}}C_\alpha}}t^{4C_{\bar{g}} C_\alpha-6} = \frac{K}{t^6}.$ Finally, at the threshold points $C_{\bar{g}}C_\alpha =\frac{7}{4}, 2$, it follows directly from the definition of $\mathscr{I}_8(t; C_{\bar{g}} C_\alpha)$ that $\mathscr{I}_8(t; C_{\bar{g}} C_\alpha) \le \frac{K}{t^6}.$\\
\noindent
\textbf{Case II: $C_{\bar{g}} C_\alpha = \frac{3}{2}.$}
In Equation \eqref{E:I-81-Second-der}, if $4C_{\bar{g}} C_\alpha-7=-1$, then
$\mathscr{I}_8(t; C_{\bar{g}} C_\alpha) \le \frac{K}{t^{6}} \int_{u=t^*}^t u^{-1} \thinspace du \le K \frac{\log t}{t^6}.$ \\
\noindent
\textbf{Case III: $\frac{1}{2} < C_{\bar{g}}C_\alpha <\frac{3}{2}.$}
If $1 < C_{\bar{g}} C_\alpha < \frac{3}{2}$, then Equation \eqref{E:I-82-Second-der} implies that $\mathscr{I}_8(t; C_{\bar{g}} C_\alpha) \le \frac{K}{t^{4C_{\bar{g}}C_\alpha}}.$
If $C_{\bar{g}} C_\alpha =1,$ then using $\frac{1}{r} \le \frac{1}{w} \le \frac{1}{s}$ yields
\begin{equation*}
\begin{aligned}
\mathscr{I}_8(t; C_{\bar{g}} C_\alpha) & \triangleq \frac{K}{t^4}  \int_{u=t^*}^t \int_{s=u}^t \int_{w=s}^t \int_{r=w}^t \frac{1}{rsw} e^{-C^*r+C^*u} \thinspace dr \thinspace dw \thinspace ds \thinspace du \\
& \le \frac{K}{t^4}  \int_{u=t^*}^t \int_{s=u}^t   \frac{1}{s^3} e^{2C^*u-2C^*s} \thinspace ds \thinspace du  \le \frac{C}{t^4}.
\end{aligned}
\end{equation*}
Finally, if $\frac{1}{2} < C_{\bar{g}} C_\alpha <1,$ then starting from the definition of $\mathscr{I}_8(t; C_{\bar{g}} C_\alpha)$, using $\frac{1}{r} \le \frac{1}{w} \le \frac{1}{s} \le \frac{1}{u}$ and basic integration estimates, we obtain
\begin{equation*}
\begin{aligned}
\mathscr{I}_8(t; C_{\bar{g}} C_\alpha) & \le \frac{K}{t^{4C_{\bar{g}} C_\alpha}} \int_{u=t^*}^t \int_{s=u}^t \int_{w=s}^t s^{C_{\bar{g}}C_\alpha-2} u^{C_{\bar{g}}C_\alpha-1} w^{2C_{\bar{g}}C_\alpha-4} e^{-C^*w + C^*u}\thinspace dw \thinspace ds \thinspace du \\
& \le \frac{K}{t^{4C_{\bar{g}}C_\alpha}} \int_{u=t^*}^t \int_{s=u}^t s^{3C_{\bar{g}}C_\alpha-6} u^{C_{\bar{g}} C_\alpha-1} e^{C^*u-C^*s} \thinspace ds \thinspace du \le \frac{K}{t^{4C_{\bar{g}}C_\alpha}} \int_{u=t^*}^t  u^{4C_{\bar{g}}C_\alpha-7} \thinspace du \\
& \le \frac{K}{t^{4C_{\bar{g}} C_\alpha}}\left[ \frac{1}{4C_{\bar{g}} C_\alpha-6} u^{4C_{\bar{g}}C_\alpha-6} \right]_{u=t^*}^t \le  \frac{K}{t^{4C_{\bar{g}}C_\alpha}}.
\end{aligned}
\end{equation*}
Combining the three cases yields the required bound.
\end{proof}

\subsection{Moment bound for the process $\theta$}\label{ss:MomentBoundTheta}
In this section, we show that, for sufficiently large $t^*$ (see Remark \ref{R:Rem-t*-choice} for the choice of $t^*$), the quantity $\sup_{t \ge t^* }\BE \left[ (\theta_t)^2 \right]$ is bounded by a constant independent of $C_\alpha.$ Following the arguments in \cite{siri_spilio_2020}, it suffices to control the term $\sup_{t \ge t^* }\BE \left[(\tilde{\theta}_t)^2 \right]$, where the process $\tilde{\theta}$ satisfies the {\sc sde}
\begin{equation}\label{E:tilde-theta}
d \tilde{\theta}_t = - \alpha_t \kappa(X_t) \tilde{\theta}_t \thinspace dt + \alpha_t f_{\theta}(X_t, \tilde{\theta}_t) \thinspace dW_t.
\end{equation}
The function $\kappa$ is defined in Assumption \ref{A:Moment-bounds} and satisfies $\kappa(x)> \gamma >0$.

\begin{lemma}\label{L:moment-bound}
Let the process $\tilde{\theta}$ be the solution of {\sc sde} \eqref{E:tilde-theta} and assume that $t^*$ is taken sufficiently large depending on the learning-rate magnitude $C_\alpha$. Then there exists a positive constant $\tilde{K}$, independent of $C_\alpha$ and uniform in time, such that
$$\sup_{t \ge t^* }\BE \left[(\tilde{\theta}_t)^2 \right] \le \tilde{K}.$$
\end{lemma}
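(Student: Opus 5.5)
We will prove the bound by applying It\^o's formula to $(\tilde{\theta}_t)^2$ and then running a Gronwall-type comparison in which the learning-rate factor $\alpha_t$ is weighed against the dissipation $\kappa(X_t) > \gamma$. From \eqref{E:tilde-theta},
\begin{equation*}
d(\tilde{\theta}_t)^2 = -2\alpha_t \kappa(X_t) (\tilde{\theta}_t)^2 \thinspace dt + \alpha_t^2 f_\theta(X_t,\tilde{\theta}_t)^2 \thinspace dt + 2\alpha_t \tilde{\theta}_t f_\theta(X_t,\tilde{\theta}_t) \thinspace dW_t.
\end{equation*}
We first take expectations; the stochastic integral is a true martingale by the moment bounds of \cite{siri_spilio_2020}. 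Next we use $\kappa > \gamma$. By Assumption \ref{A:Growth-f-g}, $f_\theta(x,\theta)^2 \le K(1+|x|^{2q} + |\theta|^2)$, with $K$ independent of $C_\alpha$. Together with $\sup_t \BE|X_t|^{2q} < \infty$ from \cite{pardoux2001poisson}, this gives the differential inequality
\begin{equation*}
\frac{d}{dt}\BE[(\tilde{\theta}_t)^2] \le -\left(2\gamma\alpha_t - K\alpha_t^2\right)\BE[(\tilde{\theta}_t)^2] + K\alpha_t^2 .
\end{equation*}

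The key step is the choice of $t^*$. We pick it so large that $K\alpha_t \le \gamma$ for $t \ge t^*$, that is, $t^* \ge K C_\alpha/\gamma$; this is where the dependence of $t^*$ on $C_\alpha$ enters. Then on $[t^*,\infty)$ we have $\frac{d}{dt}m_t \le -\gamma\alpha_t m_t + K\alpha_t^2$, where $m_t = \BE[(\tilde{\theta}_t)^2]$. Integrating with the factor $e^{\gamma\int_{t^*}^t\alpha_s\,ds} = (t/t^*)^{\gamma C_\alpha}$ yields
\begin{equation*}
m_t \le \left(\frac{t^*}{t}\right)^{\gamma C_\alpha} m_{t^*} + K C_\alpha^2 \int_{t^*}^t \left(\frac{s}{t}\right)^{\gamma C_\alpha} s^{-2}\thinspace ds .
\end{equation*}
The integral is bounded by $K C_\alpha^2/(t^*\max(1,\gamma C_\alpha - 1))$ up to logarithmic cases. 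Since $t^* \gtrsim C_\alpha^2$ can also be imposed, the integral is bounded by a $C_\alpha$-free constant, and the first term is at most $m_{t^*}$.

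The main obstacle is that $m_{t^*}$ itself must be bounded independently of $C_\alpha$, whereas on $[0,t^*]$ the growth of $\alpha_t$ makes the $C_\alpha$-dependence genuine. The first thing to try is to run the same inequality on $[0,t^*]$ with $C_0$ restored in $\alpha_t$. Since $\int_0^\infty \alpha_s^2\,ds = C_\alpha^2/C_0$, this gives a crude bound $m_{t^*} \le e^{KC_\alpha^2/C_0}(m_0 + KC_\alpha^2/C_0)$, which does depend on $C_\alpha$. We would then exploit the contraction factor $(t^*/t)^{\gamma C_\alpha}$ only partially. An alternative is to split at an intermediate time $t_1$ (chosen as above) and enlarge $t^*$ to $t_1\cdot M$ with $M^{\gamma C_\alpha}$ exceeding the crude bound at $t_1$. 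The contraction then forces $m_t \le 1 + \tilde{K}$ for all $t \ge t^*$ with $\tilde{K}$ free of $C_\alpha$, so that $t^*$ absorbs all the $C_\alpha$-dependence, consistent with Remark \ref{R:Rem-t*-choice}.
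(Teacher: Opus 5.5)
Your proof is correct, but it is organized differently from the paper's.

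\textbf{What the paper does.} It solves the linear inequality from time $1$ with the full dissipation rate $2\gamma$. It keeps $\alpha_s^2\BE|\tilde{\theta}_s|^2$ on the right and removes it with a generalized Gronwall inequality (kernel $C_\alpha^2/r^2$, hence the factor $e^{C_\alpha^2(1/s-1/t)}$). It then splits into cases according to $2\gamma C_\alpha$ and $\gamma C_\alpha$ versus $1$, arriving at the polynomial-in-$C_\alpha$ conditions on $t^*$ of Remark \ref{R:Rem-t*-choice}.

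\textbf{What you do differently.} You absorb $K\alpha_t^2 m_t$ into the drift once $K\alpha_t\le\gamma$, so the tail needs only a linear comparison at rate $\gamma$ and no case analysis. Since $(s/t)^{\gamma C_\alpha}\le 1$, the forcing term is at most $KC_\alpha^2\int_{t_1}^t s^{-2}\,ds\le KC_\alpha^2/t_1$ for every value of $\gamma C_\alpha$, so your caveat about logarithmic cases is unnecessary.

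\textbf{What your route buys.} You handle the early interval explicitly: the crude bound $m_{t_1}\lesssim e^{KC_\alpha^2/C_0}$, then waiting until $(t_1/t)^{\gamma C_\alpha}$ overcomes it. The paper's computation replaces $e^{C_\alpha^2(1/s-1/t)}$ by $e^{C_\alpha^2/t^*}$ even though the $s$-integral runs over $[1,t]$. On $[1,t^*]$ that factor reaches $e^{C_\alpha^2}$, and absorbing it requires $(t^*)^{2\gamma C_\alpha}\gtrsim e^{C_\alpha^2}$, i.e.\ $t^*$ exponentially large in $C_\alpha$. That is exactly what your choice $t^*=Mt_1$ with $M^{\gamma C_\alpha}\ge m_{t_1}$ delivers.

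This size appears genuinely necessary. Take $\kappa$ constant and $f_\theta(x,\theta)=c\theta$ in \eqref{E:tilde-theta}; then $\BE[\tilde{\theta}_t^2]$ solves a linear ODE exactly. It first grows by a factor of order $e^{c^2C_\alpha^2/C_0}$, so $\sup_{t\ge t^*}\BE[\tilde{\theta}_t^2]\le\tilde K$ forces $\log t^*\gtrsim c^2C_\alpha/(2\kappa C_0)$ for large $C_\alpha$. Your route therefore proves the lemma as stated, where $t^*$ may depend on $C_\alpha$ arbitrarily. It also indicates that the conditions in Remark \ref{R:Rem-t*-choice} are not sufficient by themselves when $\gamma C_\alpha$ is large. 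What you give up is an explicit, mild description of $t^*$.

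\textbf{Write-up notes.} Commit to the two-stage argument rather than presenting the crude bound as a first attempt. Keep $C_0$ consistent across both stages: on the tail the contraction factor is $\bigl((C_0+t_1)/(C_0+t)\bigr)^{\gamma C_\alpha}\le(2t_1/t)^{\gamma C_\alpha}$ once $t_1\ge C_0$, which only changes $M$ by a factor of $2$.
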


\begin{remark}[Remark on the choice of $t^*$ in Lemma \ref{L:moment-bound}]\label{R:Rem-t*-choice}
Recall that the function $\kappa$ in Equation \eqref{E:tilde-theta} satisfies $\kappa(x)> \gamma >0.$ The choice of $t^*$ depends on the interplay between $\gamma$ and the learning-rate magnitude $C_\alpha.$ More precisely:
\begin{itemize}
\item  Case $0 <\gamma C_\alpha < \frac{1}{2}$. $t^*$ is chosen so that there exists a positive constant $\lambda_1$, independent of $C_\alpha$, such that
$$\max\left\{ \frac{1}{{t^*}^{2 \gamma C_\alpha}}, \thinspace \frac{1}{{(1-2\gamma C_\alpha) {t^*}^{2 \gamma C_\alpha}}}\right\} < \lambda_1.$$
\item Case $\gamma C_\alpha > 1$. $t^*$ is chosen so that there exists a positive constant $\lambda_2$, independent of $C_\alpha$, such that
$$\max\left\{ \frac{C_\alpha^2}{t^*}, \thinspace \frac{C_\alpha^2}{{(2\gamma C_\alpha -1)}t^*}, \thinspace \frac{C_\alpha^2}{t^* (2 \gamma C_\alpha -2)}\right\} < \lambda_2.$$
\item Case $\frac{1}{2} <\gamma C_\alpha < 1.$ $t^*$ is chosen so that there exists a positive constant $\lambda_3$, independent of $C_\alpha$, such that
$$ \max\left\{ \frac{C_\alpha^2}{t^*}, \thinspace \frac{C_\alpha^2}{{(2\gamma C_\alpha -1)}t^*}, \thinspace \frac{C_\alpha^2}{t^* \sqrt{ (2 \gamma C_\alpha -1) (2- 2 \gamma C_\alpha)}}\right\} < \lambda_3.$$
\end{itemize}
We also note that the borderline cases $\gamma C_\alpha =\frac{1}{2}, 1$ are straightforward, and in these cases the choice of $t^*$ does not depend on $C_\alpha.$
\end{remark}

\begin{proof}[Proof of Lemma \ref{L:moment-bound}]
We use It\^o's formula and the condition $\kappa(x) > \gamma >0$ to obtain
\begin{equation*}
\BE \left[(\tilde{\theta}_t)^2 \right] \le \BE \left[ (\tilde{\theta}_0)^2 \right] - 2 \gamma \BE \int_1^t \alpha_s  (\tilde{\theta}_s)^2 \thinspace ds + \BE \int_1^t \alpha_s^2 f_\theta(X_s, \tilde{\theta}_s)^2 \thinspace ds.
\end{equation*}
Solving this inequality for $\BE \left[(\tilde{\theta}_t)^2 \right]$ and then using the growth of $f_\theta$, together with the moment bounds for $X$, yields
\begin{equation}\label{E:E111}
\begin{aligned}
\BE \left[ (\tilde{\theta}_t)^2 \right] &  \le e^{-2 \gamma \int_1^t \alpha_s ds} \BE \left[ (\tilde{\theta}_0)^2 \right] + \int_1^t \alpha_s^2 e^{-2 \gamma \int_s^t \alpha_u du} \BE \left[  f_\theta(X_s, \tilde{\theta}_s)^2 \right]ds \\
& \le \BE \left[ (\tilde{\theta}_0)^2 \right] \left( \frac{1}{t} \right)^{2 \gamma C_\alpha} + \frac{C_\alpha^2}{t^{2 \gamma C_\alpha}}\int_1^t s^{2 \gamma C_\alpha-2} \left[ 1 + \BE |X_s|^q + \BE |\tilde{\theta}_s|^2\right]ds \\
& \le \BE \left[ (\tilde{\theta}_0)^2 \right] \left( \frac{1}{t} \right)^{2 \gamma C_\alpha} + \frac{K C_\alpha^2}{t^{2 \gamma C_\alpha}}\int_1^t s^{2 \gamma C_\alpha-2} \thinspace ds +  \frac{ C_\alpha^2}{t^{2 \gamma C_\alpha}}\int_1^t s^{2 \gamma C_\alpha-2} \BE |\tilde{\theta}_s|^2 \thinspace ds.
\end{aligned}
\end{equation}
Depending on the value of $\gamma C_\alpha$, we distinguish three cases.\\
\textbf{Case I}: If $2 \gamma C_\alpha -1< 0$, then Equation \eqref{E:E111} implies
 \begin{equation*}
\underbrace{\BE \left[(\tilde{\theta}_t)^2 \right]}_{x_t} \le \underbrace{\BE \left[ (\tilde{\theta}_0)^2 \right] \left( \frac{1}{t} \right)^{2 \gamma C_\alpha} + \frac{K C_\alpha^2}{t^{2 \gamma C_\alpha}} \frac{1}{(1- 2 \gamma C_\alpha )}}_{a_t}   +  \underbrace{\frac{ C_\alpha^2}{t^{2 \gamma C_\alpha}}}_{b_t}\int_1^t \underbrace{s^{2 \gamma C_\alpha-2}}_{\kappa_s} \BE |\tilde{\theta}_s|^2 \thinspace ds.
\end{equation*}
Fix $t^*$ sufficiently large so that $t \ge s \ge t^*$, and apply a generalized Gronwall inequality\footnote{For the inequality $x_t \le a_t + b_t \int_\alpha^t \kappa_s x_s \thinspace ds$, we have $x_t \le a_t + b_t \int_\alpha^t a_s \kappa_s e^{\int_s^t b_r \kappa_r dr}ds$.} with
\begin{equation*}
\begin{aligned}
e^{\int_s^t b_r \kappa_r \thinspace dr} & = e^{\int_s^t \frac{ C_\alpha^2}{r^{2 \gamma C_\alpha}} r^{2 \gamma C_\alpha-2} dr} = e^{\int_s^t \frac{C_\alpha^2}{r^2}dr}= e^{C_\alpha^2 \left[\frac{1}{s}- \frac{1}{t} \right]}, \qquad
a_t \kappa_t  = \frac{\BE (\tilde{\theta}_0)^2}{t^2}+ \frac{K C_\alpha^2}{(1- 2 \gamma C_\alpha)t^2}.
\end{aligned}
\end{equation*}
It follows that
\begin{equation*}
\begin{aligned}
\BE \left[ (\tilde{\theta}_t)^2 \right] & \le \BE \left[ (\tilde{\theta}_0)^2 \right] \left( \frac{1}{t} \right)^{2 \gamma C_\alpha} + \frac{K C_\alpha^2}{t^{2 \gamma C_\alpha}} \frac{1}{(1- 2 \gamma C_\alpha )} + \frac{ C_\alpha^2}{t^{2 \gamma C_\alpha}}\int_1^t \left\{ \frac{\BE (\tilde{\theta}_0)^2}{s^2}+ \frac{K C_\alpha^2}{(1- 2 \gamma C_\alpha)s^2} \right\} e^{C_\alpha^2 \left[\frac{1}{s}- \frac{1}{t} \right]} \thinspace ds \\
& \le \BE \left[ (\tilde{\theta}_0)^2 \right] \left( \frac{1}{t} \right)^{2 \gamma C_\alpha} + \frac{K C_\alpha^2}{t^{2 \gamma C_\alpha}} \frac{1}{(1- 2 \gamma C_\alpha )}  + \frac{ C_\alpha^2}{t^{2 \gamma C_\alpha}} \left\{ \BE (\tilde{\theta}_0)^2 +  \frac{K C_\alpha^2}{(1- 2 \gamma C_\alpha)}  \right\}\frac{1}{t^*} e^{\frac{C_\alpha^2}{t^*}} \\
& \le \BE \left[ (\tilde{\theta}_0)^2 \right] \left( \frac{1}{t^*} \right)^{2 \gamma C_\alpha} + \frac{K C_\alpha^2}{{t^*}^{2 \gamma C_\alpha}} \frac{1}{(1- 2 \gamma C_\alpha )}  + \frac{ C_\alpha^2}{{t^*}^{2 \gamma C_\alpha}} \left\{ \BE (\tilde{\theta}_0)^2 +  \frac{K C_\alpha^2}{(1- 2 \gamma C_\alpha)}  \right\} e^{\left(\frac{C_\alpha^2}{{t^*}^{2 \gamma C_\alpha}}\right)} \le  \tilde{K},
\end{aligned}
\end{equation*}
under the condition $\max\left\{ \frac{1}{{t^*}^{2 \gamma C_\alpha}}, \thinspace \frac{1}{{(1-2\gamma C_\alpha)}} \frac{1}{{t^*}^{2 \gamma C_\alpha}}\right\} < \lambda_1$. In particular, the constant $\tilde{K}$ is independent of $C_\alpha$.

\textbf{Case II}: If $2 \gamma C_\alpha -1>0$, then Equation \eqref{E:E111} implies
\begin{equation*}
\begin{aligned}
{\BE \left[(\tilde{\theta}_t)^2 \right]} &\le
 \BE \left[ (\tilde{\theta}_0)^2 \right] \left( \frac{1}{t} \right)^{2 \gamma C_\alpha} + \frac{K C_\alpha^2}{t^{2 \gamma C_\alpha}} \frac{1}{(2 \gamma C_\alpha -1)} \left[ t^{2 \gamma C_\alpha -1} -1 \right]  +  \frac{ C_\alpha^2}{t^{2 \gamma C_\alpha}}\int_1^t s^{2 \gamma C_\alpha-2} \BE |\tilde{\theta}_s|^2 ds \\
& \le  \BE \left[ (\tilde{\theta}_0)^2 \right] \left( \frac{1}{t} \right)^{2 \gamma C_\alpha} + \frac{K C_\alpha^2}{t} \frac{1}{(2 \gamma C_\alpha -1)}   +  {\frac{ C_\alpha^2}{t^{2 \gamma C_\alpha}}}\int_1^t {s^{2 \gamma C_\alpha-2}} \BE |\tilde{\theta}_s|^2 ds.
\end{aligned}
\end{equation*}
An application of Gronwall's inequality gives
\begin{equation}\label{E:E113}
\begin{aligned}
\BE \left[ (\tilde{\theta}_t)^2 \right] & \le
 \BE \left[ (\tilde{\theta}_0)^2 \right] \left( \frac{1}{t} \right)^{2 \gamma C_\alpha} + \frac{K C_\alpha^2}{t} \frac{1}{(2 \gamma C_\alpha -1)} + \frac{ C_\alpha^2}{t^{2 \gamma C_\alpha}} \int_1^t \frac{\BE (\tilde{\theta}_0)^2}{s^2} e^{C_\alpha^2 \left[\frac{1}{s}- \frac{1}{t} \right]} ds \\
& \qquad \qquad \qquad \qquad \qquad \qquad \qquad \qquad \qquad  + \frac{ C_\alpha^2}{t^{2 \gamma C_\alpha}} \int_1^t \frac{K C_\alpha^2}{(2 \gamma C_\alpha-1)}s^{2 \gamma C_\alpha -3}  e^{C_\alpha^2 \left[\frac{1}{s}- \frac{1}{t} \right]} ds.
\end{aligned}
\end{equation}
For $t \ge s > t^*$ with $t^*$ sufficiently large, it follows from \eqref{E:E113} that
\begin{equation}\label{E:E112}
\begin{aligned}
\BE \left[(\tilde{\theta}_t)^2 \right] & \le \BE \left[ (\tilde{\theta}_0)^2 \right] \left( \frac{1}{t} \right)^{2 \gamma C_\alpha} + \frac{K C_\alpha^2}{t} \frac{1}{(2 \gamma C_\alpha -1)} + \frac{ C_\alpha^2 \BE (\tilde{\theta}_0)^2}{t^{2 \gamma C_\alpha}} e^{\frac{C_\alpha^2}{t^*}} \\
& \qquad \qquad \qquad \qquad \qquad \qquad \qquad \quad +  \frac{ K C_\alpha^4}{(2 \gamma C_\alpha -1) (2 \gamma C_\alpha -2)t^{2 \gamma C_\alpha}}  e^{\frac{C_\alpha^2}{t^*}} \left[ t^{2 \gamma C_\alpha -2} -1 \right].
\end{aligned}
\end{equation}
If $\gamma C_\alpha > 1$, then using $\frac{1}{2 \gamma C_\alpha -1}< \frac{1}{2 \gamma C_\alpha -2}$ and $t^{2 \gamma C_\alpha}>t^2 >t >t^*,$ we obtain
\begin{equation*}
\begin{aligned}
\BE \left[ (\tilde{\theta}_t)^2 \right] & \le \BE \left[ (\tilde{\theta}_0)^2 \right] \frac{1}{t^*} + \frac{K C_\alpha^2}{t^*} \frac{1}{(2 \gamma C_\alpha -1)} + \frac{C_\alpha^2 \BE (\tilde{\theta}_0)^2}{t^{*}} e^{\frac{C_\alpha^2}{t^*}} +  \frac{K C_\alpha^4}{(2 \gamma C_\alpha -2)^2 t^{*}}  e^{\frac{C_\alpha^2}{t^*}} \le \tilde{K},
\end{aligned}
\end{equation*}
under the condition $\max\left\{ \frac{C_\alpha^2}{t^*}, \thinspace \frac{C_\alpha^2}{{(2\gamma C_\alpha -1)}t^*}, \thinspace \frac{C_\alpha^2}{t^* (2 \gamma C_\alpha -2)}\right\} < \lambda_2$. Again, the constant $\tilde{K}$ is independent of $C_\alpha$. If $\gamma C_\alpha =1$, then for all $t> t^*$, Equation \eqref{E:E113} gives
\begin{equation*}
\begin{aligned}
\BE \left[(\tilde{\theta}_t)^2 \right] & \le \BE \left[ (\tilde{\theta}_0)^2 \right] \left( \frac{1}{t} \right)^{2} + \frac{K C_\alpha^2}{t}  + \frac{ C_\alpha^2 \BE (\tilde{\theta}_0)^2}{t^{2}} e^{\frac{C_\alpha^2}{t^*}}\left[ 1- \frac{1}{t}\right] +  \frac{ K C_\alpha^4}{t^2}  e^{\frac{C_\alpha^2}{t^*}} \left[ \log t \right] \\
& \le \BE \left[ (\tilde{\theta}_0)^2 \right] \frac{1}{t^*} + \frac{K C_\alpha^2}{t^*} +  \frac{ C_\alpha^2 \BE (\tilde{\theta}_0)^2}{t^*} e^{\frac{C_\alpha^2}{t^*}}   +  \frac{ K C_\alpha^4}{t^*}  e^{\frac{C_\alpha^2}{t^*}} < \tilde{K}.
\end{aligned}
\end{equation*}
Finally, consider the regime $\frac{1}{2} <\gamma C_\alpha < 1$, that is, $-1 <2 \gamma C_\alpha -2 < 0$. From Equation \eqref{E:E112}, we obtain
\begin{equation*}
\begin{aligned}
\BE \left[ (\tilde{\theta}_t)^2  \right]
& \le \BE \left[ (\tilde{\theta}_0)^2 \right] \frac{1}{t^*} + \frac{K C_\alpha^2}{t^*} \frac{1}{(2 \gamma C_\alpha -1)} + \frac{ C_\alpha^2 \BE (\theta_0)^2}{t^{*}} e^{\frac{C_\alpha^2}{t^*}}  +  \frac{ K C_\alpha^4}{(2 \gamma C_\alpha -1) (2- 2 \gamma C_\alpha )t^{*}}  e^{\frac{C_\alpha^2}{t^*}} \le \tilde{K},
\end{aligned}
\end{equation*}
under the condition $\max\left\{ \frac{C_\alpha^2}{t^*}, \thinspace \frac{C_\alpha^2}{{(2\gamma C_\alpha -1)}t^*}, \thinspace \frac{C_\alpha^2}{t^* \sqrt{ (2 \gamma C_\alpha -1) (2- 2 \gamma C_\alpha)}}\right\} < \lambda_3$.\\
\textbf{Case III}: In Equation \eqref{E:E111}, if $2 \gamma C_\alpha -1=0$, i.e., $C_\alpha = \frac{1}{2 \gamma},$ then applying Gronwall's inequality for $t \ge t^*$ (with $t^*$ sufficiently large) yields
\begin{equation*}
\begin{aligned}
{\BE \left[ (\tilde{\theta}_t)^2 \right]} &  \le {\BE \left[ (\tilde{\theta}_0)^2 \right] \left( \frac{1}{t} \right) + \frac{K C_\alpha^2}{t} \log t}   +  {\frac{ C_\alpha^2}{t}}\int_1^t {s^{-1}} \BE |\tilde{\theta}_s|^2 ds \\
& \le \frac{\BE \left[(\tilde{\theta}_0)^2 \right]}{t^*} + \frac{K}{4 \gamma^2} + \frac{ C_\alpha^2}{t} \int_1^t  s^{-1} \BE |\tilde{\theta}_s|^2 ds
 \le \tilde{K} + \frac{ C_\alpha^2}{t}\tilde{K} \int_1^t  s^{-1} e^{C_\alpha^2 \left[\frac{1}{s}- \frac{1}{t} \right]} ds \\
 & \le \tilde{K}  + \frac{\tilde{K}}{t} e^{\frac{C_\alpha^2}{t^*}} \log t \le \tilde{K}.
\end{aligned}
\end{equation*}
This completes the proof.
\end{proof}

\color{black}

\subsection{Preliminaries}
In this section, we collect preliminaries on the Poisson equation on the whole space from \cite{pardoux2003poisson}, which we use repeatedly throughout the manuscript, and we also recall basic definitions and results from Malliavin calculus \cite{nualart2006malliavin} to keep the paper self-contained.

\subsubsection{Poisson equation}\label{S:Poisson-Equation}
Assume that Conditions \ref{A:f*-growth}, \ref{A:Well-Posedness}, and \ref{A:Growth-f-g} hold, and let $\mu$ denote the invariant measure of the process $X$. We further assume that the function $\mathsf{H} \in \mathscr{C}^{\alpha,2}(\mathscr{X}, \BR)$, where $\mathscr{X} \subseteq \BR$, satisfies the centering condition
\begin{equation}\label{E:Centring-condition}
\int_{\mathscr{X}} \mathsf{H} (x, \theta) \mu(dx)=0,
\end{equation}
and that there exist positive constants $K$, $\mathsf{p}_1$, $\mathsf{p}_2$, $\mathsf{p}_3$, and $q$ such that
\begin{equation}\label{E:Poisson-eq-H-growth}
\begin{aligned}
|\mathsf{H}(x,\theta)|  \le K (1+ |\theta|^{\mathsf{p}_1})(1+ |x|^q), \quad
|\mathsf{H}_{\theta}(x,\theta)| & \le K (1+ |\theta|^{\mathsf{p}_2})(1+ |x|^q), \\
|\mathsf{H}_{\theta \theta}(x,\theta)| & \le K (1+ |\theta|^{\mathsf{p}_3})(1+ |x|^q).
\end{aligned}
\end{equation}
Let $\mathscr{L}_x$ be the infinitesimal generator of $X$. Then the Poisson equation
$\mathscr{L}_x \mathsf{v}(x,\theta) =\mathsf{H} (x, \theta),\\ \int_{\mathscr{X}} \mathsf{v}(x,\theta) \mu(dx)=0 $
has a unique solution satisfying $\mathsf{v}(x, \cdot) \in \mathscr{C}^2$ for every $x\in \mathscr{X}$, $\mathsf{v}_{\theta \theta} \in \mathscr{C}(\mathscr{X} \times \BR)$, and there exist positive constants $\mathsf{K}$ and $m$ such that
\begin{equation}\label{E:Poisson-eq-Sol-growth}
\begin{aligned}
|\mathsf{v}(x,\theta)| + |\mathsf{v}_x(x,\theta)|  \le \mathsf{K} (1+ |\theta|^{\mathsf{p}_1})(1+ |x|^m), \thinspace
|\mathsf{v}_{\theta}(x,\theta)| + |\mathsf{v}_{x \theta}(x,\theta)|  & \le \mathsf{K} (1+ |\theta|^{\mathsf{p}_2})(1+ |x|^m), \\
|\mathsf{v}_{\theta \theta}(x,\theta)| + |\mathsf{v}_{x \theta \theta}(x,\theta)| & \le \mathsf{K} (1+ |\theta|^{\mathsf{p}_3})(1+ |x|^m).
\end{aligned}
\end{equation}
We emphasize that, in the growth bounds for $\mathsf{H}$ and $\mathsf{v}$ (and their derivatives), the exponents in $\theta$ coincide, whereas the powers in $x$ may differ. In our analysis, the function \(\mathsf{H}(x, \theta)\) always satisfies the centering condition \eqref{E:Centring-condition} and takes one of the following forms: \(\bar{g}_{\theta \theta}(\theta) - g_{\theta \theta}(x, \theta)\), \(\bar{g}_{\theta}(\theta) - g_{\theta}(x, \theta)\), or \(h(x, \theta) - \bar{h}(\theta)\), where \(h(x, \theta)\) is defined in Proposition \ref{P:SS20-Main-Result}. Since these functions satisfy \eqref{E:Centring-condition} and \eqref{E:Poisson-eq-H-growth}, the associated Poisson equation admits a solution satisfying the growth bounds in \eqref{E:Poisson-eq-Sol-growth}.

\subsubsection{Necessary notions from Malliavin Calculus}
\label{PrelimsOnMalliavin}
\noindent We outline the main tools from Malliavin calculus that are needed in this
paper. For a complete treatment, we refer the reader to
\cite{nualart2006malliavin}.

Broadly speaking, the Malliavin derivative of a random variable is the Fr{\'e}chet derivative taken along the directions in the Wiener space. It measures the sensitivity of the random variable to infinitesimal
perturbations of the underlying Brownian path at time $t$. To make it precise, let $\frak{H} = L^2
 \left(\mathbb{R}_+\right)$ and consider the isonormal Gaussian
 process $\left\{ W(h)
   \colon h \in \frak{H} \right\}$, that is, the collection of centered
 Gaussian random variables with covariance given by
\begin{equation*}
\mathbb{E}\left( W(h)W(g) \right) = \left\langle h,g \right\rangle_{\frak{H}}.
\end{equation*}
Denote by $\mathcal{S}$ the set of smooth cylindrical random variables
of the form $F = f
\left( W(\varphi_1), \cdots , W(\varphi_n) \right)$, where $n \geq 1$, $\{\varphi_i\}^n_{i=1} \subset
\mathfrak{H}$, and $f \in C_b^{\infty} \left(
  \mathbb{R}^n \right)$ ($f$ and all of its partial derivatives of all
orders are bounded functions). The Malliavin derivative of such a smooth cylindrical random variable
$F$ is defined as the $\mathfrak{H}$-valued random variable given by
\begin{equation*}
DF = \sum_{i=1}^n \frac{\partial f}{\partial x_i} \left( W(\varphi_1),
  \cdots, W(\varphi_n) \right)\varphi_i.
\end{equation*}
The derivative operator $D$ is closable from $L^2(\Omega)$
into $L^2(\Omega ; \mathfrak{H})$, and we continue to denote by $D$
its closure, the domain of which we denote by $\mathbb{D}^{1,2}$, and which is a Hilbert space in the Sobolev-type norm
\begin{equation*}
\left\lVert F \right\rVert_{1,2}^2 = \BE (F^2) + \BE \left( \left\lVert DF \right\rVert_{\mathfrak{H}}^2 \right).
\end{equation*}
Similarly, one can obtain a derivative operator $D
\colon\mathbb{D}^{1, 2}(\mathfrak{H}) \to L^2(\Omega; \mathfrak{H}
\otimes \mathfrak{H})$ as the closure of \\ $D \colon L^2(\Omega;
\mathfrak{H}) \to L^2(\Omega; \mathfrak{H} \otimes \mathfrak{H})$. We then set $D^2F = D(DF)$ and define the Sobolev-type norm
\begin{equation}\label{E:Sobolev-Norm}
\left\lVert F \right\rVert_{2,4}^4 = \BE (F^4) + \BE \left( \left\lVert DF \right\rVert_{\mathfrak{H}}^4 \right)+ \BE \left( \left\lVert D^2F \right\rVert_{\mathfrak{H}^{\otimes 2}}^4 \right).
\end{equation}
Note that more generally with $p,k > 1$ and , one can analogously obtain $\mathbb{D}^{k, p}$ as Banach spaces of Sobolev type by working with $L^p(\Omega)$.

At several places throughout this paper, we make use of the following consequence of the Malliavin differentiability theorem \cite[Theorem 2.2.1]{nualart2006malliavin}. Consider the {\sc sde}: $dX_t = b(t,X_t)\thinspace dt + \sigma(t, X_t)\thinspace dW_t$.
Then the Malliavin derivative $D_rX_t$ satisfies $D_rX_t = 0$, for $r>t$, while for $r \le t$,
$$D_rX_t = \sigma(r,X_r) + \int_r^t \partial_x b(s, X_s)\cdot D_rX_s \thinspace ds + \int_r^t \partial_x \sigma(s, X_s)\cdot D_rX_s \thinspace dW_s.$$
In particular, when the diffusion coefficient $\sigma$ is independent of the state variable, i.e., $\sigma(t,x)=\sigma(t)$, the stochastic integral vanishes. Consequently, the Malliavin derivative satisfies the deterministic integral equation
$D_rX_t = \sigma(r) + \int_r^t \partial_x b(s, X_s)\cdot D_rX_s \thinspace ds,$ for $r \le t$.

\bibliographystyle{alpha}
\bibliography{References}
\end{document}